\documentclass[11pt]{article}
\usepackage{amsmath}
\usepackage{amssymb}
\usepackage{ulem}
\usepackage{amscd}
\usepackage[matrix,arrow]{xy}
\usepackage{amsmath,amsthm,amsfonts}
\usepackage[usenames]{xcolor}
\usepackage{hyperref}
\usepackage{textcomp}

\hypersetup{
  pdftitle    = {Categorical measures for finite group actions},
  pdfauthor   = {D.\,Bergh, S.\,Gorchinskiy, M.\,Larsen, V.\,Lunts},
  colorlinks=false}



\oddsidemargin=1pt
\textwidth=450pt \textheight=640pt
\topmargin=-12mm

\theoremstyle{plain}
\newtheorem{theo}{Theorem}[section]
\newtheorem{lemma}[theo]{Lemma}
\newtheorem{prop}[theo]{Proposition}
\newtheorem{conj}[theo]{Conjecture}

\numberwithin{equation}{section}

\theoremstyle{remark}
\newtheorem{example}[theo]{Example}
\newtheorem{remark}[theo]{Remark}

\theoremstyle{definition}
\newtheorem{defi}[theo]{Definition}

\makeatletter
\newtheoremstyle{maintheoremstyle} 
    {\topsep}                    
    {\topsep}                    
    {\itshape}                   
    {}                              
    {\scshape}                  
    {.}                             
    {.5em}                       
    {}  
\makeatother

\theoremstyle{maintheoremstyle}
\newtheorem{maintheo}{Theorem}

\def\R{\mathbb{R}}
\def\C{\mathbb{C}}
\def\Z{\mathbb{Z}}

\raggedbottom
\bibcite{...}{...} 

\def\coh{\operatorname{coh}}

\def\PP{{\mathbb P}}

\def\pre-tr{\operatorname{pre-tr}}

\def\Hom{\operatorname{Hom}}
\def\End{\operatorname{End}}

\def\Eq{\operatorname{Eq}}



\newcommand{\bbA}{{\mathbb A}}
\newcommand{\bbC}{{\mathbb C}}

\newcommand{\bbR}{{\mathbb R}}

\newcommand{\bbZ}{{\mathbb Z}}

\newcommand{\bbP}{{\mathbb P}}

\newcommand{\bbQ}{{\mathbb Q}}

\newcommand{\Gm}{{\mathbb G}_m}

\newcommand{\KM}{{\mathcal K \mathcal M}}
\newcommand{\cY}{{\mathcal Y}}
\newcommand{\cX}{{\mathcal X}}

\newcommand{\cQ}{{\mathcal Q}}
\newcommand{\cF}{{\mathcal F}}
\newcommand{\cG}{{\mathcal G}}
\newcommand{\cO}{{\mathcal O}}
\newcommand{\cP}{{\mathcal P}}

\newcommand{\cM}{{\mathcal M}}
\newcommand{\cN}{{\mathcal N}}
\newcommand{\cD}{{\mathcal D}}
\newcommand{\cV}{{\mathcal V}}
\newcommand{\cA}{{\mathcal A}}
\newcommand{\cB}{{\mathcal B}}

\newcommand{\cE}{{\mathcal E}}
\newcommand{\cW}{{\mathcal W}}

\newcommand{\cS}{{\mathcal S}}
\newcommand{\cT}{{\mathcal T}}

\newcommand{\cH}{{\mathcal H}}

\newcommand{\cZ}{{\mathcal Z}}

\newcommand{\iner}{\operatorname{iner}}
\newcommand{\ind}{\operatorname{ind}}
\newcommand{\mods}{\operatorname{mod}}
\newcommand{\rep}{\operatorname{rep}}

\newcommand{\DG}{\operatorname{DG}}

\newcommand{\Ker}{\operatorname{Ker}}

\newcommand{\Spec}{\operatorname{Spec}}

\newcommand{\red}{\operatorname{red}}

\newcommand{\cs}{\operatorname{cs}}
\newcommand{\Br}{\operatorname{Br}}
\newcommand{\Mat}{\operatorname{Mat}}
\newcommand{\Mods}{\operatorname{Mod}}


\newcommand{\coarse}{\ensuremath{\mathrm{cs}}}
\newcommand{\K}{\ensuremath{\mathrm{K}}}
\newcommand{\Var}{\ensuremath{\mathrm{Var}}}

\newcommand{\DM}{\ensuremath{\mathrm{DM}}}
\newcommand{\smp}{\mathrm{sp}}

\newcommand{\gm}{\mathrm{gm}}
\newcommand{\sat}{\mathrm{sat}}
\newcommand{\tp}{\mathrm{top}}

\newcommand{\eq}{\mathrm{eq}}
\newcommand{\ex}{\mathrm{ex}}

\newcommand{\GL}{\mathrm{GL}}
\newcommand{\PGL}{\mathrm{PGL}}
\newcommand{\PSL}{\mathrm{PSL}}
\newcommand{\SL}{\mathrm{SL}}
\newcommand{\Irr}{\mathrm{Irr}}
\newcommand{\uIrr}{\underline{\mathrm{Irr}}}

\sloppy

\title{Categorical measures for finite group actions}

\author{D.\,Bergh\footnote{University of Copenhagen, dbergh@math.ku.dk}, S.\,Gorchinskiy\footnote{Steklov Mathematical Institute of Russian Academy of Sciences, National Research University Higher School of Economics, Russian Federation, gorchins@mi.ras.ru}, M.\,Larsen\footnote{Indiana University, Bloomington, IN 47405, USA, mjlarsen@indiana.edu}, V.\,Lunts\footnote{Indiana University, Bloomington, IN 47405, USA, vlunts@indiana.edu}}

\begin{document}

\maketitle

\abstract{Given a variety with a finite group action, we compare its equivariant categorical measure, that is, the categorical measure of the corresponding quotient stack, and the categorical measure of the extended quotient. Using weak factorization for orbifolds, we show that for a wide range of cases, these two measures coincide. This implies, in particular, a conjecture of Galkin and Shinder on categorical and motivic zeta-functions of varieties. We provide examples showing that, in general, these two measures are not equal. We also give an example related to a conjecture of Polishchuk and Van den Bergh, showing that a certain condition in this conjecture is indeed necessary.}

\tableofcontents

\section{Introduction}

Given an algebraic variety with an action of a finite group, we compare its equivariant categorical measure and the categorical measure of the extended quotient. Both measures take values in the Grothendieck ring of saturated dg-categories and are closely related to semiorthogonal decompositions of bounded derived categories of (equivariant) coherent sheaves on smooth projective varieties.

\bigskip

Let us motivate the questions that we address in the paper. Given a smooth compact manifold $M$, one has the Chern character isomorphism between its rationalized topological \mbox{$K$-groups}~$K_{\bullet}^{\tp}(M)_{\bbQ}:=K^{\tp}_{\bullet}(M)\otimes_{\bbZ}\bbQ$ and (direct sums of) singular cohomology~$H^{\bullet}(M,\bbQ)$ with rational coefficients by Atiyah and Hirzebruch~\cite{AH0}. When trying to generalize this to an equivariant setting, that is, in the presence of an action of a finite group $G$ on $M$, one finds that equivariant topological \mbox{$K$-groups} $K^{G,\tp}_{\bullet}(M)_{\bbQ}$ are not isomorphic
any more to (direct sums of) equivariant cohomology $H^{\bullet}_G(M,\bbQ)$. Indeed, this can be seen already in the case when~$M$ is a point.
Note that since we are working with rational coefficients, we have canonical isomorphisms $H^{\bullet}_G(M,\bbQ) \simeq H^{\bullet}(M,\bbQ)^G\simeq H^\bullet(M/G, \bbQ)$.

Atiyah and Segal~\cite{AS} (see also a paper by Hirzebruch and H\"ofer~\cite{HH}) discovered that one should replace equivariant cohomology by cohomology of the {\it extended quotient}
$$
M/^{\ex}G:=\widetilde{M}/G\,,
$$
where~$\widetilde{M}$ consists of pairs $(g,m)\in G\times M$ such that $g(m)=m$, and $G$ acts on the first factor by conjugation. Explicitly, we have an equality
\begin{equation}\label{eq:decompexintro}
{M/^{\ex}G=\coprod\limits_{[g]\in C(G)}M^g/Z(g)}\,,
\end{equation}
where~$g$ runs over a system of representatives of the conjugacy classes~$C(G)$ in~$G$ and $Z(g)\subset G$ is the centralizer of~$g$. Note that, in general, the extended quotient $M/^{\ex}G$ can be singular.

It turns out that there is a canonical isomorphism between~$K_{\bullet}^{G,\tp}(M)_{\bbC}$ and (direct sums of) singular cohomology $H^{\bullet}(M/^{\ex}G,\bbC)\simeq H^{\bullet}(\widetilde{M},\bbC)^G$ with complex coefficients. The Chern character isomorphism for $\widetilde{M}$ implies that this assertion is equivalent to existence of canonical isomorphisms $K_i^{G,\tp}(M)_{\bbC}\simeq K^{\tp}_i(M/^{\ex}G)_{\bbC}$ for $i=0,1$, see~\cite[Theor.\,2]{AS}. Note that it is important here to work with complex coefficients, not just with rational ones.

\medskip

An algebraic version of these isomorphisms was investigated further by Vistoli~\cite{Vis0}. Namely, let $X$ be a smooth quasi-projective algebraic variety over a field $k$ and let a finite group $G$ act on~$X$. Assume that the order $n:=|G|$ of $G$ is prime to the characteristic of~$k$ and that~$k$ contains a primitive $n$-th root of unity $\zeta_n$. Then there are canonical isomorphisms
\begin{equation}\label{eq:mainintro}
K_i^{G}(X)_R\simeq K_i(X/^{\ex}G)_R\,,\qquad i\geqslant 0\,,
\end{equation}
between algebraic $K$-groups, where $R:=\Z[1/n,\zeta_n]$ and the extended quotient $X/^{\ex}G$ for the algebraic variety $X$ is defined similarly as for manifolds (actually, {\it op.~cit.} contains a stronger result, but we provide the above formulation to ease exposition). Note that $K_i^G(X)$ is canonically isomorphic to the algebraic $K$-group $K_i\big([X/G]\big)$ of the quotient stack $[X/G]$.

The isomorphisms~\eqref{eq:mainintro} (and their stronger versions) were generalized independently by Vezzosi, Vistoli~\cite{VV} and To\"en~\cite{To0},~\cite{To1} to the case when $G$ is a linear algebraic group and~$G$ acts on~$X$ as above with finite geometric stabilizers.

An analog of isomorphisms~\eqref{eq:mainintro} when $G$ is finite with algebraic $K$-groups being replaced by (generalized) cyclic homology was obtained by Baranovsky~\cite{Bar}. Furthermore, recently, Tabuada and Van den Bergh~\cite{TVdB} proved an analog of~\eqref{eq:mainintro} with the algebraic $K$-groups being replaced by an arbitrary additive invariant of dg-categories.

\medskip

In what follows we assume that $k$ is of characteristic zero. It is natural to try to lift isomorphisms between various invariants of $[X/G]$ and $X/^{\ex}G$ to a more geometrical context. Clearly, the stack $[X/G]$ itself is not isomorphic to the variety~$X/^{\ex}G$ in general. Recall that one has derived categories of coherent sheaves living in between the geometrical side and the side of additive invariants. However, the bounded derived category~$D^G(X)$ of \mbox{$G$-equivariant} coherent sheaves on $X$ is not necessarily equivalent to the bounded derived category $D^b(X/^{\ex}G)$ of coherent sheaves on~$X/^{\ex}G$. For instance, this can be seen by considering an effective action of $G=\Z/n\Z$ on $X=\bbP^1$.

Nevertheless, one has the following conjecture by Polishchuk and Van den Bergh~\cite{PV}: there exists a semiorthogonal decomposition of $D^G(X)$ whose pieces form the collection of categories~$D^b\big(X^g/Z(g)\big)$, where~$g$ runs over a system of representatives of the classes in~$C(G)$ (cf. formula~\eqref{eq:decompexintro}) provided that all quotients $X^g/Z(g)$ are smooth and the action of $G$ on $X$ is effective (see Conjecture~\ref{conj:pvdb} for a precise formulation). The conjecture was proved for a number of cases in {\it op.~cit.} The conjecture would imply that under its conditions there exists an isomorphism as in formula~\eqref{eq:mainintro} without tensoring by~$R$. We show that the conjecture would be false if the assumption that the action be effective were removed
(see Theorem~\ref{thm:PolvdB} and Theorem~\ref{thmD} below).

\medskip

It is natural to test the conjecture of Polishchuk and Van den Bergh on the level of the Grothendieck ring of dg-categories. To explain this, first recall that the Grothendieck ring~$K_0(\Var_k)$ of varieties over $k$ is the abelian group generated by isomorphism classes~$\{X\}$ of varieties $X$ subject to the scissors relations ${\{X\} = \{X \smallsetminus Z\} + \{Z\}}$, where $Z$ is a closed subvariety of $X$. By
Bittner's result~\cite{Bit}, $K_0(\Var_k)$ is also isomorphic to the abelian group generated by isomorphism classes $\{X\}$ of smooth projective varieties $X$ subject to the blow-up relations ${\{X\} - \{Z\} = \{\widetilde{X}\} -\{E\}}$,
where $\widetilde{X}\to X$ is the blow-up of $X$ in a smooth center $Z\subset X$ and $E\subset\widetilde{X}$ is the exceptional divisor, and the relation $\{\varnothing\}=0$.

Bondal, Larsen, and Lunts~\cite{BLL} (see also papers by Tabuada~\cite{Tab} and To\"en~\cite{ToDG}) defined the Grothendieck ring $K_0(\DG_k^{\sat})$ of saturated dg-categories over $k$, which is generated by quasi-equivalence classes $\{\cM\}$ of saturated \mbox{dg-categories} subject to the relations ${\{\cM\}=\{\cM'\}+\{\cM''\}}$ coming from semiorthogonal decompositions ${H^0(\cM)=\langle \,H^0(\cM'),H^0(\cM'')\,\rangle}$ (see, e.g., Subsection~\ref{subsec:Grothdg}). In~\cite{BLL}, a ring homomorphism
$$
\mu\;:\;K_0(\Var_k)\longrightarrow K_0(\DG_k^{\sat})\,,\qquad \{X\}\longmapsto \mu(X)\,,
$$
called the {\it categorical measure}, is constructed. It sends the class $\{X\}$ of a smooth projective variety to the class~$\{\cD(X)\}$ of a dg-enhancement of $D^b(X)$. Note that such \mbox{dg-enhancement} is unique by a theorem of Lunts and Orlov~\cite{LO}. The proof of the fact that $\mu$ is well defined relies on Bittner's result and on a result of Orlov~\cite{O0} about derived categories of blow-ups.

Similarly, one defines the Grothendieck group $K_0(\Var^G_k)$ generated by classes $\{X\}^G$ of varieties $X$ with an action of a finite group $G$ (see, e.g., Subsection~\ref{subsect:GrothGvar}) and Bittner's theorem holds true in the equivariant setting as well. One constructs a homomorphism of groups (see Subsection~\ref{subsec:Grothdg})
$$
\mu^G\;:\;K_0(\Var^G_k)\longrightarrow K_0(\DG_k^{\sat})\,,\qquad \{X\}^G\longmapsto \mu^G(X)\,,
$$
called an {\it equivariant categorical measure}, that sends the class $\{X\}^G$ of a smooth projective variety $X$ with an action of $G$ to the class $\{\cD^G(X)\}$ of a dg-enhancement of $D^G(X)$. Such \mbox{dg-enhancement} is unique by a result of Canonaco and Stellari~\cite{CS}. To see that the equivariant categorical measure~$\mu^G$ is well defined, one combines Orlov's result with results by Elagin~\cite{Ela0},~\cite{Ela} about semiorthogonal decompositions of $D^G(X)$.

We see that the conjecture of Polishchuk and Van den Bergh would imply that under its conditions for $X$ smooth projective, there is an equality
\begin{equation}\label{eq:maineqintro}
\mu^G(X)=\mu(X/^{\ex}G)
\end{equation}
in~$K_0(\DG_k^{\sat})$.

Note that the equality~\eqref{eq:maineqintro} makes sense for an arbitrary variety $X$ with an action of a finite group $G$.
Independently, the conjectural equality~\eqref{eq:maineqintro} was studied by Galkin and Shinder in this setting. Their motivation was
another conjecture due to
themselves.
Recall that Kapranov~\cite{Kap} defined the {\it motivic zeta-function} of an arbitrary variety $V$ as the following power series with coefficients in $K_0(\Var_k)$:
$$
Z_{mot}(V,t):=\sum _{n\geqslant 0}\{S^n(V)\}\,t^n\,,
$$
where $S^n(V)=V^{\times n}/\Sigma_n$ is the \mbox{$n$-th} symmetric power of a variety $V$ and $\Sigma_n$ is the symmetric group that permutes $n$ elements. A conjecture of Galkin and Shinder~\cite{GS} predicts a beautiful identity of power series with coefficients in $K_0(\DG^{\sat}_k)$
\begin{equation}\label{eq:GSintro}
\sum_{n\geqslant 0}\mu^{\Sigma_n}(V^{\times n})\,t^n=\mu\Big(\,\prod\limits_{i\geqslant 1}Z_{mot}(V,t^i)\Big)\,.
\end{equation}
Galkin and Shinder called the left hand side of~\eqref{eq:GSintro} a {\it categorical zeta-function}. It is explained in {\it op.~cit.} how to reduce equality~\eqref{eq:GSintro} to some previously known non-trivial results in the case when $V$ is either a curve or a surface. Also, Galkin and Shinder observed that equalities~\eqref{eq:maineqintro} with $G=\Sigma^n$ and $X=V^{\times n}$ for all $n\geqslant 0$ would imply equality~\eqref{eq:GSintro}. In this paper, we prove equality~\eqref{eq:GSintro} and disprove~\eqref{eq:maineqintro} for an arbitrary variety~$X$.

\medskip

As an example of equality~\eqref{eq:maineqintro}, consider the case when the action of $G$ on $X$ is free. Note that in this case $X/^{\ex}G=X/G$ and there is an equivalence of categories ${D^G(X)\simeq D^b(X/G)}$. If, in addition, $X$ is smooth projective, then the equality is obvious, because in this case~$X/G$ is a smooth projective variety, whence both sides of the equality just equal~$\mu(X/G)$. On the contrast, if~$X$ is not smooth projective, then the equality is highly non-trivial.

Indeed, when $X$ is, say, smooth affine, one computes the equivariant categorical measure~$\mu^G(X)$ by taking a $G$-equivariant smooth projective compactification $\overline{X}$. Then $\mu^G(X)$ equals the difference of~$\{\cD^G(\overline X)\}$ and a term defined by the boundary $\overline{X}\smallsetminus X$ with the action of $G$. On the other hand, to compute the categorical measure ${\mu(X/^{\ex}G)=\mu(X/G)}$, one takes a resolution of singularities $Y\to \overline{X}/G$ such that it is an isomorphism over~$X/G$. Then~$\mu(X/^{\ex}G)$ equals the difference of~$\{\cD(Y)\}$ and a term defined by the boundary~${Y\smallsetminus (X/G)}$. This shows that equality~\eqref{eq:maineqintro} in this case is not obvious at all and is closely related to the categorical McKay correspondence, see the paper by Bridgeland, King, Ried~\cite{BKR}, a survey by Ried~\cite{Ried}, and papers by Kawamata~\cite{Kaw1},~\cite{Kaw2}.

\bigskip

Now let us describe our main results. First, we show that equality~\eqref{eq:maineqintro} holds true for some important special cases (see Theorem~\ref{theorem:main}). This gives an analog of the isomorphisms by Atiyah--Segal, Vistoli, and others (see formula~\eqref{eq:mainintro}), in the context of the Grothendieck ring of dg-categories.

\begin{maintheo}\label{thmA}
Let $X$ be a variety over a field $k$ of characteristic zero and let a finite group~$G$ act on~$X$. Suppose that for any $\bar k$-point $x$ of $X$, where $\bar k$ is the algebraic closure of $k$, the following conditions are satisfied:
\begin{itemize}
\item[(i)]
all irreducible representations over $\bar k$ of the stabilizer $N_x\subset G$ of $x$ are defined over~$k$;
\item[(ii)]
the direct sum of all irreducible representations of $N_x$ over $k$ (taken with multiplicity one) extends to a $G_x$-representation, where $G_x\subset G$ is the normalizer of~$N_x$;
\item[(iii)]
there is an $H_x$-equivariant bijection between the set $C(N_x)$ of conjugacy classes in~$N_x$ and the set of isomorphism classes of irreducible representations of $N_x$, where we put ${H_x:=G_x/N_x}$.
\end{itemize}
Then the equality $\mu^G(X)=\mu(X/^{\ex}G)$ holds in $K_0(\DG_k^{\sat})$.
\end{maintheo}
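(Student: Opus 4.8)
The plan is to prove the identity by d\'evissage on $K_0(\Var^G_k)$, reducing to local models at points with nontrivial stabilizer, where hypotheses (i)--(iii) are used. First, both sides are additive: for $\mu^G$ this is recalled above (Orlov's blow-up formula and Elagin's semiorthogonal decompositions, \cite{O0},\cite{Ela0},\cite{Ela}), while for $X\mapsto\mu(X/^{\ex}G)$ it follows from $X/^{\ex}G=\coprod_{[g]\in C(G)}X^g/Z(g)$, the closed decompositions $X^g=(X\smallsetminus Z)^g\sqcup Z^g$ attached to a $G$-invariant closed $Z\subset X$, and the scissors relations for $\mu\colon K_0(\Var_k)\to K_0(\DG_k^{\sat})$. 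So both sides descend to group homomorphisms on $K_0(\Var^G_k)$. Since (i)--(iii) constrain only stabilizers of geometric points, they pass to every $G$-invariant locally closed subvariety, which makes d\'evissage by $\dim X$ legitimate.

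Stratify $X$ into its free locus $U=\{x:N_x=1\}$, open when the action is generically free, and the closed complement $X\smallsetminus U$ of strictly smaller dimension. On $U$ one has $U/^{\ex}G=U/G$ and $D^G(U)\simeq D^b(U/G)$, and the equality $\mu^G(U)=\mu(U/^{\ex}G)$ is the categorical McKay-type statement discussed in the introduction (\cite{BKR},\cite{Kaw1},\cite{Kaw2}), which we may assume known; additivity then reduces the problem to $X\smallsetminus U$, on which we induct. Two further reductions close the induction. The first is the generically non-free case: a normal subgroup $N_0\le G$ acts trivially on a component of $X$, and one must match the $N_0$-isotypic decomposition of $D^G(X)$ against the contributions $\coprod_{[g]\in C(N_0)}X/Z_G(g)$ of $N_0$ to the extended quotient. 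The second is passage to smooth projective representatives via the equivariant Bittner theorem \cite{Bit} and equivariant resolution; here care is needed, since the stabilizers occurring on an equivariant blow-up are proper subgroups of the original $N_x$ and (i)--(iii) are not stable under passage to subgroups, so the resolutions, the compactifications, and the orbifold weak factorization used below must be arranged to stay within a class of $G$-varieties for which the local analysis still applies.

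For $X$ smooth projective, $[X/G]$ is a smooth proper Deligne--Mumford stack with coarse space $X/G$, and $X/^{\ex}G$ is the coarse space of its inertia stack; weak factorization for orbifolds connects $[X/G]$ through stacky blow-ups and blow-downs to standard models (roughly, iterated root stacks over smooth projective varieties), under which one checks that $\mu^G$ changes by an Orlov-type rule and $\mu$ of the extended quotient by the matching rule, the inertia of a stacky blow-up being assembled from blow-ups of the inertia of the original stack and of the exceptional loci. This localizes everything to a model at a point $x$ with stabilizer $N_x$ acting linearly on $V:=T_xX$. As $V$ is equivariantly contracted to the origin, $\mu^{N_x}(V)=\mu^{N_x}(\mathrm{pt})$ equals the class of a dg-enhancement of the bounded derived category of finite-dimensional $k$-linear $N_x$-representations, and hypothesis (i) makes every such irreducible representation absolutely irreducible with endomorphism ring $k$, so $\mu^{N_x}(V)=|\Irr(N_x)|\cdot\{k\}$, where $\{k\}$ is the class of a point; meanwhile $V/^{\ex}N_x=\coprod_{[g]\in C(N_x)}V^g/Z_{N_x}(g)$ is a disjoint union of affine cones, so $\mu(V/^{\ex}N_x)=|C(N_x)|\cdot\{k\}$, and the two agree because $|\Irr(N_x)|=|C(N_x)|$. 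To upgrade this numerical coincidence to a $G_x$-equivariant identity near the orbit of $x$ that glues over $X$ — and to carry out the $N_0$-matching in the generically non-free case — one needs the bijection $\Irr(N_x)\leftrightarrow C(N_x)$ to respect the $H_x=G_x/N_x$-actions, which is hypothesis (iii), together with hypothesis (ii), which provides the $G_x$-action on $\bigoplus_\rho\rho$ as an honest rather than merely projective representation, so that the local semiorthogonal pieces are genuinely $H_x$-equivariant and descend over the extended quotient. The main obstacle, I expect, is precisely this cluster of compatibilities: propagating the equivariant structure on the local semiorthogonal decompositions through the equivariant resolutions and the orbifold weak factorization, verifying that stacky blow-ups move both sides in lockstep, and keeping the non-subgroup-stable hypotheses (i)--(iii) available throughout the d\'evissage.
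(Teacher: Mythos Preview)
Your overall architecture---stratify by stabilizer type, handle the free locus separately, and use (i)--(iii) at the non-free strata---matches the paper's Proposition~\ref{lemma:techno} and Lemma~\ref{lemma:strat}. But two of your load-bearing steps are gaps, not known facts.

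First, the free case $\mu^G(U)=\mu(U/G)$ is not a consequence of \cite{BKR},\cite{Kaw1},\cite{Kaw2}; the introduction says only that it is ``closely related to'' the categorical McKay correspondence, and the paper proves it separately as Theorem~\ref{thmE} (Theorem~\ref{theo:comparison}) via Bergh's destackification theorem. You cannot assume it known. This is precisely the step where one passes from the stack $[X/G]$ to its coarse space through a controlled sequence of stacky blow-ups and root constructions, and it is one of the two main ingredients of the proof.

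Second, the local tangent-space analysis does not do what you need. The paper does not reduce to linear models at all; instead it proves a global equivalence of abelian categories (Theorem~\ref{thmF}, i.e.\ Theorem~\ref{thm:equivcoh} and Theorem~\ref{cor:trivobstr}): when $G$ acts on a smooth projective $Y$ through a quotient $H=G/N$, there is an $H$-equivariant Azumaya algebra on $\uIrr(N)\times Y$ with Brauer class $\theta\in\Br^H(Z)$ such that $\coh^G(Y)\simeq\coh^H(f^*\theta)$. Hypotheses (i) and (ii) are exactly the vanishing criterion $\theta=0$ (Proposition~\ref{cor:split}), giving $\coh^G(Y)\simeq\coh^H(\uIrr(N)\times Y)$; hypothesis (iii) then identifies $\uIrr(N)$ with $C(N)$ $H$-equivariantly. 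This yields $\mu^{G_s}(Y)=\mu^{H_s}(C(N_s)\times Y)$ directly for smooth projective $Y$, with no gluing and no need to track how conditions (i)--(iii) behave on exceptional divisors. Your worry that (i)--(iii) are not stable under passage to subgroups is therefore a symptom that the local-model route is wrong: the paper's framework (Definition~\ref{def:sadeq}, Proposition~\ref{lemma:techno}(ii)) only ever invokes the exact sequences $1\to N_s\to G_s\to H_s\to 1$ themselves, never intermediate stabilizers on a blow-up. Your closing paragraph correctly identifies the obstacle but does not resolve it; the resolution is Theorem~\ref{thmF}.
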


For example, if the action of $G$ on $X$ is free, then the conditions of Theorem~\ref{thmA} are satisfied. Also, they are satisfied when $G$ is an abelian group such that all characters of $G$ over $\bar k$ are defined over $k$ (see Example~\ref{examp:S0}(ii)).

\medskip

It turns out that the conditions of Theorem~\ref{thmA} are satisfied for the action of symmetric groups on powers of varieties. This allows us to prove the conjecture of Galkin and Shinder (see Theorem~\ref{thm:GS} and Proposition~\ref{prop:GS}).

\begin{maintheo}\label{thmB}
For any variety $V$, there is an equality ${\mu^{\Sigma_n}(V^{\times n})=\mu(V^{\times n}/^{\ex}\Sigma_n)}$ for all~${n\geqslant 0}$ and the equality~\eqref{eq:GSintro} holds true in $K_0(\DG^{\sat}_k)$.
\end{maintheo}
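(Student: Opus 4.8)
The plan is to deduce Theorem~\ref{thmB} from Theorem~\ref{thmA} by verifying its three hypotheses for the action of $\Sigma_n$ on $V^{\times n}$, and then to derive the zeta-function identity~\eqref{eq:GSintro} as a purely formal consequence. First I would compute, for a $\bar k$-point $x=(x_1,\dots,x_n)$ of $V^{\times n}$, the stabilizer $N_x\subset\Sigma_n$: grouping the coordinates into blocks of equal points, $N_x$ is a Young subgroup $\Sigma_{\lambda}=\Sigma_{\lambda_1}\times\cdots\times\Sigma_{\lambda_r}$ for the partition $\lambda$ recording the multiplicities, and its normalizer $G_x$ is the subgroup of $\Sigma_n$ permuting the blocks among those of the same size, so $G_x\cong\prod_m (\Sigma_m\wr\Sigma_{a_m})$ where $a_m$ is the number of parts equal to $m$, and $H_x=G_x/N_x\cong\prod_m\Sigma_{a_m}$. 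Condition~(i) then holds because all irreducible representations of any product of symmetric groups are defined over $\mathbb{Q}$, hence over $k$ (characteristic zero); this is the classical rationality of Specht modules. Condition~(iii) holds because for $\Sigma_m$ the conjugacy classes (cycle types, i.e.\ partitions of $m$) and the irreducibles (also partitions of $m$) are in bijection, and this bijection can be chosen $H_x$-equivariantly since $H_x$ acts by permuting identical symmetric-group factors and one just transports the chosen bijection on one factor to the others — more carefully, the $H_x$-action on both sides is the obvious permutation action on tuples of partitions, so the product of the (trivial-as-$H_x$-set) bijections on individual factors is equivariant.

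The one hypothesis requiring genuine work is~(ii): the direct sum of all irreducible $k$-representations of $N_x=\Sigma_{\lambda}$, each with multiplicity one, must extend to a $G_x$-representation. Here $G_x$ contains $H_x\cong\prod_m\Sigma_{a_m}$ permuting the isomorphic $\Sigma_m$-factors. The key point is that $H_x$ acts on the set of irreducibles of $N_x$ by permuting the factors in the external tensor products, and the direct sum $\bigoplus_{V_\bullet}V_1\boxtimes\cdots\boxtimes V_r$ over all tuples is manifestly stable under this permutation action; one must upgrade this set-theoretic stability to an actual linear extension. I would argue this by building the extension one wreath-product factor at a time: for $\Sigma_m\wr\Sigma_a$ acting on $N=\Sigma_m^{\times a}$, the representation $\big(\bigoplus_{W}W\big)^{\boxtimes a}=\bigoplus_{(W_1,\dots,W_a)}W_1\boxtimes\cdots\boxtimes W_a$ (sum over irreducibles $W$ of $\Sigma_m$) carries the evident $\Sigma_a$-action permuting tensor factors, and this action is compatible with the $N$-action in exactly the way needed for a representation of the semidirect product $\Sigma_m\wr\Sigma_a=\Sigma_m^{\times a}\rtimes\Sigma_a$; taking the external product over all $m$ gives the desired $G_x$-representation. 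This is the step I expect to be the main obstacle, because one must be careful that the $\Sigma_a$-action is well defined on the sum-over-tuples (it is, since permuting a tuple of irreducibles gives another tuple in the index set) and that the resulting cocycle is trivial — which it is, because we are literally permuting tensor factors of a tensor power of a fixed representation, a canonical operation.

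Having verified (i)--(iii) for every $\bar k$-point of $V^{\times n}$, Theorem~\ref{thmA} gives $\mu^{\Sigma_n}(V^{\times n})=\mu(V^{\times n}/^{\ex}\Sigma_n)$ for all $n\geqslant 0$. To conclude~\eqref{eq:GSintro}, I would use the decomposition~\eqref{eq:decompexintro}: $V^{\times n}/^{\ex}\Sigma_n=\coprod_{[g]}(V^{\times n})^g/Z(g)$, where $g$ ranges over conjugacy classes in $\Sigma_n$, i.e.\ over partitions of $n$. For $g$ of cycle type $\lambda$ with $a_i$ parts of size $i$, one has $Z(g)=\prod_i(\Sigma_i\wr\Sigma_{a_i})$ acting on $(V^{\times n})^g=\prod_i (V^{\times a_i})$ (a point fixed by a product of $i$-cycles is constant on each cycle, leaving one free point per cycle), and the $\Sigma_i$-factors act trivially on $(V^{\times n})^g$ while $\Sigma_{a_i}$ permutes the $a_i$ free points; hence $(V^{\times n})^g/Z(g)\cong\prod_i S^{a_i}(V)$. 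Applying the ring homomorphism $\mu$ and summing over all $n$ and all partitions, $\sum_{n\geqslant 0}\mu(V^{\times n}/^{\ex}\Sigma_n)\,t^n=\sum_{\{a_i\}}\prod_{i\geqslant 1}\mu\big(S^{a_i}(V)\big)t^{i a_i}=\prod_{i\geqslant 1}\Big(\sum_{a\geqslant 0}\mu\big(S^a(V)\big)t^{ia}\Big)=\prod_{i\geqslant 1}\mu\big(Z_{mot}(V,t^i)\big)$, which is exactly the right-hand side of~\eqref{eq:GSintro}; here I am using that $\mu$ is a ring homomorphism and that $Z_{mot}(V,t)=\sum_a\{S^a(V)\}t^a$. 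Combining with $\mu^{\Sigma_n}(V^{\times n})=\mu(V^{\times n}/^{\ex}\Sigma_n)$ term by term yields~\eqref{eq:GSintro}.
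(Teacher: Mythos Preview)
Your proposal is correct and follows essentially the same route as the paper. The paper likewise verifies that $V^{\times n}$ is $\cS_0$-adequate by identifying the stabilizer of a point as a Young subgroup $\Sigma_1^{\times n_1}\times\cdots\times\Sigma_r^{\times n_r}$ with normalizer the product of wreath products (your $\prod_m(\Sigma_m\wr\Sigma_{a_m})$), and checks condition~(ii) via exactly the permutation-of-tensor-factors argument you outline (this is their Example~\ref{exam:wreath}); the combinatorial derivation of the zeta identity is the same as your last paragraph, carried out in $K_0(\Var_k)[[t]]$ before applying~$\mu$ rather than after, which avoids invoking multiplicativity of~$\mu$ directly but is otherwise identical.
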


\medskip

Our next result shows that, in general, equality~\eqref{eq:maineqintro} does not hold (see Theorem~\ref{theor:mainexamp}), at least not if we replace the group $K_0(\DG_k^{\sat})$ by the Grothendieck group $K_0(\DG_k^{\gm})$ of {\it geometric} \mbox{dg-categories}.
Geometric dg-categories were introduced by Orlov~\cite{Orl}.
Roughly speaking, a dg-category is geometric if it is a semiorthogonal component of a dg-category of the form
$\mathcal{D}(X)$ where $X$ is a smooth projective variety (see Subsection~\ref{subsec:Grothdg} for details).
The ring $K_0(\DG_k^{\gm})$ is defined similarly as $K_0(\DG_k^{\sat})$, and there is a naturally defined (ring) homomorphism ${K_0(\DG_k^{\gm})\to K_0(\DG_k^{\sat})}$ through
which the equivariant categorical measure factors by a theorem of Bergh, Lunts, and Schn\"urer~\cite{BLS}. In particular, it makes sense to study the relation~\eqref{eq:maineqintro} in $K_0(\DG_k^{\gm})$.

We consider the case $k=\bbC$ and we consider cohomology with respect to the classical complex topology. Also, we use the following homomorphism of groups defined by the order of torsion in the topological $K_1$-groups of complex smooth projective varieties (see Subsection~\ref{subsec:DGKM}):
$$
\epsilon\;:\;K_0(\DG_{\bbC}^{\gm})\longrightarrow\bbR\,,\qquad \{\cD(X)\}\longmapsto \log |K_1^{\tp}(X)_{\rm tors}|\,,
$$
where $X$ is a complex smooth projective variety. We do not know whether $\epsilon$ factors through~$K_0(\DG_k^{\sat})$.

\begin{maintheo}\label{thmC}
Let
$$
1 \longrightarrow \Z/n\Z \longrightarrow G \longrightarrow H \longrightarrow 1
$$
be a central extension of a finite group $H$ by $\Z/n\Z$ and denote its class in~$H^2(H,\Z/n\Z)$ by~$z$.
Furthermore, let $X\to B$ be an unramified Galois $H$-cover of complex smooth projective varieties.
Suppose that the following conditions are satisfied:
\begin{itemize}
\item[(i)]
we have $\beta\big(\lambda(z)\big)\ne 0$ in $H^3(B,\Z)_{\rm tors}$, where ${\lambda\colon H^2(H,\Z/n\Z)\to H^2(B,\Z/n\Z)}$ is the natural map and $\beta$ is the Bockstein homomorphism;
\item[(ii)]
there is an equality ${|K_1^{\tp}(B)_{\rm tors}|=|H^{\rm odd}(B,\Z)_{\rm tors}|}$.
\end{itemize}
Then the inequality ${\epsilon\big(\mu^G(X)\big)<\epsilon\big(\mu(X/^{\ex}G)\big)}$ holds,
where $G$ acts on $X$ through $H$.
In particular, ${\mu^G(X)\ne \mu(X/^{\ex}G)}$ in $K_0(\DG^{\gm}_{\C})$.
\end{maintheo}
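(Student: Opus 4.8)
The plan is to compute both sides explicitly, expressing them through topological $K$-theory, and then exploit condition~(i) to produce a strict inequality.

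First I would identify the extended quotient. Since the $G$-action on $X$ factors through $H$ and $X\to B$ is an unramified Galois $H$-cover, the group $H$ acts freely on $X$; hence the $G$-stabilizer of every point of $X$ is exactly the kernel $\Z/n\Z$ of $G\to H$ (its image in $H$ fixes a point, so is trivial), and $\Z/n\Z$ acts trivially on $X$. As $\Z/n\Z$ is central, conjugation on it is trivial, so $\widetilde X=\{(g,x):gx=x\}$ is identified $G$-equivariantly with $\Z/n\Z\times X$, with trivial action on the first factor and the $H$-action on the second. Therefore $X/^{\ex}G=\widetilde X/G\cong\Z/n\Z\times(X/H)=\Z/n\Z\times B$, the disjoint union of $n$ copies of the smooth projective variety $B$, so that $\mu(X/^{\ex}G)=n\{\cD(B)\}$ in $K_0(\DG^{\gm}_{\C})$ and $\epsilon\big(\mu(X/^{\ex}G)\big)=n\log|K_1^{\tp}(B)_{\mathrm{tors}}|$.

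Next I would analyse the equivariant side. Because $\Z/n\Z$ acts trivially and $H$ acts freely, the quotient stack $[X/G]$ is the $\Z/n\Z$-gerbe over $B$ obtained by pulling back the universal gerbe of the central extension $G$ of $H$ along the classifying morphism of $X\to B$; its class in $H^2(B,\Z/n\Z)$ is therefore $\lambda(z)$. Decomposing $G$-equivariant sheaves into weight spaces for the central $\Z/n\Z$ yields an orthogonal (hence semiorthogonal) decomposition $D^G(X)=D^b([X/G])=\bigoplus_{j\in\Z/n\Z}D^b\big(B,\,j\lambda(z)\big)$ into derived categories of twisted coherent sheaves, the $j=0$ term being $D^b(B)$; each piece is a geometric dg-category. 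Using the additivity of Blanc's topological $K$-theory over semiorthogonal decompositions, which underlies the construction of $\epsilon$, together with the comparison of Blanc's $K$-theory of a category of twisted perfect complexes with Atiyah--Segal twisted topological $K$-theory (and its finite generation), I would obtain
\[
\epsilon\big(\mu^G(X)\big)=\sum_{j\in\Z/n\Z}\log\big|K^1_{j\delta}(B)_{\mathrm{tors}}\big|,
\qquad \delta:=\beta(\lambda(z))\in H^3(B,\Z)_{\mathrm{tors}},
\]
where $K^*_{\gamma}(B)$ denotes $\gamma$-twisted topological $K$-theory.

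The heart of the argument is then a torsion estimate via the twisted Atiyah--Hirzebruch spectral sequence. For any torsion class $\gamma\in H^3(B,\Z)$ this spectral sequence has $E_2^{p,q}=H^p(B,\Z)$ for $q$ even, vanishing $d_2$, and converges to $K^{p+q}_\gamma(B)$; rationally it degenerates (a torsion twist is rationally trivial), so each $E_\infty^{p,q}$ is a subquotient of $H^p(B,\Z)$ of the same rank and hence has torsion order at most $|H^p(B,\Z)_{\mathrm{tors}}|$. Since torsion orders are subadditive under the extensions in the abutment, $|K^1_\gamma(B)_{\mathrm{tors}}|\le\prod_{p\text{ odd}}|H^p(B,\Z)_{\mathrm{tors}}|=|H^{\mathrm{odd}}(B,\Z)_{\mathrm{tors}}|$, which by hypothesis~(ii) equals $|K_1^{\tp}(B)_{\mathrm{tors}}|$; thus $|K^1_{j\delta}(B)_{\mathrm{tors}}|\le|K_1^{\tp}(B)_{\mathrm{tors}}|$ for every $j$. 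For $j=1$ this is strict: the first differential $d_3\colon E_3^{0,0}\to E_3^{3,-2}=H^3(B,\Z)$ of the $\delta$-twisted spectral sequence is cup product with $\delta$ on $1\in H^0(B,\Z)$, with image the cyclic group $\langle\delta\rangle$, which is nonzero by hypothesis~(i); hence $E_\infty^{3,-2}$ is a subquotient of $H^3(B,\Z)/\langle\delta\rangle$ and $|K^1_\delta(B)_{\mathrm{tors}}|\le|H^{\mathrm{odd}}(B,\Z)_{\mathrm{tors}}|/\ord(\delta)<|K_1^{\tp}(B)_{\mathrm{tors}}|$. Summing over $j$ gives $\epsilon\big(\mu^G(X)\big)<n\log|K_1^{\tp}(B)_{\mathrm{tors}}|=\epsilon\big(\mu(X/^{\ex}G)\big)$, and since $\epsilon$ is a well-defined homomorphism on $K_0(\DG^{\gm}_{\C})$ this forces $\mu^G(X)\ne\mu(X/^{\ex}G)$ there.

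I expect the main obstacle to be the two comparison inputs of the second step: establishing that $\epsilon$ of the twisted derived category $\cD(B,\,j\lambda(z))$ is computed by the torsion of twisted topological $K_1$ (i.e.\ matching Blanc's construction with the Atiyah--Segal twist and checking finite generation), and that the orthogonal decomposition of $D^b([X/G])$ into twisted pieces carries precisely the twist classes $j\lambda(z)$. The spectral-sequence bookkeeping of the third step is also somewhat delicate, since torsion has to be tracked through nontrivial differentials and extension problems; the role of hypothesis~(ii) is exactly to guarantee that the \emph{untwisted} side has not already lost torsion, so that each twisted summand can only be smaller, and the $j=1$ summand strictly smaller.
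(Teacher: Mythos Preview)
Your overall strategy is sound and would prove the theorem, but it differs from the paper's route in one essential respect. After obtaining the orthogonal decomposition
\[
D^G(X)\simeq \bigoplus_{j\in\Z/n\Z} D^b\big(\coh^{j\alpha}(B)\big),\qquad \alpha=\nu(\lambda(z))\in\Br(B),
\]
the paper does \emph{not} compute $\epsilon$ of each twisted piece via Atiyah--Segal twisted $K$-theory. Instead it invokes Bernardara's semiorthogonal decomposition
\[
D^b(E)=\big\langle D^b(B),\,D^b(\coh^{-\alpha}(B)),\ldots,D^b(\coh^{-(d-1)\alpha}(B))\big\rangle
\]
for a Severi--Brauer scheme $f\colon E\to B$ of relative dimension $d-1$ representing $\alpha$, which yields $d\cdot\mu^G(X)=n\cdot\mu(E)$ in $K_0(\DG^{\gm}_\C)$. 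This reduces everything to \emph{ordinary} topological $K$-theory of the smooth projective variety $E$, and the strict inequality comes from the (untwisted) Leray spectral sequence for $f$: one checks $R^jf_*\Z$ is constant, and the pull-back $H^3(B,\Z)\to H^3(E,\Z)$ kills $\Psi(\alpha)=\beta(\lambda(z))$, forcing a nonzero $d_3\colon H^0(B,\Z)\to H^3(B,\Z)$ and hence $\tau\big(H^{\rm odd}(E,\Z)\big)<d\cdot\tau\big(H^{\rm odd}(B,\Z)\big)$.

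The trade-off is exactly the one you flag as your ``main obstacle''. The paper's $\epsilon$ is built through $K$-motives of geometric dg-categories (Proposition~\ref{prop:Kgeom}), i.e.\ via an explicit embedding into $D^b(Y)$ for some smooth projective $Y$; the Severi--Brauer scheme $E$ supplies precisely such a $Y$ and lets one stay entirely within untwisted $K$-theory, where finite generation and the Atiyah--Hirzebruch bounds are automatic. Your approach is conceptually cleaner---it treats the twisted summands on their own and reads off the nonvanishing $d_3$ directly as cup product with $\delta$ in the twisted Atiyah--Hirzebruch spectral sequence---but it requires the comparison between the paper's $K$-motive construction (or Blanc's functor) and Atiyah--Segal twisted $K$-theory, plus finite generation of the latter. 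These inputs are available in the literature, so your argument goes through; the paper simply sidesteps them by trading the twisted spectral sequence for the Leray spectral sequence of a genuine morphism of varieties.
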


Note that in this case, $X/^{\ex}G$ is just the disjoint union of $n$ copies of $B$. Also, it follows from a result of Bernardara~\cite{Bern} that $\mu^G(X)$ can be expressed in terms of~$\mu(E)$, where $E\to B$ is a certain Severi--Brauer scheme (see Proposition~\ref{prop:Bern}). Our proof of Theorem~\ref{thmC} is inspired by a construction of Ekedahl~\cite{Eked}, which deals with torsion in cohomology of Severi--Brauer schemes.

There are many examples of central extensions and covers $X\to B$ satisfying the conditions of Theorem~\ref{thmC} (see Proposition~\ref{prop:examp} and Remark~\ref{rmk:mainexamp}).
For instance, one can take $G$ to be the Heisenberg group over $\Z/n\Z$, that is, the group of upper triangular $(3\times 3)$-matrices over $\Z/n\Z$ with units on the diagonal, and $H$ to be the quotient of $G$ by its center.
With this choice of extension, one can take $B$ to be the product $C\times S$, where $C$ is a complex smooth projective curve of positive genus and $S$ is a complex smooth projective surface with non-zero $n$-torsion in $H^2(S,\Z)$ (say, $S$ can be an Enriques surface with $n=2$ or a suitable surface of general type).

\medskip

It follows from Theorem~\ref{thmC} that the conjecture of Polishchuk and Van den Bergh would be false if the condition that the action of the group be effective were removed. We also provide a simpler example of this (see Theorem~\ref{thm:PolvdB}). Consider the action of $(\Z/n\Z)^{\oplus 2}$ on~$(\Z/n\Z)^{\oplus 3}$ by matrices of type
$$
\left(\begin{array}{ccc}
1 & a & b\\
0 & 1 & 0\\
0 & 0 & 1
\end{array}\right)\,,
$$
where $a,b\in\Z/n\Z$, and let $G$ be the semidirect product ${(\Z/n\Z)^{\oplus 3}\rtimes (\Z/n\Z)^{\oplus 2}}$.

\begin{maintheo}\label{thmD}
Suppose that $k$ is algebraically closed and let $E$ be an elliptic curve over~$k$ without complex multiplication. Let $G$ act on $E$ by translation through the quotient~$(\Z/n\Z)^{\oplus 2}$, which we identify with the $n$-torsion subgroup of $E$. Then there is no semiorthogonal decomposition of $D^G(E)$ whose components (up to order) form the collection of triangulated categories $D^b\big(E^g/Z(g)\big)$, ${[g]\in C(G)}$.
\end{maintheo}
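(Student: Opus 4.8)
The plan is to compute $D^G(E)$ explicitly, to show that its only indecomposable semiorthogonal components are copies of $D^b(E)$, and to observe that the collection $\{D^b(E^g/Z(g))\}_{[g]\in C(G)}$ provably contains a term not equivalent to $D^b(E)$. Write $G=K\rtimes H$ with $K=(\Z/n\Z)^{\oplus 3}$ the kernel of the action and $H=(\Z/n\Z)^{\oplus 2}$, acting on $E$ by translation through the fixed identification $H\cong E[n]$; we may assume $n\ge 2$. For $g\in G$ one has $E^g=E$ if $g\in K$ and $E^g=\varnothing$ otherwise, and for $g\in K$ the centralizer is $Z(g)=K\rtimes\Stab_H(g)$, which acts on $E$ through its image $\Stab_H(g)\subseteq H\cong E[n]$. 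Thus, after discarding the zero terms coming from $g\notin K$, the collection $\{D^b(E^g/Z(g))\}_{[g]\in C(G)}$ consists of the categories $D^b(E/\Stab_H(g))$ for $[g]\in C(G)$ with $g\in K$. The crucial elementary point is that, since $E$ has no complex multiplication, $\End(E)=\Z$ and $\Aut(E)=\{\pm 1\}$, so every isogeny $E\to E$ is multiplication by an integer; consequently for a finite subgroup $T\subseteq E$ one has $E/T\cong E$ if and only if $T=E[m]$ for some $m$. A direct computation with the explicit matrices shows that $\Stab_H(g)=E[m]$ forces $g$ to be central in $G$; so for a non-central $g\in K$ — which exists since $n\ge 2$, e.g. $g=(0,1,0)\in K$ — the group $\Stab_H(g)$ is a proper subgroup of $E[n]$ not of the form $E[m]$, whence $E/\Stab_H(g)\not\cong E$ and, derived-equivalent elliptic curves being isomorphic, $D^b(E/\Stab_H(g))\not\cong D^b(E)$.

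Next I would compute $D^G(E)$. As $K$ acts trivially on $E$, every object of $D^G(E)$ splits into its $K$-isotypic pieces, and $H$ permutes these pieces compatibly with its action on the character group $\widehat K$, covered by the translation action on $E$; since different $H$-orbits in $\widehat K$ are mutually $\Hom$-orthogonal, this gives an orthogonal direct sum decomposition of $D^G(E)$ indexed by the $H$-orbits in $\widehat K$. The block attached to the orbit of $\chi$ is the $\alpha_\chi$-twisted $\Stab_H(\chi)$-equivariant derived category of $E$, where $\Stab_H(\chi)$ acts by translation and $\alpha_\chi\in H^2(\Stab_H(\chi),k^{\ast})$ is the image under $\chi\colon K\to k^{\ast}$ of the extension class of $1\to K\to\pi^{-1}(\Stab_H(\chi))\to\Stab_H(\chi)\to 1$, with $\pi\colon G\to H$ the projection. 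Because $G=K\rtimes H$ is split, this extension is split and every $\alpha_\chi$ vanishes. Moreover a direct computation identifies $\Stab_H(\chi)$ with $E[d]$ for some $d\mid n$, and $E[d]$ acting freely by translation has quotient $E/E[d]\cong E$. Hence $D^G(E)$ is equivalent to a finite orthogonal direct sum of copies of $D^b(E)$.

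Finally I would invoke the following rigidity lemma: if $\cT=\cT_1\oplus\dots\oplus\cT_N$ is an orthogonal direct sum of triangulated categories none of which admits a non-trivial semiorthogonal decomposition, then every admissible subcategory of $\cT$ is a sub-sum $\bigoplus_{i\in S}\cT_i$; in particular every indecomposable semiorthogonal component of $\cT$ is some $\cT_i$. The proof is short: an admissible subcategory is thick, hence respects the orthogonal projections, so the semiorthogonal truncation triangle of an object of $\cT_i$ has both outer terms in $\cT_i$ — the off-diagonal components would produce an isomorphism between an object of the subcategory and an object of its semiorthogonal complement, forcing both to vanish — and therefore each $\cT_i$ inherits a semiorthogonal decomposition, which must be trivial. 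Applying this with $\cT_i=D^b(E)$, which is indecomposable since $E$ is a smooth projective curve of positive genus, we conclude that every indecomposable semiorthogonal component of $D^G(E)$ is equivalent to $D^b(E)$. If a semiorthogonal decomposition of $D^G(E)$ with components the collection $\{D^b(E^g/Z(g))\}$ existed, then for a non-central $g\in K$ the term $D^b(E/\Stab_H(g))$ would be an indecomposable semiorthogonal component not equivalent to $D^b(E)$, a contradiction. The step I expect to require the most care is the computation of $D^G(E)$ — in particular checking that the twisting classes $\alpha_\chi$ genuinely vanish and that all orbit-stabilizers are full torsion subgroups $E[d]$; the rigidity lemma, though it needs a careful write-up, is otherwise routine.
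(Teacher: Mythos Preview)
Your proposal is correct and follows essentially the same approach as the paper's proof (Theorem~\ref{thm:PolvdB}): both show $D^G(E)\simeq D^b(E)^{\oplus L}$ by using that the split extension makes the twisting class vanish and that the $H$-stabilizers on $\Irr(N)$ are precisely the subgroups $(\Z/d\Z)^{\oplus 2}\cong E[d]$, then derive a contradiction from the fact that the $H$-stabilizers on $C(N)=N$ include non-square subgroups, producing a piece $D^b(E')$ with $E'\not\cong E$. The paper packages the first step via its general machinery (Lemma~\ref{lem:explfree} and Example~\ref{exam:S0}(ii)) and states the rigidity step in the equivalent form of Lemma~\ref{lem:admdec}; it also supplies a short proof (Lemma~\ref{lem:Orl}) of the assertion you invoke without argument, that derived-equivalent elliptic curves without complex multiplication are isomorphic.
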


\bigskip

The proof of our main result, that is, of Theorem~\ref{thmA}, is based on two principal ingredients,
each interesting in its own right.

\medskip

To explain the first one, consider the following motivating question: let a finite group~$G$ act effectively on a smooth projective variety $X$ and let $Y$ be a smooth projective variety such that there is an isomorphism of fields $k(X)^G\simeq k(Y)$ over $k$. Then how is $X$ with its $G$-action related to  $Y$ geometrically?
One can think of this question as on a factorization of a birational map between the smooth projective quotient stack $[X/G]$ and the smooth projective variety $Y$, which have isomorphic open subvarieties.

A naive idea is to expect a sequence of $G$-equivariant blow-ups $\widetilde{X}\to X$ such that the quotient~$\widetilde{X}/G$ becomes smooth and then to apply the weak factorization theorem by Abramovich, Karu, Matsuki, W{\l}odarczyk in~\cite{AKMW} and by W{\l}odarczyk in~\cite{Wlo} to the smooth projective varieties~$\widetilde{X}/G$ and~$Y$. But, in general, such a sequence of blow-ups does not exist, which can be seen, for example, when one has (locally) a linear action of $\Z/4\Z$ on $\bbA^2$ with weights $(1,2)$.

To solve this problem, one needs to consider more birational transformations besides the classical blow-ups, namely,
those given by the root construction introduced by Cadman~\cite{Cad} and by Abramovich, Graber, Vistoli~\cite{AGV}. Note that the result of a non-trivial root construction applied to a variety is a genuine Deligne--Mumford stack. A typical example would be the morphism $[X'/(\Z/n\Z)]\to X$, where $X'\to X$ is an $n$-cyclic cover ramified over a smooth divisor $D\subset X$. This morphism is an isomorphism over $X\smallsetminus D$ and, by definition,~$D$ is the center of this root construction.

In general, one has the following destackification theorem of
Bergh~\cite{Ber}: given a smooth orbifold $\cX$, there is a sequence of blow-ups and root constructions with smooth centers $\widetilde{\cX}\to \cX$ such that the non-stacky locus in $\cX$ remains unmodified, the morphism $\widetilde{\cX}\to \widetilde{\cX}_{\cs}$ to the coarse space is a sequence of root constructions with smooth centers, and the coarse space $\widetilde{\cX}_{\cs}$ is a smooth algebraic space. In particular, applying this to $\cX=[X/G]$ as above and combining this with the weak factorization theorem for algebraic spaces, one finds the answer to the motivational question.

\medskip

Thus the destackification theorem together with the weak factorization theorem provide a way to construct weak factorization for orbifolds. A more general weak factorization theorem for Deligne--Mumford stacks has recently been obtained by Harper~\cite{Harper} and independently by Rydh (unpublished), but we will not need the general result in this work. This leads to the definition of the Grothendieck ring of smooth projective Deligne--Mumford stacks $K_0(\DM_k^{\smp})$ similarly as in Bittner's result:~$K_0(\DM_k^{\smp})$ is generated by isomorphism classes $\{\cX\}$ of smooth projective stacks $\cX$ subject to the stacky blow-up relations given by classical blow-ups and by root constructions (see more detail in Subsection~\ref{subsect:GrothDMstacks}). Recall that, according to Kresch~\cite{Kre}, smooth projective Deligne--Mumford stacks are the same as quotient stacks $[P/\GL_n]$, where $\GL_n$ acts properly and linearly on a smooth quasi-projective variety $P$ such that the geometric quotient $P/\GL_n$ (which automatically exists) is projective. Bittner's theorem implies that one has natural homomorphisms of groups
$$
\alpha\;:\;K_0(\Var_k)\longrightarrow K_0(\DM_k^{\smp})\,,\qquad \alpha^G\;:\;K_0(\Var^G_k)\longrightarrow K_0(\DM_k^{\smp})\,,
$$
where the second one sends $\{X\}^G$ to $\{[X/G]\}$ for a smooth projective variety $X$ with an action of a finite group $G$. Note that $\alpha$ is actually a ring homomorphism, whereas $\alpha^G$ does not respect the ring structure whenever the group $G$ is non-trivial. We prove the following result (see Theorem~\ref{theo:comparison}):

\begin{maintheo}\label{thmE}
Let $X$ be an arbitrary variety with a free action of a group $G$. Then the equality
$$
\alpha^G(X)=\alpha(X/G)
$$
holds in $K_0(\DM_k^{\smp})$.
\end{maintheo}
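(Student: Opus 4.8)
Both sides of the asserted equality are additive: $\alpha^G$ is a group homomorphism by construction, while the rule $\{X\}^G\mapsto\{X/G\}$ respects the scissor relations (if $Z\subseteq X$ is closed and $G$-invariant with complement $U$, then $Z/G$ is closed in $X/G$ with complement $U/G$), hence defines a homomorphism $\pi_*\colon K_0(\Var^G_k)\to K_0(\Var_k)$, and we compose with the ring homomorphism $\alpha$. It therefore suffices to prove $\alpha^G(X)=\alpha(X/G)$ for a single variety $X$ with a free $G$-action, and I would do this by induction on $\dim X$. For the inductive step, first apply $G$-equivariant resolution of singularities: a resolution $\widetilde X\to X$ is a modification, so it is an isomorphism over a dense $G$-invariant open $U$, and since $G$ acts freely on $X$ it acts freely on $\widetilde X$ as well; writing $\{X\}^G=\{U\}^G+\{X\setminus U\}^G$ and similarly for $\widetilde X$, and invoking the inductive hypothesis for the free varieties $X\setminus U$ and $\widetilde X\setminus U$ (of dimension $<\dim X$), one reduces to the case $X$ smooth. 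If in addition $X$ is projective, then freeness makes the quotient stack $[X/G]$ equal to the smooth projective variety $X/G$, whence $\alpha^G(X)=\{[X/G]\}=\{X/G\}=\alpha(X/G)$.

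In general, I would choose a $G$-equivariant smooth projective compactification $X\hookrightarrow\bar X$ (an equivariant resolution of an equivariant compactification). Then $\bar X$ is a smooth projective $G$-variety with generically free action, and its non-free locus $W_0$ — the closed $G$-invariant locus of points with non-trivial stabilizer — is contained in $Z:=\bar X\setminus X$, which has dimension $<\dim X$. Applying the scissor relations to $\bar X$, to $Z=(Z\setminus W_0)\sqcup W_0$, and to the variety $\bar X/G=(X/G)\sqcup((Z\setminus W_0)/G)\sqcup(W_0/G)$, and using the inductive hypothesis for the free variety $Z\setminus W_0$, a short manipulation reduces the desired equality $\alpha^G(X)=\alpha(X/G)$ to the single identity
\[
\alpha^G(\bar X)-\alpha(\bar X/G)=\alpha^G(W_0)-\alpha(W_0/G)
\]
in $K_0(\DM^{\smp}_k)$, with $\alpha^G(\bar X)=\{[\bar X/G]\}$; in other words, one must show that the ``defect'' $\alpha^G(\,\cdot\,)-\alpha((\,\cdot\,)/G)$ of a smooth projective $G$-variety with generically free action is concentrated on its non-free locus. (As a sanity check, applying the coarse-space-of-inertia-stack invariant converts the two sides into $\sum_{g\neq1}\{\bar X^g/Z(g)\}$ and $\sum_{g\neq1}\{W_0^g/Z(g)\}$, which agree since $\bar X^g=W_0^g$ for $g\neq1$; the content of the theorem is to obtain the identity before applying such an invariant.)

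This remaining identity is where I expect the real work to lie, and I would attack it with the destackification theorem of Bergh. The quotient $[\bar X/G]$ is a smooth projective orbifold, so there is a sequence of classical blow-ups and root constructions with smooth centres $\widetilde{\cX}\to[\bar X/G]$ leaving the non-stacky locus unmodified, with $\widetilde{\cX}\to\widetilde{\cX}_{\cs}$ a sequence of root constructions with smooth centres and $\widetilde{\cX}_{\cs}$ a smooth projective algebraic space. Since the non-stacky locus of $[\bar X/G]$ is exactly the image of the free locus $\bar X\setminus W_0$, every centre occurring here lies over $W_0/G$. Feeding the stacky blow-up and root-construction relations in $K_0(\DM^{\smp}_k)$ into this sequence, together with the weak factorization theorem for algebraic spaces (which connects $\widetilde{\cX}_{\cs}$ to a resolution of $\bar X/G$ through blow-ups and blow-downs with centres over $W_0/G$), expresses $\alpha^G(\bar X)-\alpha(\bar X/G)$ as an explicit combination of classes supported over $W_0/G$ that depends only on the $G$-action in an \'etale neighbourhood of $W_0$; here one uses that the projective bundle formula and the root-stack decomposition, read in $K_0(\DM^{\smp}_k)$, are insensitive to the relevant normal bundles.

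It then remains to identify this combination with $\alpha^G(W_0)-\alpha(W_0/G)$. I would do so by a local computation on the \'etale-local models $[(\text{slice})\times(\text{normal representation})/(\text{stabilizer})]$ along $W_0$ — where the destackification is explicit, being essentially toric — and assemble it over a stratification of $W_0$ by stabilizer type. The main obstacle, as I see it, is precisely this last step: the local computation together with the stratification bookkeeping, and interleaving it correctly with the induction on dimension. In particular $W_0$ is typically singular and has a non-trivial generic stabilizer, so it must itself be handled via an equivariant resolution, whose non-free locus has strictly smaller dimension; arranging the two inductions (on $\dim X$ and on the dimension of the non-free locus) so that they close is the delicate point. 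Everything else is formal, given equivariant resolution of singularities and the destackification and weak factorization inputs assumed above.
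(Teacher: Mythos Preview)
Your strategy invokes the right heavy machinery --- destackification and weak factorization --- but the organization leads you into a gap that you yourself flag and do not close. The problematic step is the identification of the destackification output with $\alpha^G(W_0)-\alpha(W_0/G)$. The classes produced by destackification are specific smooth projective DM-stacks (centres, exceptional divisors, $\mu_r$-gerbes), whereas $\alpha^G(W_0)$ is defined by compactifying and resolving the generally singular $G$-variety $W_0$; there is no device in $K_0(\DM^{\smp}_k)$ --- no projective bundle formula, no \'etale-local decomposition --- that lets you match these two collections. Your proposed ``local computation on \'etale-local models'' assembled over a stratification by stabilizer type is not a proof: the destackification procedure is global and does not respect such a stratification, and even in a toric local model the sequence of centres depends on the full normal representation, not just on the stabilizer group. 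The induction on the dimension of the non-free locus does not obviously close.

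The paper avoids this difficulty by a different packaging that eliminates both the induction and the local computation. Rather than isolating the non-free locus $W_0$, one compactifies the free $G$-variety $V$ by a $G$-equivariant standard pair $(X,D)$ with $D$ a simple normal crossing divisor written as a union of $G$-invariant smooth divisors, passes to the standard pair of DM-stacks $(\cX,\cD):=([X/G],[D/G])$, and works with the alternating sum $\{\cX,\cD\}:=\sum_{J\subset I}(-1)^{|J|}\{\cD_J\}$ over subsets of the set $I$ of components of $\cD$. Inclusion--exclusion gives $\alpha^G(V)=\{\cX,\cD\}$ immediately. The key lemma is that $\{\cX,\cD\}$ is \emph{invariant} under any single stacky blow-up with smooth centre $\cZ\subset\cD$: each $\widetilde\cD_J$ is itself the stacky blow-up of $\cD_J$ in $\cZ\cap\cD_J$, one applies the defining relation of $K_0(\DM^{\smp}_k)$ termwise, and the correction terms $\{\cZ\cap\cD_J\}$ cancel in pairs because $\cZ\subset\cD_{i_0}$ for some $i_0$, forcing $\cZ\cap\cD_L=\cZ\cap\cD_{L\cup\{i_0\}}$. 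Destackification then connects $(\cX,\cD)$ through such blow-ups to a standard pair of varieties $(X',D')$ with $X'\smallsetminus D'\simeq V/G$, and Bittner's presentation of $K_0(\Var_k)$ gives $\{X',D'\}=\alpha(V/G)$. The SNC boundary and the alternating-sum invariant are precisely the bookkeeping device that your sketch is missing.
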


It is important that the categorical measure $\mu^G\colon K_0(\Var_k^G)\to \K_0(\DG_{k}^{\sat})$ factors through the homomorphism~$\alpha^G$ (see Subsection~\ref{subsec:catmeas}). Thus Theorem~\ref{thmE} implies Theorem~\ref{thmA} for the case when the action is free.

Note that it is crucial for us to work with the Grothendieck ring of (smooth projective) Deligne--Mumford stacks and not with the Grothendieck ring $K_0({\rm Stck}_k)$ of all Artin stacks described by Ekedahl~\cite{Eked} (see also references therein). The categorical measure does not factor through $K_0({\rm Stck}_k)$ (see Remark~\ref{rmk:catmeas}(ii)).

\medskip

The second ingredient in the proof of Theorem~\ref{thmA} consists in the description of \mbox{$G$-equivariant} sheaves on a variety $X$ with an action of a finite group $G$ that factors through a quotient $G\to H$ with a non-trivial kernel $N\subset G$. We use a finite affine scheme $\uIrr(N):=\Spec(Z)$, where $Z$ is the center of the group algebra $k[N]$ (see Subsection~\ref{subsection:Brinterp}). Note that $\bar k$-points of~$\uIrr(N)$ are in natural bijection with the set of irreducible representations of $N$ over $\bar k$. The quotient $H$ acts on $Z$ by conjugation, which defines an action of $H$ on $\uIrr(N)$. The exact sequence of groups
$$
1\longrightarrow N\longrightarrow G\longrightarrow H\longrightarrow 1
$$
defines in a certain canonical way a class $\theta\in \Br^H(Z)$ in the $H$-equivariant Brauer group. Let $f\colon \uIrr(N)\times X\to \uIrr(N)$ denote the projection. This gives an element ${f^*\theta\in\Br^H\big(\uIrr(N)\times X\big)}$. For the following statement see Theorem~\ref{thm:equivcoh} and Theorem~\ref{cor:trivobstr}.

\begin{maintheo}\label{thmF}
There is an equivalence of categories
$$
\coh^G(X)\simeq \coh^H(f^*\theta)\,,
$$
where $\coh^G(X)$ denotes the category of $G$-equivariant coherent sheaves on $X$ and~$\coh^H(f^*\theta)$ denotes the category of $H$-equivariant coherent modules over an \mbox{$H$-equivariant} Azuamaya algebra on $\uIrr(N)\times X$ which is a representative of $f^*\theta$.
\end{maintheo}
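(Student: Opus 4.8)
The plan is to convert the statement into a concrete dictionary between equivariant data and module data, and then to track the residual group action carefully. First I would use that, since $G$ acts on $X$ through $H=G/N$, the normal subgroup $N$ acts trivially on $X$. Therefore, for an object $\cF$ of $\coh^G(X)$, the restriction of the equivariant structure to $N$ is nothing but a structure of a coherent $\O_X\otimes_k k[N]$-module on $\cF$, and the full $G$-equivariant structure amounts to such a module structure together with a compatible family of isomorphisms $\varphi_g\colon\sigma_g^*\cF\xrightarrow{\sim}\cF$, $g\in G$ --- semilinear over the conjugation action of $g$ on $k[N]$, satisfying the cocycle identity, and with $\varphi_n$ for $n\in N$ equal to the action of the unit $n\in k[N]$. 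Thus the problem is reduced to describing $\O_X\otimes_k k[N]$-modules endowed with this residual $G$-structure.

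Second, I would identify the sheaf of algebras $\O_X\otimes_k k[N]$ geometrically. Since $k$ has characteristic zero, $k[N]$ is a finite-dimensional semisimple $k$-algebra; decomposing $k[N]=\prod_j A_j$ into simple factors with centers $L_j$, one sees that $k[N]$ is an Azumaya algebra over its center $Z=\prod_j L_j$, locally free of finite rank, and $\uIrr(N)=\Spec Z=\coprod_j\Spec L_j$ is a finite $k$-scheme. Base change along $X\to\Spec k$ shows that $\O_X\otimes_k k[N]$ is an Azumaya algebra over $\O_X\otimes_k Z=p_*\O_{\uIrr(N)\times X}$, where $p\colon\uIrr(N)\times X\to X$ is the projection. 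As $p$ is finite and affine, $p_*$ is an equivalence between coherent $f^*k[N]$-modules on $\uIrr(N)\times X$ and coherent $\O_X\otimes_k k[N]$-modules on $X$, where $f^*k[N]$ denotes the pullback along $f\colon\uIrr(N)\times X\to\uIrr(N)$ of the Azumaya algebra $k[N]$ over $Z$; moreover $f^*k[N]$ represents the class in $\Br(\uIrr(N)\times X)$ underlying $f^*\theta$. This turns the $N$-part of the data into a coherent $f^*k[N]$-module on $\uIrr(N)\times X$.

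The main point is to match the residual structure. For $g\in G$, conjugation gives a $Z$-semilinear algebra automorphism $c_g$ of $k[N]$ over the automorphism of $Z$ induced by the image of $g$ in $H$, and $c_n$ for $n\in N$ is inner. Hence $h\mapsto c_g$ --- for any lift $g$ of $h$ --- endows $k[N]$, and so $f^*k[N]$, with the structure of an $H$-equivariant Azumaya algebra whose descent cocycle is valued in inner automorphisms; by construction this is a representative of $f^*\theta\in\Br^H(\uIrr(N)\times X)$, and in particular $f^*\theta$ is realized by an honest $H$-equivariant Azumaya algebra. Unwinding the notion of an $f^*\theta$-twisted $H$-equivariant module relative to this representative, one finds that it is exactly a coherent $f^*k[N]$-module $\cM$ together with isomorphisms $\psi_g\colon\sigma_g^*\cM\xrightarrow{\sim}\cM$, $g\in G$, that are $f^*k[N]$-semilinear over $c_g$, satisfy the honest cocycle identity, and have $\psi_n$ equal to the action of $n\in k[N]$ --- precisely the residual $G$-structure of the first step. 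Combining the two dictionaries produces mutually quasi-inverse functors between $\coh^G(X)$ and $\coh^H(f^*\theta)$; full faithfulness and essential surjectivity then hold formally, since a sheaf morphism is $G$-equivariant if and only if it is $k[N]$-linear and compatible with the $\psi_g$.

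I expect the real work to lie in this last step: reconciling the two roles of an element $n\in N$, as an element of $G$ and as a unit of $k[N]$, and showing that the inner-automorphism ambiguity inherent in the $H$-equivariant Azumaya algebra $f^*k[N]$ is cancelled, at the level of modules, by the $k[N]$-module structure, so that ``$\theta$-twisted $H$-equivariant'' becomes literally ``$G$-equivariant''. It is cleanest to verify this first for $X=\Spec k$, where the assertion is the purely representation-theoretic equivalence between $\rep_k(G)$ and the category of $\theta$-twisted $H$-equivariant $k[N]$-modules, and then to globalize using the finite affine morphism $p$ and \'etale-local descent for Azumaya algebras. Finally, the reformulation of $\coh^H(f^*\theta)$ in terms of an explicit equivariant Azumaya representative, rather than twisted sheaves on a gerbe, is where Theorem~\ref{cor:trivobstr} comes in.
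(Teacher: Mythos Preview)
Your approach is different from the paper's and is conceptually appealing, but it has a genuine gap at the point where you pass from the weak $H$-action on $k[N]$ to an honest $H$-equivariant Azumaya algebra.

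You write that ``$h\mapsto c_g$ for any lift $g$ of $h$ endows $k[N]$ with the structure of an $H$-equivariant Azumaya algebra whose descent cocycle is valued in inner automorphisms,'' and then conclude that ``$f^*\theta$ is realized by an honest $H$-equivariant Azumaya algebra.'' These two claims are in tension. If the map $H\to\Aut_Z(k[N])$ is a homomorphism only modulo inner automorphisms, then $k[N]$ is \emph{not} an $H$-equivariant algebra in the paper's sense; it carries only an outer $H$-action. The paper defines $\Br^H(Z)$ via honest $H$-equivariant Azumaya algebras and $\coh^H(\theta)$ as $\coh^H(A)$ for such a representative $A$, so your category of ``$k[N]$-modules with $c_g$-semilinear isomorphisms $\psi_g$ indexed by $g\in G$, with $\psi_n$ the action of $n$'' is a perfectly good description of $\coh^G(X)$, but you have not shown it coincides with $\coh^H(A)$ for any honestly $H$-equivariant $A$. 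Your final sentence, invoking Theorem~\ref{cor:trivobstr} to supply this identification, is circular: that theorem is precisely the statement you are proving.

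The paper circumvents this by choosing a $G$-representation $V$ that is an $N$-generator (e.g.\ $V=k[G]$) and setting $A=\End_N(V)^{\rm op}$. Because $G$ acts on $V$ and $N$ acts trivially on $\End_N(V)$, this $A$ is \emph{honestly} $H$-equivariant, and the equivalence $\coh^G(X)\simeq\coh^H(A\otimes_k\cO_X)$ is obtained by the adjoint pair $\cV\otimes_{\cA}-$ and $\cH om_N(\cV,-)$, using that $\cV$ is a projective generator of $\coh^N(X)$. Your direct route through $k[N]$ can be salvaged by the same device: replace $k[N]$ by $\End_N(k[G])^{\rm op}\simeq\Mat_{|H|}(k[N])$, which carries the honest $H$-action coming from left translation on $k[G]$, and check that the $(k[N],A)$-bimodule $V$ implements an $H$-equivariant Morita equivalence between your twisted category and $\coh^H(A)$. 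Without this step, the argument establishes an equivalence with a category you have defined ad hoc, not with $\coh^H(f^*\theta)$ as stated.
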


In particular, when $X$ is a point, Theorem~\ref{thmF} gives an equivalence between the category of finite-dimensional $G$-representations over $k$ and the category of $H$-equivariant modules over an $H$-equivariant Azumaya algebra over $Z$ which is a representative of $\theta$.

We give an explicit criterion for vanishing of $\theta$ in terms of representations of $N$ and $G$ (see Proposition~\ref{cor:split}). Together with Theorem~\ref{thmF} this allows us to prove Theorem~\ref{thmA} in full generality.

\medskip

Finally, note that in order to prove Theorem~\ref{thmA}, it is convenient to consider all the Grothendieck groups $K_0(\Var^G_k)$, for varying finite groups $G$, at once. Also, it makes sense to impose the induction relation of type $\{Y\}^H=\{\ind_H^G(Y)\}^G$ thus obtaining a ring $K_0(\Var^{\eq}_k)$ (see Definition~\ref{def:Groeq}), which we call a Grothendieck ring of equivariant varieties (this ring is also defined by Gusein-Zade, Luengo, and Melle-Hern\'andez in~\cite[Def.\,6]{GZ} and related groups are considered by Getzler, Pandharipande in~\cite{GP} and by Bergh in~\cite{Ber0}).

\bigskip

The paper is organized as follows. In Section~\ref{sec:not}, we fix notation and some convention. In particular,
we recall the definition of (quasi-)projective Deligne--Mumford stacks and the root construction.

\medskip

Section~\ref{sec:geom} is of geometrical flavour and is devoted to various Grothendieck rings of geometric objects and their properties. In Subsection~\ref{subsect:GrothGvar}, we recall from Bittner's paper~\cite{Bit} the definition and general properties of the Grothendieck group~$K_0(\Var_k^G)$ of varieties with an action of a fixed finite group~$G$.

We collect these groups all together in Subsection~\ref{subsec:Grotheq} to get the ring~$K_0(\Var^{\eq}_k)$ of varieties with an action of some finite group. We provide a useful collection of additive generators of~$K_0(\Var^{\eq}_k)$ (see Lemma~\ref{lemma:strat}).

In Subsection~\ref{subsec:extquot}, we recall the notion of an extended quotient $X/^{\ex}G$, which is crucial for us. We show that taking extended quotient induces a well-defined ring homomorphism ${K_0(\Var_k^{\eq})\to K_0(\Var_k)}$ (see Lemma~\ref{lem:iner} and the discussion following it). A technical result gives a sufficient condition on a variety $X$ with an action of a group $G$ and on a group homomorphism $\rho\colon K_0(\Var_k^{\eq})\to A$ that guarantees the equality $\rho^G(X)=\rho(X/^{\ex}G)$ (see Proposition~\ref{lemma:techno}).

We introduce the ring $K_0(\DM_k^{\smp})$ of smooth projective Deligne--Mumford stacks in Subsection~\ref{subsect:GrothDMstacks}. We prove Theorem~\ref{thmE} in Subsection~\ref{subsec:proofcomp}, using the destackification theorem of Bergh~\cite{Ber}.

\medskip

In Section~\ref{sec:cat}, which is of dg-categorical flavour, we describe the categorical measure for the Grothendieck ring of Deligne--Mumford stacks. We start by recalling in Subsection~\ref{subsec:Grothdg} generalities on dg-categories and also the definition of the Grothendieck ring of saturated dg-categories and related rings including the Grothendieck ring $K_0(\DG_k^{\gm})$ of geometric \mbox{dg-categories}.

In Subsection~\ref{subsec:catmeas}, using a geometricity theorem of Bergh, Lunts, and Schn\"urer~\cite{BLS}, we construct categorical measures, which are homomorphisms ${K_0(\DM^{\smp}_k)\to K_0(\DG^{\gm}_k)}$ and ${\mu\colon K_0(\Var^{\eq}_k)\to K_0(\DG_k^{\gm})}$.

We discuss in Subsection~\ref{subsec:DGKM} the construction of $K$-motives for geometric dg-categories (see Proposition~\ref{prop:Kgeom}), which was introduced previously by Gorchinskiy and Orlov~\cite{GO}. We apply it to define group homomorphisms $K_0(\DG^{\gm}_{\bbC})\to K_0^{\oplus}(\Z)$ by taking topological $K$-groups, where~$K_0^{\oplus}(\Z)$ is the Grothendieck group of the additive category of finitely generated abelian groups (with split exact sequences).

\medskip

Section~\ref{sec:noneff} is of algebraic flavour and contains a description of equivariant sheaves when a group $G$ acts on a variety $X$ through a quotient $G\to H$ with a non-trivial kernel $N$. In Subsection~\ref{subsec:twoequiv}, we construct an $H$-equivariant $k$-algebra $A$ and prove an equivalence of categories $\coh^G(X)\simeq \coh^H(A\otimes_k\cO_X)$ (see Theorem~\ref{thm:equivcoh}).

We show in Subsection~\ref{subsection:Brinterp} that the center of $A$ is canonically isomorphic to the center~$Z$ of the group algebra~$k[N]$. This leads to an interpretation of the previous result in terms of a class $\theta\in \Br^H(Z)$ in the equivariant Brauer group and allows us to prove Theorem~\ref{thmF}.

Subsection~\ref{subsec:critvan} contains a useful criterion for vanishing of $\theta$ (see Proposition~\ref{cor:split}), which we apply later to prove Theorem~\ref{thmA}. We also consider several examples, including the wreath product case (see Example~\ref{exam:wreath}).

We provide in Subsection~\ref{sec:furthexam} further examples of representative of the class $\theta$. They are not used in the proofs of the main results, but we believe that they are important in their own rights and might be useful for other possible applications. Namely, we construct a representative of~$\theta$ given by a skew group algebra of $G$ with coefficients in the algebra~$\cO(H)$ of functions on $H$ (see Proposition~\ref{lemma:explskew}), we construct an explicit representative of~$\theta$ when~$G$ is the Heisenberg group over $\Z/n\Z$ (see Lemma~\ref{lem:Heis}), and we construct an example when there is a group section $H\to G$ but the class $\theta$ is still non-trivial (see Example~\ref{ex:splitnontriv} and the discussion before it). At the end of Subsection~\ref{sec:furthexam}, we give an invariant description of the category of $H$-equivariant $A$-modules enhanced in \mbox{$H$-equivariant} $Z$-modules in terms of representations of $G$ thus giving an invariant definition of the class $\theta$.

\medskip

Finally, we prove our main results in Section~\ref{sec:mainres}. Theorem~\ref{thmA} is proved in Subsection~\ref{subsec:equalmeas} with the help of Theorem~\ref{thmE} and Theorem~\ref{thmF}. We deduce Theorem~\ref{thmB} from Theorem~\ref{thmA} in Subsection~\ref{subsec:GS}. We prove Theorem~\ref{thmC} in Subsection~\ref{subsec:diffcatmeas} using a result of Bernardara~\cite{Bern} and an analysis of the Atiyah--Hirzubruch and Leray spectral sequences for Severi--Brauer schemes. Theorem~\ref{thmD} is proved in Subsection~\ref{subsec:PVdB} by using Theorem~\ref{thmF} and a result of Orlov~\cite{Orlov1} on equivalences between derived categories.

\bigskip

We are very grateful to Sergey Galkin and Evgeny Shinder for telling us about their conjecture on the categorical and motivic zeta-functions and for sharing the idea to compare the equivariant categorial measure with the measure of the extended quotient, which initiated our research on this topic. We thank Dan Abramovich, Yuval Ginosar, Darrell Haile, Ludmil Katzarkov, Alexander Polishchuk, and Constantin Shramov for useful discussions and suggestions. The first named author was partially supported by the Danish National Research Foundation through the Niels Bohr Professorship of Lars Hesselholt, by the Max Planck Institute for Mathematics in Bonn, and by the DFG through SFB/TR 45. The second named author is partially supported by Laboratory of Mirror Symmetry NRU HSE, RF Government grant, ag. \textnumero~14.641.31.0001. He is also very grateful for excellent working conditions to FIM ETH Z\"urich, where part of this work was done. The third named author was partially supported by a grant from the NSF. The fourth named author was partially supported by the NSA grant H98230-15-1-0255.


\section{Terminology and conventions}\label{sec:not}

Throughout the paper, we fix a ground field $k$ of characteristic zero. The condition on the characteristic is needed for two purposes. Firstly, we apply resolution of singularities and weak factorization over $k$. Secondly, given a finite group, we use semisimplicity of its category of (finite-dimensional) representations over $k$. As usual, we denote the algebraic closure of $k$ by~$\bar k$.

\medskip

For short, by a {\it variety} and an {\it algebraic group}, we mean a (not necessarily irreducible) variety and an algebraic group over the ground field $k$. In particular, any finite group naturally defines a constant algebraic group over $k$, which is a finite set of points over~$k$ as a variety. By a {\it scheme}, we mean a scheme of finite type over $k$. By an {\it algebraic space}, we mean an algebraic space, as defined in Knutson's book \cite[Def.~II.1.1]{Knu}, that is of finite type over~$k$.
In particular, any algebraic space we consider is quasi-separated, but not necessarily separated. By an {\it algebra}, we mean an associative unital $k$-algebra. By a {\it dg-category}, we mean a small $k$-linear dg-category, that is, a category enriched in the category of complexes of $k$-vector spaces. Explicitly, morphisms between any two objects in such category form a complex of $k$-vector spaces, which we call a {\it morphism-complex}.

Group actions will always be understood to be left actions. Given a finite group $G$, we call a variety endowed with an algebraic $G$-action a {\it \mbox{$G$-variety}}. If the group in question is clear from context, we sometimes simply write {\it equivariant variety}.
Given a subgroup $H\subset G$ and an $H$-variety $Y$, we denote the {\it induced $G$-variety}~$G\times_H Y$ by $\ind_H^G(Y)$.

\medskip

Given a finite set $S$, we denote the number of elements in $S$ by $|S|$. Given a finite group~$G$, we denote the category of finite-dimensional $k$-linear representations by~$\rep(G)$ and the set of isomorphism classes of irreducible representations in this category by~$\Irr(G)$. We use the notations $\rep_l(G)$ and $\Irr_l(G)$ when we consider representations of $G$ over an arbitrary field~$l$. Denote the group algebra of $G$ with coefficients in $k$ by $k[G]$. Given an element $g\in G$, we denote its centralizer in $G$ by $Z(g)$. Denote the set of conjugacy classes in~$G$ by~$C(G)$. Given a non-negative integer $n$, we denote the symmetric group that permutes~$n$ elements by $\Sigma_n$ (thus both $\Sigma_0$ and $\Sigma_1$ are the trivial group).

Given an algebra $A$, we denote its center by $Z(A)$ and the category of left $A$-modules by~$\Mods(A)$. Given an action of a finite group $G$ on $A$, we let $\Mods^{G}(A)$ denote the category of $G$-equivariant $A$-modules, that is, the category of $A$-modules $M$ with an action of $G$ such that $g(am)=g(a)g(m)$ for all $g\in G$, $a\in A$, and $m\in M$. If $A$ is finite-dimensional over~$k$, we let~$\mods(A)$ denote the category of left $A$-modules that are finite-dimensional over~$k$ and similarly for $\mods^G(A)$.

Given a variety $X$, we denote the category of coherent sheaves on $X$ by $\coh(X)$. Given a coherent sheaf of $\cO_X$-algebras $\cA$ on $X$, we say that a sheaf of $\cA$-modules is {\it coherent} provided that it is coherent when regarded as a sheaf of $\mathcal{O}_{X}$-modules. We denote the category of coherent $\cA$-modules by $\coh(\cA)$. Given an action of a finite group $G$ on $X$, we let~$\coh^G(X)$ denote the category of $G$-equivariant coherent sheaves on $X$. If $\cA$ is a $G$-equivariant coherent sheaf of $\cO_X$-algebras, we let $\coh^G(\cA)$ denote the category of \mbox{$G$-equivariant} coherent~\mbox{$\cA$-modules}.

\medskip

We will always assume that our Deligne--Mumford stacks, or {\it DM-stacks} for short, are of finite presentation over $k$ and that they have finite inertia. The latter condition means that the natural morphism from the inertia stack to a given DM-stack is finite.
Note that this is weaker than requiring the DM-stack to be separated.
By a theorem by Keel--Mori~\cite{KM}, under these hypotheses each \mbox{DM-stack~$\cX$} admits
a {\it coarse space}~$\pi\colon \cX\to \cX_{\,\cs}$, which is initial in the category of morphisms to algebraic spaces.
Furthermore, the morphism $\pi$ is a proper, quasi-finite, universal homeomorphism such that the canonical morphism $\mathcal{O}_{\cX_{\,\cs}}\to\pi_\ast\mathcal{O}_\cX$ is an isomorphism, see also papers by Conrad~\cite{Con} and Rydh~\cite{Ryd}.

Following Kresch~\cite[Def.\,5.5]{Kre}, we say that a DM-stack $\cX$ is {\it (quasi-)projective} if it admits a (locally) closed embedding into a smooth, proper DM-stack with projective coarse space. A DM-stack is quasi-projective if and only if it is isomorphic to the quotient stack~$[P/\GL_n]$, where~$P$ is a quasi-projective variety and the action of $\GL_n$ on $P$ is proper, linear and each point of $P$ is semi-stable with respect to the linearized action.
That is, the morphism $P\times\GL_n\to P\times P$ defined by the projection and the action is proper and
there is a \mbox{$\GL_n$-equivariant} line bundle~$\mathcal{L}$ on~$P$ such that $P$ admits an affine open covering of
subsets of the form $P_s:= \{ s \neq 0\}$, where $s$ is a $\GL_n$-invariant global section of some non-negative power of $\mathcal{L}$.

Indeed, assume that~$\cX$ is a quasi-projective DM-stack.
Then $\cX\simeq [P/\GL_n]$ for some quasi-projective variety $P$ with an $\GL_n$-action and $\cX_\coarse$ is quasi-projective by \cite[Prop.\,5.1, Theor.\,5.3]{Kre}. The action of $\GL_n$ on $P$ is proper, because the morphism $P\times\GL_n\to P\times P$ is the pull-back of the diagonal $\cX\to\cX\times\cX$, which is proper since $\cX$,
being a locally closed substack of separated stack by assumption, is separated.
Moreover, since the morphism $P \to \cX$ is affine, any ample line bundle on $\cX$ pulls back to an equivariant line bundle on $P$ making each point of $P$ semi-stable.
Conversely, assume that $\cX~\simeq [P/\GL_n]$, where $P$ is a quasi-projective variety on which $\GL_n$ acts properly and linearly such that $P$ is everywhere semi-stable with respect to the linearlized action.
Then $P$ is in fact everywhere stable since the action is proper.
Hence there exists a quasi-projective geometric quotient~$P/\GL_n$ by geometric invariant theory.
Since the action is proper, we have $\cX_\coarse \simeq P/\GL_n$ by a result of Koll\'ar~\cite[Cor.\,2.15]{Kol0}.
Hence $\cX$ is quasi-projective by~\cite[Theor.\,5.3]{Kre}.

\medskip

In our paper, we use the root construction, which was introduced by Cadman~\cite{Cad} and by Abramovich, Graber, Vistoli~\cite{AGV}. Given a DM-stack $\cX$, an effective Cartier divisor $\cD\subset \cX$, and a positive integer $r$, we denote the {\it $r$-th root stack} of $\cX$ in $\cD$ by~$\cX_{r^{-1}\cD}$. That is, $\cX_{r^{-1}\cD}$ is the $r$-th root stack associated with the line bundle $\cO_\cX(\cD)$ together with the section that corresponds to the canonical morphism of sheaves $\cO_\cX\to\cO_\cX(\cD)$. Recall that $\cX_{r^{-1}\cD}$ is a DM-stack with a canonical morphism $f\colon \cX_{r^{-1}\cD}\to \cX$, which is an isomorphism over~$\cX\smallsetminus \cD$. Moreover, the morphism $f$ is proper and faithfully flat and the natural morphism $f_{\,\cs}\colon (\cX_{r^{-1}\cD})_{\,\cs}\to \cX_{\,\cs}$ is an isomorphism of algebraic spaces. There is a naturally defined effective Cartier divisor $\cE\subset \cX_{r^{-1}\cD}$ such that $f^*\cD=r\cdot \cE$ and the morphism $\cE\to \cD$ is a $\mu_r$-gerbe. We call $\cE$ the {\it exceptional divisor} and we call $\cD$ the {\it center} of the root construction. The root stack of a smooth stack in a smooth center is smooth.

Let us describe the root construction explicitly in the case $\cX=[P/\GL_n]$ and ${\cD=[Q/\GL_n]}$, where $P$ is a variety with a proper action of $\GL_n$ and $Q$ is a \mbox{$\GL_n$-invariant} effective Cartier divisor on $P$. Denote the total space of the line bundle~$\cO_P(-Q)$ by $L$. The natural morphism of line bundles $\cO_P(-Q)\to\cO_P$ corresponds to a morphism of varieties $L\to P\times\bbA^1$. Its composition with the projection to $\bbA^1$ defines a regular function~$f$ on~$L$, which is linear along the fibers of $L$. The function $f$ is invariant under the natural action of $\GL_n$ on $L$. Let $L^0$ denote the complement to the zero section in $L$. Let $M\subset L^0\times \bbA^1$ be a closed subvariety given by the equation $f-t^r=0$, where $t$ is the coordinate on~$\bbA^1$. Define the action of $\Gm$ on~$L^0\times\bbA^1$ by the formula
$$
\Gm\times (L^0\times\bbA^1)\longrightarrow L^0\times\bbA^1\,,\qquad {\big(\lambda,(s,t)\big)\longmapsto (\lambda^rs,\lambda t)}\,.
$$
Then $M\subset L^0\times \bbA^1$ is invariant under this action of $\Gm$. Also,~$M$ is invariant under the action of $\GL_n$, where $\GL_n$ acts trivially on $\bbA^1$. Let $N\subset M$ be the Cartier divisor defined by the equation $t=0$. Note that $N$ is isomorphic to the complement to the zero section in the total space of the conormal bundle $\cO_Q(-Q)$ to $Q$ in $P$. There are natural isomorphism of stacks
$$
\cX_{r^{-1}\cD}\simeq [M/(\GL_n\times\Gm)]\,,\qquad \cE\simeq [N/(\GL_n\times \Gm)]\,.
$$
Thus $\cX_{r^{-1}\cD}$ is a quotient stack and we see that its coarse space is isomorphic to $\cX_{\,\cs}$. Moreover,~$\cX_{r^{-1}\cD}$ is smooth whenever $\cX$ and $\cD$, or, equivalently, $P$ and $Q$, are smooth.

\medskip

By a {\it stacky blow-up}, we mean either a blow-up of a DM-stack $\cX$ in a closed substack $\cZ\subset \cX$ or a root construction of $\cX$ in an effective Cartier divisor $\cD\subset \cX$. Note that stacky blow-ups in smooth centers preserve the class of smooth projective DM-stacks (for root constructions this follows from the discussion above and for usual blow-ups one should use that they commute with taking quotient stacks).

\medskip

Given a variety $X$, we let $K_0(X)$ denote the Grothendieck group of the exact category of vector bundles on $X$ and we let $K_i(X)$, $i\geqslant 0$, denote the algebraic $K$-groups of this exact category. If $k=\bbC$ is the field of complex numbers, then we let $K_i^{\tp}(X)$, $i\geqslant 0$, denote the topological $K$-groups of the topological space $X(\bbC)$ of complex points of $X$ with the classical topology. Recall that there are canonical isomorphisms $K^{\tp}_{2i}(X)\simeq K_0^{\tp}(X)$ and ${K^{\tp}_{2i+1}(X)\simeq K_1^{\tp}(X)}$ for any $i\geqslant 0$.

\section{Various Grothendieck rings}\label{sec:geom}

\subsection{The Grothendieck group of $G$-varieties}\label{subsect:GrothGvar}

The following notions and results are taken from the paper of Bittner~\cite{Bit} (see also a survey by Looijenga~\cite{Loo}). Fix a finite group $G$.

\begin{defi}\label{defin:Ggroups}
The {\it Grothendieck group $K_0(\Var^G_k)$ of $G$-varieties over $k$} is the abelian group generated by isomorphism classes $\{X\}^G$ of $G$-varieties $X$ over $k$ subject to the {\it scissors relations}
\begin{equation}\label{eq:Gscissor}
\{X\}^G = \{X \smallsetminus Z\}^G + \{Z\}^G\,,
\end{equation}
where $Z$ is a $G$-invariant closed subvariety of a $G$-variety $X$.
\end{defi}


When $G$ is the trivial group $\{e\}$, we omit the upper index $G=\{e\}$ in the notation, that is, we use $K_0(\Var_k)$ and $\{X\}$ for $K_0(\Var_k^{\{e\}})$ and $\{X\}^{\{e\}}$, respectively.

\begin{remark}\label{rmk:Bittnerquot}
In \cite[\S\,7]{Bit} the extra relations ${\{\bbP(V)\}^G=\{\bbP^{n-1}\}^G}$, where $V$ is an $n$-dimensional representation of $G$ and $G$ acts trivially on $\bbP^{n-1}$,
are imposed on $K_0(\Var_k^G)$. Although these relations are natural to consider, the results that we prove below hold without them, so we do not include these relations in Definition~\ref{defin:Ggroups}.
\end{remark}

\begin{remark}\label{rmk:multl}
\hspace{0cm}
\begin{itemize}
\item[(i)]
The abelian group~$K_0(\Var_k)$ is a ring with multiplication defined by the natural formula~${\{X\}\cdot \{Y\}:=\{X\times Y\}}$ and with unit being the class $\{*\}$ of the point $*=\Spec(k)$.
\item[(ii)]
One has a ring structure on the abelian group $K_0(\Var^G_k)$ with multiplication defined by the formula $\{X\}^G\cdot \{Y\}^G:=\{X\times Y\}^G$, where $G$ acts diagonally on $X\times Y$. However this is not a convenient ring if one wants to compare it with the Grothendieck rings of DM-stacks (see Definition~\ref{defi:Grothstacks} and Remark~\ref{remark:alphabeta} below).
\end{itemize}
\end{remark}

\medskip

Recall that any $G$-variety $X$ (or more generally, any $G$-algebraic space $X$) admits a canonical quotient morphism
$X \to X/G$ that is initial among $G$-invariant morphisms to algebraic spaces.
Indeed, this is a special case of the theorem by Keel--Mori \cite{KM} mentioned in the previous section.
Note that $X/G$ may be a genuine algebraic space even if $X$ is a variety (see, e.g., Vistoli's explanation in~\cite[\S\,4.4.2]{Vis}). However, if $X$ is a quasi-projective $G$-variety, then every $G$-orbit in $X$ is contained in an affine open subset of $X$. This implies that $X/G$ is a variety and, in addition, a geometric quotient, see~\cite[Exp.\,V, Prop.\,1.8]{SGA}. In fact, in this situation $X/G$ is even quasi-projective.

\begin{remark}\label{rmk:Ggener}
\hspace{0cm}
\begin{itemize}
\item[(i)]
It is also possible to consider smaller classes of $G$-varieties in Definition~\ref{defin:Ggroups}. Namely, one can take isomorphism classes of either quasi-projective or smooth quasi-projective or quasi-affine or smooth quasi-affine $G$-varieties subject to the scissors relations~\eqref{eq:Gscissor} and obtain the same group $K_0(\Var^G_k)$. The reason is that any $G$-variety admits a finite \mbox{$G$-equivariant} stratification with strata being smooth affine \mbox{$G$-varieties}. When~$G$ is trivial, this is obvious. For an arbitrary group $G$, this follows from the fact that any algebraic space admits a finite stratification with strata being affine schemes, see Knutson's book~\cite[Ch.\,2, Prop.\,6.6]{Knu}, which we apply to the algebraic space $X/G$ for a given $G$-variety~$X$.
\item[(ii)]
On the other hand, one can consider in Definition~\ref{defin:Ggroups} a larger class than that of all \mbox{$G$-varieties}. Namely, one can take isomorphism classes of either $G$-schemes or $G$-algebraic spaces subject to the scissors relations~\eqref{eq:Gscissor} and obtain the same group~${K_0(\Var^G_k)}$. Indeed, by definition, the class of a $G$-scheme $X$ in the Grothendieck group is equal to the class of the corresponding closed reduced $G$-subscheme $X_{\red}$, which is a variety. For a $G$-algebraic space $X$, apply the same argument as in~(i) using the fact that the quotient $X/G$ is again an algebraic space.

In particular, for any $G$-algebraic space $X$, one has a well defined class~${\{X\}^G := \{X_0\}^G + \cdots + \{X_n\}^G}$ in~$K_0(\Var^G_k)$, where $X=X_0\cup \ldots\cup X_n$ is a stratification of $X$ into $G$-invariant varieties.
\item[(iii)]
It follows from part~(i) and from the existence of $G$-equivariant smooth compactifications for smooth affine $G$-varieties over $k$ (see the proof of~\cite[Lem.\,7.1]{Bit} for a non-trivial $G$) that the group $K_0(\Var^G_k)$ is generated by isomorphism classes of smooth projective \mbox{$G$-varieties}. However, note that the scissors relations~\eqref{eq:Gscissor} do not make sense for these generators as these relations involve open subvarieties.
\end{itemize}
\end{remark}

\medskip

\begin{defi}\label{defi:GBittner}
Let $K_0(\Var_k^{G,\smp})$ be the abelian group generated by isomorphism classes~$\{X\}^G$ of smooth projective $G$-varieties $X$ over $k$ subject to the {\it blow-up relations}
\begin{equation}\label{eq:GBittner}
\{X\}^G - \{Z\}^G = \{\widetilde{X}\}^G -\{E\}^G\,, \qquad \{\varnothing\}^G = 0\,,
\end{equation}
where $\widetilde{X}\to X$ is the blow-up of a smooth projective $G$-variety $X$ in a $G$-invariant smooth center $Z\subset X$ and $E\subset\widetilde{X}$ is the exceptional divisor. \end{defi}

\begin{remark}\label{remark:disjoint}
The blow-up relations~\eqref{eq:GBittner} imply the equality in $K_0(\Var^{G,\smp}_k)$
$$
\{X\sqcup Y\}^G=\{X\}^G+\{Y\}^G
$$
for all smooth projective $G$-varieties $X$ and $Y$. Indeed, by definition, the blow-up of $X\sqcup Y$ in~$Y$ is just $X$ with an empty exceptional divisor.
\end{remark}

The natural isomorphism of $G$-varieties $X\smallsetminus Z\simeq \widetilde{X}\smallsetminus E$ implies that the blow-up relations~\eqref{eq:GBittner} hold in the group $K_0(\Var^G_k)$. Therefore one has a natural homomorphism of groups
\begin{equation}\label{eq:Bittner}
{K_0(\Var_k^{G,\smp})\longrightarrow K_0(\Var^G_k)}\,.
\end{equation}
It is proved by Bittner in~\cite[Theor.\,3.1, Lem.\,7.1]{Bit} that homomorphism~\eqref{eq:Bittner} is an isomorphism. Essentially, the proof is based on the weak factorization theorem for smooth projective varieties over~$k$ proved by Abramovich, Karu, Matsuki, W{\l}odarczyk in~\cite{AKMW} and by W{\l}odarczyk in~\cite{Wlo} (note that weak factorization holds for smooth projective $G$-varieties as well). We reproduce a part of the argument below in Lemma~\ref{lemma:Bittner} in the context of DM-stacks. The isomorphism~\eqref{eq:Bittner} is usually called the
{\it Bittner presentation} of the group~$K_0(\Var^G_k)$.


\subsection{The Grothendieck ring of equivariant varieties}\label{subsec:Grotheq}

In the context of comparison with the Grothendieck rings of DM-stacks, it is natural to collect all together the groups $K_0(\Var^G_k)$ as follows.

\begin{defi}\label{def:Groeq}
\hspace{0cm}
\begin{itemize}
\item[(i)]
Define the ring $\bigoplus\limits_{G}K_0(\Var_k^G)$, where the direct sum is taken over the set of isomorphism classes of finite groups and multiplication is given by ${\{X\}^G\cdot \{Y\}^H:=\{X\times Y\}^{G\times H}}$ with the natural action of $G\times H$ on $X\times Y$.
\item[(ii)]
The {\it Grothendieck ring $K_0(\Var_k^{\eq})$ of equivariant varieties over $k$} is the quotient of the ring $\bigoplus\limits_{G}K_0(\Var_k^G)$ by the subgroup generated by elements of type
\begin{equation}\label{eq:eq1}
\{Y\}^H-\{\ind_H^G(Y)\}^G\,,
\end{equation}
where $Y$ is an $H$-variety and we have an embedding of groups $H\subset G$. We call elements in formula~\eqref{eq:eq1} {\it induction relations}.
\end{itemize}
\end{defi}

The ring $K_0(\Var_k^{\eq})$ is also defined by Gusein-Zade, Luengo, Melle-Hern\'andez in~\cite[Def.\,6]{GZ}, where it is denoted by~$K_0^{\rm fGr}(\Var_k)$. Related groups are considered by Getzler and Pandharipande in~\cite{GP} and by Bergh in~\cite{Ber0}.

\begin{remark}\label{rem:EgGr}
\hspace{0cm}
\begin{itemize}
\item[(i)]
The subgroup of $\bigoplus\limits_{G}K_0(\Var_k^G)$ generated by the induction relations~\eqref{eq:eq1} is indeed an ideal, because taking products commutes with induction. Namely, for a finite group $F$ and an \mbox{$F$-variety} $Z$, there is a $(G\times F)$-equivariant isomorphism ${\ind_H^G(Y)\times Z\simeq \ind_{H\times F}^{G\times F}(Y\times Z)}$.
\item[(ii)]
Since the group $\bigoplus\limits_{G}K_0(\Var_k^G)$ is generated by classes of smooth projective equivariant varieties (see Remark~\ref{rmk:Ggener}(iii)) and for an embedding of groups $H\subset G$, the map ${\ind_H^G\colon K_0(\Var^H_k)\to K_0(\Var^G_k)}$ is a group homomorphism, we see that one obtains the same ring $K_0(\Var_k^{\eq})$ if one assumes in Definition~\ref{def:Groeq}(ii) that $Y$ is, additionally, a smooth projective $H$-variety. In other words, $K_0(\Var_k^{\eq})$ is generated by classes of smooth projective equivariant varieties subject to the blow-up relations~\eqref{eq:GBittner} and the induction relations~\eqref{eq:eq1}.
\end{itemize}
\end{remark}

In order to study the ring $K_0(\Var_k^{\eq})$, one often considers homomorphisms
$$
\rho\;:\; K_0(\Var_k^{\eq})\longrightarrow A\,,
$$
where $A$ is an abelian group. Such homomorphisms are usually called {\it motivic measures}. If~$A$ is in addition a ring, then one sometimes requires $\rho$ to be a ring homomorphism. To ease notation, we put
$$
\rho^G(X):=\rho(\{X\}^G)
$$
for a $G$-variety $X$. In particular, if $G$ is trivial, we write~$\rho(X)$ instead of~$\rho(\{X\})$.

Let $X$ be a $G$-variety and let $Z \subset X$ be a $G$-invariant closed subvariety.
Note that since we are working over a field of characteristic zero,
the quotient $Z/G$ is a closed subspace of~$X/G$ (see, e.g.,~\cite[Prop.\,A.3]{Ber} for a proof of this well-known fact).
In particular, it easy to check that there is a well-defined ring homomorphism (see Remark~\ref{rmk:Ggener}(ii) for the definition of the class $\{X/G\}$ when $X/G$ is an algebraic space)
\begin{equation}\label{eq:pi}
\gamma\;:\;K_0(\Var^{\eq}_k)\longrightarrow K_0(\Var_k)\,,\qquad \{X\}^G\longmapsto \{X/G\}\,.
\end{equation}
The composition of the natural homomorphism $i\colon K_0(\Var_k)\to K_0(\Var_k^{\eq})$ with $\gamma$ is the identity, whence $i$ is injective. Thus, for simplicity of notation, we usually omit $i$.

\medskip

We will use also the following system of additive generators of $K_0(\Var^{\eq}_k)$.

\begin{lemma}\label{lemma:strat}
Let $G$ be a finite group and $X$ be a $G$-variety. Let $\Lambda$ denote the set of conjugacy classes of subgroups of $G$,
and choose a system of representatives $\{N_\lambda\}_{\lambda \in \Lambda}$ for~$\Lambda$. Denote the normalizer of $N_{\lambda}$ in $G$ by $G_{\lambda}$ and let~$H_{\lambda}:=G_{\lambda}/N_{\lambda}$.
\begin{itemize}
\item[(i)]
There is a (uniquely defined) locally closed subvariety $Y_\lambda\subset X$ such that $Y_{\lambda}(\bar k)$ consists of all $\bar k$-points of $X$ having $N_\lambda$ as stabilizer.
\item[(ii)]
We have a well-defined action of $G_\lambda$ on $Y_\lambda$ with kernel $N_\lambda$ such
that the induced action by $H_\lambda$ is free.
\item[(iii)]
There is an equality
$$
\{X\}^G=\sum_{\lambda\in\Lambda}\{Y_{\lambda}\}^{G_{\lambda}}
$$
in $K_0(\Var_k^{\eq})$.
\end{itemize}
\end{lemma}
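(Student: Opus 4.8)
The plan is to unwind the three assertions in turn, each being essentially bookkeeping about stabilizers under a group action, with the only real work concentrated in part (i).

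First I would prove (i). The key point is that the locus where the stabilizer is exactly $N_\lambda$ can be described as a difference of closed loci. For a subgroup $N\subset G$, let $X^N\subset X$ denote the (closed) fixed-point subscheme of $N$, i.e. the locus of points fixed by every element of $N$; since we are in characteristic zero and $N$ is finite, $X^N$ is a smooth closed subvariety of $X$ when $X$ is smooth, and in general it is a closed subscheme whose reduced structure is a variety, which is all we need here. A $\bar k$-point $x$ of $X$ has stabilizer containing $N_\lambda$ if and only if $x\in X^{N_\lambda}$, and has stabilizer exactly $N_\lambda$ if and only if $x\in X^{N_\lambda}$ but $x\notin X^{N'}$ for every subgroup $N'\subsetneq G$ that strictly contains $N_\lambda$. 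Since $G$ is finite, there are finitely many such $N'$, so $Y_\lambda:=X^{N_\lambda}\smallsetminus\bigcup_{N'\supsetneq N_\lambda}X^{N'}$ is a locally closed subvariety of $X$ with the stated property on $\bar k$-points; uniqueness is clear since a locally closed subvariety is determined by its $\bar k$-points. Note that $Y_\lambda$ is preserved by the normalizer $G_\lambda$ of $N_\lambda$, since conjugation by $g\in G_\lambda$ sends the stabilizer of $x$ to the stabilizer of $g(x)$, and fixes $N_\lambda$.

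Next, (ii). By construction every $\bar k$-point of $Y_\lambda$ has stabilizer (inside $G$, hence inside $G_\lambda$) equal to $N_\lambda$, so $N_\lambda$ acts trivially on $Y_\lambda$ and the action of $G_\lambda$ factors through $H_\lambda=G_\lambda/N_\lambda$. To see that the induced $H_\lambda$-action is free, suppose $\bar h\in H_\lambda$ fixes a $\bar k$-point $x$ of $Y_\lambda$; lifting $\bar h$ to $g\in G_\lambda$, we get $g(x)=x$, so $g$ lies in the stabilizer of $x$, which is $N_\lambda$, hence $\bar h=1$ in $H_\lambda$. (One should also check freeness scheme-theoretically, not just on $\bar k$-points; since $H_\lambda$ is a finite constant group acting on a variety over a field of characteristic zero, freeness on geometric points implies the action map $H_\lambda\times Y_\lambda\to Y_\lambda\times Y_\lambda$ is a monomorphism, hence the action is free.)

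Finally, (iii). The subvarieties $\{Y_\lambda\}_{\lambda\in\Lambda}$ are pairwise disjoint — a $\bar k$-point lies in at most one $Y_\lambda$ since its stabilizer determines a single conjugacy class — and their union is all of $X$, since every $\bar k$-point has some stabilizer, which is conjugate to exactly one $N_\lambda$. Thus $\{Y_\lambda\}$ is a stratification of $X$ into $G$-invariant locally closed subvarieties; more precisely, ordering the $\lambda$ so that larger subgroups come first, each partial union $\bigcup_{i\le j}Y_{\lambda_i}$ is closed in the complement of $\bigcup_{i<i_0}$ for appropriate indices, which is exactly the setup in which the scissors relations \eqref{eq:Gscissor} give $\{X\}^G=\sum_\lambda\{Y_\lambda\}^G$ in $K_0(\Var_k^G)$, hence in $K_0(\Var_k^{\eq})$. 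Now for each $\lambda$, the $G$-variety $Y_\lambda$ with its $G$-action: since $G_\lambda$ is the normalizer of $N_\lambda$ and $Y_\lambda$ is the locus with stabilizer exactly $N_\lambda$, the $G$-set of $\bar k$-points of $Y_\lambda$ is induced from the $G_\lambda$-set of its points, and in fact there is a $G$-equivariant isomorphism $Y_\lambda\simeq\ind_{G_\lambda}^G(Y_\lambda^{\circ})$ where $Y_\lambda^{\circ}$ is $Y_\lambda$ regarded with its $G_\lambda$-action — this is because the $G$-orbit of a point with stabilizer $N_\lambda$ breaks up into $[G:G_\lambda]$ copies of the $G_\lambda$-orbit, indexed by cosets $G/G_\lambda$, matching the cosets appearing in the induced variety. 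Applying the induction relation \eqref{eq:eq1}, $\{Y_\lambda\}^G=\{\ind_{G_\lambda}^G(Y_\lambda^\circ)\}^G=\{Y_\lambda^\circ\}^{G_\lambda}$ in $K_0(\Var_k^{\eq})$. Substituting gives the claimed equality.

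The main obstacle I anticipate is the clean identification in part (iii) of the $G$-variety $Y_\lambda$ with $\ind_{G_\lambda}^G$ of the $G_\lambda$-variety $Y_\lambda$: on $\bar k$-points it is the standard orbit-decomposition argument, but one has to be a little careful to produce an honest isomorphism of $k$-varieties (as opposed to just geometric points), which amounts to observing that $Y_\lambda=\coprod_{gG_\lambda\in G/G_\lambda}g\cdot Y_\lambda^{\circ}$ as a scheme-theoretic disjoint union of its $G_\lambda$-translates — this uses that distinct cosets $gG_\lambda$ give disjoint translates $g(Y_\lambda^\circ)$, which follows because a point of $g(Y_\lambda^\circ)\cap g'(Y_\lambda^\circ)$ would have $N_\lambda$ as stabilizer and be fixed by both $gN_\lambda g^{-1}$-conjugacy data forcing $g^{-1}g'\in G_\lambda$. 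Everything else is routine.
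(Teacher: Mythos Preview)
Your approach is the same as the paper's, but there is a genuine error in part~(iii) stemming from a conflation of two different subvarieties.

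You define $Y_\lambda$ in part~(i) as the locus of points with stabilizer \emph{equal to} $N_\lambda$. This $Y_\lambda$ is $G_\lambda$-invariant but \emph{not} $G$-invariant, and the union $\bigcup_\lambda Y_\lambda$ does \emph{not} cover $X$: a $\bar k$-point whose stabilizer is $gN_\lambda g^{-1}$ for some $g\notin G_\lambda$ lies in no $Y_\mu$, since its stabilizer is not equal to any of the chosen representatives. Your justification (``its stabilizer\dots is conjugate to exactly one $N_\lambda$'') is the reasoning for a different variety, namely the locus $X_\lambda$ of points whose stabilizer lies in the conjugacy class $\lambda$. Those $X_\lambda$ are $G$-invariant and do stratify $X$. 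Your later isomorphism $Y_\lambda\simeq\ind_{G_\lambda}^G(Y_\lambda^\circ)$ and the disjoint-union formula $\coprod_{gG_\lambda}g\cdot Y_\lambda^\circ$ make sense only if the left-hand side is $X_\lambda$ and $Y_\lambda^\circ$ is the $Y_\lambda$ from part~(i); as written you are using the symbol $Y_\lambda$ for both objects.

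The fix is exactly what the paper does: introduce $X_\lambda:=G\cdot Y_\lambda$, observe that the $X_\lambda$ give a $G$-equivariant stratification of $X$ so that $\{X\}^G=\sum_\lambda\{X_\lambda\}^G$ by the scissors relations, prove the $G$-equivariant isomorphism $X_\lambda\simeq\ind_{G_\lambda}^G(Y_\lambda)$ (this is your last-paragraph argument, correctly targeted), and then apply the induction relation to get $\{X_\lambda\}^G=\{Y_\lambda\}^{G_\lambda}$. With that bookkeeping straightened out, your argument coincides with the paper's.
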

\begin{proof}
$(i)$ We have
$$
Y_{\lambda}=X^{N_{\lambda}}\smallsetminus \bigcup_{N\varsupsetneq N_{\lambda}} X^{N}\,.
$$

$(ii)$ This follows directly from the definition of $Y_{\lambda}$.

$(iii)$ For each $\lambda\in\Lambda$, let $X_{\lambda}\subset X$ be the locally closed subvariety formed by all $\bar k$-points in~$X$ whose stabilizers belong to the conjugacy class~$\lambda$. In particular, $Y_{\lambda}\subset X_{\lambda}$. We have a canonical $G$-equivariant isomorphism
\begin{equation}\label{eq:free}
X_{\lambda}\simeq \ind_{G_{\lambda}}^G(Y_{\lambda})\,.
\end{equation}
On the other hand, we have a finite $G$-equivariant stratification
\begin{equation}\label{eq:strat}
X=\bigcup_{\lambda\in\Lambda}X_{\lambda}\,.
\end{equation}
Therefore the equalities
$$
\{X\}^G=\sum_{\lambda\in\Lambda}\{X_{\lambda}\}^G=\sum_{\lambda\in\Lambda}\{\ind_{G_{\lambda}}^G(Y_\lambda)\}^G=
\sum_{\lambda\in\Lambda}\{Y_\lambda\}^{G_\lambda}
$$
hold in $K_0(\Var_k^{\eq})$.
\end{proof}

Note that the stratification obtained in formula~\eqref{eq:strat} is the coarsest among all stratifications with the property expressed in formula~\eqref{eq:free} such that $G_{\lambda}$ acts through a free action of $H_{\lambda}$ on~$Y_{\lambda}$.

\subsection{Extended quotients}\label{subsec:extquot}

Let $G$ be a finite group and $X$ be a $G$-variety.

\begin{defi}\label{def:extquot}
\hspace{0cm}
\begin{itemize}
\item[(i)]
The {\it inertia} of $X$ is a $G$-variety defined as the equalizer
$\Eq(G\times X\rightrightarrows X)$ of two \mbox{$G$-equivariant} morphisms given by the action of $G$ on $X$ and by the projection. Here, we consider the diagonal action of $G$ on $G\times X$, where $G$ acts on itself by conjugation.
\item[(ii)]
The {\it extended quotient} $X/^{\ex}G$ of $X$ is the algebraic space defined as the quotient of the inertia:
$$
X/^{\ex}G:=\Eq(G\times X\rightrightarrows X)/G\,.
$$
\end{itemize}
\end{defi}

Note that if a \mbox{$G$-variety}~$X$ is quasi-projective, then the extended quotient is a quasi-projective variety as well.

Explicitly, $\bar k$-points of the inertia $\Eq(G\times X\rightrightarrows X)$ are given by pairs $(x,g)$, where $x$ is a~\mbox{$\bar k$-point} of $X$ and $g$ is an element of the stabilizer of $x$ in $G$. Taking the $G$-equivariant projection from the inertia to $G$, we see that the inertia decomposes \mbox{$G$-equivariantly} into a disjoint union as follows:
\begin{equation}\label{eq:inerexpl}
\Eq(G\times X\rightrightarrows X)=\coprod_{[g]\in C(G)}\ind_{Z(g)}^G(X^g)\,,
\end{equation}
where $g\in G$ is a representative of a conjugacy class $[g]\in C(G)$, $X^g\subset X$ is the closed subvariety of $g$-fixed points, and $Z(g)\subset G$ is the centralizer of $g$ in $G$. Hence we have a decomposition of the extended quotient into a disjoint union
\begin{equation}\label{eq:extquotexpl}
X/^{\ex}G=\coprod_{[g]\in C(G)} X^g/Z(g)\,.
\end{equation}

\begin{example}\label{ex:extquottriv}
Given an exact sequence of finite groups
$$
1\longrightarrow N\longrightarrow G\longrightarrow H\longrightarrow 1\,,
$$
let $G$ act on a variety $X$ through a free action of $H$ on $X$. Then the inertia and the extended quotient are as follows:
$$
\Eq(G\times X\rightrightarrows X)=N\times X\,,\qquad X/^{\ex}G=\big(C(N)\times X\big)/H\,,
$$
where $H$ acts diagonally on $C(N)\times X$ with the action on $C(N)$ by conjugation. The collection of varieties $X^g/Z(g)$, $[g]\in C(G)$, from formula~\eqref{eq:extquotexpl} coincides with the collection of varieties~$X/H_c$ parameterized by $H$-orbits $[c]\in C(N)/H$, where $c\in C(N)$ is a representative of an $H$-orbit~$[c]$ and $H_c$ is the stabilizer of $c$ in $H$. In particular, if $N$ is central, then there are equalities~${X/^{\ex}G=N\times (X/H)=\coprod\limits_{i=1}^{|N|} X/H}$.
\end{example}

\medskip

The following fact was also essentially proved in~\cite[Lem.\,1]{GZ} in different terms.

\begin{lemma}\label{lem:iner}
\hspace{0cm}
\begin{itemize}
\item[(i)]
Taking inertia defines a ring endomorphism
$$
{\rm iner}\;:\; K_0(\Var_k^{\eq})\longrightarrow K_0(\Var_k^{\eq})\,,\qquad \{X\}^G\longmapsto \{\Eq(G\times X\rightrightarrows X)\}^G\,.
$$
\item[(ii)]
For any finite group $G$ and a $G$-variety $X$, there is an equality
$$
\iner\,\{X\}^G=\sum_{[g]\in C(G)}\{X^g\}^{Z(g)}\,.
$$
\end{itemize}
\end{lemma}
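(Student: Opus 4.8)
The plan is to prove part (ii) first, since it is essentially a direct computation from the decomposition~\eqref{eq:inerexpl}, and then deduce that $\iner$ is well defined and multiplicative. For part (ii), the decomposition~\eqref{eq:inerexpl} says that
$$
\Eq(G\times X\rightrightarrows X)=\coprod_{[g]\in C(G)}\ind_{Z(g)}^G(X^g)
$$
$G$-equivariantly. Applying the scissors relations together with Remark~\ref{remark:disjoint} (or rather its analog in $K_0(\Var^G_k)$, namely that the class of a disjoint union is the sum of the classes, which follows from the scissors relations) gives
$$
\{\Eq(G\times X\rightrightarrows X)\}^G=\sum_{[g]\in C(G)}\{\ind_{Z(g)}^G(X^g)\}^G
$$
in $K_0(\Var^G_k)$, and hence in $K_0(\Var^{\eq}_k)$. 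Now the induction relations~\eqref{eq:eq1} identify $\{\ind_{Z(g)}^G(X^g)\}^G$ with $\{X^g\}^{Z(g)}$, which yields the stated formula. One should note that $\iner\{X\}^G$ depends only on the class $\{X\}^G$: the right-hand side $\sum_{[g]}\{X^g\}^{Z(g)}$ is manifestly additive in scissors relations (since for a $G$-invariant closed $Z\subset X$ one has $Z^g\subset X^g$ closed and $Z(g)$-invariant, with complement $(X\smallsetminus Z)^g$), so the formula in (ii) can also serve as the \emph{definition} of $\iner$ and then (i) reduces to checking well-definedness on the induction relations and multiplicativity.

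For part (i), the content is to show $\iner$ respects the induction relations and the multiplication. For the induction relations: given $H\subset G$ and an $H$-variety $Y$, one must check that $\iner\{Y\}^H=\iner\{\ind_H^G(Y)\}^G$. Using the formula from (ii), the left side is $\sum_{[h]\in C(H)}\{Y^h\}^{Z_H(h)}$, where $Z_H(h)$ is the centralizer of $h$ in $H$. For the right side, one analyzes the fixed loci of $\ind_H^G(Y)=G\times_H Y$: a point of $G\times_H Y$ is a class $[g,y]$, and it is fixed by $g_0\in G$ iff $g^{-1}g_0 g\in H$ and stabilizes $y$. Grouping these by the $G$-conjugacy class of $g_0$ and carefully counting cosets, one gets that $(\ind_H^G Y)^{g_0}$, with its residual $Z_G(g_0)$-action, decomposes (up to equivariant isomorphism) as an induced variety built from the $H$-conjugates of $g_0$ lying in $H$ and the corresponding fixed loci in $Y$; applying the induction relations again on the $G$-side collapses this to $\sum_{[h]\in C(H)}\{Y^h\}^{Z_H(h)}$, matching the left side. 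The cleanest bookkeeping here uses Lemma~\ref{lemma:strat}-style stratification or the general combinatorial identity $\ind_H^G(Y)^{g_0}=\coprod \ind_{Z_H(h)}^{Z_G(g_0)}(Y^h)$ over representatives $h$ of $H$-conjugacy classes in $H\cap {}^{G}g_0$ that are $G$-conjugate to $g_0$. For multiplicativity, one uses that $(X\times X')^{(g,g')}=X^g\times X'^{g'}$, that conjugacy classes in $G\times G'$ are pairs of conjugacy classes, and that $Z_{G\times G'}(g,g')=Z_G(g)\times Z_{G'}(g')$, together with the multiplication rule $\{X^g\}^{Z(g)}\cdot\{X'^{g'}\}^{Z(g')}=\{X^g\times X'^{g'}\}^{Z(g)\times Z(g')}$ in $K_0(\Var^{\eq}_k)$; this gives $\iner(\{X\}^G\cdot\{X'\}^{G'})=\iner\{X\}^G\cdot\iner\{X'\}^{G'}$ after reindexing the double sum.

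The main obstacle I anticipate is the compatibility with the induction relations in part (i): tracking the fixed-point locus of an element $g_0\in G$ inside the induced variety $G\times_H Y$ and identifying its equivariant structure requires a careful orbit-counting argument (which conjugates of $g_0$ land in $H$, with which multiplicities, and what the residual centralizer action looks like). Everything else — additivity under scissors relations, multiplicativity, and the formula in (ii) — is formal once the decomposition~\eqref{eq:inerexpl} is in hand. One can also shortcut the verification for (i) by invoking the Bittner presentation (Remark~\ref{rem:EgGr}(ii)): it suffices to check that the assignment on smooth projective equivariant varieties respects the blow-up relations~\eqref{eq:GBittner} — which holds because blowing up commutes with taking $g$-fixed loci when the center is $G$-invariant and smooth, so $\widetilde{X}^g\to X^g$ is the blow-up of $X^g$ in $Z^g$ with exceptional divisor $E^g$ — and the induction relations, reducing the problem to the combinatorial identity above in a smooth projective setting where the geometry is more transparent.
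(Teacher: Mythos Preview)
Your main argument is correct but takes a different route from the paper. The paper proves (i) first, by establishing directly a $G$-equivariant isomorphism
\[
\ind_H^G\big(\Eq(H\times Y\rightrightarrows Y)\big)\;\simeq\;\Eq\big(G\times\ind_H^G(Y)\rightrightarrows\ind_H^G(Y)\big)
\]
via the explicit map $(g,h,y)\mapsto(ghg^{-1},g,y)$ between two concrete models; multiplicativity is dispatched in one line (``taking equalizers commutes with products''), and (ii) is then immediate from~\eqref{eq:inerexpl}. Your approach instead decomposes both sides along conjugacy classes and matches terms via the fixed-point identity for induced varieties. This works, but the bookkeeping is heavier than the paper's single geometric isomorphism, and your formulation of the identity $(\ind_H^G Y)^{g_0}=\coprod \ind_{Z_H(h)}^{Z_G(g_0)}(Y^h)$ is slightly off as written: $Z_H(h)$ is a subgroup of $Z_G(h)$, not of $Z_G(g_0)$, so a conjugation by the element $g$ with $g^{-1}g_0g=h$ must be inserted. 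The paper's approach avoids this entirely by never decomposing into conjugacy classes for the induction check.

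Your alternative via the Bittner presentation contains a genuine error: it is \emph{not} true in general that $\widetilde{X}^g\to X^g$ is the blow-up of $X^g$ in $Z^g$ with exceptional divisor $E^g$. Take $X=\mathbb{A}^2$ with $g$ acting by $-1$ and $Z=\{0\}$: then $X^g=Z^g=\{0\}$, so the blow-up of $X^g$ in $Z^g$ is empty, whereas $\widetilde{X}^g=E^g\simeq\mathbb{P}^1$. The blow-up relation is still preserved by $\iner$ (both sides give zero in this example), but not for the reason you state; in fact you do not need to check blow-up relations at all, since you have already verified the scissors relations, and the Bittner isomorphism then transports well-definedness automatically.
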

\begin{proof}
$(i)$ First one checks easily that for each finite group $G$, taking inertia is a well-defined endomorphism of the group $K_0(\Var^G_k)$. This gives a map from~$\bigoplus\limits_{G}K_0(\Var_k^G)$ to itself, which is a ring endomorphism, because taking equalizers commutes with products. Now we need to show that this homomorphism respects the induction relations~\eqref{eq:eq1}. For this it is enough to prove that taking inertia commutes with induction.

In other words, it is enough to prove that for any $H$-variety~$Y$ and an embedding of finite groups $H\subset G$, there is a $G$-equivariant isomorphism
\begin{equation}\label{eq:indeq}
\ind_H^G\big(\Eq(H\times Y\rightrightarrows Y)\big)\simeq\Eq\big(G\times \ind_H^G(Y)\rightrightarrows \ind_H^G(Y)\big)\,.
\end{equation}
The left hand side of~\eqref{eq:indeq} is $G$-equivariantly isomorphic to the quotient by $H$ of the \mbox{$(G\times H)$-invariant} subvariety
$$
S:=\{(g,h,y)\;\mid\;h(y)=y\}\subset G\times H\times Y\,,
$$
where $y$ denotes a $\bar k$-point of $Y$ and the action of $G\times H$ on $G\times H\times Y$ is given by the formula
$$
(g',h')\;:\;(g,h,y)\longmapsto \big(g'g(h')^{-1},h'h(h')^{-1},h'(y)\big)\,.
$$
The right hand side of~\eqref{eq:indeq} is $G$-equivariantly isomorphic to the quotient by $H$ of the \mbox{$(G\times H)$-invariant} subvariety
$$
T:=\{(g_1,g_2,y)\;\mid\;(g_1g_2h^{-1},h(y))=(g_2,y) \,\,{\rm for}\,\,{\rm (unique)}\,\, h\in H\}\subset G\times G\times Y\,,
$$
where, again, $y$ denotes a $\bar k$-point of $Y$ and the action of $G\times H$ on $G\times H\times Y$ is given by the formula
$$
(g',h')\;:\;(g_1,g_2,y)\longmapsto \big(g'g_1(g')^{-1},g'g_2(h')^{-1},h'(y)\big)\,.
$$
Finally, we have a $(G\times H)$-equivariant isomorphism
$$
S\stackrel{\sim}\longrightarrow T\,,\qquad (g,h,y)\longmapsto (ghg^{-1},g,y)\,.
$$

$(ii)$ This follows directly from part~$(i)$ and formula~\eqref{eq:inerexpl}.
\end{proof}

By Lemma~\ref{lem:iner}, we obtain a ring homomorphism (see formula~\eqref{eq:pi} for $\gamma$)
\begin{equation}\label{eq:piiner}
\gamma\circ\iner\;:\;K_0(\Var_k^{\eq})\longrightarrow K_0(\Var_k)\,,\qquad \{X\}^G\longmapsto \{X/^{\ex}G\}=\sum_{[g]\in C(G)}\{X^g/Z(g)\}\,.
\end{equation}

\begin{remark}\label{rem:Eulerorb}
\hspace{0cm}
\begin{itemize}
\item[(i)]
When $k=\C$, the composition of $\gamma\circ\iner$ (respectively, of $\gamma\circ\iner^{n}$ for an integer~${n\geqslant 1}$) with the Euler characteristic map $\chi\colon K_0(\Var_{\C})\to\Z$ is called an orbifold Euler characteristic (respectively, an $n$-th orbifold Euler characteristic), which was introduced by Atiyah and Segal in~\cite{AS}, see also papers by Hirzebruch and H\"ofer~\cite{HH} and by Bryan and Fulman~\cite{BF}.
\item[(ii)]
It is shown in~\cite{AS} that for any smooth complex algebraic $G$-variety $X$, there are canonical isomorphisms
$$
K_i^{G,\tp}(X)_{\C}\simeq K_i^{\tp}(X/^{\ex}G)_{\C}\,,\qquad i=0,1\,,
$$
where $K_i^{G,\tp}(X)$ are $G$-equivariant topological $K$-groups of~$X$.
\end{itemize}
\end{remark}

In view of Remark~\ref{rem:Eulerorb}(ii), it makes sense to compare the classes $\{X\}^G$ and $\{X/^{\ex}G\}$ in~$K_0(\Var_k^{\eq})$ or, rather, to compare their images under homomorphisms from $K_0(\Var_k^{\eq})$ to some other groups or rings. With this aim, we give the following definition.

\medskip

Let $\cS$ be a (possibly, infinite) collection of exact sequences of finite groups
$$
1\longrightarrow N_s\longrightarrow G_s \longrightarrow H_s\longrightarrow 1\,,\qquad s\in \cS\,.
$$

\begin{defi}\label{def:sadeq}
\hspace{0cm}
\begin{itemize}
\item[(i)]
We say that a $G$-variety $X$ is {\it $\cS$-adequate} if for any $\bar k$-point~$x$ of~$X$, the exact sequence
$$
1\longrightarrow N_x\longrightarrow G_x\longrightarrow H_x\longrightarrow 1
$$
belongs to $\cS$, where $N_x\subset G$ is the stabilizer of $x$ and $G_x\subset G$ is the normalizer of~$N_x$ in~$G$. \item[(ii)]
Let $R_{\cS}\subset K_0(\Var_k^{\eq})$ be the subgroup generated by elements of type
\begin{equation}\label{eq:first}
\{V\}^G-\{V/G\}\,,
\end{equation}
where $V$ is a $G$-variety with a free action of a group $G$, and by elements of type
$$
\{Y\}^{G_s}-\{C(N_{s})\times Y\}^{H_{s}}\,,
$$
where $s$ runs over elements of $\cS$, $Y$ is a smooth projective $H_s$-variety, the action of~$G_s$ on $Y$ is through~$H_s$, and~$H_s$ acts on~$C(N_s)$ by conjugation.
\end{itemize}
\end{defi}

Note that formula~\eqref{eq:first} does not depend on the collection~$\cS$.

Let ${\rho\colon K_0(\Var_k^{\eq})\to A}$ be a homomorphism to an arbitrary abelian group. Lemma~\ref{lemma:strat} implies that checking
the equality in $A$
\begin{equation}\label{eq:local}
\rho^G(X)=\rho(X/^{\ex}G)
\end{equation}
for every
$\cS$-adequate $G$-variety $X$ is equivalent
to checking this equality only for special $\cS$-adequate equivariant varieties, see Proposition~\ref{lemma:techno}$(i)$ below. Note that these special $\cS$-adequate equivariant varieties are not necessarily smooth projective, even if one considers only smooth projective equivariant varieties~$X$ as above.

On the other hand, one often defines a homomorphism $\rho$ only on classes of smooth projective equivariant varieties verifying the blow-up relations~\eqref{eq:GBittner} and the induction relations~\eqref{eq:eq1} (see Remark~\ref{rem:EgGr}(ii)). Thus it is useful to find conditions ``more in terms of'' smooth projective equivariant varieties that would guarantee equality~\eqref{eq:local}. We give in Proposition~\ref{lemma:techno}$(ii)$ such sufficient conditions in terms of $R_{\cS}$. The conditions do not seem to be necessary. However, they are satisfied in our main application, that is, for the categorical measure and the collection~$\cS_0$ (see Proposition~\ref{theor:catmeas}$(ii)$ and Definition~\ref{def:S0} below).

\begin{prop}\label{lemma:techno}
Let $\rho\colon K_0(\Var_k^{\eq})\to A$ be a homomorphism of abelian groups.
\begin{itemize}
\item[(i)]
There is an equality
${\rho^G(X)=\rho(X/^{\ex}G)}$ for every $\cS$-adequate $G$-variety $X$ if and only if there is an equality ${\rho^{G_s}(Y)=\rho(Y/^{\ex}G_s)}$ for
every $s\in \cS$ and every
$G_s$-variety $Y$ such that the action of $G_s$ on $Y$ is through a free action of $H_s$ on $Y$.
\item[(ii)]
Suppose that the homomorphism $\rho$ factors through the quotient $K_0(\Var_k^{\eq})/R_{\cS}$, that is, we have $\rho(R_{\cS})=0$. Then for
every $\cS$-adequate $G$-variety $X$, there is an equality ${\rho^G(X)=\rho(X/^{\ex}G)}$.
\end{itemize}
\end{prop}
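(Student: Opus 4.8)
The plan is to prove part $(i)$ directly from the stratification of Lemma~\ref{lemma:strat} together with Lemma~\ref{lem:iner}, and then to derive part $(ii)$ from part $(i)$. For part $(i)$ the forward implication is essentially formal: if $G_s$ acts on a variety $Y$ through a free action of $H_s$, then for every $\bar k$-point $y$ of $Y$ the stabilizer $N_y$ is exactly the kernel $N_s$ of $G_s\to H_s$, its normalizer $G_y$ in $G_s$ is all of $G_s$ since $N_s$ is normal, and the resulting extension $1\to N_y\to G_y\to H_y\to 1$ is $s$ itself; hence such a $Y$ is $\cS$-adequate and the general equality specializes to the desired one. For the converse I would take an $\cS$-adequate $G$-variety $X$, apply Lemma~\ref{lemma:strat} to write $\{X\}^G=\sum_{\lambda\in\Lambda}\{Y_\lambda\}^{G_\lambda}$ in $K_0(\Var_k^{\eq})$ with $G_\lambda$ acting on $Y_\lambda$ through a free action of $H_\lambda$, note that $\cS$-adequacy of $X$ forces the extension $1\to N_\lambda\to G_\lambda\to H_\lambda\to 1$ into $\cS$ for every nonempty stratum $Y_\lambda$ (inspect any $\bar k$-point of $Y_\lambda$), and conclude stratum by stratum that $\rho^{G_\lambda}(Y_\lambda)=\rho(Y_\lambda/^{\ex}G_\lambda)$. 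Summing over $\lambda$ gives $\rho^G(X)=\sum_\lambda\rho(Y_\lambda/^{\ex}G_\lambda)$; applying the ring endomorphism $\iner$ of Lemma~\ref{lem:iner}$(i)$ to the stratification identity and then the homomorphism $\gamma$ turns it, via formula~\eqref{eq:piiner}, into $\{X/^{\ex}G\}=\sum_\lambda\{Y_\lambda/^{\ex}G_\lambda\}$ in $K_0(\Var_k)$, and pushing this forward by $\rho$ identifies the two sums.

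For part $(ii)$, by part $(i)$ it is enough to verify $\rho^{G_s}(Y)=\rho(Y/^{\ex}G_s)$ for each $s\in\cS$, whose extension I write as $1\to N\to\widetilde{G}\to H\to 1$, and each $G_s$-variety $Y$ on which $G_s$ acts through a free action of $H$. I would route this through the intermediate class $\{C(N)\times Y\}^H$, where $H$ acts on $C(N)$ by conjugation and on $Y$ freely. Since the diagonal $H$-action on $C(N)\times Y$ is then also free, the element $\{C(N)\times Y\}^H-\{(C(N)\times Y)/H\}$ is one of the generators of $R_{\cS}$, so $\rho$ annihilates it; and by Example~\ref{ex:extquottriv} one has $(C(N)\times Y)/H=Y/^{\ex}\widetilde{G}$. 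Thus everything reduces to the identity $\rho\big(\{Y\}^{\widetilde{G}}\big)=\rho\big(\{C(N)\times Y\}^H\big)$.

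This last identity is where the real work lies, and I expect it to be the main obstacle, because the corresponding generator of $R_{\cS}$ is only available when $Y$ is smooth projective, whereas the $Y$ we need it for need not even be quasi-projective. To bridge this I would run a dévissage. Introduce the discrepancy $D(Y):=\rho\big(\{Y\}^{\widetilde{G}}\big)-\rho\big(\{C(N)\times Y\}^H\big)$, now defined for arbitrary $\widetilde{G}$-varieties $Y$ whose action is through $H$ (not necessarily freely); it is additive for $\widetilde{G}$-invariant scissors decompositions, satisfies $D(\varnothing)=0$, and vanishes on smooth projective $Y$ precisely because $\rho(R_{\cS})=0$. Then induct on $\dim Y$: stratify $Y$ into smooth affine $\widetilde{G}$-strata by Remark~\ref{rmk:Ggener}(i) (each still carrying an action through $H$), reduce by additivity and the inductive hypothesis to a single smooth affine stratum $Y'$, choose an $H$-equivariant smooth projective compactification $\overline{Y'}\supset Y'$ (available since $\operatorname{char}k=0$, cf. the proof of \cite[Lem.\,7.1]{Bit}), and use $0=D(\overline{Y'})=D(Y')+D(\overline{Y'}\smallsetminus Y')$ together with $\dim(\overline{Y'}\smallsetminus Y')<\dim Y'$ to obtain $D(Y')=0$. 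This gives $D\equiv 0$, and combining with the reduction of the previous paragraph yields $\rho^{\widetilde{G}}(Y)=\rho(Y/^{\ex}\widetilde{G})$, completing part $(ii)$. The whole scheme hinges on the fact that the second family of generators of $R_{\cS}$ imposes no freeness hypothesis on the $H$-action, so it persists through every compactification used in the dévissage.
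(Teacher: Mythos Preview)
Your proof is correct and follows the same overall architecture as the paper: part~$(i)$ via Lemma~\ref{lemma:strat} and the homomorphism $\gamma\circ\iner$, and part~$(ii)$ by reducing via part~$(i)$ to the special $Y$'s, then routing through $\{C(N_s)\times Y\}^{H_s}$ and Example~\ref{ex:extquottriv}.

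The one place you diverge is the step extending the identity $\rho\big(\{Y\}^{G_s}\big)=\rho\big(\{C(N_s)\times Y\}^{H_s}\big)$ from smooth projective $Y$ to arbitrary $Y$. You run an explicit dimension d\'evissage with compactifications. The paper does this in one line: the assignment
\[
\{Y\}^{H_s}\longmapsto \{Y\}^{G_s}-\{C(N_s)\times Y\}^{H_s}
\]
is a well-defined group homomorphism $K_0(\Var_k^{H_s})\to K_0(\Var_k^{\eq})$ (both terms are additive for scissors), and since $K_0(\Var_k^{H_s})$ is generated by classes of smooth projective $H_s$-varieties (Remark~\ref{rmk:Ggener}(iii)), the image of this homomorphism lies entirely in $R_{\cS}$. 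Your d\'evissage is effectively re-deriving this generation statement by hand; it works, but the paper's phrasing is shorter and avoids invoking equivariant compactification a second time.
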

\begin{proof}
$(i)$
This follows directly from Lemma~\ref{lemma:strat} and the fact that $\gamma\circ\iner$ is a group homomorphism (see formula~\eqref{eq:piiner}).

$(ii)$
For any $s\in \cS$, the map
$$
K_0(\Var^{H_s}_k)\longrightarrow K_0(\Var^{\eq}_k)\,,\qquad
\{Y\}^{H_s}\longmapsto \{Y\}^{G_s}-\{C(N_{s})\times Y\}^{H_{s}}\,,
$$
is a homomorphism of groups. Therefore, since $K_0(\Var_k^{H_s})$ is generated by classes of smooth projective $H_s$-varieties, we see that for any $H_s$-variety $Y$, not necessarily smooth projective, we have
$$
\{Y\}^{G_s}-\{C(N_{s})\times Y\}^{H_{s}}\in R_{\cS}\,.
$$
Furthermore, suppose that the action of $H_s$ on a variety $Y$ is free. Then the action of $H_s$ on~${C(N)\times Y}$ is free as well and by relation~\eqref{eq:first}, we have
$$
\{C(N_{s})\times Y\}^{H_{s}}-\big\{\big(C(N_s)\times Y\big)/H_s\big\}\in R_{\cS}\,.
$$
By Example~\ref{ex:extquottriv}, there is an equality
$$
\big(C(N_s)\times Y\big)/H_s=Y/^{\ex}G_{s}\,.
$$
Hence we obtain that
$$
\{Y\}^{G_s}-\{Y/^{\ex}G_{s}\}\in R_{\cS}
$$
when the action of $H_s$ on $Y$ is free. Now we finish the proof using part~$(i)$.
\end{proof}

\subsection{The Grothendieck ring of smooth projective DM-stacks}\label{subsect:GrothDMstacks}

One has an analog for DM-stacks of the group $K_0(\Var^{G,\smp}_k)$ (see Definition~\ref{defi:GBittner}). Note that there are ``more birational morphisms'' between DM-stacks than between varieties, because, in general, the root construction is not decomposed as a composition of blow-ups and blow-downs. Thus in the Bittner presentation for smooth projective DM-stacks one should consider stacky blow-ups (see Section~\ref{sec:not}).

\begin{defi}\label{defi:Grothstacks}
The {\it Grothendieck ring $K_0(\DM_k^{\smp})$ of smooth projective DM-stacks} is the abelian group generated by isomorphism classes $\{\cX\}$ of smooth projective DM-stacks $\cX$ over~$k$ subject to the {\it stacky blow-up relations}
\begin{equation}\label{eq:stackBittner}
\{\cX\} -\{\cZ\} = \{\widetilde{\cX}\}-\{\cE\} \,, \qquad \{\varnothing\} = 0\,,
\end{equation}
where $\widetilde{\cX}\to \cX$ is a stacky blow-up of a smooth projective DM-stack $\cX$ in a smooth center $\cZ\subset \cX$ and $\cE\subset\widetilde{\cX}$ is the exceptional divisor. Multiplication in $K_0(\DM_k^{\smp})$ is defined by the formula $\{\cX\}\cdot \{\cY\}:=\{\cX\times \cY\}$.
\end{defi}


As in Remark~\ref{remark:disjoint}, there is an equality in $K_0(\DM^{\smp}_k)$
$$
\{\cX\sqcup \cY\}=\{\cX\}+\{\cY\}
$$
for all smooth projective DM-stacks $\cX$ and $\cY$.

It follows from Remark~\ref{rem:EgGr}(ii) that one has a well-defined ring homomorphism
\begin{equation}\label{eq:manymaps}
\alpha\;:\;K_0(\Var^{\eq}_k)\longrightarrow K_0(\DM_k^{\smp})\,,\qquad \{X\}^G\longmapsto \{[X/G]\}\,,
\end{equation}
where $X$ is a smooth projective $G$-variety and $G$ is a finite group.

\begin{remark}\label{remark:alphabeta}
For a non-trivial finite group $G$, the composition
$$
K_0(\Var_k^G)\longrightarrow K_0(\Var_k^{\eq})\longrightarrow K_0(\DM^{\smp}_k)
$$
is not a homomorphism of rings (cf. Remark~\ref{rmk:multl}(ii)).
\end{remark}

The homomorphism $\gamma$ (see formula~\eqref{eq:pi}) equals to the composition of $\alpha$ with the ring homomorphism
$$
K_0(\DM^{\smp}_k)\longrightarrow K_0(\Var_k)\,,\qquad \{\cX\}\longmapsto \{\cX_{\,\cs}\}\,.
$$
Indeed, it is enough to check this on generators given by smooth projective equivariant varieties, in which case this is clearly true.

\medskip

The following theorem is crucial for us.

\begin{theo}\label{theo:comparison}
Let $V$ be a $G$-variety with a free action of a group $G$. Then the equality
$$
\alpha^G(V)=\alpha(V/G)
$$
holds in $K_0(\DM_k^{\smp})$.
\end{theo}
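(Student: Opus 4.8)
The plan is to reduce to the case where $V$ is a smooth quasi-projective $G$-variety, to compactify it $G$-equivariantly, and then to apply Bergh's destackification theorem together with weak factorization for algebraic spaces (``weak factorization for orbifolds'') to the quotient stack of the compactification; the leftover contributions are supported on the boundary, hence of strictly smaller dimension, and are absorbed by an induction on dimension.

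For a $G$-variety $V$ with free action put $\varepsilon(V):=\alpha^G(V)-\alpha(V/G)\in K_0(\DM_k^{\smp})$; this makes sense since $\alpha$ and $\alpha^G$ are defined on all of $K_0(\Var_k^{\eq})$, and it vanishes trivially when $V$ is smooth projective, because then $[V/G]$ is the smooth projective variety $V/G$. If $Z\subset V$ is a $G$-invariant closed subvariety, the scissors relation gives $\{V\}^G=\{V\setminus Z\}^G+\{Z\}^G$, while $V\to V/G$ is finite étale (the action being free), so $Z/G\subset V/G$ is closed with $(V\setminus Z)/G=(V/G)\setminus(Z/G)$, whence $\{V/G\}=\{(V\setminus Z)/G\}+\{Z/G\}$ and $\varepsilon(V)=\varepsilon(V\setminus Z)+\varepsilon(Z)$. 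Stratifying the algebraic space $V/G$ into smooth affine pieces (Knutson) and pulling back along $V\to V/G$ therefore reduces us to $V$ smooth quasi-projective with free $G$-action, and we argue by induction on $\dim V$, the case $\dim V\le 0$ being immediate.

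For the inductive step choose a $G$-equivariant smooth projective compactification $\overline V\supset V$ with $D:=\overline V\setminus V$ a $G$-invariant divisor of dimension $<\dim V$ (characteristic zero). Then $\{V\}^G=\{\overline V\}^G-\{D\}^G$ gives $\alpha^G(V)=\{[\overline V/G]\}-\alpha^G(D)$, and $\cX:=[\overline V/G]$ is a smooth projective DM-stack whose non-stacky locus contains $V/G$. Bergh's destackification theorem~\cite{Ber} then provides a composition of stacky blow-ups with smooth centres $\widetilde\cX\to\cX$ that is an isomorphism over $V/G$, such that $\widetilde\cX\to\widetilde\cX_{\,\cs}$ is a composition of root constructions with smooth centres and $W:=\widetilde\cX_{\,\cs}$ is a smooth projective variety carrying a projective birational morphism to $\overline V/G$ which is an isomorphism over $V/G$. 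Applying the stacky blow-up relations along both factorizations yields $\{[\overline V/G]\}=\{W\}+\Xi$ in $K_0(\DM_k^{\smp})$, where $\Xi=\sum_j(\{\cF_j\}-\{\cD_j\})-\sum_i(\{\cE_i\}-\{\cZ_i\})$ is a $\mathbb Z$-linear combination of classes of smooth projective DM-stacks ($\cZ_i,\cE_i$ the centres and exceptional divisors of the first factorization, $\cD_j,\cF_j$ those of the second), each mapping to the boundary $\overline V/G\setminus V/G$ and of dimension $<\dim V$. On the other hand $V/G$ is dense open in $W$ with complement $\partial$ of dimension $<\dim V$, so $\alpha(V/G)=\{W\}-\alpha(\partial)$, and hence
$$
\varepsilon(V)=\Xi+\alpha(\partial)-\alpha^G(D).
$$

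It remains to show $\alpha^G(D)=\Xi+\alpha(\partial)$, an identity among objects of dimension $<\dim V$. Stratifying the $G$-variety $D$ by stabilizer type as in Lemma~\ref{lemma:strat}, each piece is $\ind_{G_\lambda}^G(Y_\lambda)$ where $N_\lambda$ acts trivially on $Y_\lambda$ and $H_\lambda=G_\lambda/N_\lambda$ acts freely, so $[Y_\lambda/G_\lambda]\simeq(Y_\lambda/H_\lambda)\times BN_\lambda$: the inductive hypothesis identifies the free quotients $Y_\lambda/H_\lambda$ with their equivariant classes, the gerbe exceptional divisors of the root constructions (each the difference of a $\mu_r$-gerbe over a divisor and that divisor) account for the $BN_\lambda$-factors, and the centres $\cZ_i,\cD_j$ cancel against the resolution boundary $\partial$. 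I expect this final term-by-term matching to be the main obstacle: it is exactly the point where the precise output of destackification — that the stacky structure of $[\overline V/G]$ along $D$ is resolved, over the boundary, into a smooth variety together with controlled blow-ups and root constructions — must be reconciled with the decomposition of $\alpha^G(D)$ coming from the orbit-type stratification, and where all the geometry of Bergh's theorem is genuinely used.
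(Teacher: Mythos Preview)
Your overall strategy---reduce to smooth quasi-projective $V$, compactify equivariantly, and apply Bergh's destackification plus weak factorization---is exactly the one the paper uses. But the final step, the ``term-by-term matching'' you flag as the main obstacle, is a genuine gap, and the paper closes it with a device you have not found.

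The problem with your induction is structural. The correction term $\Xi$ is a $\Z$-linear combination of classes of genuine DM-stacks (centres and exceptional divisors of stacky blow-ups, including $\mu_r$-gerbes), not of classes $\alpha^{G'}(V')$ for $G'$-varieties $V'$ with free action. Your inductive hypothesis says nothing about such objects. The sketch in your last paragraph---that the orbit-type stratification of $D$ will line up with the specific gerbes produced by destackification---would require detailed control over exactly which blow-ups and root constructions the algorithm performs, and no such control is available: destackification is functorial but not canonical in a way that pins down the boundary contributions stratum by stratum.

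The paper avoids this entirely by introducing, for any standard pair of DM-stacks $(\cX,\cD)$, the element
\[
\{\cX,\cD\}:=\sum_{J\subset I}(-1)^{|J|}\{\cD_J\}\in K_0(\DM_k^{\smp}),
\]
where $\cD_J$ is the intersection of the components indexed by $J$. A direct computation (the Bittner argument, transplanted to stacks) shows that $\{\cX,\cD\}$ is invariant under any smooth stacky blow-up whose centre lies in $\cD$. This is the key lemma: once you have it, destackification gives a chain of such blow-ups from $([\overline V/G],[D/G])$ to a standard pair of varieties $(W,D')$, so $\{[\overline V/G],[D/G]\}=\{W,D'\}$, and the right-hand side equals $\alpha(V/G)$ by the classical Bittner presentation. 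A separate easy inclusion--exclusion shows $\alpha^G(V)=\{[\overline V/G],[D/G]\}$. No induction on dimension is needed, and the boundary never has to be matched term by term. (Two smaller issues: you need $D$ to be a \emph{simple normal crossing} divisor forming a $G$-equivariant standard pair, not merely $G$-invariant; and your $\varepsilon$-additivity argument assumes $Z/G$ is closed in $V/G$, which needs characteristic zero.)
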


Theorem~\ref{theo:comparison} is equivalent to saying that the homomorphism $\alpha$ sends to zero elements from formula~\eqref{eq:first}. We prove Theorem~\ref{theo:comparison} in Subsection~\ref{subsec:proofcomp}.

\medskip

\begin{remark}
It is natural to define the {\it Grothendieck ring $K_0(\DM_k)$ of DM-stacks over $k$} as the abelian group generated by isomorphism classes $\{\cX\}$ of DM-stacks $\cX$ over $k$ subject to the scissors relations
$$
\{\cX\} = \{\cX \smallsetminus \cZ\} + \{\cZ\}\,,
$$
where $\cZ$ is a closed substack of a DM-stack $\cX$.
Multiplication in $K_0(\DM_k)$ is induced by taking products of stacks.

There is a natural ring homomorphism (cf. formula~\eqref{eq:Bittner})
$$
\beta\;:\;K_0(\DM_k^{\smp})\longrightarrow K_0(\DM_k)\,,
$$
which is proven to be an isomorphism in a parallel work by Bergh~\cite[Theor.\,1.1]{Ber1}.

The proof in {\it op.~cit.} uses weak factorization for proper birational maps between smooth
DM-stacks which are quotient stacks. A full proof of weak factorization for DM-stacks has recently been given by Harper in~\cite{Harper} and independently by Rydh~(unpublished).

Note that Theorem~\ref{theo:comparison} follows easily from the fact that $\beta$ is an isomorphism. In this work, we do not need the full power of this result, but choose to provide a somewhat simpler proof, which is given in the next section.
\end{remark}

\subsection{Proof of Theorem~\ref{theo:comparison}}\label{subsec:proofcomp}

Let $G$ be a finite group.

\begin{defi}
A {\it standard pair of varieties} is a pair~$(X,D)$, where $X$ is a smooth projective variety and~$D$ is a simple normal crossing divisor on~$X$. A standard pair is {\it $G$-equivariant} if $X$ is a $G$-variety and $D$ is a union of $G$-invariant smooth (not necessarily connected) divisors. We define standard pairs of DM-stacks similarly, with smooth projective varieties being replaced by smooth projective stacks.
\end{defi}

Note the slight subtlety in the definition of a $G$-equivariant standard pair, namely, that~$D$ is not just required to be $G$-invariant. The rationale behind our definition is that we want $G$-equivariant standard pairs $(X, D)$ of varieties to give standard pairs $([X/G], [D/G])$ of the quotient stacks.

\begin{example}
\label{ex:eq-standard}
Let $X$ be the product $\PP^1\times\PP^1$, $G$ be the group $\mathbb{Z}/2\mathbb{Z}$ acting by permuting the multiples, and $D$ be the union $\PP^1\times\{0\}\cup\{0\}\times\PP^1$. Then $(X, D)$ is not a standard pair. However, the blow-up $\widetilde{X}$ of $X$ at the point $\{0\}\times\{0\}$ with the preimage $\widetilde{D}$ of $D$ form a \mbox{$G$-equivariant} standard pair.
\end{example}
The technique illustrated in Example~\ref{ex:eq-standard} generalizes to $G$-varieties of arbitrary dimension.

\begin{lemma}
\label{lem:eq-standpair}
Let $X$ be a smooth projective $G$-variety and let $D$ be a reduced, $G$-invariant simple normal crossing divisor on $X$.
Then there exists a birational modification $\pi\colon X' \to X$ such that $\pi$ is an isomorphism over the complement of $D$ and $\pi^{-1}D$ is a simple normal crossings divisor that is a union of $G$-invariant smooth divisors.
\end{lemma}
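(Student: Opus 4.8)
The plan is to reduce $D$ to a divisor all of whose irreducible components lie in \emph{pairwise-disjoint} $G$-orbits, by a sequence of $G$-equivariant blow-ups along disjoint unions of strata of $D$. First I would fix notation: write $D=\bigcup_{i\in J}D_i$ for the decomposition into irreducible components, so each $D_i$ is smooth and $G$ acts on the index set $J$ (this is what $G$-invariance of $D$ means); for $\sigma\subseteq J$ put $D_\sigma:=\bigcap_{i\in\sigma}D_i$, which is either empty or smooth of pure codimension $|\sigma|$ because $D$ has simple normal crossings. For a $G$-orbit $O\subseteq J$ let $r_O$ be the largest integer $r$ such that $D_\sigma\neq\varnothing$ for some $\sigma\subseteq O$ with $|\sigma|=r$. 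The point of this reduction is that the conclusion of the lemma holds for $D$ itself as soon as $r_O\leqslant 1$ for every orbit $O$: in that case the components lying in one orbit are pairwise disjoint, so their union is a smooth $G$-invariant divisor, and $\pi^{-1}D$ (with $\pi=\mathrm{id}$) is the union of these.

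The inductive step. Suppose some orbit $O$ has $r_O\geqslant 2$, and set $\Phi:=\{\sigma\subseteq O\mid |\sigma|=r_O,\ D_\sigma\neq\varnothing\}$. Maximality of $r_O$ forces $D_\sigma\cap D_\tau=D_{\sigma\cup\tau}=\varnothing$ for distinct $\sigma,\tau\in\Phi$, so $Z:=\coprod_{\sigma\in\Phi}D_\sigma$ is a smooth closed subvariety; it is $G$-invariant because $G$ maps $O$ to $O$ and preserves cardinalities of subsets. I would then blow up $Z$, obtaining a smooth projective $G$-variety $X'\to X$ which is an isomorphism over $X\smallsetminus D$ (as $Z\subseteq D$), whose total transform of $D$ is again a simple normal crossings divisor, equal to the union of the strict transforms $\widetilde D_i$ and the exceptional divisors $E_\sigma$, $\sigma\in\Phi$. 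The bookkeeping is then: (a) the $E_\sigma$ form new $G$-orbits of components, pairwise disjoint since the $D_\sigma$ are, so these orbits have $r=1$; (b) for the orbit $O$, every intersection of $r_O$ of the divisors $\widetilde D_i$, $i\in O$, is now empty --- either the corresponding $D_\tau$ was already empty, or $\tau\in\Phi$ and the $\widetilde D_i$, $i\in\tau$, meet $E_\tau$ in the $r_O$ coordinate hyperplanes of its $\PP^{r_O-1}$-fibres, whose common intersection is empty --- while $\bigcap_{i\in\tau}\widetilde D_i\subseteq\pi^{-1}(D_\tau)$ creates no new intersections, so $r_O$ drops strictly; (c) for any other orbit $O'$ one has $\bigcap_{i\in\tau}\widetilde D_i\neq\varnothing\iff D_\tau\neq\varnothing$ for $\tau\subseteq O'$, since no $\sigma\in\Phi$ can be contained in such a $\tau$ (that would force $\sigma\subseteq O\cap O'=\varnothing$), so $r_{O'}$ is unchanged; moreover any orbit that was already ``good'' stays good, because strict transforms of disjoint components are disjoint.

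Termination and conclusion. By (a)--(c) the non-negative integer $\sum_O\max(r_O-1,0)$, the sum running over the $G$-orbits of components of the current divisor, strictly decreases under each such blow-up, so after finitely many steps one reaches a divisor with $r_O\leqslant 1$ for every $O$. Composing all these blow-ups yields the desired $\pi\colon X'\to X$: it is an isomorphism over $X\smallsetminus D$; $\pi^{-1}D$ has simple normal crossings; and grouping the components of $\pi^{-1}D$ (strict transforms of the $D_i$ together with all exceptional divisors produced along the way) into $G$-orbits exhibits $\pi^{-1}D$ as a union of $G$-invariant smooth divisors.

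The only non-formal ingredient --- and the step I would be most careful about in a full write-up --- is the precise description in items (a)--(b) of how the stratification of a simple normal crossings divisor transforms under the blow-up of one of its strata (the local model being the blow-up of a coordinate linear subspace in affine space equipped with the coordinate-hyperplane divisor); everything else is formal manipulation of $G$-equivariant blow-ups of smooth projective varieties. This also makes precise the procedure of Example~\ref{ex:eq-standard}: there $O$ is the single two-element orbit, $r_O=2$, $\Phi=\{O\}$, $Z$ is the point $\{0\}\times\{0\}$, and a single blow-up suffices.
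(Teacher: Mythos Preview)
Your argument is correct, and it takes a genuinely different route from the paper's.  Both proofs proceed by iterated $G$-equivariant blow-ups in smooth strata of the divisor, but they choose the centers differently.  The paper ignores the orbit structure entirely: assuming $X$ has pure dimension $n$, it blows up at step $i$ the locus where $n-i$ irreducible components of the strict transform of $D$ meet (this is smooth because by induction no $n-i+1$ of them meet), and after $n-1$ steps the strict transform of $D$ has pairwise disjoint components, so is itself smooth and $G$-invariant; the exceptional divisors from each step are automatically $G$-invariant and smooth.  Your approach instead targets only the intersections that actually obstruct the conclusion, namely those among components lying in the \emph{same} $G$-orbit, and leaves intersections between different orbits untouched.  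This is more economical in principle (you may perform fewer blow-ups, and your centers may have smaller codimension), but the paper's procedure is slightly simpler to state and terminates in a fixed number of steps depending only on $\dim X$.  Your verification in~(c) that $r_{O'}$ is unchanged is the one place where a reader might want a word more: the point is that for $\tau\subseteq O'$ with $D_\tau\neq\varnothing$ one has $D_\tau\not\subseteq Z$, since locally near any point of $D_\tau\cap D_\sigma$ the two are transverse coordinate subspaces cut out by disjoint sets of hyperplanes, so $D_\tau\smallsetminus Z$ is nonempty and its preimage lies in $\bigcap_{i\in\tau}\widetilde D_i$.  With that said, the argument is complete.
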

\begin{proof}
We may without loss of generality assume that $X$ is of pure dimension $n$.
We construct $\pi$ as the composition
$$
X' = X_{n-1} \to \cdots \to X_0 = X
$$
of blow-ups in $G$-invariant smooth centers $Z_i \subset X_i$ as follows.
Let $D_i$ denote the strict transform of $D$ to $X_i$ and let $E_i$ denote
the exceptional locus of the composition $X_i \to X$.
Furthermore, we let $Z_i \subset X_i$ be the locus where $n-i$ of the irreducible components of $D_i$ meet.
By a simple induction argument one shows that at each step $i$, the divisor $E_i + D_i$ is a simple normal crossings divisor,
the divisor $E_i$ is a sum of smooth $G$-invariant divisors and the locus $Z_i$ is smooth, $G$-invariant and has normal crossings with $E_i + D_i$.
Here the key observation is that the locus where $n - i + 1$ of the irreducible components of $D_i$ meet is empty,
which makes $Z_i$ a disjoint union of intersections of irreducible components of~$D_i$.
In particular, after the last step the divisor $D_{n-1} = Z_{n-1}$ is smooth and $G$-invariant,
so $\pi^{-1}D = E_{n-1} + D_{n-1}$ has the desired properties.
\end{proof}

Let $(\cX,\cD)$ be a standard pair of DM-stacks. Let $I$ be the set of irreducible components~$\cD_i$, $i\in I$, of $\cD$. Given a (possibly, empty) subset $J\subset I$, put
\begin{equation}\label{eq:DJ}
\cD_{J}:=\mbox{$\bigcap\limits_{i \in J} \cD_i$}\,.
\end{equation}
In particular, we have that $\cD_{\varnothing}:=\cX$. Define the element in $K_0(\DM^{\smp}_k)$
$$
\{\cX, \cD\}:= \sum_{J\subset I} (-1)^{|J|}\{\cD_{J}\}\,,
$$
where $J$ runs over all subsets of $I$ including the empty set.

\begin{theo}\label{theo:excision}
Let $(\cX,\cD)$ be a standard pair of DM-stacks such that $U:=\cX\smallsetminus \cD$ is a (quasi-projective) variety. Then there is an equality in~$K_0(\DM_k^{\smp})$
$$
\{\cX,\cD\}=\alpha(U)\,.
$$
\end{theo}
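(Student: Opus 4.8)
The plan is to prove the equality $\{\cX, \cD\} = \alpha(U)$ by a double induction: first on the number of irreducible components of $\cD$, reducing to the case of a single smooth divisor, and then inside that case by using the stacky blow-up relations to pass between $\cX$ and $U$. The role of the hypothesis that $U = \cX \smallsetminus \cD$ is a variety (rather than merely an algebraic space or a genuine stack) is crucial: it means the stacky structure of $\cX$ is concentrated along $\cD$, and this is what lets one eventually eliminate the stackiness by repeated root constructions centered on the components of $\cD$.

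First I would treat the base case $I = \varnothing$, where $\cD = \varnothing$, so $\cX = U$ is a smooth projective variety and the statement reads $\{\cX\} = \alpha(\cX)$, which holds by the definition of $\alpha$ on generators. Next, for the inductive step on $|I|$, suppose $|I| \geq 1$ and pick a component $\cD_0$. One can attempt to write $\{\cX, \cD\}$ in terms of the pair $(\cX, \cD - \cD_0)$ and the pair $(\cD_0, \cD|_{\cD_0})$ obtained by restricting the remaining components to $\cD_0$: expanding the alternating sum over subsets $J$ according to whether $0 \in J$ gives
$$
\{\cX, \cD\} = \{\cX, \textstyle\bigcup_{i\neq 0}\cD_i\} - \{\cD_0, \textstyle\bigcup_{i\neq 0}(\cD_i \cap \cD_0)\}.
$$
By the inductive hypothesis applied to each of these two standard pairs — noting that $\cX \smallsetminus \bigcup_{i\neq 0}\cD_i$ contains $U$ as an open subvariety and that $\cD_0 \smallsetminus \bigcup_{i \neq 0}(\cD_i \cap \cD_0)$ is an open subvariety of the relevant complement — one reduces to showing $\{\cX, \cD_0\} = \alpha(U)$ for the single smooth divisor $\cD_0$ with $\cX \smallsetminus \cD_0$ a variety; indeed one then gets a telescoping identity matching the scissors relation $\{U\} = \{\cX \smallsetminus \cD_0\} - (\text{boundary contribution along }\cD_0)$ on the variety side, invoking Bittner's presentation (the isomorphism~\eqref{eq:Bittner} and its analog for $K_0(\DM_k^{\smp})$ implicit in Definition~\ref{defi:Grothstacks}).

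The heart of the matter, and what I expect to be the main obstacle, is the single-divisor case: $\cD \subset \cX$ smooth, $U = \cX \smallsetminus \cD$ a variety, and we must show $\{\cX\} - \{\cD\} = \alpha(U)$ in $K_0(\DM_k^{\smp})$. Here one uses the destackification theorem of Bergh~\cite{Ber}: since $U$ is a scheme, the stacky locus of $\cX$ is contained in $\cD$, and a sequence of stacky blow-ups with smooth centers supported over $\cD$ produces $\widetilde{\cX} \to \cX$ whose coarse space is a smooth algebraic space — in fact, since everything is projective and $U$ is already a variety, one arranges that $\widetilde{\cX}$ itself, after suitable root constructions along the preimage of $\cD$, becomes a variety $Y$ with a simple normal crossing boundary $E$ over $\cD$, with $Y \smallsetminus E \cong U$. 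Tracking the element $\{\cX, \cD\}$ through each stacky blow-up relation~\eqref{eq:stackBittner} — one must check that $\{-,-\}$ is invariant under blowing up a center lying inside $\cD$ (a standard verification with the projective bundle and root-gerbe formulas from Section~\ref{sec:not}, using that $\cD_J$ transforms compatibly) and likewise under root constructions along $\cD$ — one reduces to the already-known variety statement $\{Y, E\} = \alpha(U)$, which is Bittner's excision in $K_0(\Var_k)$ composed with $\alpha$. The delicate points are: (a) verifying that the alternating-sum invariant is unchanged by a root construction, where the exceptional divisor is a $\mu_r$-gerbe over the center and one needs the class of that gerbe in $K_0(\DM_k^{\smp})$ to cancel correctly against the center's class, and (b) organizing the destackification so that all centers are genuinely contained in (strict transforms of) $\cD$ and that the induction on discrepancy or stacky invariant in~\cite{Ber} terminates with $Y$ a variety rather than just an algebraic space — the latter being fine since a smooth projective algebraic space of the relevant kind with a variety as a dense open is a variety, or can be made one by a further blow-up with center in $E$.
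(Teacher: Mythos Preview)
Your inductive reduction on the number of irreducible components of $\cD$ has a genuine gap. When you remove one component $\cD_0$ and try to apply the inductive hypothesis to the pair $(\cX,\bigcup_{i\neq 0}\cD_i)$, you need the complement $\cX\smallsetminus\bigcup_{i\neq 0}\cD_i$ to be a variety. But this complement contains the locus $\cD_0\smallsetminus\bigcup_{i\neq 0}\cD_i$, which may carry genuine stacky structure: the hypothesis of the theorem only tells you that $\cX\smallsetminus\cD$ is a variety, not that the stackiness is concentrated along any single component. So the inductive hypothesis does not apply to the intermediate pairs, and the reduction to a single divisor breaks down. (Your remark that the complement ``contains $U$ as an open subvariety'' does not repair this: you need the entire complement to be a variety, not just a dense open in it.)

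The good news is that your ``heart of the matter'' analysis is essentially the entire proof, and it works directly for the full divisor $\cD$ without any induction on $|I|$. The paper proceeds exactly as you outline in your single-divisor discussion: apply the destackification theorem (Theorem~\ref{theor:destack}) to the full pair $(\cX,\cD)$, obtaining a smooth stacky blow-up sequence to a pair $(\cX',\cD')$ whose coarse space $(X,D)$ is already a standard pair of varieties, together with a further stacky blow-up sequence $(\cX',\cD')\to(X,D)$ consisting only of root constructions. The invariance of $\{\cX,\cD\}$ under a smooth stacky blow-up with center contained in $\cD$ is isolated as a separate lemma (Lemma~\ref{lemma:Bittner}); its proof handles classical blow-ups and root constructions simultaneously via the defining relation~\eqref{eq:stackBittner}, so your concern~(a) about cancellation for the $\mu_r$-gerbe is resolved automatically by that relation. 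Your concern~(b) is handled by the form of Theorem~\ref{theor:destack}, which guarantees directly that the coarse space is a standard pair of varieties. Once one has $\{\cX,\cD\}=\{\cX',\cD'\}=\{X,D\}$, the variety-level Bittner isomorphism gives $\{X,D\}=\alpha(U)$ and the proof is complete. In short: drop the induction on $|I|$ and run your destackification argument on the full pair.
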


Before we prove Theorem~\ref{theo:excision}, let us show how it implies Theorem~\ref{theo:comparison}. For this we use the following lemma.

\begin{lemma}\label{lem:stand}
Let $(X,D)$ be a $G$-equivariant standard pair of varieties and define a $G$-variety $V:=X\smallsetminus D$. Then there is an equality in $K_0(\DM^{\smp}_k)$
$$
\alpha^G(V)=\{\cX,\cD\}\,,
$$
where $\cX:=[X/G]$ and $\cD=[D/G]$.
\end{lemma}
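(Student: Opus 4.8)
The plan is to evaluate both sides by inclusion--exclusion in the relevant Grothendieck groups and then compare. Write the $G$-invariant divisor $D$ as a union $D=\bigcup_{i\in I_0}D_i$ of the given $G$-invariant smooth divisors, and for $J\subseteq I_0$ put $D_J:=\bigcap_{i\in J}D_i$, with the convention $D_\varnothing:=X$. Since $(X,D)$ is a standard pair, each $D_J$ is a smooth projective $G$-variety, one checks that $[D_J/G]=\bigcap_{i\in J}[D_i/G]$ is a smooth closed substack of $\cX=[X/G]$, and $\cD=\bigcup_{i\in I_0}[D_i/G]$ is simple normal crossing (this may be verified after pulling back along the smooth surjection $X\to\cX$), so that $(\cX,\cD)$ is a standard pair of DM-stacks and $\{\cX,\cD\}$ is defined.

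First I would work in $K_0(\Var_k^G)$. By the scissors relations $\{V\}^G=\{X\}^G-\{D\}^G$, and by the usual inclusion--exclusion identity for a union of closed subvarieties --- which follows from the two-set case $\{A\cup B\}^G=\{A\}^G+\{B\}^G-\{A\cap B\}^G$ (an immediate consequence of scissors applied to $A\cup B=A\sqcup(B\smallsetminus A\cap B)$) by induction on $|I_0|$ --- one gets $\{D\}^G=\sum_{\varnothing\ne J\subseteq I_0}(-1)^{|J|+1}\{D_J\}^G$. Hence
$$
\{V\}^G=\sum_{J\subseteq I_0}(-1)^{|J|}\{D_J\}^G
$$
already in $K_0(\Var_k^G)$, and therefore in $K_0(\Var_k^{\eq})$. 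Applying the group homomorphism $\alpha^G$ and using that each $D_J$ is smooth projective, so that $\alpha^G(D_J)=\{[D_J/G]\}$, I obtain $\alpha^G(V)=\sum_{J\subseteq I_0}(-1)^{|J|}\{[D_J/G]\}$ in $K_0(\DM_k^{\smp})$.

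It then remains to match this with $\{\cX,\cD\}=\sum_{J'\subseteq I}(-1)^{|J'|}\{\cD_{J'}\}$, where $I$ indexes the irreducible components $\cD_i$ of $\cD$. The point --- and the one slightly fiddly step, which I expect to be the main obstacle --- is that the alternating sum computing $\{\cX,\cD\}$ is unchanged if one replaces the decomposition of $\cD$ into its irreducible components by the coarser decomposition into the smooth pieces $[D_i/G]$. Indeed, two distinct irreducible components of a single smooth divisor $[D_i/G]$ are disjoint (because distinct irreducible components of the smooth variety $D_i$ are disjoint, and this is preserved by the finite quotient map), so any subset of $I$ containing at least two components of one and the same $[D_i/G]$ contributes $\{\varnothing\}=0$; after discarding these vanishing terms, the surviving subsets of $I$ are precisely the choices of at most one component in each $[D_i/G]$, and summing over such choices --- using the same disjointness once more --- regroups the sum into $\sum_{J\subseteq I_0}(-1)^{|J|}\{\bigcap_{i\in J}[D_i/G]\}=\sum_{J\subseteq I_0}(-1)^{|J|}\{[D_J/G]\}$. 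Comparing with the previous paragraph yields $\alpha^G(V)=\{\cX,\cD\}$. The entire argument is formal once the scissors relations are in hand; the only genuine care is needed in this last combinatorial comparison of the two normal-crossings decompositions of $\cD$.
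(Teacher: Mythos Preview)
Your proof is correct and follows the same inclusion--exclusion strategy as the paper, but the paper short-circuits precisely the ``fiddly step'' you flag at the end. Rather than using the given decomposition $D=\bigcup_{i\in I_0}D_i$ and then matching it combinatorially to the decomposition of $\cD$ into irreducible components, the paper simply \emph{chooses} the decomposition so that the two coincide: letting $K$ be the set of irreducible components of $D$ and $I:=K/G$, it sets $D_i$ to be the union of the components in the $G$-orbit $i\in I$. Because $(X,D)$ is a $G$-equivariant standard pair, each such orbit lies inside one of the given $G$-invariant smooth divisors, hence its members are pairwise disjoint and $D_i$ is smooth; then $[D_i/G]$ is exactly the irreducible component $\cD_i$, so $\cD_J=[D_J/G]$ on the nose and the two alternating sums are literally identical. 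Your longer route works too, but note that your regrouping tacitly assumes that no irreducible component of $\cD$ lies in two different $[D_i/G]$ (equivalently, distinct $D_i$ share no irreducible component of $D$); this is harmless and can be arranged WLOG, but it is precisely what the paper's choice of decomposition builds in from the start.
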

\begin{proof}
Let $K$ be the set of irreducible components of $D$. Then the group $G$ acts naturally on $K$ and $I:=K/G$ is the set of irreducible components of~$\cD$. Furthermore, for each~${i\in I}$, we have $\cD_i\simeq [D_i/G]$, where $D_i$ is the union of irreducible components of $D$ from the corresponding $G$-orbit in $K$ (note that these components do not intersect with each other and the union is disjoint). Besides, for each subset $J\subset I$, we have $\cD_J\simeq [D_J/G]$, where, similarly as above, we put $D_J:=\bigcap\limits_{i\in J}D_i$.

Since $D=\bigcup\limits_{i\in I}D_i$, by the inclusion-exclusion principle, we have the equality in~$K_0(\Var^{\eq}_k)$
$$
\{V\}^G=\{X\}^G-\{D\}^G=\sum_{J\subset I} (-1)^{|J|}\{D_{J}\}^G\,.
$$
Thus, applying $\alpha$, we obtain the needed equality.
\end{proof}

\begin{proof}[Proof of Theorem~\ref{theo:comparison}]
By Remark~\ref{rmk:Ggener}(i), we may assume that $V$ is a smooth affine \mbox{$G$-variety} with a free action of $G$. Then there is a $G$-equivariant standard pair of varieties $(X,D)$ with a $G$-equivariant isomorphism $V\simeq X\smallsetminus D$. Indeed, this follows from the proof of~\cite[Lem.\,7.1]{Bit} and Lemma~\ref{lem:eq-standpair} (cf. Remark~\ref{rmk:Ggener}(iii)).

We obtain a standard pair of DM-stacks $([X/G],[D/G])$. (Note that $G$ may not act freely on the compactification $X$ of~$V$, so~$[X/G]$ will usually be a genuine DM-stack.) It remains to apply Lemma~\ref{lem:stand} and Theorem~\ref{theo:excision} with $\cX=[X/G]$, $\cD=[D/G]$, and~${U=V/G}$.
\end{proof}

\medskip

Now let us prove Theorem~\ref{theo:excision}. Firstly, let us recall the destackification theorem from the paper by Bergh~\cite{Ber}. For this we introduce some more notation. Let~$(\cX,\cD)$ be a standard pair of DM-stacks and let $\cZ\subset \cX$ be an irreducible smooth closed substack. Let $I$ be the set of irreducible components~$\cD_i$, $i\in I$, of $\cD$. Suppose that $\cZ$ has {\it simple normal crossings} with~$\cD$, that is, for any subset $J \subseteq I$, the intersection (defined as a fibred product) of $\cZ$ with $\cD_J$ is smooth (see formula~\eqref{eq:DJ} for~$\cD_J$). Let~${\widetilde{\cX}\to\cX}$ be a stacky blow-up of $\cX$ in~$\cZ$ (according to our terminology,~$\widetilde{\cX}$ is a root stack when $\cZ$ is a divisor), let $\widetilde{\cD}\subset \widetilde{\cX}$ be the strict transform of~$\cD$, and let~${\cE\subset \widetilde{\cX}}$ be the exceptional divisor. Then $\widetilde{\cD}\cup\cE$ is a simple normal crossing divisor on~$\widetilde{\cX}$, that is, $(\widetilde{\cX},\widetilde{\cD}\cup\cE)$ is a standard pair of DM-stacks.

We denote a transform as above by $(\widetilde{\cX},\widetilde{\cD}\cup \cE)\to (\cX,\cD)$ and call it a {\it smooth stacky blow-up} of the standard pair of DM-stacks~$(\cX,\cD)$. Note that the locus over $\cX\smallsetminus \cD$ remains unmodified if and only if we have $\cZ\subset\cD$.

More generally, we denote a sequence of such transforms by
$$
(\cX',\cD')\longrightarrow (\cX,\cD)
$$
and call it a {\it smooth stacky blow-up sequence}.
Note also that if such a blow-up sequence does not modify the locus over $\cX\smallsetminus\cD$,
then we have an isomorphism $\cX_{i+1}\smallsetminus\cD_{i+1} \simeq \cX_i\smallsetminus\cD_i$ for each step ${(\cX_{i+1},\cD_{i+1})\to (\cX_i,\cD_i)}$ in the sequence, where $0\leqslant i\leqslant n-1$ with $n$ being the number of steps, $(\cX_0,\cD_0)=(\cX,\cD)$, and $(\cX_n,\cD_n)=(\cX',\cD')$.

\medskip

The following destackification theorem is a combination of~\cite[Theor.\,1.2]{Ber} and a discussion before~\cite[Cor.\,1.4]{Ber}.

\begin{theo}\label{theor:destack}
Let $(\cX,\cD)$ be a standard pair of DM-stacks such that $\cX\smallsetminus\cD$ is a variety.
Then there is a smooth stacky blow-up sequence of standard pairs of DM-stacks ${(\cX',\cD')\to (\cX,\cD)}$ such that
\begin{itemize}
\item[(i)]
$(X,D):=(\cX'_{\,\cs},\cD'_{\,\cs})$ is a standard pair of varieties;
\item[(ii)]
the canonical morphism $(\cX',\cD')\to (X,D)$ is a smooth stacky blow-up sequence of standard pairs of DM-stacks (actually, only root constructions are involved here);
\item[(iii)]
the locus over $\cX\smallsetminus\cD$ remains unmodified, that is, there are natural isomorphisms of varieties
$$
\cX\smallsetminus \cD\simeq \cX'\smallsetminus\cD'\simeq X\smallsetminus D\,.
$$
\end{itemize}
\end{theo}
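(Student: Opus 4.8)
The plan is to deduce Theorem~\ref{theor:destack} from the destackification theorem of Bergh~\cite[Theor.\,1.2]{Ber} together with the refinement explained in {\it op.~cit.} before Corollary~1.4; the substance of destackification — the analysis of the local structure of $\cX$ along its stacky locus and the resolving sequence of stacky star subdivisions — is carried out in \cite{Ber}, so here I would only verify that our input falls under its hypotheses and that the three asserted properties come out.

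First I would check admissibility of the input. A standard pair of DM-stacks $(\cX,\cD)$ has $\cX$ smooth and projective, hence $\cX$ is smooth, of finite type, with finite inertia, separated (a smooth projective, thus quasi-projective, DM-stack is separated, as recalled in Section~\ref{sec:not}), and admits a projective coarse space $\cX_{\,\cs}$ by Keel--Mori; and $\cD$ is a simple normal crossings divisor, which is exactly the boundary datum that Bergh's algorithm keeps track of. The decisive point is that $U:=\cX\smallsetminus\cD$ is a variety: then the inertia of $\cX$ is trivial over $U$, so the stacky locus of $\cX$ is contained in $\cD$; in particular $\cX$ is an orbifold and the locus where $\cX\to\cX_{\,\cs}$ fails to be an isomorphism is concentrated over $\cD$, so \cite[Theor.\,1.2]{Ber} applies. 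Bergh's blow-up sequence only blows up smooth centers lying in the successive stacky loci and in normal crossings position with the successive divisors, so, inductively, each center lies over $\cD$; therefore the whole sequence $(\cX',\cD')\to(\cX,\cD)$ — with $\cD'$ taken to be the strict transform of $\cD$ together with all the exceptional divisors accumulated along the way — is a smooth stacky blow-up sequence of standard pairs that is an isomorphism over $U$, which gives the first isomorphism in~(iii).

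Next I would read off~(i) and~(ii). The output of \cite[Theor.\,1.2]{Ber} is precisely that $\cX'$ has a smooth coarse space $X:=\cX'_{\,\cs}$ and that the canonical map $\cX'\to X$ is an iterated root construction with smooth centers along a simple normal crossings divisor of $X$; this is~(ii), and in particular only root constructions are involved in going from $\cX'$ down to $X$. That $X$ is in fact a smooth projective variety follows because root constructions leave the coarse space unchanged and ordinary blow-ups induce ordinary blow-ups of coarse spaces, so $X$ is obtained from $\cX_{\,\cs}$ by a sequence of blow-ups in smooth centers, hence is smooth projective. It remains to see that $D:=\cD'_{\,\cs}$ is simple normal crossings on $X$; this is the content of the discussion before \cite[Cor.\,1.4]{Ber}: the coarse space of a root stack along a smooth divisor in normal crossings position is canonically the base, the exceptional $\mu_r$-gerbes descend to honest smooth divisors, and the formation of coarse spaces is compatible with restriction to the snc strata — so the snc divisor $\cD'$ on $\cX'$, whose components are strict transforms or such exceptional divisors, pushes forward to an snc divisor $D$ on $X$, yielding~(i). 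The second isomorphism in~(iii) then follows because $\cX'\to X$ is an isomorphism away from $\cD'$, which maps onto $D$, compatibly with the first isomorphism by construction.

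The only genuine work beyond citing \cite{Ber} is the bookkeeping in~(i): confirming that the simple-normal-crossings condition and the projectivity of the coarse space survive the entire sequence of root constructions and ordinary blow-ups, uniformly across all intersection strata. This is exactly what forces one to run destackification for standard pairs rather than for bare stacks, and it is established in \cite{Ber}; in the write-up this reduces to invoking the relevant statements of {\it op.~cit.} and matching terminology, so I do not expect any new difficulty there.
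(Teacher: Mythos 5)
Your proposal is correct and follows essentially the same route as the paper: the paper offers no proof of Theorem~\ref{theor:destack} at all, stating only that it is ``a combination of \cite[Theor.\,1.2]{Ber} and a discussion before \cite[Cor.\,1.4]{Ber}'', which is precisely the reduction you carry out. Your additional checks (that $\cX\smallsetminus\cD$ being a variety forces the stacky locus into $\cD$ so that the centers stay over $\cD$, and that projectivity and the snc condition descend to the coarse space) are exactly the implicit bookkeeping the authors leave to the reader.
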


One can say that Theorem~\ref{theor:destack} gives an explicit way to pass from a stacky smooth compactification $(\cX,\cD)$ of the variety $\cX\smallsetminus\cD$ to a non-stacky one $(X,D)$ by a smooth stacky blow-up sequence.

\medskip

Secondly, we investigate how the element $\{\cX,\cD\}\in K_0(\DM_k^{\smp})$ is transformed by a smooth stacky blow-up of a projective standard pair $(\cX,\cD)$. The proof of the next lemma repeats a part of Bittner's proof of~\cite[Theor.\,3.1]{Bit} with varieties being replaced by DM-stacks. We provide it here for convenience of the reader.

\begin{lemma}\label{lemma:Bittner}
Let $(\cX,\cD)$ be a projective standard pair of DM-stacks, $\cZ\subset \cD$ be an irreducible smooth closed substack that has simple normal crossings with $\cD$, and let ${(\widetilde{\cX},\widetilde{\cD}\cup\cE)\to (\cX,\cD)}$ be a smooth stacky blow-up with center $\cZ$, where $\widetilde{\cD}$ is the strict transform of~$\cD$ and~$\cE$ is the exceptional divisor. Then there is an equality in $K_0(\DM^{\smp}_k)$
$$
\{\cX,\cD\}=\{\widetilde{\cX}, \widetilde{\cD}\cup\cE\}\,.
$$
\end{lemma}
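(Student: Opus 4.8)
The plan is to reduce the statement to the known behaviour of the classes $\{\cD_J\}$ under blow-up, exactly as in Bittner's original argument but carried out for DM-stacks. Write $I$ for the index set of irreducible components of $\cD$, and let $\cZ\subset\cD$ be the center. Since $\cZ$ is irreducible and has simple normal crossings with $\cD$, there is a subset $I_0\subseteq I$ consisting of exactly those components $\cD_i$ that contain $\cZ$; equivalently, for $i\in I_0$ the intersection $\cZ\cap\cD_i=\cZ$, while for $i\notin I_0$ the substack $\cZ\cap\cD_i$ is a proper smooth closed substack of $\cZ$ of strictly smaller dimension along each component. First I would organize the sum $\{\cX,\cD\}=\sum_{J\subseteq I}(-1)^{|J|}\{\cD_J\}$ by splitting each $J$ as $J=J_0\sqcup J_1$ with $J_0=J\cap I_0$ and $J_1=J\setminus I_0$, and do the analogous bookkeeping on $\widetilde{\cX}$, where the components of $\widetilde{\cD}\cup\cE$ are the strict transforms $\widetilde{\cD}_i$ ($i\in I$) together with the single new component $\cE$.

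The key step is the local comparison: for each subset $J\subseteq I$, I would compare $\{\cD_J\}$ with the contributions of $\{\widetilde{\cD}_J\}$ and of the intersections involving $\cE$. Concretely, the strict transform $\widetilde{\cD}_J\to\cD_J$ is a stacky blow-up of $\cD_J$ along $\cZ\cap\cD_J$, whose exceptional divisor is $\cE\cap\widetilde{\cD}_J$. Applying the stacky blow-up relation \eqref{eq:stackBittner} to this blow-up gives
$$
\{\cD_J\}-\{\cZ\cap\cD_J\}=\{\widetilde{\cD}_J\}-\{\cE\cap\widetilde{\cD}_J\}\,.
$$
Moreover $\cE\cap\widetilde{\cD}_J$ is a projective bundle (or an iterated root stack over one, if $\cZ$ is a divisor and we are doing a root construction) over $\cZ\cap\cD_J$, whose class is governed by the projective bundle / root-construction formula. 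Substituting these relations into the alternating sum $\{\widetilde{\cX},\widetilde{\cD}\cup\cE\}=\sum_{J'\subseteq I\cup\{*\}}(-1)^{|J'|}\{\widetilde{\cD\cup\cE}_{J'}\}$ and grouping terms according to whether $*\in J'$ (i.e. whether the intersection meets $\cE$), the $\cE$-terms should telescope against the correction terms $\{\cZ\cap\cD_J\}$, and the remaining terms reassemble to $\{\cX,\cD\}$. The combinatorial identity underlying the cancellation is the binomial identity $\sum_{K\subseteq I_0}(-1)^{|K|}=0$ whenever $I_0\neq\varnothing$, which is exactly what makes the "extra" $\cZ$-contributions vanish; here $I_0\neq\varnothing$ because $\cZ\subseteq\cD$, so $\cZ$ lies in at least one component $\cD_i$.

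I expect the main obstacle to be the careful treatment of the terms involving $\cE$ in the case where the stacky blow-up is a \emph{root construction} rather than an ordinary blow-up. In that case $\cZ=\cD_{i_0}$ is itself a divisor, $\cE\to\cZ$ is a $\mu_r$-gerbe rather than a $\PP^{c-1}$-bundle, and one must check that the class $\{\cE\cap\widetilde{\cD}_J\}$ in $K_0(\DM_k^{\smp})$ still matches $\{\cZ\cap\cD_J\}$ in the relevant combination — here one uses precisely the stacky blow-up relation \eqref{eq:stackBittner} applied to the root stack of $\cD_J$ in $\cZ\cap\cD_J$, together with the fact that a root stack leaves the coarse space unchanged and that its own defining relation is built into $K_0(\DM_k^{\smp})$. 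A secondary point requiring care is bounding the number of components of $\cD$ meeting $\cZ$: because $\cZ$ has normal crossings with $\cD$ and is irreducible, I need to verify that the "how many components contain $\cZ$" count $|I_0|$ is well-defined and at least $1$, so that the telescoping sum really does collapse. Once these two technical points are dispatched, the rest is the same bookkeeping as in Bittner's proof, and the equality $\{\cX,\cD\}=\{\widetilde{\cX},\widetilde{\cD}\cup\cE\}$ follows.
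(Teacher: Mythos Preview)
Your proposal is correct and follows essentially the same route as the paper: apply the stacky blow-up relation to each $\cD_J$ to obtain $\{\cD_J\}-\{\cZ\cap\cD_J\}=\{\widetilde{\cD}_J\}-\{\cE\cap\widetilde{\cD}_J\}$, substitute into the two alternating sums, and then show $\sum_{J\subseteq I}(-1)^{|J|}\{\cZ\cap\cD_J\}=0$ by pairing $J$ with $J\cup\{i_0\}$ for a single index $i_0$ with $\cZ\subset\cD_{i_0}$. Your concerns about the root-construction case and about projective-bundle formulas are unnecessary over-complications: the defining relation~\eqref{eq:stackBittner} already covers both kinds of stacky blow-up uniformly (so no separate case analysis and no explicit formula for $\{\cE\cap\widetilde{\cD}_J\}$ is required), and a single $i_0$ suffices for the cancellation rather than the full set $I_0$.
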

\begin{proof}
Let $I$ be the set of irreducible components $\cD_i$, $i\in I$, of $\cD$.
The irreducible components of $\widetilde{\cD}$ are the strict transforms $\widetilde{\cD}_i$ of $\cD_i$ for $i\in I$.

Take a possibly empty
subset $J\subset I$ and put
$$
\cZ_J:=\cZ\cap \cD_J\,,\qquad \widetilde{\cD}_J:=\mbox{$\bigcap\limits_{i\in J}\widetilde{\cD}_i$}\,.
$$
(As usual, we put $\cZ_{\varnothing}:=\cZ$ and $\widetilde{\cD}_{\varnothing}:=\widetilde{\cX}$.) Then $\widetilde{\cD}_{J}$ is a stacky blow-up of the smooth projective DM-stack $\cD_{J}$ in~$\cZ_{J}$ and~$\widetilde{\cD}_J\cap \cE$ is the exceptional divisor of this stacky blow-up (note that the blow-up of a DM-stack in itself is empty). Therefore we have an equality in~$K_0(\DM_k^{\smp})$
$$
\{\cD_{J}\}-\{\cZ_J\}=\{\widetilde{\cD}_{J}\}-\{\widetilde{\cD}_J\cap\cE\}\,.
$$

On the other hand, we have equalities
$$
\{\cX,\cD\}=\sum_{J\subset I}(-1)^{|J|}\{\cD_J\}\,,\qquad \{\widetilde{\cX},\widetilde{\cD}\cup\cE\}=\sum_{J\subset I} (-1)^{|J|}\big(\{\widetilde{\cD}_{J}\}-\{\widetilde{\cD}_J\cap\cE\}\big)\,.
$$
Thus it is enough to show that the sum $\sum_J (-1)^{|J|}\{\cZ_J\}$ equals zero. Since $\cZ\subset \cD$ and $\cZ$ is irreducible, there is $i_0\in I$ such that $\cD_{i_0}$ contains $\cZ$. Then for any subset $L\subset I\smallsetminus\{i_0\}$, we have that $\cZ_L=\cZ_{L\,\cup\,\{i_0\}}$, whence $\{\cZ_{L}\}-\{\cZ_{L\,\cup\,\{i_0\}}\}=0$. Since all subsets $J\subset I$ are grouped into pairs $(L,L\cup\{i_0\})$, where $L\subset  I\smallsetminus\{i_0\}$, this finishes the proof.
\end{proof}

\medskip

Finally, we are ready to prove Theorem~\ref{theo:excision}.

\begin{proof}[Proof of Theorem~\ref{theo:excision}]
Apply Theorem~\ref{theor:destack} to the standard pair of \mbox{DM-stacks}~$(\cX,\cD)$, getting a standard pair of DM-stacks $(\cX',\cD')$ whose coarse space is a standard pair of varieties~$(X,D)$. Since the locus over~${\cX\smallsetminus \cD}$ remains unmodified, the centers of all smooth stacky blow-ups involved are contained in the corresponding simple normal crossing divisors. Thus, by Lemma~\ref{lemma:Bittner}, we have the equalities ${\{X,D\}=\{\cX',\cD'\}=\{\cX,\cD\}}$ in $K_0(\DM_k^{\smp})$. On the other hand, the isomorphism $K_0(\Var_k^{\smp})\simeq K_0(\Var_k)$ (see formula~\eqref{eq:Bittner} and
the discussion after~it) implies the equality $\alpha(U)=\{X,D\}$, which finishes the proof.
\end{proof}

\section{The categorical measure}\label{sec:cat}

\subsection{The Grothendieck ring of geometric dg-categories}\label{subsec:Grothdg}

We start by reviewing dg-categories. A general reference is Keller's paper~\cite{Ke}; see also a survey by Kuznetsov and Lunts in~\cite[\S\,3]{KL}. Let $\cM$ be a dg-category. We denote its homotopy category by $H^0(\cM)$, that is, the category whose objects are the same as the objects in $\cM$, and whose hom-sets are formed by the zeroth cohomology groups of the morphism-complexes in $\cM$. A dg-functor $\Phi\colon\cM\to\cN$ is a {\it quasi-equivalence} if~$\Phi$ induces quasi-isomorphisms on the morphism-complexes and the functor ${H^0(\Phi)\colon H^0(\cM)\to H^0(\cN)}$ is an equivalence of categories (in fact, under the first condition, it is enough to require that~$H^0(\Phi)$ is essentially surjective). Two dg-categories $\cM$ and~$\cN$ are {\it quasi-equivalent} if they are connected by a chain of quasi-equivalences, that is, if there is a diagram of dg-categories ${\cM\leftarrow \cQ_1\to\ldots\leftarrow \cQ_n\to \cN}$, where all arrows are dg-functors that are quasi-equivalences.


Let $\text{Mod-}\cM$ be the dg-category of right dg-modules over $\cM$, that is, contravariant \mbox{dg-func\-tors} from $\cM$ to the dg-category of complexes of $k$-vector spaces. We have a natural \mbox{dg-Yoneda} embedding $\cM\to\text{Mod-}\cM$, which induces a fully faithful functor ${H^0(\cM) \to H^0(\text{Mod-}\cM)}$. Note that the category $H^0(\text{Mod-}\cM)$ has a canonical triangulated structure. A right dg-module over $\cM$ is {\it perfect} if it, when considered as an object in~$H^0(\text{Mod-}\cM)$, lies in the smallest triangulated subcategory of~$H^0(\text{Mod-}\cM)$ closed under direct summands and containing~$H^0(\cM)$.

A~\mbox{dg-category}~$\cM$ is \mbox{{\it pre-tri\-an\-gu\-lated}} if $H^0(\cM)$ is a triangulated subcategory of~$H^0(\text{Mod-}\cM)$. A dg-category $\cM$ is {\it proper} if for any two objects of $\cM$, the total cohomology of their morphism-complex is finite-dimensional. A~\mbox{dg-category}~$\cM$ is {\it smooth} if the diagonal \mbox{$\cM$-bimodule} is perfect in $\text{Bimod-}\cM=\text{Mod-}(\cM^{\rm op}\otimes_k \cM)$, where the diagonal \mbox{$\cM$-bimodule} sends a pair of objects in $\cM$ to their morphism-complex. A dg-category~$\cM$ is called {\it saturated} if the homotopy category $H^0(\cM)$ is idempotent complete and $\cM$ is pre-triangulated, proper, and smooth. Note that all these properties are preserved under quasi-equivalences.

\medskip

Similarly, a triangulated category $T$ is {\it proper} if for any two objects of $T$, the graded morphism-space is finite-dimensional. Following Bondal and Kapranov~\cite[Def.\,2.5]{BK}, we say that a triangulated category $T$ is {\it saturated} if $T$ is proper and all covariant and contravariant cohomological functors from~$T$ to the category of finite-dimensional graded $k$-vector spaces are representable and corepresentable, respectively.

For any saturated dg-category $\cM$, its homotopy category $H^0(\cM)$ is saturated as a triangulated category. Indeed, since $\cM$ is smooth, the class of the diagonal bimodule in~$H^0(\text{Bimod-}\cM)$ is a direct summand of an object $P$ in the triangulated subcategory of~$H^0(\text{Bimod-}\cM)$ generated by $H^0(\cM^{\rm op}\otimes_k \cM)$. Explicitly, this means that there is a finite filtration of $P$ (in a triangulated sense) whose adjoint quotients are classes of representable $\cM$-bimodules $E_i\otimes F_i$, where $E_i$, $F_i$ are objects of $\cM$ and ${1\leqslant i\leqslant n}$ with $n$ being the length of the filtration. Taking the action of bimodules on modules and using properness of $\cM$, we see that any object of $H^0(\cM)$ can be obtained from the class of the object $E:=\bigoplus_i E_i$ by taking finite direct sums, direct summands, and at most $n-1$ cones. In other words,~$E$ is a {\it strong generator} of~$H^0(\cM)$. Since~$H^0(\cM)$ is idempotent complete and proper, we have that the triangulated category~$H^0(\cM)$ is saturated by a result of Bondal and Van den Bergh~\cite[Theor.\,1.3]{BvdB}.

\medskip

A {\it semiorthogonal decomposition} of a triangulated category $T$ is an ordered pair of full triangulated subcategories $M,N\subset T$ such that there are no non-zero morphisms from objects of $N$ to objects of $M$, and such that $T$ coincides with the smallest triangulated subcategory containing~$M$ and $N$; one denotes this by $T=\langle M,N\rangle$. A full triangulated subcategory $N$ of a triangulated category $T$ is {\it admissible} if the embedding functor $N\to T$ has a left and a right adjoint functors. In this case, we have semiorthogonal decompositions ${T=\langle N,{}^{\bot}N\rangle=\langle N^{\bot},N\rangle}$. Also, note that the composition of the embedding functor $N\to T$ with the either left or right adjoint functor $T\to N$ is isomorphic to the identity functor on~$N$.

The following fact is well known to experts and we do not pretend to any originality
either in its statement or in its proof. We include this fact here as it can be viewed as a natural reason to consider semiorthogonal decompositions and admissible subcategories in the context of saturated dg-categories.

\begin{prop}\label{prop:dg}
Let $\cT$ be a saturated dg-category and put $T:=H^0(\cT)$. Let $\cM$ be a full dg-subcategory in $\cT$ and let $M:=H^0(\cM)$ be the corresponding full subcategory in $T$. Then the following conditions are equivalent:
\begin{itemize}
\item[(i)]
the dg-category $\cM$ is saturated;
\item[(ii)]
we have that $M$ is a triangulated admissible subcategory of $T$;
\item[(iii)]
we have that $M$ is a triangulated subcategory of $T$ and there is a semiorthogonal decomposition $T=\langle M,N\rangle$.
\end{itemize}
\end{prop}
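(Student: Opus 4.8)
The plan is to establish the cyclic chain of implications $(i)\Rightarrow(ii)\Rightarrow(iii)\Rightarrow(i)$. The implication $(ii)\Rightarrow(iii)$ is immediate from the properties of admissible subcategories recalled just above: if $M$ is admissible, one takes $N:={}^{\bot}M$, so that $T=\langle M,N\rangle$. The two remaining implications carry all the content.

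For $(i)\Rightarrow(ii)$ I would argue as follows. If $\cM$ is saturated then, as recalled in the paragraph preceding the proposition, $M=H^{0}(\cM)$ is a saturated triangulated category; in particular $M$ is a triangulated subcategory of $T$, since a cone of a morphism between objects of $\cM$ formed in $T$ is again representable by an object of $\cM$ (using that $\cM$ is pre-triangulated and full in $\cT$). To produce the adjoints to the inclusion $i\colon M\hookrightarrow T$, fix $t\in T$ and consider the contravariant cohomological functor on $M$ sending $m$ to the graded space $\Hom^{\bullet}_{T}(m,t)$; it takes values in finite-dimensional graded vector spaces because $\cT$, hence $T$, is proper. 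By saturation of $M$ this functor is corepresentable, say by $R(t)\in M$, and Yoneda turns $t\mapsto R(t)$ into a right adjoint of $i$. Applying the same reasoning to $m\mapsto\Hom^{\bullet}_{T}(t,m)$ gives a left adjoint. Hence $M$ is admissible.

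The main work is $(iii)\Rightarrow(i)$, where I would verify the four defining properties of a saturated dg-category for $\cM$. Properness is free: the morphism-complexes of $\cM$ coincide with those of $\cT$. For idempotent completeness of $H^{0}(\cM)$, note that $T$ is idempotent complete (being the homotopy category of a saturated dg-category) and that the semiorthogonal decomposition $T=\langle M,N\rangle$ provides a left adjoint $\pi\colon T\to M$ to $i$ with $\pi\circ i\simeq\id_{M}$; thus $M$ is a retract of $T$, and therefore idempotent complete. For pre-triangulatedness I would compare module categories: the restriction functor $\text{Mod-}\cT\to\text{Mod-}\cM$ is exact and, because $\cM$ is a \emph{full} dg-subcategory, carries the representable $\cT$-module of an object $z$ of $\cM$ to the representable $\cM$-module of $z$. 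Given a closed degree-zero morphism $f$ between objects of $\cM$, its cone formed in $\text{Mod-}\cT$ lies in $T$ (as $\cT$ is pre-triangulated) and even in $M$ (as $M$ is triangulated in $T$), hence is quasi-isomorphic to the representable module of some object $z$ of $\cM$; restricting, the cone formed in $\text{Mod-}\cM$ is quasi-isomorphic to the representable module of $z$, and the same argument handles shifts. So $H^{0}(\cM)$ is a triangulated subcategory of $H^{0}(\text{Mod-}\cM)$.

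It remains to establish smoothness of $\cM$, which I expect to be the main obstacle. Once pre-triangulatedness and idempotent completeness are in hand, one has $\Perf(\cM)=H^{0}(\cM)=M$; moreover $M$ admits a strong generator, obtained by pushing a strong generator of $T$ — which exists because $\cT$ is smooth, by the argument recalled above the proposition — through the triangulated, essentially surjective functor $\pi\colon T\to M$. I would then invoke the standard fact that a proper pre-triangulated dg-category whose homotopy category is idempotent complete and carries a strong generator is smooth; equivalently, that the components of a semiorthogonal decomposition of a saturated dg-category are again saturated. This is what genuinely requires passing between the triangulated and dg-categorical worlds, and can be deduced from the results of Bondal--Van den Bergh~\cite{BvdB} together with standard dg-category formalism (see also~\cite{BLL},~\cite{Orl}); everything else in the proof is either formal or a direct check. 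With smoothness established, $\cM$ is saturated, which closes the cycle.
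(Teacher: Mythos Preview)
Your overall architecture matches the paper's: the cycle $(i)\Rightarrow(ii)\Rightarrow(iii)\Rightarrow(i)$, with $(i)\Rightarrow(ii)$ via saturation of the triangulated category $M$ (the paper simply cites~\cite[Prop.~2.6]{BK}, which is the representability argument you spell out), and with the ``easy checks'' of properness, idempotent completeness and pre-triangulatedness in $(iii)\Rightarrow(i)$.

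There is, however, a genuine error in your smoothness step. The assertion that \emph{a proper pre-triangulated dg-category whose homotopy category is idempotent complete and has a strong generator is smooth} is false. A counterexample is $\Perf(X)$ for a singular proper variety $X$: it is proper (Hom-spaces between perfect complexes on a proper scheme are finite-dimensional), idempotent complete, and has a strong generator by Bondal--Van den Bergh~\cite{BvdB}, yet it is \emph{not} smooth as a dg-category when $X$ is singular. So the strong-generator route cannot close the argument, and the two statements you call ``equivalent'' are not.

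What does work is precisely the second formulation you mention: components of a semiorthogonal decomposition of a saturated dg-category are again smooth. This is not a consequence of~\cite{BvdB}; it is a separate result, and the paper invokes it directly, citing Lunts--Schn\"urer~\cite[Theor.~3.24]{LS} (see also~\cite[Cor.~2.21]{LS2}). You should drop the strong-generator claim and cite that theorem for smoothness; everything else in your proof stands.
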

\begin{proof}
Assume condition~$(i)$. Then $M$ is saturated as a triangulated category, whence~$M$ is admissible in $T$, see~\cite[Prop.\,2.6]{BK}. Clearly, condition~$(ii)$ implies condition~$(iii)$. Finally, assume condition~$(iii)$. One easily checks that $M$ is idempotent complete and that the \mbox{dg-category} $\cM$ is pre-triangulated and proper. Also, it was proved by Lunts and Schn\"urer~\cite[Theor.\,3.24]{LS} that the dg-category~$\cM$ is smooth (see also~\cite[Cor.\,2.21]{LS2}). Thus $\cM$ is saturated.
\end{proof}

\medskip

Let $M$ be a triangulated category. A {\it dg-enhancement} of $M$ is a pre-triangulated \mbox{dg-category}~$\cM$ together with an equivalence of triangulated categories $H^0(\cM)\stackrel{\sim}\longrightarrow M$.

\medskip

Here is an example of a saturated dg-category. Let $X$ be a smooth projective variety and denote the bounded derived category of the abelian category~$\coh(X)$ by~$D^b(X)$. One has various ways to construct an enhancement of $D^b(X)$, for example, with the help of bounded below complexes of injective quasi-coherent sheaves. However, Lunts and Orlov~\cite[Theor.\,9.9]{LO} proved that, in fact, all \mbox{dg-enhancements} of $D^b(X)$ are (strongly) quasi-equivalent. In other words, we have a \mbox{dg-category~$\cD(X)$}, well-defined up to a quasi-equivalence, such that~$\cD(X)$ is a dg-enhancement of~$D^b(X)$. Actually, by a result of Schn\"urer~\cite[Theor.\,1.1]{Snu}, there is a functorial \mbox{dg-enhancement} of $D^b(X)$ simultaneously for all varieties such that it respects inverse images, direct images, and tensor products of coherent sheaves. The dg-category $\cD(X)$ is smooth, see Lunts and Schn\"urer~\cite[Theor.\,1.2]{LS3}, and one easily concludes that~$\cD(X)$ is saturated.

Following Orlov~\cite[Def.\,4.3]{Orl}, we say that a dg-category $\cM$ is {\it geometric} if there is a smooth projective variety $X$ and an admissible subcategory $N\subset D^b(X)$ such that the corresponding full dg-subcategory $\cN\subset \cD(X)$ with $H^0(\cN)=N$ is quasi-equivalent to the dg-category $\cM$. In particular, any geometric dg-category is saturated by Proposition~\ref{prop:dg}.


\medskip

In~\cite[Def.\,5.2]{BLL} Bondal, Larsen, and Lunts define the product $\cM\bullet\cN$ of dg-categories $\cM$ and $\cN$. One shows that the product of saturated dg-categories is saturated (see, e.g., \cite[Prop.\,2.17]{LS2}). Given smooth projective varieties $X$ and $Y$, the dg-category $\cD(X)\bullet\cD(Y)$ is \mbox{quasi-equivalent} to~$\cD(X\times Y)$, see~\cite[Theor.\,6.6]{BLL}. It follows that the product of geometric \mbox{dg-categories} is geometric (see also~\cite[Prop.\,1.6]{GO}).

\begin{defi}\label{defin:Deltageom}
The {\it Grothendieck ring of geometric dg-categories} is the abelian group~$K_0(\DG_k^{\gm})$ generated by quasi-equivalence classes $\{\cM\}$ of geometric $k$-linear \mbox{dg-categories} subject to the relations
\begin{equation}
\{\cM\} = \{\cM'\} + \{\cM''\}\,,
\end{equation}
where $\cM',\cM''\subset \cM$ are full dg-subcategories inducing a semiorthogonal decomposition
$$
H^0(\cM)=\langle\, H^0(\cM'),H^0(\cM'')\,\rangle\,.
$$
Multiplication in $K_0(\DG_k^{\gm})$ is given by the formula $\{\cM\}\cdot\{\cN\}:=\{\cM\bullet\cN\}$ with unit being~${1=\{\cD(k)\}}$, the class of the dg-category of finite-dimensional vector spaces.
\end{defi}

Note that by a result of Orlov~\cite[Theor.\,4.15]{Orl}, if $\cM',\cM''$ are full dg-subcategories of a \mbox{dg-category}~$\cM$ inducing a semiorthogonal decomposition
$$
H^0(\cM)=\langle\, H^0(\cM'),H^0(\cM'')\,\rangle
$$
and $\cM'$, $\cM''$ are geometric, then $\cM$ is geometric as well.

\begin{remark}
It is also possible to consider other classes of dg-categories in Definition~\ref{defin:Deltageom}. Taking only dg-categories $\cD(X)$, where $X$ is a smooth projective variety, one gets the ring~$\Gamma$ from~\cite[Def.\,8.1]{BLL}. On the other hand, taking all saturated dg-categories, one gets the ring~$K_0(\DG_k^{\sat})$ as in~\cite[Def.\,2.27]{BLL}, in Tabuada's paper~\cite[\S\,7]{Tab}, and in To\"en's paper~\cite[\S\,5.4]{ToDG}, where it is called a secondary $K$-group of $k$ (see also~\cite[\S\,2]{LS2} for other various versions of Grothendieck rings of \mbox{dg-categories}, including a $\bbZ/2\bbZ$-graded version). Clearly, one has homomorphisms of rings
$$
\Gamma\longrightarrow K_0(\DG^{\gm}_k)\longrightarrow K_0(\DG^{\sat}_k)\,.
$$
None of these homomorphisms is known to be either injective, or surjective.
\end{remark}

\subsection{The categorical measure for smooth projective DM-stacks}\label{subsec:catmeas}

Let $\cX$ be a smooth projective DM-stack. As in the case of a variety, the bounded derived category~$D^b(\cX)$ of the abelian category $\coh(\cX)$ of coherent sheaves of $\cO_{\cX}$-modules has a dg-enhancement $\cD(\cX)$ obtained with the help of bounded below complexes of injective quasi-coherent sheaves, see, e.g.,~\cite[Rem.\,A.3, Ex.\,5.4]{BLS}. In addition, all dg-enhancements of $D^b(\cX)$ are quasi-equivalent by a result of Canonaco and Stellari~\cite[Prop.\,6.10]{CS} (see also~\cite[Rem.\,5.8]{BLS}), thus the dg-category $\cD(\cX)$ is well-defined up to quasi-equivalences. It is proved by Bergh, Lunts, Schn\"urer~\cite[Theor.\,6.6]{BLS} that the \mbox{dg-category~$\cD(\cX)$} is geometric (the ground field $k$ is of zero characteristic). Also, recall the following facts.

\begin{prop}\label{prop:semiorth}
Let $\widetilde{\cX}\to\cX$ be a stacky blow-up (see Section~\ref{sec:not}) of a smooth projective \mbox{DM-stack}~$\cX$ in a smooth center $\cZ\subset\cX$ and let $\cE\subset\widetilde{\cX}$ be the exceptional divisor. Let $r$ be a positive integer defined as follows:
\begin{itemize}
\item[(i)]
if $\cZ$ is a divisor in $\cX$, then $r$ is such that $\widetilde{\cX}$ is isomorphic to the $r$-th root stack $\cX_{r^{-1}\cZ}$;
\item[(ii)]
if $\cZ$ is not a divisor in $\cX$, then $r\geqslant 2$ is the codimension of $\cZ$ in $\cX$.
\end{itemize}
Then there are semiorthogonal decompositions
$$
D^b(\widetilde{\cX})=\big\langle\underbrace{D^b(\cZ),\ldots, D^b(\cZ)}_{r-1}, D^b(\cX)\big\rangle\,,
$$
$$
D^b(\cE)=\big\langle\underbrace{D^b(\cZ),\ldots, D^b(\cZ)}_{r}\big\rangle
$$
induced by fully faithful dg-functors between dg-enhacements of the triangulated categories (in the case $(i)$, the second decomposition is just a direct sum of categories).
\end{prop}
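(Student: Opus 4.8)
The plan is to deduce both semiorthogonal decompositions from their well-known counterparts over schemes by a smooth-local reduction, exploiting that every functor involved is a kernel (Fourier--Mukai) functor and therefore lifts canonically to a fully faithful dg-functor between the (unique, by \cite{CS}) dg-enhancements.

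\emph{Step 1: the exceptional divisor.} In case~(ii), $\cE$ is the projectivization $\PP(\cN)$ of the rank-$r$ normal bundle $\cN=\cN_{\cZ/\cX}$ on $\cZ$, and the $r$ functors $a\mapsto Lp^{*}a\otimes\cO_{\cE}(i)$, $i=0,\dots,r-1$ (with $p\colon\cE\to\cZ$ the projection), realize the projective bundle decomposition. In case~(i), $\cE\to\cZ=\cD$ is the $\mu_{r}$-gerbe of $r$-th roots of $\cN=\cO_{\cX}(\cD)|_{\cD}$; since its class in $H^{2}(\cD,\mu_{r})$ is the Kummer boundary of a line bundle it dies in $H^{2}(\cD,\Gm)$, so the gerbe has trivial Brauer class and carries a tautological invertible twisted sheaf. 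Decomposing $\coh(\cE)$ into $\mu_{r}$-eigensheaves and trivializing each summand by a power of this twisted sheaf gives $D^{b}(\cE)=\bigoplus_{i\in\Z/r}D^{b}(\cD)$, with the inclusions induced by ($\chi$-twisted) pullback along $\cE\to\cD$.

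\emph{Step 2: the blow-up.} Write $\rho\colon\widetilde\cX\to\cX$, $j\colon\cE\hookrightarrow\widetilde\cX$, and $q=\rho\circ j\colon\cE\to\cZ$. I would take the functor $L\rho^{*}\colon\cD(\cX)\to\cD(\widetilde\cX)$ together with, for each of the $r-1$ relevant twists (resp. characters) $i$, the functor $a\mapsto Rj_{*}\big(Lq^{*}a\otimes\cO_{\widetilde\cX}(-i\cE)|_{\cE}\big)$ --- Orlov's kernels in case~(ii), the root-stack kernels in case~(i). To see that this collection is a semiorthogonal decomposition one must check, for these kernel functors, (a) full faithfulness, (b) the semiorthogonality relations, and (c) generation. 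Full faithfulness and semiorthogonality of kernel functors can be tested after a smooth surjective base change $\cX'\to\cX$ (pass to a smooth atlas, or invoke conservative descent for semiorthogonal decompositions), and generation reduces in the same way together with the vanishing $R\rho_{*}\cO_{\widetilde\cX}=\cO_{\cX}$ (itself a smooth-local statement, and standard for both classical blow-ups and root stacks). After such a base change, case~(ii) becomes the blow-up of a scheme along a smooth center, where (a)--(c) are exactly Orlov's theorem \cite{O0}, and case~(i) becomes a standard-form root stack $[\Spec B[t]/(t^{r}-g)\,/\,\mu_{r}]$, whose coherent sheaves form the $\mu_{r}$-equivariant category and for which the decomposition can be written down and verified by hand (the exceptional divisor being $[\Spec(B/g)/\mu_{r}]$).

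\emph{Step 3: enhancements.} Each functor above is a Fourier--Mukai functor with bounded kernel on a smooth projective DM-stack, hence lifts to a dg-functor $\cD(\cZ)\to\cD(\widetilde\cX)$, $\cD(\cX)\to\cD(\widetilde\cX)$, resp. $\cD(\cZ)\to\cD(\cE)$; by uniqueness of dg-enhancements \cite{CS} these are fully faithful and induce on $H^{0}$ precisely the stated decompositions. I expect the main obstacle to be the descent bookkeeping in Step~2 --- verifying that "being a semiorthogonal decomposition cut out by these kernel functors" is a genuinely smooth-local condition on $\cX$ (resp. on $\cZ$ for $\cE$), and that the pullback of a stacky blow-up along a smooth atlas is again a classical blow-up, resp. a standard-form root stack, so that the scheme-level results apply verbatim.
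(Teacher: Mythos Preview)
Your approach is essentially correct but considerably more hands-on than what the paper does. The paper gives no proof at all: it simply observes that case~(i) is the semiorthogonal decomposition for root stacks established by Ishii--Ueda~\cite[Theor.\,1.5,~1.6]{IU} (or~\cite[Theor.\,4.7]{BLS}), and that case~(ii) follows from Elagin's descent theorem for semiorthogonal decompositions~\cite[Theor.\,10.1,~10.2]{Ela}, invoked via the fact (recalled in Section~\ref{sec:not}) that every smooth projective DM-stack is a global quotient~$[P/\GL_n]$. Thus for blow-ups the paper's route is: Orlov's decomposition holds on the atlas~$P$, and Elagin's theorem pushes it down to the quotient.

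The difference from your proposal is therefore one of packaging. You reconstruct the kernel functors on the stack and verify full faithfulness, semiorthogonality, and generation by pulling back to a smooth atlas; the paper instead quotes Elagin, who has already proved that a semiorthogonal decomposition of~$D^b(P)$ compatible with the group action descends to~$D^b([P/\GL_n])$. Your smooth-local argument is sound in outline --- full faithfulness and semiorthogonality of Fourier--Mukai functors are genuinely local conditions, and generation follows from~$R\rho_*\cO_{\widetilde\cX}\simeq\cO_\cX$ together with the standard filtration argument --- but the bookkeeping you flag as the ``main obstacle'' is exactly what Elagin's paper absorbs. Your treatment of the gerbe~$\cE\to\cD$ in case~(i) via the triviality of its Brauer class is correct and is essentially what underlies the cited results of Ishii--Ueda and~\cite{BLS}.
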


The case~$(i)$ of Proposition~\ref{prop:semiorth} follows from results of Ishii and Ueda~\cite[Theor.\,1.5, Theor.\,1.6]{IU} or from~\cite[Theor.\,4.7]{BLS}. The case~$(ii)$ of Proposition~\ref{prop:semiorth} follows from Elagin's results \cite[Theor.\,10.1, Theor.\,10.2]{Ela}, because smooth projective DM-stacks are quotient stacks, see Section~\ref{sec:not} (the case of varieties goes back to Orlov~\cite[Theor.\,4.3]{O0}).


\medskip

Proposition~\ref{prop:semiorth} implies the following important result.

\begin{prop}\label{theor:catmeas}
\hspace{0cm}
\begin{itemize}
\item[(i)]
There is a well-defined homomorphism of groups
$$
K_0(\DM_k^{\smp})\longrightarrow K_0(\DG_k^{\gm})\,,\qquad \{\cX\}\longmapsto \{\cD(\cX)\}\,.
$$
\item[(ii)]
There is a well-defined homomorphism of groups
$$
\mu\;:\;K_0(\Var_k^{\eq})\longrightarrow K_0(\DG_k^{\gm})\,,\qquad \{X\}^G\longmapsto \{\cD^G(X)\}\,,
$$
where $G$ is a finite group, $X$ is a smooth projective $G$-variety and $\cD^G(X)$ is a \mbox{dg-enhancement} of the bounded derived category $D^G(X)$ of the abelian category~$\coh^G(X)$ of $G$-equivariant coherent sheaves on $X$.
\end{itemize}
\end{prop}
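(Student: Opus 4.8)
The plan is to verify, in each case, that the proposed assignment respects the defining relations of the source Grothendieck group, using Proposition~\ref{prop:semiorth} as the single geometric input, together with uniqueness of dg-enhancements and the geometricity of $\cD(\cX)$.

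For part~(i), I would first record that for a smooth projective DM-stack $\cX$ the dg-category $\cD(\cX)$ is well-defined up to quasi-equivalence and is geometric: uniqueness of the enhancement is the result of Canonaco and Stellari~\cite{CS}, and geometricity is the theorem of Bergh, Lunts, and Schn\"urer~\cite{BLS}. Hence $\{\cD(\cX)\}$ is a well-defined element of $K_0(\DG_k^{\gm})$, and it remains to check the stacky blow-up relations~\eqref{eq:stackBittner} from Definition~\ref{defi:Grothstacks}. The relation $\{\varnothing\}=0$ is immediate since $\cD(\varnothing)=0$. For the main relation, let $\widetilde{\cX}\to\cX$ be a stacky blow-up in a smooth center $\cZ$ with exceptional divisor $\cE$, and let $r$ be the integer of Proposition~\ref{prop:semiorth}. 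The two semiorthogonal decompositions provided there are induced by fully faithful dg-functors between dg-enhancements, so they translate directly into relations in $K_0(\DG_k^{\gm})$, giving
$$
\{\cD(\widetilde{\cX})\}=(r-1)\{\cD(\cZ)\}+\{\cD(\cX)\}\,,\qquad \{\cD(\cE)\}=r\{\cD(\cZ)\}\,.
$$
Subtracting, $\{\cD(\widetilde{\cX})\}-\{\cD(\cE)\}=\{\cD(\cX)\}-\{\cD(\cZ)\}$, which is exactly~\eqref{eq:stackBittner}. This produces the desired homomorphism.

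For part~(ii), the quickest route is to observe that $\mu$ is the composition of the ring homomorphism $\alpha\colon K_0(\Var_k^{\eq})\to K_0(\DM_k^{\smp})$ of~\eqref{eq:manymaps} with the homomorphism built in part~(i). Indeed, for a smooth projective $G$-variety $X$ one has $\coh([X/G])\simeq\coh^G(X)$, hence $D^b([X/G])\simeq D^G(X)$, and by uniqueness of dg-enhancements $\cD([X/G])$ is quasi-equivalent to $\cD^G(X)$; so the composite sends $\{X\}^G$ to $\{\cD^G(X)\}$. Alternatively, one can argue directly from the presentation of $K_0(\Var_k^{\eq})$ in Remark~\ref{rem:EgGr}(ii): the equivariant blow-up relations~\eqref{eq:GBittner} follow from Proposition~\ref{prop:semiorth}(ii) applied to the blow-up $[\widetilde{X}/G]\to[X/G]$ in $[Z/G]$ with exceptional divisor $[E/G]$, exactly as in part~(i); and the induction relations~\eqref{eq:eq1} follow from the isomorphism of stacks $[(\ind_H^G Y)/G]\simeq [Y/H]$, which gives $\cD^G(\ind_H^G Y)\simeq\cD^H(Y)$ and hence equality of the corresponding classes.

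I do not expect a serious obstacle, since the geometric content is entirely packaged in Proposition~\ref{prop:semiorth} and the cited uniqueness and geometricity results. The point that requires genuine care is that the semiorthogonal decompositions of Proposition~\ref{prop:semiorth} must be realized inside a \emph{single} dg-enhancement by full dg-subcategories, so that they impose the defining relations of $K_0(\DG_k^{\gm})$ rather than merely a $K$-theoretic splitting; this is precisely why Proposition~\ref{prop:semiorth} is stated with the clause that the decompositions are induced by fully faithful dg-functors between dg-enhancements. A secondary bookkeeping point is that the identifications $D^b([X/G])\simeq D^G(X)$, $[(\ind_H^G Y)/G]\simeq [Y/H]$, and ``$[\widetilde{X}/G]$ is the blow-up of $[X/G]$ in $[Z/G]$'' are compatible with passing to dg-enhancements, which again follows from the uniqueness statements.
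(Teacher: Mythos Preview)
Your proposal is correct and follows exactly the same approach as the paper: part~(i) is deduced directly from Proposition~\ref{prop:semiorth} together with the uniqueness and geometricity results for $\cD(\cX)$, and part~(ii) is obtained as the composition of $\alpha$ from~\eqref{eq:manymaps} with the homomorphism of part~(i). Your write-up simply unpacks these two sentences in detail, including the alternative direct verification of the blow-up and induction relations, and your caveats about dg-level compatibility are precisely the points covered by the clause in Proposition~\ref{prop:semiorth} and the cited uniqueness theorems.
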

\begin{proof}
$(i)$ This follows directly from Proposition~\ref{prop:semiorth} and Definition~\ref{defi:Grothstacks}.

$(ii)$ Take the composition of $\alpha$ (see formula~\eqref{eq:manymaps}) and the homomorphism from part~$(i)$.
\end{proof}

The homomorphism $\mu$ in Proposition~\ref{theor:catmeas}$(ii)$ is called a {\it categorical measure}.
For example, we have $\mu(\bbP^1)=2$ and $\mu(\bbA^1)=1$.

\begin{remark}\label{rmk:catmeas}
\hspace{0cm}
\begin{itemize}
\item[(i)]
Presumably one can show that the homomorphisms in Proposition~\ref{theor:catmeas} are, actually, ring homomorphisms, e.g., by using Schn\"urer's methods from~\cite{Snu}, but we do not need this fact.
\item[(ii)]
It is impossible to extend the homomorphism in Proposition~\ref{theor:catmeas}$(i)$ to all smooth projective Artin stacks. For example, for the smooth projective Artin stack $[*/\Gm]$, the category~$D^b([*/\Gm])$ is isomorphic to the direct sum of countably many copies of~$D^b(k)$. Thus the triangulated category $D^b([*/\Gm])$ does not have a strong generator and is not equivalent to the homotopy category of a saturated dg-category.
\item[(iii)]
Clearly, the composition ${K_0(\Var_k)\to K_0(\Var^{\eq}_k)\stackrel{\mu}\to K_0(\DG_k^{\gm})}$
factors through the homomorphism $K_0(\Var_k)\to \Gamma$ defined in~\cite[\S\,8.2]{BLL}.
\item[(iv)]
The composition ${K_0(\Var_k^G)\to K_0(\Var^{\eq}_k)\to K_0(\DG_k^{\gm})}$ factors through the quotient of the group $K_0(\Var^G_k)$ considered in~\cite[\S\,7]{Bit} (cf. Remark~\ref{rmk:Bittnerquot}). Namely, it follows from a result of Elagin~\cite[Theor.\,2.1]{Ela0} that for an $n$-dimensional $G$-representation $V$, we have
$$
\mu^G\big(\bbP(V)\big)=n\cdot\mu([*/G])=\mu^G\big(\bbP^{n-1}\big)\,,
$$
where we consider the trivial action of $G$ on $\bbP^{n-1}$.
\end{itemize}
\end{remark}

\subsection{$K$-motives of geometric dg-categories}\label{subsec:DGKM}

Let us explain a relation between geometric dg-categories and $K$-motives. We use this in one of our main results (see Subsection~\ref{subsec:diffcatmeas}).

The category $\KM_k$ of {\it $K$-motives} over $k$ is defined as the idempotent completion of the additive category whose objects are smooth projective varieties and whose morphisms are \mbox{$K_0$-groups} of the products of varieties, see more details in Manin's exposition~\cite{Ma}. The category~$\KM_k$ has a symmetric monoidal structure that comes from products of varieties. We have a covariant functor $X\mapsto KM(X)$ from the category of smooth projective varieties to the category of $K$-motives. Also, algebraic $K$-groups are well-defined for $K$-motives.

\medskip

Given smooth projective varieties $X$, $Y$ and an object $\cE\in D^b(X\times Y)$, denote the corresponding Fourier--Mukai functors between derived categories by
$$
FM_{\cE_*}\;:\; D^b(X)\longrightarrow D^b(Y)\,,\qquad \cF\longmapsto R\,p_{Y*}\big(p_{X}^*(\cF)\otimes^L \cE\big)\,,
$$
$$
FM_{\cE}^*\;:\; D^b(Y)\longrightarrow D^b(X)\,,\qquad \cG\longmapsto R\,p_{X*}\big(p_{Y}^*(\cG)\otimes^L \cE\big)\,.
$$
Also, denote the corresponding morphisms between $K$-motives by
$$
[\cE]_*\;:\; KM(X)\longrightarrow KM(Y)\,,\qquad [\cE]^*\;:\; KM(Y)\longrightarrow KM(X)\,.
$$
Let $\Phi\colon \cD(X)\to \cD(Y)$ be a quasi-functor, that is, $\Phi$ is given by a diagram of \mbox{dg-categories} ${\cD(X)\leftarrow \cQ_1\to\ldots \leftarrow \cQ_n\to \cD(Y)}$, where all arrows are dg-functors and the backward arrows are quasi-equivalences. Then To\"en's theorem~\cite[Theor.\,8.15]{To}
asserts that there exists an object ${\cE\in D^b(X\times Y)}$ such that the functor from $D^b(X)$ to $D^b(Y)$ induced by $\Phi$ is isomorphic to~$FM_{\cE*}$ (this was previously proved by Bondal, Larsen, Lunts~\cite[Theor.\,7.7]{BLL} in the case when~$\Phi$ is an actual dg-functor).

It is shown by Gorchinskiy and Orlov~\cite[\S\,4]{GO} that for a smooth projective variety~$X$ and a triangulated admissible subcategory $N\subset D^b(X)$, one has a well-defined $K$-motive~$KM(X,N)$, which is a direct summand in $KM(X)$. Let us recall this construction. Let
$$
p\;:\; D^b(X)\longrightarrow N
$$
be the left (or right) adjoint to the embedding $i\colon N\to D^b(X)$. As explained in {\it op.~cit.}, the functor $p$ is induced by a quasi-functor
$$
\Pi\;:\; \cD(X)\longrightarrow\cN\,.
$$
Also, the functor $i$ is clearly induced by a dg-functor from $\cN$ to $\cD(X)$, which we denote similarly to ease notation. We see that the functor ${i\circ p\colon D^b(X)\to D^b(X)}$ is induced by the quasi-functor ${i\circ \Pi\colon \cD(X)\to \cD(X)}$. Hence by~\cite[Theor.\,8.15]{To}, there exists an object
$$
\cP\in D^b(X\times X)
$$
such that~${i\circ p\simeq FM_{\cP*}}$. One checks that $[\cP]_*\colon KM(X)\to KM(X)$ is a projector and one defines $KM(X,N)$ as the image of $[\cP]_*$.

Recall that the $K_0$-group of a triangulated category is generated by isomorphism classes of objects subject to a relation defined naturally by distinguished triangles. The endofunctor~$i\circ p$ induces a projector from ${K_0\big(D^b(X)\big)\simeq K_0(X)}$ to itself, whose image is $K_0(N)$. It follows that we have an isomorphism ${K_0\big(KM(X,N)\big)\simeq K_0(N)}$.

\begin{prop}\label{prop:Kgeom}
Let $X$, $X'$ be smooth projective varieties and let
${N\subset D^b(X)}$, ${N'\subset D^b(X')}$ be admissible subcategories. Let $\cN\subset\cD(X)$, $\cN'\subset\cD(X')$ be the corresponding full \mbox{dg-subcategories} with $H^0(\cN)=N$, $H^0(\cN')=N'$. Suppose that the dg-categories~$\cN$ and~$\cN'$ are quasi-equivalent. Then the $K$-motives $KM(X,N)$ and $KM(X',N')$ are isomorphic.
\end{prop}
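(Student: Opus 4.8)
The plan is to promote a quasi-equivalence $\cN\simeq\cN'$ to a pair of mutually inverse correspondences between the direct summands $KM(X,N)\subseteq KM(X)$ and $KM(X',N')\subseteq KM(X')$, and then to split idempotents in the idempotent-complete additive category $\KM_k$. First I would fix a chain of quasi-equivalences connecting $\cN$ and $\cN'$ and record quasi-functors $\Psi\colon\cN\to\cN'$ and a quasi-inverse $\Psi^{-1}\colon\cN'\to\cN$, inverse to $\Psi$ up to isomorphism of quasi-functors. Recall that the construction of $KM(X,N)$ uses the left (or right) adjoint $p$ of $i$, realized by a quasi-functor $\Pi\colon\cD(X)\to\cN$ with $\Pi\circ i\simeq\id_{\cN}$, together with an object $\cP\in D^b(X\times X)$ representing the quasi-functor $i\circ\Pi$ and satisfying $KM(X,N)=\im([\cP]_*)$; I write $e:=[\cP]_*$, and let $\Pi'$, $\cP'$, $e':=[\cP']_*$ denote the analogous data for $X'$. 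Then I would introduce the quasi-functors
$$
f:=i'\circ\Psi\circ\Pi\colon\cD(X)\to\cD(X')\,,\qquad g:=i\circ\Psi^{-1}\circ\Pi'\colon\cD(X')\to\cD(X)\,,
$$
represent them via To\"en's theorem by Fourier--Mukai kernels $\cF\in D^b(X\times X')$ and $\cG\in D^b(X'\times X)$, and pass to the associated morphisms of $K$-motives $u:=[\cF]_*\colon KM(X)\to KM(X')$ and $v:=[\cG]_*\colon KM(X')\to KM(X)$.

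The heart of the argument will be a short computation \emph{at the level of quasi-functors}. Using $\Pi\circ i\simeq\id_{\cN}$, $\Pi'\circ i'\simeq\id_{\cN'}$ and the invertibility of $\Psi$, one obtains
$$
g\circ f\simeq i\circ\Psi^{-1}\circ(\Pi'\circ i')\circ\Psi\circ\Pi\simeq i\circ\Psi^{-1}\circ\Psi\circ\Pi\simeq i\circ\Pi\,,
$$
and likewise $f\circ g\simeq i'\circ\Pi'$, $f\circ(i\circ\Pi)\simeq f$ and $(i'\circ\Pi')\circ f\simeq f$. The passage from a quasi-functor between the categories $\cD(-)$ to its Fourier--Mukai kernel, and then to a morphism of $K$-motives, is functorial --- it carries composition of quasi-functors to composition of morphisms of $K$-motives, that is, to convolution of kernels --- and is insensitive to isomorphisms of quasi-functors; this is part of To\"en's theorem and is precisely what makes $KM(X,N)$ well defined in \cite{GO}. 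Hence the displayed isomorphisms yield the relations
$$
v\circ u=e\,,\qquad u\circ v=e'\,,\qquad u\circ e=u\,,\qquad e'\circ u=u
$$
in $\KM_k$. Writing $\iota\colon KM(X,N)\hookrightarrow KM(X)$ and $\pi\colon KM(X)\to KM(X,N)$ for the section and retraction of the projector $e$ (so $\pi\circ\iota=\id$ and $\iota\circ\pi=e$), and $\iota',\pi'$ for those of $e'$, I would set $a:=\pi'\circ u\circ\iota$ and $b:=\pi\circ v\circ\iota'$; the four relations above readily give $b\circ a=\id_{KM(X,N)}$ and $a\circ b=\id_{KM(X',N')}$, so $a\colon KM(X,N)\to KM(X',N')$ is an isomorphism, which is exactly the claim.

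The main obstacle is entirely in the middle step: one must stay with honest quasi-functors rather than the exact functors they induce on homotopy categories, so that To\"en's equivalence between quasi-functors $\cD(X)\to\cD(X')$ and objects of $D^b(X\times X')$ applies and is compatible with composition, and one must use that the class in $K_0(X\times X')$ of a Fourier--Mukai kernel depends only on the quasi-functor (equivalently, on the exact functor) it represents, so that isomorphic quasi-functors induce equal morphisms of $K$-motives. These are exactly the facts needed to make $KM(X,N)$ well defined in the first place, so they are available from \cite{To} and \cite{GO}; with them in hand, everything else is the formal splitting of idempotents carried out above.
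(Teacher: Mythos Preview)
Your argument is correct and the morphism $a$ you construct coincides with the morphism $f$ in the paper's proof; the two approaches diverge only in how the inverse is obtained. You produce the inverse explicitly by representing the quasi-inverse $\Psi^{-1}$ through a second Fourier--Mukai kernel and invoking the compatibility of To\"en's equivalence with composition (so that the relations $g\circ f\simeq i\circ\Pi$, $f\circ g\simeq i'\circ\Pi'$ transport to $v\circ u=e$, $u\circ v=e'$ in $\KM_k$). The paper instead constructs only the one kernel $\cE$ for $f$, observes that $f$ induces an isomorphism $K_0(N)\xrightarrow{\sim}K_0(N')$ because $G$ is an equivalence, and then upgrades this to an isomorphism of $K$-motives via Manin's identity principle: replacing $X$ by $X\times Y$ and $N$ by $N\boxtimes D^b(Y)$ shows that $f\otimes\id_{KM(Y)}$ is an isomorphism on $K_0$ for every smooth projective $Y$, and Yoneda in $\KM_k$ finishes. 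Your route is more elementary in that it avoids the Yoneda/Manin step and the auxiliary product with $Y$, at the price of using the bicategorical form of To\"en's theorem (composition of quasi-functors corresponds to convolution of kernels); the paper's route needs only the object-level statement of To\"en's theorem but imports Manin's principle. Both are short and either would be acceptable here.
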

\begin{proof}
We let $p$, $\Pi$, $\cP$ be as above and we use analogous notation in the case of $X'$ and~$N'$. By assumption of the proposition, there is a quasi-functor $\Psi\colon \cN\to \cN'$ which defines a quasi-equivalence between $\cN$ and $\cN'$. Let $G\colon N\to N'$ be the corresponding equivalence between triangulated categories.
The composition
$$
F\;:\; D^b(X)\stackrel{p}\longrightarrow N\stackrel{G}\longrightarrow N'\stackrel{i'}\longrightarrow D^b(X')\,.
$$
is induced by a quasi-functor
$$
\cD(X)\stackrel{\Pi}\longrightarrow \cN\stackrel{\Psi}\longrightarrow \cN'\stackrel{i'}\longrightarrow \cD(X')\,.
$$
Therefore, by~\cite[Theor.\,8.15]{To}, there exists an object~${\cE\in D^b(X\times X')}$ such that~${F\simeq FM_{\cE*}}$.
Let $f$ denote the composition
$$
KM(X,N)\longrightarrow KM(X)\stackrel{[\cE]_*}\longrightarrow KM(X')\longrightarrow KM(X',N')\,,
$$
in the category of $K$-motives, where the first morphism is the embedding and the last morphism is the projection.

Let us show that $f$ is an isomorphism. By construction, $f$ induces an isomorphism between $K_0$-groups
$$
K_0\big(KM(X,N)\big)\simeq K_0(N)\stackrel{G}\longrightarrow K_0(N')\simeq K_0\big(KM(X',N')\big)\,.
$$
Let $Y$ be an arbitrary smooth projective variety. Then $f$ defines the morphism of $K$-motives
$$
f\otimes{\rm id}\;:\; KM(X,N)\otimes KM(Y)\longrightarrow KM(X',N')\otimes KM(Y)\,.
$$
Replacing $X$ by $X\times Y$ and replacing the admissible subcategory $N\subset D^b(X)$ by the admissible subcategory ${N\boxtimes D^b(Y)\subset D^b(X\times Y)}$, we see that the above argument implies that $f\otimes{\rm id}$ also induces an isomorphism between $K_0$-groups
$$
K_0\big(KM(X,N)\otimes KM(Y)\big)\simeq K_0\big(N\boxtimes D^b(Y)\big)\stackrel{G\boxtimes{\rm id}}\longrightarrow
$$
$$
\stackrel{G\boxtimes{\rm id}}\longrightarrow K_0\big(N'\boxtimes D^b(Y)\big)\simeq K_0\big(KM(X',N')\otimes KM(Y)\big)\,.
$$
Now by Manin's identity principle~\cite[\S\,3]{Ma} (that is, by Yoneda lemma for the category of $K$-motives), we see that $f$ is an isomorphism of $K$-motives.
\end{proof}

It is not clear whether Proposition~\ref{prop:Kgeom} remains valid if one assumes only that the triangulated categories $N$ and $N'$ are equivalent, without requiring that this equivalence is induced by a quasi-equivalence of dg-categories. (Note that, according to experts, there exist equivalences between admissible subcategories that are not induced by
quasi-equivalences between the corresponding dg-categories.)

\medskip

Now let $\cM$ be a geometric \mbox{dg-category}. Let $X$ be a smooth projective variety and let ${N\subset D^b(X)}$ be an admissible subcategory such that $\cM$ is quasi-equivalent to the full \mbox{dg-subcategory} $\cN\subset \cD(X)$ with $H^0(\cN)=N$. Then, by Proposition~\ref{prop:Kgeom}, the isomorphism class of the $K$-motive $KM(X,N)$ is well-defined by the quasi-equivalence class of~$\cM$; that is, it does not depend on the choices of~$X$ and~$N$. Thus it makes sense to denote the \mbox{$K$-motive} $KM(X,N)$ just by $KM(\cM)$ (in what follows, we consider only isomorphism classes of \mbox{$K$-motives}).

It is easy to show that the assignment $\cM\mapsto KM(\cM)$ is additive with respect to semiorthogonal decompositions and is monoidal with respect to products of geometric dg-categories. Hence we obtain a homomorphism of rings
\begin{equation}\label{eq:DGKM}
K_0(\DG^{\gm}_k)\longrightarrow K_0(\KM_k)\,,\qquad \{\cM\}\longmapsto \{KM(\cM)\}\,,
\end{equation}
where relations in the Grothendieck group of the additive category $\KM_k$ are given by direct sums of objects. This is a geometric version of the homomorphism from $K_0(\DG_k^{\sat})$ to the Grothendieck group of non-commutative motives constructed in~\cite[Prop.\,7.1]{Tab}.

\medskip

Now assume that $k$ is the field of complex numbers $\bbC$. Later we shall use the following homomorphisms on the group $K_0(\DG_{\C}^{\gm})$ defined by topological $K$-groups.

For any complex smooth projective variety $X$, we have a natural homomorphism of rings $K_0(X)\to K_0^{\tp}(X)$, which commutes with pull-backs. By Atiyah and Hirzebruch, see~\cite[Theor.\,4.2]{AH}, it also commutes with  push-forwards. Therefore we have a homomorphism of the (non-commutative) rings of correspondences ${K_0(X\times X)\to K_0^{\tp}(X\times X)}$, where multiplication is defined by composition of correspondences. Since the groups $K_i^{\tp}(X)$, $i=0,1$, are modules over the ring of correspondences~${K_0^{\tp}(X\times X)}$, we see that topological $K$-groups are well-defined for $K$-motives.

Also, recall that since complex algebraic varieties carry the structure of a finite CW-complex, their topological $K$-groups are finitely generated, see~\cite[Cor.\,2.5]{AH0}. Hence we obtain additive functors
$$
\KM_\bbC\longrightarrow \mods(\bbZ)\,,\qquad KM(X)\longmapsto K_i^{\tp}(X)\,,\qquad i=0,1\,,
$$
where $\mods(\bbZ)$ is the category of finitely generated abelian groups and $X$ is a complex smooth projective variety. Applying $K_0$ to these homomorphisms, we obtain homomorphisms of abelian groups
\begin{equation}\label{eq:KtopKm}
K_0(\KM_\bbC)\longrightarrow K_0^{\oplus}(\bbZ)\,,\qquad \{KM(X)\}\longmapsto \{K_i^{\tp}(X)\}\,,\qquad i=0,1\,,
\end{equation}
where $K^{\oplus}_0(\bbZ)$ denotes the Grothendieck group of the additive category of finitely generated abelian groups with relations given by direct sums of abelian groups (not by arbitrary exact sequences).

\begin{defi}\label{defi:kappa}
Let
$$
\kappa_i^{\tp}\,:\,K_0(\DG_\bbC^{\gm})\longrightarrow K^{\oplus}_0(\bbZ)\,,\qquad \{\cD(X)\}\longmapsto \{K_i^{\tp}(X)\}\,,\qquad i=0,1\,,
$$
be the composition of the homomorphisms given in formulas~\eqref{eq:DGKM} and~\eqref{eq:KtopKm}, where $X$ is a complex smooth projective variety.
\end{defi}

\begin{remark}
More generally, topological $K$-groups are defined by Blanc in~\cite{Bla} for any $\bbC$-linear \mbox{dg-category}. However, it seems that it is not known whether topological $K$-groups are finitely generated for an arbitrary saturated $\bbC$-linear dg-category, not necessarily geometric. Thus it is not clear whether the homomorphisms $\kappa_i^{\tp}$ factor through the natural homomorphism ${K_0(\DG^{\gm}_{\bbC})\to K_0(\DG_{\bbC}^{\sat})}$. This is the reason why we restrict ourselves to the Grothendieck group of geometric dg-categories.
\end{remark}

\section{Equivariant sheaves for non-effective actions}\label{sec:noneff}

\subsection{Two equivalences of categories}\label{subsec:twoequiv}

In this section, we study the following problem. Let
\begin{equation}\label{eq:exgroup}
1\longrightarrow N\longrightarrow G \stackrel{\pi}\longrightarrow H\longrightarrow 1
\end{equation}
be an exact sequence of finite groups and let $X$ be an $H$-variety. Our aim is to describe \mbox{$G$-equivariant} coherent sheaves on $X$ in terms of $H$-equivariant coherent sheaves on $X$ with an additional structure. Here $G$ acts on $X$ through the homomorphism $\pi$.

\medskip

Let $V$ be a finite-dimensional representation of $G$ and define an $G$-equivariant coherent sheaf on $X$
$$
\cV:=V\otimes_k\cO_X\,.
$$
Also, define a finite-dimensional \mbox{$H$-equivariant} algebra
$$
A:=\End_N(V)^{\rm op}
$$
and an $H$-equivariant coherent sheaf of $\cO_X$-algebras on $X$
$$
\cA:=A\otimes_k\cO_X\simeq \cE nd_{N}(\cV)^{\rm op}\,.
$$
Note that $A$ and $\cA$ are also $G$-equivariant algebras, where $G$ acts through the homomorphism~$\pi$. Furthermore, $V$ is naturally a $G$-equivariant right $A$-module and $\cV$ is naturally a $G$-equivariant coherent right $\cA$-module.

By the construction of $A$, we see that~$V$ is an $(N{-}A)$-bimodule and $\cV$ is an~\mbox{$(N{-}\cA)$-bimodule}, where the actions of~$N$ on~$V$ and~$\cV$ are obtained by taking the restriction of the $G$-equivariant structures. Therefore we have adjoint functors
\begin{equation}\label{eq:firstadjfuncts}
\cV\otimes_{\cA}-\;:\;\coh(\cA)\longrightarrow\coh^N(X)\,,\qquad \cH om_N(\cV,-)\;:\;\coh^N(X)\longrightarrow\coh(\cA)\,,
\end{equation}
that is, for any coherent $\cA$-module $\cM$ and any $N$-equivariant coherent sheaf $\cF$ on $X$, there is a functorial isomorphism
\begin{equation}\label{eq:firstadj}
{\rm Hom}_N(\cV\otimes_{\cA} \cM ,\cF)\stackrel{\sim}\longrightarrow {\rm Hom}_{\cA}\big(\cM,\cH om_N(\cV,\cF)\big)\,.
\end{equation}
Here, the actions of $N$ on $\cV\otimes_{\cA}\cM$ and of $\cA$ on $\cH om_N(\cV,\cF)$ are induced by their (commuting) actions on $\cV$.

Suppose, in addition, that $\cM$ is an $H$-equivariant coherent $\cA$-module and $\cF$ is a \mbox{$G$-equivariant} coherent sheaf on $X$. Then $\cM$ has a $G$-equivariant structure via the homomorphism $\pi$, and
${\cV\otimes_{\cA} \cM}$ is naturally a $G$-equivariant coherent sheaf on~$X$. Furthermore, $\cH om_N(\cV,\cF)$ is naturally an $H$-equivariant coherent $\cA$-module. Thus the adjoint functors in formula~\eqref{eq:firstadjfuncts} extend to the functors
$$
\cV\otimes_{\cA}-\;:\;\coh^H(\cA)\longrightarrow\coh^G(X)\,,\qquad \cH om_N(\cV,-)\;:\;\coh^G(X)\longrightarrow\coh^H(\cA)\,,
$$
which we denote similarly by abuse of notation. The following lemma claims that these functors are adjoint as well.

\begin{lemma}\label{lemma:adj}
For any $\cM$ in $\coh^H(\cA)$ and $\cF$ in $\coh^G(X)$, there is a functorial isomorphism
$$
{\rm Hom}_G(V\otimes_A \cM ,\cF)\stackrel{\sim}\longrightarrow {\rm Hom}_{\cA,H}\big(\cM,\cH om_N(\cV,\cF)\big)\,.
$$
\end{lemma}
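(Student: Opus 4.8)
The plan is to deduce the equivariant adjunction from the already-established isomorphism~\eqref{eq:firstadj} by passing to $H$-invariants. The point that makes this work is that, since $G$ acts on $X$ through $\pi$, the subgroup $N$ acts trivially on $X$, so that the conjugation action of $G$ on the relevant $N$-equivariant morphism spaces has $N$ acting trivially and hence factors through a genuine action of $H=G/N$. Concretely, let $\cM\in\coh^H(\cA)$ and $\cF\in\coh^G(X)$. As noted in the discussion preceding the lemma, $\cV\otimes_{\cA}\cM$ is then a $G$-equivariant coherent sheaf and $\cH om_N(\cV,\cF)$ is an $H$-equivariant coherent $\cA$-module. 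Using the $G$-equivariant structures on $\cV\otimes_{\cA}\cM$ and $\cF$, the group $\Hom_N(\cV\otimes_{\cA}\cM,\cF)$ of $N$-equivariant $\cO_X$-linear morphisms carries a conjugation action of $G$ (one checks that $g\cdot\phi$ is again $N$-equivariant); since $N$ acts trivially on this group, it is really an $H$-action, and by definition $\Hom_G(\cV\otimes_{\cA}\cM,\cF)=\Hom_N(\cV\otimes_{\cA}\cM,\cF)^H$. Likewise, using the $H$-equivariant structures on $\cM$ and $\cH om_N(\cV,\cF)$, one obtains an $H$-action on $\Hom_{\cA}(\cM,\cH om_N(\cV,\cF))$ with invariants $\Hom_{\cA,H}(\cM,\cH om_N(\cV,\cF))$.

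It therefore suffices to check that the bijection~\eqref{eq:firstadj} is $H$-equivariant for these two actions; taking $H$-invariants then gives the lemma. This equivariance holds because the unit $\cM\to\cH om_N(\cV,\cV\otimes_{\cA}\cM)$ and the counit $\cV\otimes_{\cA}\cH om_N(\cV,\cF)\to\cF$ of the adjunction~\eqref{eq:firstadjfuncts} are built purely out of the $(N{-}\cA)$-bimodule structure of $\cV$, and this bimodule is itself $G$-equivariant, with $G$ acting on $\cV=V\otimes_k\cO_X$ diagonally through $\pi$, compatibly with both the left $N$-action and the right $\cA$-action. Unwinding the explicit formula $\phi\mapsto(m\mapsto(v\mapsto\phi(v\otimes m)))$ and its inverse, one verifies directly that for every $g\in G$ the morphism $g\cdot\phi$ corresponds to $g\cdot\psi$, where $\psi$ denotes the image of $\phi$ under~\eqref{eq:firstadj}; this uses nothing beyond the $G$-equivariance of the structures on $\cV$, $\cM$, and $\cF$.

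I do not expect a genuine obstacle here, as the argument is entirely formal. The only place requiring care is the bookkeeping of the various $G$- and $H$-equivariant structures and of the two conjugation actions, and in particular fixing conventions so that they are matched by~\eqref{eq:firstadj}; once that is done the verification is short. Alternatively, one can package everything using the general principle that an adjoint pair of functors equipped with compatible actions of a finite group $\Gamma$ --- that is, $\Gamma$-equivariant functors together with $\Gamma$-equivariant unit and counit --- induces an adjoint pair on the categories of $\Gamma$-equivariant objects whose adjunction isomorphism is the $\Gamma$-invariant part of the original one, and apply this with $\Gamma=H$ to the adjunction~\eqref{eq:firstadjfuncts}.
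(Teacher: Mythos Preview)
Your proposal is correct and follows essentially the same approach as the paper. The only cosmetic difference is that the paper starts from the bare adjunction ${\rm Hom}(\cV\otimes_\cA\cM,\cF)\simeq{\rm Hom}_\cA(\cM,\cH om(\cV,\cF))$, observes it is $G$-equivariant, and then takes $G$-invariants in two steps (first $N$, then $H$), whereas you begin from the $N$-invariant version~\eqref{eq:firstadj} and take $H$-invariants directly; these are the same argument.
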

\begin{proof}
We have a standard adjunction
\begin{equation}\label{eq:adjsimple}
{\rm Hom}(\cV\otimes_\cA \cM ,\cF)\stackrel{\sim}\longrightarrow {\rm Hom}_{\cA}\big(\cM,\cH om(\cV,\cF)\big)
\end{equation}
given by the right $\cA$-module $\cV$. All sheaves involved in the isomorphism~\eqref{eq:adjsimple} are given with compatible $G$-equivariant structures. It follows that $G$ acts naturally on both sides of the isomorphism~\eqref{eq:adjsimple} and this isomorphism commutes with the action of $G$. It remains to take $G$-invariants of both sides of the isomorphism~\eqref{eq:adjsimple} by first taking $N$-invariants and then taking $H$-invariants (note that, taking $N$-invariants only, we also obtain the adjunction~\eqref{eq:firstadj}).
\end{proof}

\medskip

Let us say that a representation $V$ of $G$ is an {\it $N$-generator} if the restriction of $V$ to $N$ contains all irreducible representations of $N$ as direct summands.

\begin{theo}\label{thm:equivcoh}
Let $V$ be a finite-dimensional representation of $G$ which is an $N$-generator. Put
$$
\cV:=V\otimes_k\cO_X\,,\qquad A:=\End_N(V)^{\rm op}\,,\qquad \cA:=A\otimes_k\cO_X\,.
$$
Then the adjoint functors $\cV\otimes_\cA-$ and $\cH om_N(\cV,-)$ define equivalences of categories
$$
\coh(\cA)\simeq \coh^N(X)\,,\qquad \coh^H(\cA)\simeq\coh^G(X)\,.
$$
These equivalences commute naturally with the forgetful functors ${\coh^H(\cA)\to\coh(\cA)}$ and~${{\coh^G(X)\to \coh^N(X)}}$.
\end{theo}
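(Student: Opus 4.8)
The plan is to reduce everything to classical Morita theory. The key input is that, since $k$ has characteristic zero, the group algebra $k[N]$ is semisimple, so every finite-dimensional $k[N]$-module is projective; together with the hypothesis that $V$ is an $N$-generator, this says exactly that $V$ is a finitely generated projective generator (a progenerator) of $\mods(k[N])$, with $A=\End_N(V)^{\rm op}$ acting on $V$ on the right. Classical Morita theory then gives that the adjoint functors $\Hom_N(V,-)\colon\mods(k[N])\to\mods(A)$ and $V\otimes_A-\colon\mods(A)\to\mods(k[N])$ are mutually quasi-inverse equivalences; more precisely, the unit $P\to\Hom_N(V,V\otimes_AP)$ and the counit $V\otimes_A\Hom_N(V,M)\to M$ of the adjunction~\eqref{eq:firstadj} are natural isomorphisms, which one checks on the generators $P=A$ and $M=k[N]$.

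First I would globalize this to the first asserted equivalence $\coh(\cA)\simeq\coh^N(X)$. Since $G$ acts on $X$ through $H$, the subgroup $N$ acts trivially on $X$, so an $N$-equivariant coherent sheaf on $X$ is the same thing as a coherent module over the sheaf of $\cO_X$-algebras $k[N]\otimes_k\cO_X$; under this identification the functors $\cV\otimes_\cA-$ and $\cH om_N(\cV,-)$ of~\eqref{eq:firstadjfuncts} are the $\cO_X$-linear extensions of the two functors above, with bimodule $\cV=V\otimes_k\cO_X$. To see that their unit and counit are isomorphisms it suffices to restrict to an affine open $U=\Spec R$ of $X$: since $\cV$ is $\cO_X$-coherent and locally free, the formation of $\cH om_N(\cV,-)$, and of the unit and counit, commutes with this restriction, and on $U$ the claim becomes the Morita statement over $R$ for the $(k[N]\otimes_kR)$-module $V\otimes_kR$. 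The latter is again a progenerator (the base change $k\to R$ preserves being finitely generated, projective, and a generator), and its endomorphism algebra is $A^{\rm op}\otimes_kR$ because $V$ is $k[N]$-projective, so $\Hom_{k[N]}(V,-)$ commutes with $-\otimes_kR$. Hence both composites are naturally isomorphic to the identity, giving the equivalence $\coh(\cA)\simeq\coh^N(X)$ with quasi-inverse pair $\cV\otimes_\cA-$ and $\cH om_N(\cV,-)$.

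For the equivariant statement I would observe that no new work is needed on the level of objects. By Lemma~\ref{lemma:adj} the functors $\cV\otimes_\cA-\colon\coh^H(\cA)\to\coh^G(X)$ and $\cH om_N(\cV,-)\colon\coh^G(X)\to\coh^H(\cA)$ are again adjoint, and by construction they are the previous functors with the $G$-equivariant structure (coming from that on $\cV$) carried along; in particular they commute with the forgetful functors $\coh^H(\cA)\to\coh(\cA)$ and $\coh^G(X)\to\coh^N(X)$, and the equivariant unit and counit restrict, under these forgetful functors, to the non-equivariant ones shown to be isomorphisms above. Since a morphism of $H$-equivariant $\cA$-modules (respectively of $G$-equivariant sheaves) is an isomorphism as soon as its underlying morphism in $\coh(\cA)$ (respectively in $\coh^N(X)$) is, the equivariant unit and counit are isomorphisms as well. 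Therefore $\cV\otimes_\cA-$ and $\cH om_N(\cV,-)$ define mutually quasi-inverse equivalences $\coh^H(\cA)\simeq\coh^G(X)$ compatible with the forgetful functors, as asserted.

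The step I expect to be most delicate is the globalization: checking that $\cH om_N(\cV,-)$ really takes values in $\coh(\cA)$ (i.e.\ that the resulting $\cA$-modules are $\cO_X$-coherent) and that its formation, together with that of the unit and counit, is local on $X$, so that the affine-local Morita computation suffices. This rests on $\cV$ being $\cO_X$-coherent and locally free; everything else is either the standard Morita dictionary over a base ring or the formal fact that the forgetful functors reflect isomorphisms.
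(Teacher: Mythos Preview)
Your proposal is correct and follows essentially the same approach as the paper: reduce to the affine case, use that $V$ is a projective generator of $\rep(N)$ (by semisimplicity of $k[N]$ and the $N$-generator hypothesis) to verify the unit and counit are isomorphisms there, and then deduce the equivariant equivalence from the non-equivariant one via the fact that the forgetful functors reflect isomorphisms. The paper unpacks the Morita argument slightly more explicitly (checking right exactness of both composites and testing on the generators $\cM=\cA$, $\cF=\cV$), whereas you invoke Morita theory by name, but the content is the same.
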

\begin{proof}
By the isomorphism~\eqref{eq:firstadj} and Lemma~\ref{lemma:adj}, it is enough to show that for any $\cM$ in~$\coh(\cA)$ (respectively, in $\coh^H(\cA)$) and $\cF$ in $\coh^N(X)$ (respectively, in $\coh^G(X)$), the natural morphisms
\begin{equation}\label{eq:comps}
\cM\longrightarrow \cH om_N(\cV,\cV\otimes_\cA \cM)\,,\qquad \cV\otimes_\cA\cH om_N(\cV,\cF)\longrightarrow \cF
\end{equation}
are isomorphisms. Clearly, it is enough to consider the case when $\cM$ is in $\coh(\cA)$, $\cF$ is in~$\coh^N(X)$, and $X$ is affine. In this case, the needed assertion is well-known, because $\cV$ is then a projective generator in $\coh^N(X)$. We provide details for the sake of completeness.

First note that $\cV$ is a projective object in the category $\coh^N(X)$, because the category~$\rep(N)$ is semisimple and $X$ is affine. Hence the functor ${\cH om_N(\cV,-)}$ is exact. It follows that both morphisms in~\eqref{eq:comps} are between right exact functors.

Furthermore, we claim that $\cA$ and $\cV$ are generators of the categories~$\coh(\cA)$ and~$\coh^N(X)$, respectively. In other words, for any $\cM$ in $\coh(\cA)$ and $\cF$ in $\coh^N(X)$, there are surjective morphisms
\begin{equation}\label{eq:surjmorph}
\cA^{\oplus n}\longrightarrow\cM\,,\qquad \cV^{\oplus n}\longrightarrow \cF
\end{equation}
in $\coh(\cA)$ and $\coh^N(X)$, respectively, for some natural number $n$. Indeed, the first morphism exists because $X$ is affine. To show the existence of the second morphism, decompose~$\cF$ into a direct sum of isotypic components
$$
\cF\simeq \bigoplus\limits_{W\in\Irr(N)}W\otimes_k\cH om_N(\cW,\cF)\,,
$$
where $\cW:=W\otimes_k\cO_X$. Then use again that $X$ is affine and that for any irreducible representation $W$ of $N$, there is a surjective morphism $V\to W$, because $V$ is an $N$-generator.

Finally, for $\cM=\cA$ and $\cF=\cV$, one checks directly that the morphisms~\eqref{eq:comps} are isomorphisms with the help of the isomorphism ${\cA\simeq\cE nd_N(\cV)^{\rm op}}$. Thus we finish the proof using right exactness of the functors in~\eqref{eq:comps} and surjective morphisms~\eqref{eq:surjmorph}.
\end{proof}

\begin{remark}\label{remark:arbvectbund}
More generally, let $\cV$ be a $G$-equivariant vector bundle on $X$ such that for any irreducible representation $W$ of $N$, the $W$-isotypic component of $\cV$ is non-zero. Then Theorem~\ref{thm:equivcoh} holds also for the $H$-equivariant coherent sheaf of $\cO_X$-algebras $\cA:=\cE nd_N(\cV)^{\rm op}$ (the proof remains the same).
\end{remark}

\subsection{Interpretation in terms of an equivariant Brauer group}\label{subsection:Brinterp}

In order to apply Theorem~\ref{thm:equivcoh}, it is useful to interpret the algebra $A$ and the coherent sheaf of $\cO_X$-algebras $\cA$ as Azumaya algebras.

First we define a commutative algebra
$$
Z:=Z(k[N])
$$
and an affine scheme
$$
\uIrr(N):=\Spec(Z)\,.
$$
Note that there is a decomposition into a product of fields over $k$
$$
Z\simeq \mbox{$\prod\limits_{W\in \Irr(N)}Z\big(\End(W)\big)$}\,.
$$
Hence the underlying set of the scheme $\uIrr(N)$ is canonically bijective with the set $\Irr(N)$. Analogous facts hold over any extension of the field $k$. In particular, over any algebraic closure~$\bar k$ of~$k$, we have an isomorphism $Z_{\bar k}\simeq \prod\limits_{W\in \Irr_{\bar k}(N)}\bar k$ and a bijection ${\uIrr(N)(\bar k)\simeq \Irr_{\bar k}(N)}$.

The group $H$ acts on the algebra $Z$ by conjugation: an element $h\in H$ sends $z\in Z$ to~$\tilde h z \tilde h^{-1}$, where $\tilde h\in G$ is any preimage of $h$ with respect to $\pi$. The action of~$H$ on the scheme $\uIrr(N)$ defined by the action of $H$ on $Z$ agrees with the action of $H$ on the set $\Irr(N)$ defined by conjugation of representations, that is, an element $h\in H$ sends the isomorphism class of an irreducible representation $\rho$ of $N$ to the isomorphism class of the representation  ${g\mapsto \rho(\tilde h^{-1}g\tilde h)}$, where~$g\in N$.

For any representation $U$ of $N$, we have a canonical central homomorphism of algebras ${Z\to \End_N(U)}$, that is, $\End_N(U)$ is naturally a $Z$-algebra. One checks directly that for any representation $V$ of~$G$, the homomorphism $Z\to\End_N(V)$ commutes with the action of $H$, that is, $\End_N(V)$ is an $H$-equivariant $Z$-algebra.

\medskip


Recall that the $H$-equivariant Brauer group $\Br^H(Y)$ of an $H$-variety $Y$ consists of classes of $H$-equivariant sheaves of Azumaya algebras on $Y$ modulo $H$-equivariant Morita equivalence over $Y$. Namely, given $H$-equivariant sheaves of Azumaya algebras $\cB$ and $\cB'$ on~$Y$, we have $\cB\sim \cB'$ if and only if there exists an $H$-equivariant coherent $(\cB'\otimes_{\cO_Y} \cB^{\rm op})$-module~$\cP$ such that
$$
\cP\otimes_{\cB}-\colon \coh^H(\cB)\stackrel{\sim}\longrightarrow\coh^H(\cB')
$$
is an equivalence of categories. Actually, it is equivalent to require that an $H$-equivariant coherent $(\cB'\otimes_{\cO_Y} \cB^{\rm op})$-module $\cP$ gives an equivalence of categories ${\cP\otimes_{\cB}-\colon \coh(\cB)\stackrel{\sim}\longrightarrow\coh(\cB')}$.

Given a class $\eta\in\Br^H(Y)$, we let~$\coh^H(\eta)$ denote the category $\coh^H(\cB)$, where $\cB$ represents~$\eta$. This is well-defined up to equivalence.

For an affine $H$-variety $Y=\Spec(R)$, we also use notation $\Br^H(R)$ for $\Br^H(Y)$ and, given an element $\eta\in\Br^H(R)$, we also denote $\coh^H(\eta)$ by~$\mods^H(\eta)$.

\begin{prop}\label{prop:Azum}
\hspace{0cm}
\begin{itemize}
\item[(i)]
Let $V$ be a representation of $G$ which is an $N$-generator. Then $A=\End_N(V)^{\rm op}$ is an \mbox{$H$-equivariant} Azumaya algebra over $Z$.
\item[(ii)]
Let $A'$ be an $H$-equivariant Azumaya algebra over $Z$. Then $A'$ is $H$-equivariantly Morita equivalent to $A$ as in part~$(i)$ if and only if there exists a representation $V'$ of $G$ such that $V'$ is an $N$-generator and there is an isomorphism of $H$-equivariant $Z$-algebras ${A'\simeq \End_N(V')^{\rm op}}$.
\end{itemize}
\end{prop}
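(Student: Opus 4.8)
The plan is to reduce part (i) to the Wedderburn--Artin structure of $k[N]$, to prove the ``if'' direction of part (ii) by writing down an explicit Morita bimodule, and to prove the ``only if'' direction by transporting the regular module of $A'$ through a Morita equivalence and the equivalence of Theorem~\ref{thm:equivcoh}.

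\textbf{Part (i).} Since $N$ is finite and $\operatorname{char}k=0$, we have $k[N]\simeq\prod_{W\in\Irr(N)}\End_{D_W}(W)$ with $D_W:=\End_N(W)$ a finite-dimensional division algebra, so $Z=Z(k[N])\simeq\prod_W Z(D_W)$ is the asserted product of fields and $\uIrr(N)=\coprod_W\Spec Z(D_W)$. Writing $V\simeq\bigoplus_W W^{\oplus m_W}$ with all $m_W\geqslant 1$ (this is exactly the hypothesis that $V$ is an $N$-generator), Schur's lemma gives $A=\End_N(V)^{\rm op}\simeq\prod_W\Mat_{m_W}(D_W)^{\rm op}\simeq\prod_W\Mat_{m_W}(D_W^{\rm op})$, and each factor $\Mat_{m_W}(D_W^{\rm op})$ is central simple over $Z(D_W^{\rm op})=Z(D_W)$; matching the two product decompositions shows that $A$ is Azumaya over $Z$. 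For the equivariant statement I would observe that the $G$-action on $V$ induces the conjugation action of $G$ on $\End_k(V)$, which preserves the subalgebra $\End_N(V)$ because $N$ is normal in $G$ and on which $N$ acts trivially, since an $N$-equivariant endomorphism is fixed under conjugation by $\rho_V(n)$; hence this action descends to $H$, and it is compatible with the conjugation action of $H$ on $Z$ by the discussion preceding the proposition. Thus $A$ is an $H$-equivariant Azumaya algebra over $Z$.

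\textbf{Part (ii), the ``if'' direction.} Suppose $A'\simeq\End_N(V')^{\rm op}$ as $H$-equivariant $Z$-algebras for some $N$-generator $V'\in\rep(G)$. I would set $\cP:=\Hom_N(V',V)$, with left $A'=\End_N(V')^{\rm op}$-action by precomposition, right $A=\End_N(V)^{\rm op}$-action by postcomposition, and $H$-equivariant structure induced from the $G$-actions on $V$ and $V'$ (again trivial on $N$, hence descending to $H$). Both module structures are $Z$-linear because $N$-equivariant maps commute with the central $Z$-action on $V$ and on $V'$, so $\cP$ is an $H$-equivariant $(A'\otimes_Z A^{\rm op})$-module. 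By Theorem~\ref{thm:equivcoh} with $X$ a point, $V\otimes_A-\colon\mods^H(A)\to\rep(G)$ and $\Hom_N(V',-)\colon\rep(G)\to\mods^H(A')$ are mutually inverse equivalences; their composite $\mods^H(A)\to\mods^H(A')$ sends $M\mapsto\Hom_N(V',V\otimes_A M)$, in particular sends the regular module $A$ to $\Hom_N(V',V)=\cP$, and is therefore naturally isomorphic to $\cP\otimes_A-$. Being an equivalence, $\cP$ witnesses $[A]=[A']$ in $\Br^H(Z)$.

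\textbf{Part (ii), the ``only if'' direction.} Conversely, suppose $A'$ is $H$-equivariantly Morita equivalent to $A$ over $Z$ via an $H$-equivariant $(A'\otimes_Z A^{\rm op})$-progenerator $\cP$; then $\cP\otimes_A-$ is an $H$-equivariant, $Z$-linear equivalence $\mods^H(A)\to\mods^H(A')$ compatible with the forgetful functors to $\mods(A)$ and $\mods(A')$. Composing a quasi-inverse of it with the equivalence $\Psi\colon\mods^H(A)\to\rep(G)$ of Theorem~\ref{thm:equivcoh} (again with $X$ a point) produces an $H$-equivariant equivalence $F\colon\mods^H(A')\to\rep(G)$, compatible with forgetting $H$, hence inducing $\bar F\colon\mods(A')\to\rep(N)$. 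Put $V':=F({}_{A'}A')\in\rep(G)$, the image of the regular left module with its tautological $H$-equivariant structure. Since $A'$ is Morita equivalent to the semisimple algebra $A$ (semisimple by part (i)), $A'$ is semisimple, so ${}_{A'}A'$ contains every simple left $A'$-module as a summand; as $\bar F$ carries simple modules bijectively onto $\Irr(N)$, the representation $V'=\bar F({}_{A'}A')$ contains every irreducible representation of $N$, i.e.\ $V'$ is an $N$-generator. Applying $\bar F$ to endomorphism rings and using the canonical isomorphism $\End_{A'}({}_{A'}A')\simeq(A')^{\rm op}$ yields a ring isomorphism $A'\simeq\End_N(V')^{\rm op}$, and since $F$ is $H$-equivariant and $Z$-linear this is an isomorphism of $H$-equivariant $Z$-algebras.

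\textbf{Main obstacle.} The routine parts are the Wedderburn computation in (i) and the Eilenberg--Watts/Morita identification of a composite equivalence with a bimodule in the ``if'' direction. The delicate point is the ``only if'' direction: verifying that the abstract ring isomorphism $A'\simeq\End_N(V')^{\rm op}$ genuinely respects the $Z$-algebra structure and the $H$-equivariant structure. This requires one to (a) make precise what it means for $F$ to be $H$-equivariant --- a coherent system of natural isomorphisms intertwining the ``twist by $h$'' autoequivalences --- so that it can be applied to morphism spaces, and (b) track how the categorical centre of $\mods(A')$, which one must identify with $Z$ compatibly with the given $Z$-algebra structure on the Azumaya algebra $A'$, transports through the $Z$-linear Morita equivalence $\cP\otimes_A-$ and through the equivalence of Theorem~\ref{thm:equivcoh}. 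Everything else is bookkeeping.
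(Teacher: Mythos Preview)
Your proof is correct and lands on the same object $V'=V\otimes_A Q$ as the paper, but the paper takes a shorter route in part~(ii) that dissolves exactly the obstacle you flagged.

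For~(i) the arguments are equivalent: you spell out the Wedderburn decomposition and compute $\End_N(V)$ factor by factor, while the paper observes abstractly that $k[N]$ is Azumaya over its centre $Z$ (being semisimple) and that $V$ is a faithful $k[N]$-module over $Z$, so $\End_{k[N]}(V)^{\rm op}$ is Azumaya over $Z$ by the standard stability of Azumaya algebras under endomorphism rings of faithful progenerators.

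For~(ii) the paper does not split into ``if'' and ``only if''. Instead it invokes, straight from the definition of $H$-equivariant Morita equivalence over $Z$, the characterisation: $A'\sim A$ if and only if there is an $H$-equivariant $A$-module $Q$, faithful as a $Z$-module, together with an isomorphism of $H$-equivariant $Z$-algebras $A'\simeq\End_A(Q)^{\rm op}$. (Given a Morita bimodule $\cP$, take $Q=\cP$; conversely $Q$ is the bimodule.) This packages the $H$-equivariance and $Z$-linearity of the identification into the hypothesis. Then one simply sets $V':=V\otimes_A Q$ and uses the single equivalence $V\otimes_A-\colon\mods(A)\to\rep(N)$ of Theorem~\ref{thm:equivcoh} (with $X$ a point), which is $Z$-linear and compatible with the $H$-structures via the 2-commutative square, to get $\End_N(V')\simeq\End_A(Q)$ as $H$-equivariant $Z$-algebras, hence $\End_N(V')^{\rm op}\simeq A'$. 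Faithfulness of $Q$ over $Z$ translates under the equivalence into $V'$ being an $N$-generator.

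Your route reaches the same $V'$ (your transported regular module is $V\otimes_A\cP^\vee$), but by passing through the composite equivalence $\mods^H(A')\to\rep(G)$ and then reading off endomorphism rings, you are forced to verify $H$-equivariance and $Z$-linearity after the fact --- the ``main obstacle'' you identify. The paper's framing via the $\End_A(Q)^{\rm op}$ characterisation makes this step automatic, at the cost of assuming that characterisation as known. Your explicit construction of the Morita bimodule $\Hom_N(V',V)$ in the ``if'' direction is a nice bonus that the paper does not spell out.
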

\begin{proof}
$(i)$
We need to show that $A$ is an Azumaya algebra over $Z$. Since $Z$ is the center of~$k[N]$ and the category $\mods\big(k[N]\big)\simeq\rep(N)$ is semisimple, we see that $k[N]$ is an Azumaya algebra over $Z$. Since $V$ is an $N$-generator, $V$ is faithful as a $Z$-module. Hence the equality $\End_N(V)^{\rm op}=\End_{k[N]}(V)^{\rm op}$ implies that $A$ is an Azumaya algebra over $Z$.


$(ii)$
It follows from the definition of $H$-equivariant Morita equivalence over $Z$ that~${A'\sim A}$ if and only if there exists an $H$-equivariant $A$-module $Q$ such that $Q$ is faithful as a $Z$-module and there is an isomorphism of $H$-equivariant $Z$-algebras ${A'\simeq \End_A(Q)^{\rm op}}$. By Theorem~\ref{thm:equivcoh} applied with $X=\Spec(k)$, we have a $2$-commutative diagram
$$
\begin{CD}
\mods^H(A) @>V\otimes_A->> \rep(G) \\
@VVV   @VVV \\
\mods(A) @>V\otimes_A->> \rep(N)
\end{CD}
$$
with horizontal arrows being equivalences of categories. These equivalences also commute with the $Z$-linear structures on the categories. Hence it remains to put~${V':=V\otimes_A Q}$ with the tensor product of the actions of~$G$ on $V$ and on $Q$ through the homomorphism~$\pi$.
\end{proof}

By Proposition~\ref{prop:Azum}, the class of $\End_N(V)^{\rm op}$ in $\Br^H(Z)$ does not depend on the choice of a representation~$V$ of $G$ which is an $N$-generator. In other words, this class is well-defined by the exact sequence~\eqref{eq:exgroup}. Denote this class by
\begin{equation}\label{eq:alpha}
\theta\in\Br^H(Z)\,.
\end{equation}

Theorem~\ref{thm:equivcoh} has the following interpretation in terms of equivariant Brauer groups. As above, let $X$ be an $H$-variety. Consider the $H$-equivariant projection
$$
f\;:\; \uIrr(N)\times X\longrightarrow \uIrr(N)\,.
$$

\begin{theo}\label{cor:trivobstr}
\hspace{0cm}
\begin{itemize}
\item[(i)]
There is an equivalence of categories
$$
{\coh^G(X)\simeq\coh^H(f^*\theta)}\,.
$$
\item[(ii)]
Suppose that $\theta=0$ in $\Br^H(Z)$. Then for any $H$-variety $X$, there is an equivalence of categories
$$
\coh^G(X)\simeq\coh^H\big(\uIrr(N)\times X\big)\,.
$$
\end{itemize}
\end{theo}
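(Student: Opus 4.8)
The plan is to deduce the statement directly from Theorem~\ref{thm:equivcoh} and Proposition~\ref{prop:Azum}, the only real work being to reinterpret the category of $\cA$-modules on $X$ as a category of equivariant modules over an Azumaya algebra on $\uIrr(N)\times X$ and to check that the class of this algebra is $f^*\theta$. Concretely, I would first fix a representation $V$ of $G$ that is an $N$-generator — for instance the regular representation $k[G]$, whose restriction to $N$ is $[G:N]$ copies of $k[N]$ and hence contains every irreducible representation of $N$ — and set $A:=\End_N(V)^{\rm op}$, $\cA:=A\otimes_k\cO_X$ as in Theorem~\ref{thm:equivcoh}. That theorem already provides an equivalence $\coh^H(\cA)\simeq\coh^G(X)$ compatible with the forgetful functors, so everything reduces to identifying $\coh^H(\cA)$ with $\coh^H(f^*\theta)$.

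The key observation is that $\cA$ is nothing but the pullback of $A$ along $f$. Since $A$ is a $Z$-algebra and $\uIrr(N)=\Spec(Z)$, over any affine open $\Spec(R)\subseteq X$ one has $\cA|_{\Spec(R)}=A\otimes_kR=A\otimes_Z(Z\otimes_kR)$, so $\cA=f^*\widetilde A$, where $\widetilde A$ is the quasi-coherent sheaf of algebras on $\uIrr(N)$ determined by $A$. By Proposition~\ref{prop:Azum}$(i)$, $A$ is an $H$-equivariant Azumaya algebra over $Z$, and by definition its class in $\Br^H(Z)$ is $\theta$ (see~\eqref{eq:alpha}); since pullback of an Azumaya algebra along an arbitrary morphism is again Azumaya and is compatible with $H$-equivariant structures and with $H$-equivariant Morita equivalence, $\cA=f^*\widetilde A$ is an $H$-equivariant Azumaya algebra on $\uIrr(N)\times X$ whose class is $f^*\theta$. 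Thus $\cA$ is a representative of $f^*\theta$ in the sense of Theorem~\ref{thmF}, so $\coh^H(\cA)=\coh^H(f^*\theta)$ up to equivalence, and part~$(i)$ follows. For part~$(ii)$, if $\theta=0$ then, $f^*$ being a homomorphism of equivariant Brauer groups, $f^*\theta=0$, and the structure sheaf $\cO_{\uIrr(N)\times X}$ is a representative of this trivial class; hence $\coh^H(f^*\theta)\simeq\coh^H\big(\uIrr(N)\times X\big)$, and combining with part~$(i)$ gives the asserted equivalence.

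The step to be careful about is not an obstacle so much as bookkeeping: one must check that the $H$-equivariant structure on $A$ (by conjugation, as described before Proposition~\ref{prop:Azum}), the $H$-equivariant structure on $\cA=A\otimes_k\cO_X$ as a sheaf of $\cO_X$-algebras used in Theorem~\ref{thm:equivcoh}, and the $H$-equivariant structure on $f^*\widetilde A$ coming from the $H$-equivariant morphism $f$ and the $H$-equivariant algebra $\widetilde A$ all coincide, so that the three categories of modules literally agree; and that the equivalence produced by Theorem~\ref{thm:equivcoh} is compatible with the $Z$-linear (hence Brauer-theoretic) reinterpretation. One should also note that $\coh^H(f^*\theta)$ is only defined up to equivalence, which is exactly why it is legitimate to compute it using the specific representative $\cA$. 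None of this requires new ideas beyond what is already established in Subsections~\ref{subsec:twoequiv}--\ref{subsection:Brinterp}.
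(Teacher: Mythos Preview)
Your proposal is correct and follows essentially the same approach as the paper. The only point to tighten is the equation ``$\cA=f^*\widetilde A$'': these are sheaves on different spaces ($X$ versus $\uIrr(N)\times X$), so what you really mean, and what the paper makes explicit, is that the finite projection $p\colon\uIrr(N)\times X\to X$ induces an equivalence $\coh^H(f^*A)\simeq\coh^H(p_*f^*A)$ and that $p_*f^*A=A\otimes_k\cO_X=\cA$; your affine computation $A\otimes_k R=A\otimes_Z(Z\otimes_k R)$ is exactly this identification at the level of global sections.
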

\begin{proof}
$(i)$
Let $A$ be a representative of $\theta$ constructed as in Theorem~\ref{thm:equivcoh}. Since the $H$-equivariant projection $p\colon X\times\uIrr(N)\to X$ is a finite morphism, we have an equivalence of categories
$$
\coh^H(f^*A)\stackrel{\sim}\longrightarrow \coh^H(p_*f^*A)\,,\qquad \cM\longmapsto p_*\cM\,.
$$
Since $p_*f^*A=A\otimes_k\cO_X$, we finish the proof applying Theorem~\ref{thm:equivcoh}.

$(ii)$ This follows directly from part~$(i)$.
\end{proof}

\begin{remark}
Proposition~\ref{prop:Azum} and Theorem~\ref{cor:trivobstr} admit the following generalizations with almost the same proofs. Let $\cV$ be as in Remark~\ref{remark:arbvectbund}. Then $\cA:=\cE nd_N(\cV)^{\rm op}$ is naturally an $H$-equivariant sheaf of Azumaya algebras on $\uIrr(N)\times X$. Given an $H$-equivariant sheaf of Azumaya algebras~$\cA'$ on $\uIrr(N)\times X$, we have an $H$-equivariant Morita equivalence $\cA\sim\cA'$ if and only if there is a $G$-equivariant vector bundle $\cV'$ such that for
every irreducible representation $W$ of~$N$, the $W$-isotypic component of $\cV'$ is non-zero and there is an \mbox{$H$-equivariant} isomorphism $\cA'\simeq \cE nd_N(\cV')^{\rm op}$ of sheaves of algebras on $\uIrr(N)\times X$. It follows that the class of~$\cA$ in ${\Br^H\big(\uIrr(N)\times X\big)}$ equals $f^*\theta$. In particular, we have $f^*\theta=0$ if and only if there is a \mbox{$G$-equivariant} vector bundle $\cV$ on $X$ such that the natural map ${\cO_{\uIrr(N)\times X}\to \cE nd_N(\cV)}$ is an isomorphism. In this case, there is an equivalence of categories $\coh^G(X)\simeq\coh^H\big(\uIrr(N)\times X)$.
\end{remark}

\subsection{A criterion for vanishing of the class $\theta$}\label{subsec:critvan}

In view of Theorem~\ref{cor:trivobstr}$(ii)$, it is important to have a criterion for vanishing of~$\theta$. Such criterion is obtained from Proposition~\ref{prop:Azum}$(ii)$. First let us state the following simple lemma.

\begin{lemma}\label{lem:brtrivgr}
The following conditions are equivalent:
\begin{itemize}
\item[(i)]
the class of $k[N]$ in $\Br(Z)$ vanishes;
\item[(ii)]
the natural homomorphism of algebras $Z\to\End_N(U)$ is an isomorphism, where we put ${U:=\bigoplus\limits_{W\in\Irr(N)}W}$;
\item[(iii)]
for any $W\in\Irr(N)$, the representation $W_{\bar k}:=W\otimes_k \bar k$ of $N$ over any algebraic closure~$\bar k$ of $k$ is isomorphic to the direct sum of (some of the) irreducible representations of $N$ over~$\bar k$ taken with multiplicity one.
\end{itemize}
\end{lemma}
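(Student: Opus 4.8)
The plan is to show that each of (i), (ii), (iii) is equivalent to the single statement ($\dagger$): for every $W\in\Irr(N)$ the division algebra $D_W:=\End_N(W)$ is commutative, i.e. coincides with its center $L_W:=Z\big(\End_N(W)\big)$. The common framework is the Wedderburn decomposition $k[N]\simeq\prod_{W\in\Irr(N)}\End_{D_W}(W)$, available because $k$ has characteristic zero, so that $\rep(N)$ is semisimple. Here $Z=Z(k[N])\simeq\prod_W L_W$, hence $\Br(Z)\simeq\prod_W\Br(L_W)$, and each factor $\End_{D_W}(W)$ is a central simple $L_W$-algebra whose Brauer class vanishes precisely when $D_W=L_W$, using the elementary fact that a finite-dimensional division algebra over a field $L$ is trivial in $\Br(L)$ if and only if it equals $L$.

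For the equivalence (i) $\Leftrightarrow$ ($\dagger$): under $\Br(Z)\simeq\prod_W\Br(L_W)$ the class of $k[N]$ is the tuple of classes of the $\End_{D_W}(W)$, so it vanishes exactly when $D_W=L_W$ for all $W$. (One can also avoid the explicit decomposition: $U:=\bigoplus_{W\in\Irr(N)}W$ is a progenerator of $\mods(k[N])$, so $k[N]$ is $Z$-Morita equivalent to $\End_N(U)^{\mathrm{op}}$, and (i) $\Leftrightarrow$ (ii) then becomes formal.) For (ii) $\Leftrightarrow$ ($\dagger$): by Schur's lemma $\End_N(U)\simeq\prod_W\End_N(W)=\prod_W D_W$, with no cross terms since $U$ is multiplicity-free, and the canonical central homomorphism $Z\to\End_N(U)$ is componentwise the inclusion $L_W\hookrightarrow D_W$; hence it is an isomorphism if and only if $D_W=L_W$ for every $W$.

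For (iii) $\Leftrightarrow$ ($\dagger$): flat base change along $k\to\bar k$ identifies $\End_N(W_{\bar k})$ with $D_W\otimes_k\bar k$, and for a finite-dimensional module $M=\bigoplus_i S_i^{\oplus m_i}$ over the semisimple algebra $\bar k[N]$ one has $\End(M)\simeq\prod_i\Mat_{m_i}(\bar k)$; thus $W_{\bar k}$ is a multiplicity-free sum of irreducibles if and only if $D_W\otimes_k\bar k$ is commutative. If $D_W=L_W$, then $L_W\otimes_k\bar k$ is a finite product of copies of $\bar k$ by separability in characteristic zero, hence commutative; conversely, commutativity of $D_W\otimes_k\bar k$ forces commutativity of $D_W$, since $D_W$ embeds into $D_W\otimes_k\bar k$. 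This completes the circle of equivalences. I expect the only point that needs some care to be the bookkeeping of the last paragraph — pinning down the isomorphism $\End_N(W_{\bar k})\simeq D_W\otimes_k\bar k$ and reading multiplicity-freeness of $W_{\bar k}$ off commutativity of its endomorphism algebra; everything else is a routine unwinding of Wedderburn theory and of the Brauer group of a finite product of fields.
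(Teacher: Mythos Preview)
Your proof is correct. The route is close to the paper's but organized differently: you introduce the explicit intermediate condition~($\dagger$) that each endomorphism division algebra $D_W=\End_N(W)$ is commutative, and verify all three conditions against it via the Wedderburn decomposition componentwise. The paper instead argues (i)$\Leftrightarrow$(ii) directly from the general characterization of triviality in the Brauer group (an Azumaya algebra over $Z$ is trivial iff it is $\End_Z$ of some faithful module, and such a module for $k[N]$ with $Z\simeq\End_N(U)$ is forced to be $\bigoplus_W W$), and then proves (ii)$\Leftrightarrow$(iii) by base-changing the single map $Z\to\End_N(U)$ to $\bar k$ and reading off that $U_{\bar k}$ must be the multiplicity-free sum of all $\bar k$-irreducibles. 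Your version makes the underlying structure (the per-simple-factor Brauer classes $[D_W]\in\Br(L_W)$) more visible and is arguably more informative; the paper's version is terser and avoids naming $D_W$ and $L_W$ at all. Mathematically they amount to the same thing.
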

\begin{proof}
$(i)\Leftrightarrow(ii)$
The class of $k[N]$ in $\Br(Z)$ vanishes if and only if there is a $k[N]$-module~$U$ such that the natural homomorphism $Z\to \End_{k[N]}(U)=\End_N(U)$ is an isomorphism. Also note that if $Z\to \End_N(U)$ is an isomorphism, then $U$ is isomorphic to the direct sum ${\bigoplus\limits_{W\in\Irr(N)}W}$ (the converse implication does not hold for an arbitrary group $N$).

$(ii)\Leftrightarrow(iii)$
The map $Z\to \End_N(U)$ is an isomorphism if and only if the map ${Z_{\bar k}\to \End_N(U_{\bar k})}$ is an isomorphism. Since $\bar k$ is algebraically closed, this is equivalent to the existence of an isomorphism ${U_{\bar k}\simeq \bigoplus\limits_{E\in \Irr_{\bar k}(N)}E}$. In turn, this is equivalent to~$(iii)$.
\end{proof}

\begin{prop}\label{cor:split}
The following conditions are equivalent:
\begin{itemize}
\item[(i)]
we have $\theta=0$ in $\Br^H(Z)$;
\item[(ii)]
the group $N$ satisfies the equivalent conditions of Lemma~\ref{lem:brtrivgr} and the $N$-representation $\bigoplus\limits_{W\in\Irr(N)}W$ extends to a $G$-representation.
\end{itemize}
\end{prop}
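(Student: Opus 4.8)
The plan is to read the proposition off from the Morita-theoretic criterion of Proposition~\ref{prop:Azum}(ii). By definition $\theta=0$ in $\Br^H(Z)$ means exactly that a representative of $\theta$ is $H$-equivariantly Morita equivalent to the trivial Azumaya algebra over $Z$, which is $Z$ itself equipped with the conjugation action of $H$. So I would apply Proposition~\ref{prop:Azum}(ii) with $A'=Z$, and then everything reduces to a short computation with semisimple algebras plus some bookkeeping of the $H$-equivariant structures.

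For the implication (i)~$\Rightarrow$~(ii): from $\theta=0$ and Proposition~\ref{prop:Azum}(ii) I obtain a $G$-representation $V'$ which is an $N$-generator together with an isomorphism of $H$-equivariant $Z$-algebras $\End_N(V')^{\rm op}\simeq Z$. Forgetting the equivariance, this is an isomorphism of semisimple $k$-algebras. Writing $V'|_N\simeq\bigoplus_{W\in\Irr(N)}W^{\oplus m_W}$ with all $m_W\geqslant 1$ (as $V'$ is an $N$-generator), the algebra $\End_N(V')$ has simple factors $\Mat_{m_W}(\End_N(W))$, one for each $W$, whereas $Z$ is a finite product of fields; since an isomorphism of semisimple algebras matches simple factors, each $\Mat_{m_W}(\End_N(W))$ must be a field. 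Hence $m_W=1$ and $\End_N(W)$ is commutative for every $W$. The first statement says $V'|_N\simeq U:=\bigoplus_W W$, so $U$ extends to a $G$-representation, and the second says the canonical central map $Z\to\End_N(U)=\prod_W\End_N(W)$ is an isomorphism, i.e.\ condition~(ii) of Lemma~\ref{lem:brtrivgr}. That is precisely condition~(ii) of the proposition.

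For the implication (ii)~$\Rightarrow$~(i): I take a $G$-representation $\tilde U$ with $\tilde U|_N\simeq U$; it is an $N$-generator, so by construction $\theta$ is the class of $\End_N(\tilde U)^{\rm op}$ with the $H$-equivariant structure induced by conjugation by $G$ (this does factor through $H$, since $N$ acts trivially on $N$-linear endomorphisms). By the equivalent conditions of Lemma~\ref{lem:brtrivgr} the canonical central map $Z\to\End_N(\tilde U)=\End_N(U)$ is an isomorphism of $k$-algebras, and I would check it is $H$-equivariant for the conjugation actions: for $g\in G$ lifting $h\in H$, multiplication on $\tilde U$ by the conjugate $gzg^{-1}$ of $z\in Z$ equals $\rho(g)\circ(\text{multiplication by }z)\circ\rho(g)^{-1}$, where $\rho$ denotes the $G$-action. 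Since $Z$ is commutative the opposite algebra is harmless, so $\End_N(\tilde U)^{\rm op}\simeq Z$ as $H$-equivariant $Z$-algebras, whence $\theta=0$.

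The only place that needs any care is the handling of the $H$-equivariant structures: in the direction (i)~$\Rightarrow$~(ii) this is already packaged inside Proposition~\ref{prop:Azum}, leaving a purely non-equivariant statement about semisimple algebras, and in the direction (ii)~$\Rightarrow$~(i) it amounts to the one short computation above showing that the canonical isomorphism $Z\simeq\End_N(\tilde U)$ intertwines the two conjugation actions. Everything else is formal.
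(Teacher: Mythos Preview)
Your proposal is correct and follows essentially the same route as the paper: both arguments amount to applying Proposition~\ref{prop:Azum}$(ii)$ with $A'=Z$, so that $\theta=0$ becomes the existence of a $G$-representation $V'$ which is an $N$-generator and satisfies $Z\simeq\End_N(V')^{\rm op}$ as $H$-equivariant $Z$-algebras. The paper states this reduction in one sentence and leaves the unwinding to the reader; you spell out the Wedderburn-component argument forcing $m_W=1$ and $\End_N(W)$ commutative, and the $H$-equivariance check for the canonical map $Z\to\End_N(\tilde U)$ (the latter is in fact already recorded in the paper just before Proposition~\ref{prop:Azum}).
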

\begin{proof}
By Proposition~\ref{prop:Azum}$(ii)$, we have $\theta=0$ if and only if there is a representation $V$ of~$G$ such that the natural map ${Z\to\End_N(V)}$ is an isomorphism. Note that the latter condition implies that $V$ is an $N$-generator and, moreover, that there is an isomorphism ${V|_N\simeq \bigoplus\limits_{W\in\Irr(N)}W}$. This finishes the proof.
\end{proof}


\medskip

Here is an example of a group $N$ for which the equivalent conditions of Lemma~\ref{lem:brtrivgr} are not satisfied. In particular, by Proposition~\ref{cor:split}, one has $\theta\ne 0$ for any exact sequence~\eqref{eq:exgroup} with this $N$ (see also Subsection~\ref{sec:furthexam} below for less trivial examples with $\theta\ne 0$).

\begin{example}
Let $N$ be the group of quaternions. Then the algebra $k[N]$ is isomorphic to the product $k^{\times 4}\times Q$, where $Q$ is the standard quaternion algebra over $k$ with a basis~$1$,~$i$,~$j$,~$ij$ such that $i^2=j^2=-1$ and ${ij=-ji}$. In particular, there is an isomorphism $Z\simeq k^{\times 5}$ and the class of $k[N]$ in $\Br(Z)\simeq\Br(k)^{\oplus 5}$ is~$(0,0,0,0,[Q])$. Thus if $Q$ does not split over $k$, then this class is non-trivial.
\end{example}



Here are examples when the conditions of Proposition~\ref{cor:split} are satisfied.

\begin{example}\label{ex:commut}
Suppose that $N$ is abelian. Then the equivalent conditions of Lemma~\ref{lem:brtrivgr} are satisfied for $N$, because in this case, the representation $U=\bigoplus\limits_{W\in\Irr(N)}W$ is isomorphic to~$k[N]$ and we have ${Z=k[N]\simeq \End_N\big(k[N]\big)}$. Let $H$ be any finite group acting on $N$ and let $G=N\rtimes H$ be the semidirect product. Then the action of~$H$ on $U\simeq k[N]$ gives an action of~$G$ on~$U$ that extends the action of~$N$. Hence, by Proposition~\ref{cor:split}, we have $\theta=0$ in $\Br^H(Z)$.
\end{example}

\begin{example}\label{exam:wreath}
Let $T$ be a finite group and $n$ be a natural number. The symmetric group $\Sigma_n$ acts by permutations on the group $T^{\times n}$ as follows:
${\sigma (t_1,\ldots,t_n):=(t_{\sigma ^{-1}(1)},\ldots,t_{\sigma ^{-1}(n)})}$, where $\sigma\in\Sigma_n$, $t_1,\ldots,t_n\in T$. The corresponding semidirect product group $G=T^{\times n}\rtimes\Sigma_n$ is called a {\it wreath product}. Thus
we have a split exact sequence of groups
\begin{equation}\label{eq:wreath}
1\longrightarrow T^{\times n}\longrightarrow G\longrightarrow \Sigma_n\longrightarrow 1\,.
\end{equation}
Note that
$$
\Irr(T^{\times n})=\big\{ W_{1}\otimes \ldots\otimes W_{n}\,\mid\, W_1,\ldots,W_n\in\Irr(T)\big\}\,,
$$
where an element $(t_1,\ldots,t_n)\in T^{\times n}$ sends a tensor ${w_1\otimes\ldots\otimes w_n\in W_1\otimes\ldots\otimes W_n}$ to~$t_1(w_1)\otimes\ldots\otimes t_n(w_n)$. The corresponding action of $\Sigma_n$ on $\Irr(T^{\times n})$ is given by the formula
$$
\sigma(W_1\otimes \ldots\otimes W_n)=W_{\sigma ^{-1}(1)}\otimes \ldots\otimes W_{\sigma ^{-1}(n)}\,,\qquad \sigma\in\Sigma_n\,,\,\,\,W_1,\ldots,W_n\in\Irr(T)\,.
$$
The point is that for each $\sigma\in\Sigma_n$, there is a linear map
$$
\sigma\, :\,W_1\otimes \ldots\otimes W_n\longrightarrow W_{\sigma ^{-1}(1)}\otimes \ldots\otimes W_{\sigma ^{-1}(n)}\,,\qquad w_1\otimes \ldots\otimes w_n\mapsto w_{\sigma^{-1}(1)}\otimes \ldots\otimes w_{\sigma^{-1}(n)}\,,
$$
with the property that for any element $t=(t_1,\ldots,t_n)\in T^{\times n}$, the following diagram is commutative:
$$
\begin{CD}
W_1\otimes \ldots\otimes W_n  @>{\sigma}>>  W_{\sigma^{-1}(1)}\otimes \ldots\otimes W_{\sigma^{-1}(n)}  \\
 @V t VV @V \sigma(t) VV \\
W_1\otimes \ldots\otimes W_n @>{\sigma}>> W_{\sigma^{-1}(1)}\otimes \ldots\otimes W_{\sigma^{-1}(n)}\,.
\end{CD}
$$
It follows that the group $\Sigma_n$ acts on the direct sum of all irreducible representations of $T^{\times n}$ and this direct sum has a natural structure of a representation of~$G$.

Suppose now that the equivalent conditions of Lemma~\ref{lem:brtrivgr} are satisfied for the group~$T$. Then they are also satisfied for $T^{\times n}$. Hence, by Proposition~\ref{cor:split}, we have $\theta=0$ in $\Br^{\Sigma_n}(Z)$, where $Z=Z(k[T^{\times n}])$ and $\theta$ is associated with the exact sequence~\eqref{eq:wreath}.
\end{example}

\medskip

The above statements and constructions admit the following generalization.

\begin{remark}\label{rem:S}
Let $S\subset \Irr(N)$ be an $H$-invariant subset. Then $S$ corresponds to an \mbox{$H$-invariant} subscheme $\underline{S}$ of $\uIrr(N)$ with an $H$-equivariant algebra of regular functions~$Z_S$. Clearly,~$Z_S$ is an $H$-invariant direct factor of the algebra $Z$. Let $\rep(N)_S$ denote the category of \mbox{$S$-isotypic} finite-dimensional representations of $N$, that is, whose all irreducible direct summands are from~$S$, and let~$\rep(G)_S$ denote the category of finite-dimensional representations of $G$ whose restriction to~$N$ is $S$-isotypic. Let $\theta_S\in \Br^H(Z_S)$ be the restriction of $\theta\in \Br^H(Z)$ from~$Z$ to~$Z_S$.

Then there are equivalences of categories
$$
\coh(\theta_S)\simeq\rep(N)_S\,,\qquad \coh^H(\theta_S)\simeq \rep(G)_S\,.
$$
This follows from Theorem~\ref{thm:equivcoh} applied with $X=\Spec(k)$ and from the definition of the $Z$-linear structure on the algebra $A$ in this theorem (see Subsection~\ref{subsection:Brinterp}). Also, an analog of Theorem~\ref{thm:equivcoh} for an arbitrary $H$-variety $X$ holds in the $S$-isotypic setting as well. Furthermore, one can show that $\theta_S$ is the class of the $Z_S$-algebra $\End_N(V)^{\rm op}$, where $V$ is any representation in $\rep(G)_S$ whose restriction to $N$ contains as direct summands all irreducible representations of~$N$ from~$S$.

Lemma~\ref{lem:brtrivgr} has a direct generalization with $Z$, $k[N]$, $\Br(Z)$, and $\Irr(N)$ being replaced by~$Z_S$, $Z_S\cdot k[N]$, $\Br(Z_S)$, and $S$, respectively. Proposition~\ref{cor:split} also has a direct generalization with $\theta$ and $\Br^H(Z)$ being replaced by $\theta_S$ and $\Br^H(Z_S)$, respectively.

For example, the trivial character $\chi_0$ of $N$ defines an $H$-invariant $k$-point $\chi_0$ on $\uIrr(N)$ and the corresponding element $\theta_{\{\chi_0\}}\in\Br^H(k)$ is trivial.
\end{remark}

\subsection{Further examples}\label{sec:furthexam}

Let us give several examples to the class $\theta$ (see formula~\eqref{eq:alpha}). We will not really use this in what follows; however, we decided to provide these examples for a better understanding of the class~$\theta$ and for possible further applications.

\medskip

First we describe in detail a particular example of an $H$-equivariant algebra $A$ as in Theorem~\ref{thm:equivcoh}. The regular representation $V=k[G]$ of $G$ is an $N$-generator. Indeed, a choice of a section of $\pi$ defines an isomorphism $k[G]\simeq k[N]^{\oplus |H|}$ of representations of $N$ and~$k[N]$ contains as direct summands all irreducible representations of $N$. Since ${\End_N\big(k[N]\big)^{\rm op}\simeq k[N]}$, the algebra $A=\End_N\big(k[G]\big)^{\rm op}$ is isomorphic to the matrix algebra $\Mat_{|H|}\big(k[N]\big)$ over~$k[N]$. In particular, the dimension of~$A$ is equal to~$|H|^2\cdot|N|$.

Define the actions of the group $G$ on the algebra of regular functions $\cO(G)={\rm Map}(G,k)$ by left translations
\begin{equation}\label{eq:lefttrans}
L_{g}\;:\;\cO(G)\longrightarrow \cO(G)\,,\qquad \psi\longmapsto\big(g'\mapsto \psi(g^{-1}g')\big)\,,
\end{equation}
and by right translations
\begin{equation}\label{eq:righttrans}
R_g\;:\;\cO(G)\longrightarrow \cO(G)\,,\qquad \psi\longmapsto\big(h'\mapsto \psi(g'g)\big)\,,
\end{equation}
where $g,g'\in G$ and $\psi\in\cO(G)$. Then there is a canonical isomorphism $k[G]^{\vee}\simeq \cO(G)$ of representations of $G$, where we consider the action of $G$ on $\cO(G)$ by left translations. Hence there is a canonical isomorphism of $H$-equivariant algebras ${\End_N\big(k[G]\big)^{\rm op}\simeq\End_N\big(\cO(G)\big)}$.

One has the following alternative description of $\End_N\big(\cO(G)\big)$. We use similar notation as in formulas~\eqref{eq:lefttrans} and~\eqref{eq:righttrans} for the action of the group $H$ by translations on the algebra of regular functions $\cO(H)$. Define the algebra
$\cO(H)*G$ to be the {\it skew group algebra} of $G$ with coefficients in~$\cO(H)$, where we consider the action of~$G$ on $\cO(H)$ by right translations. Explicitly, $\cO(H)*G$ is isomorphic to $\cO(H)\otimes_k k[G]$ as a $k$-vector space and multiplication in~$\cO(H)*G$ is defined by the formula
$$
(\varphi\otimes g)\cdot(\varphi'\otimes g')=(\varphi\cdot R_{\pi(g)}\varphi')\otimes gg'\,,\qquad \varphi,\varphi'\in \cO(H),\,\, g,g'\in G\,.
$$
Since the action of the group $H$ on $\cO(H)$ by left translations commutes with the action of~$G$ on $\cO(H)$ by right translations, the group $H$ acts on the algebra $\cO(H)*G$ by the formula
$$
h\,:\,\varphi\otimes g\longmapsto L_h\varphi\otimes g\,,\qquad h\in H,\,\,\varphi\in\cO(H),\,\, g\in G\,.
$$
The $k$-vector space~$\cO(G)$ has a natural structure of a module over $\cO(H)*G$, where an element $\varphi\otimes g\in \cO(H)*G$ acts on $\cO(G)$ as $(\cdot \pi^*\varphi)\circ R_g$. The action of $\cO(H)*G$ on~$\cO(G)$ commutes with the action of $N$ by left translations, which defines a homomorphism of algebras
$$
\xi\;:\;\cO(H)*G\longrightarrow \End_N\big(\cO(G)\big)\,.
$$

\begin{prop}\label{lemma:explskew}
The homomorphism $\xi$ is an isomorphism of \mbox{$H$-equivariant} algebras.
\end{prop}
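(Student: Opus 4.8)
The plan is to verify that $\xi$ is a homomorphism of $H$-equivariant $k$-algebras, then prove it is injective, and finally conclude by a dimension count. For the algebra-homomorphism part, one checks directly that each operator $\xi(\varphi\otimes g)=(\cdot\,\pi^*\varphi)\circ R_g$ on $\cO(G)$ commutes with the left-translation action of $N$: this uses only that $N=\ker\pi$, so that $\pi^*\varphi$ is invariant under $L_n$ for $n\in N$, while $R_g$ commutes with all left translations. That $\xi$ respects the multiplication of $\cO(H)*G$ comes down to the identity $R_g\circ\pi^*=\pi^*\circ R_{\pi(g)}$ on functions, which is exactly the twisting appearing in the multiplication rule of the skew group algebra; unitality is immediate since $\xi(1\otimes e)=\id$.

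For $H$-equivariance, recall that $H=G/N$ acts on $\End_N\big(\cO(G)\big)$ by conjugation by lifts, $\phi\mapsto L_{\tilde h}\circ\phi\circ L_{\tilde h}^{-1}$, which is well defined precisely because the elements of the image of $\xi$ commute with $N$. One then computes $L_{\tilde h}\circ\xi(\varphi\otimes g)\circ L_{\tilde h}^{-1}=\xi(L_h\varphi\otimes g)$ using that $L_{\tilde h}$ is an algebra automorphism of $\cO(G)$, that it commutes with $R_g$, and that $L_{\tilde h}\circ\pi^*=\pi^*\circ L_h$; the last identity again uses $N=\ker\pi$. This matches the prescribed $H$-action $h\colon\varphi\otimes g\mapsto L_h\varphi\otimes g$ on $\cO(H)*G$.

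The key step is injectivity, i.e.\ faithfulness of $\cO(G)$ as an $\cO(H)*G$-module. Write an element of the kernel as $\sum_i\varphi_i\otimes g_i$ with the $g_i\in G$ pairwise distinct; its action sends $\psi\in\cO(G)$ to the function $g'\mapsto\sum_i\varphi_i\big(\pi(g')\big)\,\psi(g'g_i)$. Fixing $g'$ and letting $\psi$ range over all functions on $G$ — so that the values $\psi(g'g_i)$ can be prescribed arbitrarily and independently, as the points $g'g_i$ are distinct — forces $\varphi_i\big(\pi(g')\big)=0$ for every $i$. Since $\pi$ is surjective and $g'$ is arbitrary, each $\varphi_i=0$, so $\xi$ is injective.

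Finally, both $\cO(H)*G$ and $\End_N\big(\cO(G)\big)\simeq\End_N\big(k[G]\big)^{\rm op}\simeq\Mat_{|H|}\big(k[N]\big)$ are $k$-vector spaces of dimension $|H|^2\cdot|N|$ (see the discussion preceding the statement), so the injective algebra homomorphism $\xi$ is an isomorphism, which also respects the $H$-actions by the above. The only real care needed throughout is in keeping the left- and right-translation actions and the pullback $\pi^*$ straight; I expect this bookkeeping, rather than any conceptual difficulty, to be the main obstacle.
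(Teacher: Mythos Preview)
Your proof is correct and follows essentially the same route as the paper: verify $H$-equivariance by the conjugation computation $L_{\tilde h}\circ\xi(\varphi\otimes g)\circ L_{\tilde h}^{-1}=\xi(L_h\varphi\otimes g)$, establish injectivity by showing the $\cO(H)*G$-action on $\cO(G)$ is faithful (the paper picks a single characteristic function $I_{g_1g_2}$ where you let $\psi$ vary, but these are the same idea), and conclude by the dimension count $|H|^2\cdot|N|$. Your additional explicit check that $\xi$ is an algebra homomorphism is not in the paper's proof only because the paper already declared $\xi$ a homomorphism of algebras when defining it.
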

\begin{proof}
The homomorphism $\xi$ commutes with the action of $H$, because for all $\varphi\in\cO(H)$, $g\in G$, $h\in H$, and any preimage $\tilde h\in G$ of $h$ with respect to $\pi$, there are equalities between linear maps from $\cO(G)$ to itself
$$
L_{\tilde h}\circ\xi(\varphi\otimes g)\circ L_{\tilde h}^{-1}=L_{\tilde h}\circ (\cdot\pi^*\varphi)\circ R_g\circ L_{\tilde h}^{-1}=\big(\cdot\pi^*(L_h\varphi)\big)\circ L_{\tilde h}\circ R_g\circ L_{\tilde h}^{-1}=
$$
$$
=\big(\cdot\pi^*(L_h\varphi)\big)\circ R_g\circ L_{\tilde h}\circ L_{\tilde h}^{-1}=\xi\big(h(\varphi\otimes g)\big)\,.
$$
Clearly, the dimension of $\cO(H)*G$ is~$|H|\cdot|G|$, which is equal to the dimension $|H|^2\cdot|N|$ of~$\End_N\big(\cO(G)\big)$. Thus in order to prove that~$\xi$ is an isomorphism, it is enough to show that~$\xi$ is injective, that is, that the action of $\cO(H)*G$ on~$\cO(G)$ is faithful.

For any non-zero element $a=\sum\limits_{g\in G}\varphi_g\otimes g\in \cO(H)*G$, there are elements $g_1,g_2\in G$ such that $(\pi^*\varphi_{g_2})(g_1)\ne 0$. Hence for the characteristic function $I_{g_1g_2}\in\cO(G)$ of the element~$g_1g_2\in G$, we have
$$
(aI_{g_1g_2})(g_1)=\mbox{$\sum\limits_{g\in G}(\pi^*\varphi_g)(g_1)\cdot I_{g_1g_2}(g_1g)$}=(\pi^*\varphi_{g_2})(g_1)\ne 0\,.
$$
This implies faithfulness of the action of $\cO(H)*G$ on $\cO(G)$.
\end{proof}

\medskip

Now let us consider the example when $G$ is the Heisenberg group over the ring $\bbZ/n\bbZ$, that is, when $G$ is the group of upper triangular $(3\times 3)$-matrices with units on the diagonal and with entries in $\Z/n\Z$. Let $N$ be the center of~$G$ and let $H=G/N$. Let us choose generators $x,y$ of~$H$, which fixes an isomorphism $H\simeq\Z/n\Z\times\Z/n\Z$. Also, $x$ and $y$ determine the generator~$[x,y]$ of $N$, which, in turn, fixes an isomorphism $N\simeq\Z/n\Z$.

Let $\zeta\in k$ be a primitive $n$-th root of unity. Consider the character $\chi\colon N\to k^*$, $[x,y]\mapsto \zeta$. Since $N$ is central, $\chi$ defines an $H$-invariant point on~$\uIrr(N)$ and one has the corresponding element $\theta_{\zeta}:=\theta_{\{\chi\}}\in\Br^H(k)$ (see Remark~\ref{rem:S}).

One has two coordinate projections from $H=\Z/n\Z\times\Z/n\Z$ to $\bbZ/n\bbZ$. Together with $\zeta$ they define two characters
$$
\psi\;:\; H\longrightarrow k^*\,,\qquad x\longmapsto \zeta\,,\qquad y\longmapsto 1\,,
$$
$$
\psi'\;:\; H\longrightarrow k^*\,,\qquad x\longmapsto 1\,\qquad y\longmapsto \zeta\,.
$$
Let $L$ and $L'$ be one-dimensional representations of~$H$ given by the characters $\psi$ and $\psi'$, respectively. Note that there are canonical isomorphisms ${L^{\otimes n}\simeq (L')^{\otimes n}\simeq k}$ of representations of $H$. Define an $H$-equivariant algebra
$$
C_{\zeta}:=\big(k\oplus L\oplus\ldots\oplus L^{\otimes (n-1)}\big)\otimes_k \big(k\oplus (L')\oplus\ldots\oplus (L')^{\otimes (n-1)}\big)\,,
$$
where we put $l\cdot l'=\zeta\, l'\cdot l$ for all $l\in L$, $l'\in L'$. We call this algebra a {\it cyclic algebra}.

\begin{lemma}\label{lem:Heis}
The class of $C_{\zeta^{-1}}$ in $\Br^H(k)$ equals $\theta_{\zeta}$.
\end{lemma}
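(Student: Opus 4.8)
The plan is to produce an explicit $H$-equivariant algebra representing $\theta_\zeta$ and to write down an $H$-equivariant isomorphism from it onto $C_{\zeta^{-1}}$; since $H$-equivariantly isomorphic Azumaya algebras have the same class in $\Br^H(k)$, this suffices. Because $N$ is central, all its irreducible characters, and in particular $\chi$, are defined over $k$ (as $\zeta\in k$), so $\{\chi\}$ is an open and closed point of $\uIrr(N)$ with residue ring $k$; hence, by the $S$-isotypic version of Proposition~\ref{prop:Azum}(ii) stated in Remark~\ref{rem:S}, the class $\theta_\zeta=\theta_{\{\chi\}}\in\Br^H(k)$ is represented by $A:=\End_N(V)^{\rm op}$ for any representation $V$ of $G$ on which the central generator $z:=[x,y]$ acts by $\zeta$ and whose restriction to $N$ contains $\chi$. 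Recall that the $H$-equivariant structure on $A$ is conjugation by $\rho(\tilde h)$ for a lift $\tilde h\in G$ of $h\in H$; this is well defined because $N$ acts on $V$ by scalars, hence trivially by conjugation on $\End_N(V)$.

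For $V$ I would take the standard $n$-dimensional representation $V=\bigoplus_{j\in\Z/n\Z}k\,e_j$ with $\rho(x)$ the cyclic shift $P\colon e_j\mapsto e_{j+1}$ and $\rho(y)=Q^{-1}$, where $Q:=\operatorname{diag}(1,\zeta,\dots,\zeta^{n-1})$. A direct computation gives $P^n=Q^n=\mathrm{id}$ and $PQ^{-1}P^{-1}Q=\zeta\cdot\mathrm{id}$, so this is a genuine representation of $G$ with $z$ acting by $\zeta$, and $V|_N\cong\chi^{\oplus n}$. Since $N$ acts by scalars, $\End_N(V)=\End_k(V)=\Mat_n(k)$, so $A=\Mat_n(k)^{\rm op}$, which I identify with $\Mat_n(k)$ via the transpose anti-automorphism $M\mapsto M^{\rm t}$. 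Transporting the $H$-action through this identification, and using $P^{\rm t}=P^{-1}$ and $Q^{\rm t}=Q$, one finds that on $\Mat_n(k)$ the generators $x$ and $y$ of $H$ act by conjugation by $P$ and by $Q$ respectively.

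It remains to match this with $C_{\zeta^{-1}}$. Fixing generators $u$ of $L$ and $v$ of $L'$ with $u^{\otimes n}=v^{\otimes n}=1$ under the canonical isomorphisms $L^{\otimes n}\cong(L')^{\otimes n}\cong k$, the algebra $C_{\zeta^{-1}}$ is presented by $u,v$ with $u^n=v^n=1$ and $uv=\zeta^{-1}vu$. I would define $\phi\colon C_{\zeta^{-1}}\to\Mat_n(k)$ by $u\mapsto Q^{-1}$, $v\mapsto P$. The relations $Q^{-n}=P^n=\mathrm{id}$ and $Q^{-1}P=\zeta^{-1}PQ^{-1}$ show $\phi$ is well defined; it is surjective since a clock and a shift generate $\Mat_n(k)$ when $\zeta$ is primitive, hence an isomorphism by the dimension count $\dim_k C_{\zeta^{-1}}=n^2$. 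Finally $\phi$ is $H$-equivariant: since $x$ acts on $L$ by $\psi(x)=\zeta$ and trivially on $L'$, and $y$ acts trivially on $L$ and by $\psi'(y)=\zeta$ on $L'$, one checks on generators that conjugation by $P$ sends $Q^{-1}\mapsto\zeta Q^{-1}$ and fixes $P$, while conjugation by $Q$ fixes $Q^{-1}$ and sends $P\mapsto\zeta P$ --- exactly the required compatibilities. Therefore $C_{\zeta^{-1}}\cong A$ as $H$-equivariant algebras, whence $[C_{\zeta^{-1}}]=\theta_\zeta$ in $\Br^H(k)$.

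The only genuinely delicate point is bookkeeping of conventions: one must fix once and for all the commutator convention in $G$ (which pins down the central character of $V$), the precise $G$-action on $\End_N(V)^{\rm op}$ as opposed to $\End_N(V)$, and the clock/shift normalization, and keep all of them consistent. It is exactly the passage to the opposite algebra, together with these choices, that produces $\zeta^{-1}$ rather than $\zeta$ in the statement; there is no conceptual obstacle beyond this.
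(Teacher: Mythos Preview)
Your proof is correct. It differs from the paper's argument in execution rather than in spirit. The paper also represents $\theta_\zeta$ by $\End_k(V_\zeta)^{\rm op}$ for the (unique) irreducible representation $V_\zeta$ with central character $\chi$, but then passes through the group-algebra quotient via Burnside's theorem, obtaining an $H$-equivariant isomorphism $k[G]/I_\zeta\cong\End_k(V_\zeta)$; it then uses the anti-involution $g\mapsto g^{-1}$ to identify $(k[G]/I_\zeta)^{\rm op}\cong k[G]/I_{\zeta^{-1}}$, and finishes with an explicit computation $k[G]/I_{\zeta^{-1}}\cong C_{\zeta^{-1}}$. You instead write down the clock-and-shift model for $V$ directly, use the matrix transpose to handle the passage to the opposite algebra, and match generators by hand. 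Your route is more self-contained---it avoids invoking Burnside and the Stone--von~Neumann uniqueness---and it makes the appearance of $\zeta^{-1}$ visible as a consequence of the transpose/conjugation bookkeeping; the paper's route locates the same sign flip more conceptually in the group anti-involution $g\mapsto g^{-1}$.
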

\begin{proof}
Note that there is a unique irreducible representation $V_{\zeta}$ of $G$ whose restriction to~$N$ is \mbox{$\chi$-isotypic} (this is an analog of the Stone--von Neumann theorem). Namely, let $G'\simeq (\Z/n\Z)^{\oplus 2}$ be a normal subgroup such that $G'$ contains~$N$ and there is an isomorphism $G/G'\simeq \Z/n\Z$, and let a character $\chi'\colon G'\to k^*$ be an extension of $\chi$ from $N$ to $G'$. Then the restriction $\ind^G_{G'}(\chi')|_{G'}$ is isomorphic to the direct sum of all extensions of $\chi$ from $N$ to~$G'$ (one uses here that $\zeta$ is primitive). Using Frobenius reciprocity, one deduces that the representation $V_{\zeta}:=\ind^G_{G'}(\chi')$ is irreducible, does not depend on the choice of $\chi'$, and for any representation $V$ of $G$ whose restriction to $N$ is $\chi$-isotypic, there is a non-zero morphism from $V_\zeta$ to $V$ (see more detail, e.g., in a paper by Beloshapka and Gorchinskiy~\cite[Ex.\,4.10]{BG}).

Hence by Theorem~\ref{thm:equivcoh} applied to a point and Remark~\ref{rem:S}, we see that~$\theta_\zeta$ is the class of the $H$-equivariant algebra $\End_N(V_{\zeta})^{\rm op}$. The latter algebra equals $\End_k(V_{\zeta})^{\rm op}$, because the restriction of $V_{\zeta}$ to $N$ is $\chi$-isotypic.

Burnside theorem together with the above characterization of $V_{\zeta}$ imply that the natural homomorphism of $H$-equivariant algebras
$$
k[G]/I_{\zeta}\longrightarrow \End_k(V_{\zeta})
$$
is an isomorphism, where $I_{\zeta}$ is the two-sided ideal generated by the element $z-\zeta$.

Applying the anti-involution $g\mapsto g^{-1}$, we obtain an isomorphism of $H$-equivariant algebras
$$
(k[G]/I_{\zeta})^{\rm op}\stackrel{\sim}\longrightarrow k[G]/I_{\zeta^{-1}}\,.
$$
Finally, an explicit calculation implies an isomorphism of $H$-equivariant algebras ${k[G]/I_{\zeta^{-1}}\simeq C_{\zeta^{-1}}}$.
\end{proof}

\begin{example}\label{exam:Heis}
Take two elements $a,b\in K^*$ and consider the variety ${X:=\Spec\big(K[T,S]/(T^n-a,S^n-b)\big)}$. Then~$X$ is an \mbox{$H$-torsor} over $k$ and it defines a morphism $t\colon {\Spec(k)\to BH=[\Spec(k)/H]}$. We have a canonical isomorphism $\Br^H(k)\simeq \Br(BH)$ and Lemma~\ref{lem:Heis} implies that the pull-back $t^*(\theta_{\zeta^{-1}})\in\Br(k)$ is the class of the classical cyclic algebra $A_{\zeta^{-1}}(a,b)$ associated with $a$,~$b$, and~$\zeta^{-1}$. In paticular, by Theorem~\ref{thm:equivcoh} and Remark~\ref{rem:S}, there is an equivalence of categories ${\coh^G(X)\simeq \mods\big(A_{\zeta^{-1}}(a,b)\big)}$.
\end{example}

\medskip

Let us construct an exact sequence~\eqref{eq:exgroup} that splits but still has a non-trivial class~$\theta$. Let~$W$ be a finite-dimensional $k$-vector space and $N\subset \GL(W)$ be a finite subgroup such that the representation of $N$ in $W$ is irreducible over $\bar k$ and the quotient
$$
H:=N/(N\cap k^*)\subset \PGL(W)
$$
does not lift group-theoretically to $\GL(W)$. The group $H$ acts on $N$ by conjugation and the representation $W$ is isomorphic to its conjugates by elements of~$H$. Let $G$ be the semidirect product $N\rtimes H$.

By construction, the corresponding exact sequence~\eqref{eq:exgroup} splits. On the other hand, consider the restriction $\theta_{\{W\}}\in\Br^H(k)$ of $\theta\in\Br^H(Z)$ to the $H$-invariant point on $\uIrr(N)$ that corresponds to $W$. By Proposition~\ref{cor:split} and Remark~\ref{rem:S}, we have $\theta_{\{W\}}=0$ if and only if the $N$-representation $W$ extends to a $G$-representation. One checks that giving such a representation of $G$ is the same as giving a lifting of $H$ to $\GL(W)$. Thus $\theta_{\{W\}}\ne 0$ and henceforth we have $\theta\ne 0$.

Here is a concrete example of the situation as above.

\begin{example}\label{ex:splitnontriv}
Let $k$ be algebraically closed, $W$ have dimension two, $H$ be the group~$A_4$ embedded into $\PGL_2(k)$, and $N$ be the preimage of $H$ with respect to the homomorphism $\SL_2(k)\to \PSL_2(k)=\PGL_2(k)$. Then a faithful two-dimensional representation of $N$ is irreducible, because $N$ is not abelian. Also, $H$ does not lift to~$\GL_2(k)$, because $A_4$ does not have an irreducible two-dimensional representation.
\end{example}

\medskip

At the end of this section, let us characterize the class $\theta\in\Br^H(Z)$ in a more invariant way.
Let~$A$ be a representative of $\theta$ and let $\underline{\mods}^H(A)$ be the category of finite-dimensional \mbox{$H$-equivariant} modules over~$A$, considered as a category enhanced in \mbox{$H$-equivariant} \mbox{$Z$-modules}.
Note that the class~$\theta$ is uniquely determined by the enhanced equivalence class of $\underline{\mods}^H(A)$.

Now consider the enhanced category $\underline{\rep}(G)$ whose objects are finite-dimensional representations of $G$ and whose $H$-equivariant $Z$-linear morphism modules are given by~$\Hom_N(V,V')$, where $V$ and $V'$ are representations of $G$. It follows from Theorem~\ref{thm:equivcoh} applied with~$X$ being a point and from Proposition~\ref{prop:Azum} that~$\underline{\rep}(G)$ is enhanced equivalent to $\underline{\mods}^H(A)$ for an $H$-equivariant Azumaya algebra $A$ over $Z$ whose class is~$\theta$.

\section{Main results}\label{sec:mainres}

\subsection{Extended quotient and quotient stack: equal categorical measures}\label{subsec:equalmeas}

\begin{defi}\label{def:S0}
Let $\cS_0$ be the collection of all exact sequences of finite groups
$$
1\longrightarrow N\longrightarrow G\longrightarrow H\longrightarrow 1
$$
that satisfy the following conditions:
\begin{itemize}
\item[(i)]
all irreducible representations of $N$ over $\bar k$ are defined over $k$;
\item[(ii)]
the $N$-representation $\bigoplus\limits_{W\in\Irr(N)}W$ extends to a $G$-representation;
\item[(iii)]
there is an $H$-equivariant bijection of sets $C(N)\simeq \Irr(N)$.
\end{itemize}
\end{defi}

Let us make some comments on the conditions of Definition~\ref{def:S0}.

\begin{remark}\label{rmk:S0}
\hspace{0cm}
\begin{itemize}
\item[(i)]
Condition~(i) implies that there is an isomorphism of permutation representations of~$H$ over $k$
$$
\big(k\cdot C(N)\big)^{\vee}\simeq k\cdot \Irr(N)
$$
given by the evaluation of characters of irreducible representations of $N$ on conjugacy classes in~$N$. Since $k\cdot C(N)$ is a permutation representation of $H$, it is (canonically) self-dual, whence we have an isomorphism of representations of $H$
$$
k\cdot C(N)\simeq k\cdot \Irr(N)\,.
$$
Condition (iii) strengthens this.
\item[(ii)]
Explicitly, condition~(iii) is equivalent to the following: the actions of~$H$ on $C(N)$ and~$\Irr(N)$ have the same collections of stabilizers. \item[(iii)]
By Proposition~\ref{cor:split}, conditions~(i) and~(ii) imply that the class ${\theta\in \Br^H(Z)}$ (see formula~\eqref{eq:alpha}) vanishes, where $Z=Z\big(k[N]\big)$.
\item[(iv)]
Conversely, the equality $\theta=0$ together with condition~(iii) imply conditions (i) and~(ii). Indeed, it follows from Proposition~\ref{cor:split} that condition~(ii) is satisfied and that any irreducible representation of $N$ over $k$ has no multiple irreducible summands over $\bar k$ (see also Lemma~\ref{lem:brtrivgr}). On the other hand, by condition~(iii), the sets $\Irr(N)$ and $\Irr_{\bar k}(N)$ have the same cardinality, which is equal to the cardinality of $C(N)$. Therefore we obtain condition~(i).
\item[(v)]
The collection $\cS_0$ is closed under taking pull-backs of exact sequences along homomorphisms to the right terms and under taking direct products of exact sequences.
\end{itemize}
\end{remark}

Here are examples of exact sequences from $\cS_0$.

\begin{example}\label{examp:S0}
\hspace{0cm}
\begin{itemize}
\item[(i)]
The wreath product exact sequence~\eqref{eq:wreath} (see Example~\ref{exam:wreath}) belongs to $\cS_0$ provided that all irreducible representations of $T$ over $\bar k$ are defined over $k$. Indeed, condition~(i) of Definition~\ref{def:S0} is obviously satisfied, while condition~(ii) of Definition~\ref{def:S0} is satisfied by Example~\ref{exam:wreath}. Condition~(iii) of Definition~\ref{def:S0} is satisfied, because any bijection of sets $C(T)\simeq \Irr(T)$ gives a $\Sigma_n$-equivariant bijection between ${C(T^{\times n})\simeq C(T)^{\times n}}$ and~${\Irr(T^{\times n})\simeq \Irr(T)^{\times n}}$.
\item[(ii)]
Let $G$ be an abelian group such that all characters of $G$ over $\bar k$ are defined over $k$. Then any exact sequence with such $G$ in the middle belongs to $\cS_0$. Indeed, since the map
$$
\Hom(G,k^*)=\Hom(G,\bar k^*)\longrightarrow \Hom(N,\bar k^*)
$$
is surjective, we see that conditions~(i) and~(ii) of Definition~\ref{def:S0} are satisfied. Condition~(iii) of Definition~\ref{def:S0} is satisfied, because the action of $H$ on $\Irr(N)$ and $C(N)=N$ is trivial.
\end{itemize}
\end{example}

Note that conditions~(ii) and (iii) of Definition~\ref{def:S0} are not equivalent, as the following example shows.

\begin{example}\label{exam:S0}
Suppose that $k$ contains a primitive $n$-th root of unity.
\begin{itemize}
\item[(i)]
Consider the exact sequence associated with the Heisenberg group over the ring~$\Z/n\Z$ (see Subsection~\ref{sec:furthexam}). Then condition~(ii) of Definition~\ref{def:S0} is not satisfied by Lemma~\ref{lem:Heis} (see also Example~\ref{exam:Heis}). At the same time, condition~(iii) of Definition~\ref{def:S0} is satisfied, because $C(N)$ and $\Irr(N)$ are sets of the same cardinality $n$ with the trivial action of $H$.
\item[(ii)]
Let $N=(\Z/n\Z)^{\oplus 3}$ and let $H=(\Z/n\Z)^{\oplus 2}$ act on elements of $N$ considered as
column vectors by matrices of type
$$
\left(\begin{array}{ccc}
1 & a & b\\
0 & 1 & 0\\
0 & 0 & 1
\end{array}\right)\,,
$$
where $a,b\in\Z/n\Z$. Let $G=N\rtimes H$ be the semidirect product. By Example~\ref{ex:commut}, condition~(ii) of Definition~\ref{def:S0} is satisfied. One checks directly that the collection of stabilizers of the action of $H$ on $C(N)\simeq N$ consists of $n$ copies of $H$ and of~${n^3-n}$ subgroups $K\subset H$ such that each $H/K$ is a non-trivial cyclic group. On the other hand, the collection of stabilizers of the action of $H$ on ${\Irr(N)\simeq \Hom(N,\Z/n\Z)}$ consists of subgroups of type ${\Z/d\Z\times \Z/d\Z}$, where $d$ is a positive divisor of $n$. Thus by Remark~\ref{rmk:S0}(ii), condition~(iii) of Definition~\ref{def:S0} is not satisfied.
\end{itemize}
\end{example}

\medskip

Now we are ready to prove one of our main results. Recall that we have constructed in Proposition~\ref{theor:catmeas}$(ii)$ the categorical measure
$$
\mu\;:\;K_0(\Var_k^{\eq})\longrightarrow K_0(\DG^{\gm}_k)\,,
$$
from the Grothendieck ring of equivariant varieties $K_0(\Var^{\eq}_k)$ (see Definition~\ref{def:Groeq}(ii)) to the Grothendieck ring of geometric dg-categories~${K_0(\DG^{\gm}_k)}$ (see Definition~\ref{defin:Deltageom}).

\begin{theo}\label{theorem:main}
Let $G$ be a finite group and let $X$ be an $\cS_0$-adequate $G$-variety (see Definition~\ref{def:sadeq}(i)), that is, for any $\bar k$-point~$x$ of~$X$, the exact sequence
$$
1\longrightarrow N_x\longrightarrow G_x\longrightarrow H_x\longrightarrow 1
$$
belongs to $\cS_0$ (see Definition~\ref{def:S0}), where $N_x\subset G$ is the stabilizer of $x$ and $G_x\subset G$ is the normalizer of $N_x$ in $G$. Then the equality
$$
\mu^G(X)=\mu(X/^{\ex}G)
$$
holds in $K_0(\DG_k^{\gm})$ (see Definition~\ref{def:extquot}(ii) for the extended quotient $X/^{\ex}G$).
\end{theo}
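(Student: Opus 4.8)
The plan is to deduce the theorem from Proposition~\ref{lemma:techno}$(ii)$ applied with $\cS = \cS_0$, with $A = K_0(\DG_k^{\gm})$, and with $\rho = \mu$ the categorical measure of Proposition~\ref{theor:catmeas}$(ii)$. Since an $\cS_0$-adequate $G$-variety is by definition one all of whose local exact sequences $1\to N_x\to G_x\to H_x\to 1$ lie in $\cS_0$ (Definition~\ref{def:sadeq}$(i)$), it suffices to show that $\mu$ annihilates the subgroup $R_{\cS_0}\subset K_0(\Var_k^{\eq})$, i.e. that $\mu$ kills both families of generators from Definition~\ref{def:sadeq}$(ii)$: the elements $\{V\}^G-\{V/G\}$ with $V$ carrying a free $G$-action, and the elements $\{Y\}^{G_s}-\{C(N_s)\times Y\}^{H_s}$, where $s\in\cS_0$, $Y$ is a smooth projective $H_s$-variety on which $G_s$ acts through $H_s$, and $H_s$ acts on $C(N_s)$ by conjugation. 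I would treat the two families separately.

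For the first family, I would use that $\mu$ factors through the homomorphism $\alpha\colon K_0(\Var_k^{\eq})\to K_0(\DM_k^{\smp})$ (Proposition~\ref{theor:catmeas}$(ii)$) together with Theorem~\ref{theo:comparison}, which gives $\alpha^G(V)=\alpha(V/G)$ for every $V$ with a free $G$-action; hence $\mu(\{V\}^G-\{V/G\})=0$. This is the place where the destackification input enters.

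For the second family — the substantive step — I would argue as follows. Since $s\in\cS_0$, conditions (i) and (ii) of Definition~\ref{def:S0} together with Proposition~\ref{cor:split} (see Remark~\ref{rmk:S0}$(iii)$) show that the associated class $\theta\in\Br^{H_s}(Z)$ vanishes, where $Z=Z(k[N_s])$. Theorem~\ref{cor:trivobstr}$(ii)$ applied to the $H_s$-variety $Y$ then yields an equivalence of abelian categories $\coh^{G_s}(Y)\simeq\coh^{H_s}(\uIrr(N_s)\times Y)$. Next I would identify $\uIrr(N_s)$ with $C(N_s)$ as finite $H_s$-schemes: condition (i) of Definition~\ref{def:S0} makes $Z$ a product of copies of $k$ indexed by $\Irr(N_s)$, so $\uIrr(N_s)$ is the disjoint union of copies of $\Spec(k)$ indexed by $\Irr(N_s)$ with $H_s$ acting through the conjugation action on $\Irr(N_s)$; condition (iii) supplies an $H_s$-equivariant bijection $C(N_s)\simeq\Irr(N_s)$. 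Composing gives $C(N_s)\times Y\simeq\uIrr(N_s)\times Y$ as $H_s$-varieties, hence an exact equivalence of abelian categories
$$
\coh^{G_s}(Y)\simeq\coh^{H_s}(C(N_s)\times Y).
$$
Since $Y$ and $C(N_s)\times Y$ are smooth projective, the quotient stacks $[Y/G_s]$ and $[(C(N_s)\times Y)/H_s]$ are smooth projective DM-stacks, so this equivalence induces an equivalence of bounded derived categories $D^{G_s}(Y)\simeq D^{H_s}(C(N_s)\times Y)$, which by uniqueness of dg-enhancements (Canonaco--Stellari~\cite{CS}, as invoked in Subsection~\ref{subsec:catmeas}) lifts to a quasi-equivalence $\cD^{G_s}(Y)\simeq\cD^{H_s}(C(N_s)\times Y)$. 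Therefore $\mu^{G_s}(Y)=\mu^{H_s}(C(N_s)\times Y)$ in $K_0(\DG_k^{\gm})$, so $\mu$ kills the second family as well, $\mu(R_{\cS_0})=0$, and Proposition~\ref{lemma:techno}$(ii)$ completes the proof.

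I expect the main obstacle to be precisely the second step: the real content is Theorem~\ref{thmF}/Theorem~\ref{cor:trivobstr} together with the vanishing criterion of Proposition~\ref{cor:split}, which is why the three conditions defining $\cS_0$ are calibrated the way they are — (i) and (ii) are exactly what forces $\theta=0$ and splits the equivariant Azumaya algebra, while (iii) is exactly what makes the resulting scheme $\uIrr(N_s)$ coincide $H_s$-equivariantly with $C(N_s)$, so that the extended quotient $(C(N_s)\times Y)/H_s = Y/^{\ex}G_s$ (Example~\ref{ex:extquottriv}) is reproduced on the nose. The only other point needing care is the passage from an abstract equivalence of abelian categories to an equality of classes in $K_0(\DG_k^{\gm})$; this is routine, but one must note that the relevant stacks are smooth projective so that the enhancement-uniqueness results of Subsection~\ref{subsec:catmeas} apply.
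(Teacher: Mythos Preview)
Your proposal is correct and follows essentially the same argument as the paper's own proof: reduce via Proposition~\ref{lemma:techno}$(ii)$ to showing $\mu(R_{\cS_0})=0$, handle the free-action generators by factoring $\mu$ through $\alpha$ and invoking Theorem~\ref{theo:comparison}, and handle the remaining generators by combining Proposition~\ref{cor:split} (giving $\theta=0$), Theorem~\ref{cor:trivobstr}$(ii)$, and condition~(iii) of Definition~\ref{def:S0} to obtain the equivalence $\coh^{G_s}(Y)\simeq\coh^{H_s}(C(N_s)\times Y)$. The only difference is that you spell out the passage from this equivalence to an equality of classes in $K_0(\DG_k^{\gm})$ via uniqueness of dg-enhancements, whereas the paper simply asserts the resulting equality of measures as clear.
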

\begin{proof}
By Proposition~\ref{lemma:techno}$(ii)$, we need to show that $\mu(R_{\cS_0})=0$ (see Definition~\ref{def:sadeq}(ii) for~$R_{\cS_0})$. For this, we need to check two conditions. Firstly, we need to show that $\mu$ vanishes on elements of type
\begin{equation}\label{eq:first2}
\{V\}^F-\{V/F\}\,,
\end{equation}
where $V$ is an $F$-variety with a free action of a finite group $F$. By Proposition~\ref{theor:catmeas} and its proof, the homomorphism $\mu$ equals the composition
$$
K_0(\Var^{\eq}_k)\stackrel{\alpha}\longrightarrow K_0(\DM_k^{\smp})\longrightarrow K_0(\DG^{\gm}_k)\,.
$$
By Theorem~\ref{theo:comparison}, $\alpha$ vanishes on elements from formula~\eqref{eq:first2}, so the same holds for~$\mu$.

Secondly, we need to show that $\mu$ vanishes on elements of type
\begin{equation}\label{eq:second2}
\{Y\}^{G_s}-\{C(N_{s})\times Y\}^{H_{s}}\,,
\end{equation}
where $s\in\cS_0$ and $Y$ is a smooth projective $H_s$-variety. By Proposition~\ref{cor:split}, conditions~(i) and~(ii) of Definition~\ref{def:S0} imply that~${\theta=0}$ in~$\Br^{H_s}(Z)$, where $Z:=Z\big( k[N_s]\big)$ and $\theta$ is associated with the exact sequence
$$
1\longrightarrow N_s\longrightarrow G_s\longrightarrow H_s\longrightarrow 1
$$
(cf. Remark~\ref{rmk:S0}(iii)). Hence, by Theorem~\ref{cor:trivobstr}$(ii)$, we have an equivalence of categories
$$
\coh^{G_s}(Y)\simeq\coh^{H_s}\big(\uIrr(N_s)\times Y\big)\,.
$$
By condition~(iii) of Definition~\ref{def:S0}, there is an $H$-equivariant isomorphism of $H$-varieties
$$
\uIrr(N_s)\times Y\simeq C(N_s)\times Y\,.
$$
This gives an equivalence of categories
$$
\coh^{G_s}(Y)\simeq\coh^{H_s}\big(C(N_s)\times Y\big)\,.
$$
Clearly, this implies the equality ${\mu^{G_s}(Y)=\mu^{H_s}\big(C(N_s)\times Y\big)}$ in $K_0(\DG_k^{\gm})$, that is, $\mu$ vanishes on elements from formula~\eqref{eq:second2}. This finishes the proof of the theorem.
\end{proof}

\begin{remark}
Theorem~\ref{theorem:main} generalizes to the case when $G$ is a linear algebraic group and~$X$ is a $G$-variety with a proper action of $G$. For this, one uses a $G$-equivariant version of the weak factorization theorem, see~\cite[\S\,0.3]{AKMW}, which implies that one has a Bittner presentation for the Grothendieck ring of $G$-varieties with proper action. Furthermore, one uses a generalization of results of Section~\ref{sec:noneff} when $N$ is a normal finite group subscheme of a linear algebraic group $G$ and one considers algebraic (finite-dimensional) representations of $G$. All the other arguments go through without any changes.
\end{remark}

\subsection{Motivic and categorical zeta-functions}\label{subsec:GS}

In~\cite{Kap}, Kapranov defined the {\it motivic zeta-function} of a variety $X$ as the power series
$$
Z_{mot}(X,t):=\sum _{n\geqslant 0}\{S^n(X)\}\,t^n\in K_0(\Var_k)[[t]]\,,
$$
in a formal variable $t$, where $S^n(X):=X^{\times n}/\Sigma_n$ is the $n$-th symmetric power of $X$. In~\cite{GS}, Galkin and Shinder defined the {\it categorical zeta-function} of $X$ as the power series
$$
Z_{cat}(X,t):=\sum_{n\geqslant 0}\mu^{\Sigma_n}(X^{\times n})\,t^n\in K_0(\DG^{\gm}_k)[[t]]\,.
$$
The categorical measure $\mu\colon K_0(\Var_k) \to K_0(\DG^{\gm}_k)$ extends termwise
to a ring homomorphism $K_0(\Var_k)[[t]] \to K_0(\DG^{\gm}_k)[[t]]$, which we also denote by $\mu$.

We prove the following comparison result, which was conjectured by Galkin and Shinder and also proved by them in {\it op.~cit.} for
varieties $X$ of dimension at most two.

\begin{theo}\label{thm:GS}
For any variety $X$, the equality
$$
Z_{cat}(X,t) = \mu\Big(\,\prod\limits_{i\geqslant 1}Z_{mot}(X,t^i)\Big)
$$
holds in the ring of power series $K_0(\DG_k^{\gm})[[t]]$.
\end{theo}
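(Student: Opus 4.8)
The plan is to deduce the identity from the per-power equalities $\mu^{\Sigma_n}(X^{\times n})=\mu(X^{\times n}/^{\ex}\Sigma_n)$ for all $n\geqslant 0$, which themselves follow from Theorem~\ref{theorem:main}, and then to expand each extended quotient into an Euler product of symmetric powers.

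First I would verify that the $\Sigma_n$-variety $X^{\times n}$ is $\cS_0$-adequate. For a $\bar k$-point $x=(x_1,\dots,x_n)$, grouping the coordinates by value shows that its stabilizer $N_x\subset\Sigma_n$ is a Young subgroup $\Sigma_{a_1}\times\dots\times\Sigma_{a_r}$ with $\sum_i a_i=n$; if $m_j$ denotes the number of indices $i$ with $a_i=j$, then the normalizer is $G_x\simeq\prod_j(\Sigma_j\wr\Sigma_{m_j})$ with $H_x\simeq\prod_j\Sigma_{m_j}$, so the associated extension is the direct product over $j$ of the wreath extensions $1\to\Sigma_j^{\times m_j}\to\Sigma_j\wr\Sigma_{m_j}\to\Sigma_{m_j}\to 1$. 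Since all irreducible representations of each $\Sigma_j$ are defined over $\bbQ$, hence over $k$, Example~\ref{examp:S0}(i) puts each factor in $\cS_0$, and closure of $\cS_0$ under direct products (Remark~\ref{rmk:S0}(v)) puts the whole extension in $\cS_0$. Theorem~\ref{theorem:main} then yields $\mu^{\Sigma_n}(X^{\times n})=\mu(X^{\times n}/^{\ex}\Sigma_n)$, so that $Z_{cat}(X,t)=\sum_{n\geqslant 0}\mu(X^{\times n}/^{\ex}\Sigma_n)\,t^n$.

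Next I would unwind the extended quotient. Using $X^{\times n}/^{\ex}\Sigma_n=\coprod_{[g]\in C(\Sigma_n)}(X^{\times n})^g/Z(g)$ and the bijection between $C(\Sigma_n)$ and partitions $\lambda\vdash n$, one computes, for $g$ of cycle type $\lambda$ with $m_k=m_k(\lambda)$ cycles of length $k$ (so $\sum_k k m_k=n$), that a fixed tuple is constant along each cycle of $g$, whence $(X^{\times n})^g\simeq X^{\times\ell(\lambda)}$ with $\ell(\lambda)=\sum_k m_k$, and that $Z(g)\simeq\prod_k(\Z/k\Z\wr\Sigma_{m_k})$ acts on it with the cyclic factors trivial (they rotate equal coordinates) and with $\Sigma_{m_k}$ permuting the $m_k$ coordinates attached to the $k$-cycles. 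Hence $(X^{\times n})^g/Z(g)\simeq\prod_k S^{m_k}(X)$, so that
\begin{equation*}
X^{\times n}/^{\ex}\Sigma_n\;\simeq\;\coprod_{\lambda\vdash n}\ \prod_{k\geqslant 1}S^{m_k(\lambda)}(X).
\end{equation*}

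Finally I would assemble the generating series. Since $\mu|_{K_0(\Var_k)}$ is a ring homomorphism (Remark~\ref{rmk:catmeas}(iii)), applying $\mu$ gives $\mu(X^{\times n}/^{\ex}\Sigma_n)=\sum_{\lambda\vdash n}\prod_k\mu\big(S^{m_k(\lambda)}(X)\big)$, and reindexing the double sum over $n$ and over $\lambda\vdash n$ by the sequences $(m_k)_{k\geqslant 1}$ of non-negative integers almost all of which are zero (with $n=\sum_k k m_k$) turns $Z_{cat}(X,t)$ into
\begin{equation*}
\prod_{k\geqslant 1}\Big(\sum_{m\geqslant 0}\mu\big(S^m(X)\big)\,t^{km}\Big)=\prod_{k\geqslant 1}\mu\big(Z_{mot}(X,t^k)\big);
\end{equation*}
as $Z_{mot}(X,t^k)=1+O(t^k)$ this infinite product converges $t$-adically, and since the termwise extension of $\mu$ to power series is $t$-adically continuous and multiplicative, one concludes $\prod_k\mu(Z_{mot}(X,t^k))=\mu\big(\prod_k Z_{mot}(X,t^k)\big)$, which is the asserted equality. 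I do not expect a serious obstacle here: the substantive input, Theorem~\ref{theorem:main}, is already in hand, and what remains is the group-theoretic identification of normalizers of Young subgroups with products of wreath products, the fixed-point-and-centralizer computation in $\Sigma_n$, and the combinatorial reindexing — all routine.
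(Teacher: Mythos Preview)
Your proposal is correct and follows essentially the same route as the paper: both verify $\cS_0$-adequacy of $X^{\times n}$ via the wreath-product structure of normalizers of Young subgroups (this is the paper's Proposition~\ref{prop:GS}(ii)), invoke Theorem~\ref{theorem:main} to obtain $\mu^{\Sigma_n}(X^{\times n})=\mu(X^{\times n}/^{\ex}\Sigma_n)$, and compute the extended quotient via cycle types as $\coprod_{\lambda\vdash n}\prod_k S^{m_k(\lambda)}(X)$ (this is Proposition~\ref{prop:GS}(i)).

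The one organizational difference worth noting is the order of operations. The paper first proves the identity $\prod_{i\geqslant 1}Z_{mot}(X,t^i)=\sum_n\{X^{\times n}/^{\ex}\Sigma_n\}t^n$ entirely inside $K_0(\Var_k)[[t]]$ and only then applies $\mu$ termwise; this needs nothing beyond $\mu$ being a group homomorphism. You instead apply $\mu$ to each piece and reassemble the product in $K_0(\DG_k^{\gm})[[t]]$, which forces you to invoke multiplicativity of $\mu|_{K_0(\Var_k)}$ twice (once to split $\mu(\prod_k S^{m_k}(X))$ and once to pull $\mu$ out of the infinite product). That multiplicativity is true---it follows from \cite{BLL} via the factorization in Remark~\ref{rmk:catmeas}(iii)---but Remark~\ref{rmk:catmeas}(i) explicitly flags that the paper avoids relying on it. Rearranging your argument to do the combinatorics in $K_0(\Var_k)$ first, as the paper does, removes this dependence at no cost.
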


\medskip

The proof of the theorem is based on the following proposition. We are grateful to Galkin and Shinder for having communicated the proof of part~$(i)$ of this proposition to us.

\begin{prop}\label{prop:GS}
\hspace{0cm}
\begin{itemize}
\item[(i)]
For any variety $X$, the equality
$$
\prod_{i\geqslant 1}Z_{mot}(X,t^i)=\sum_{n\geqslant 0}\{X^{\times n}/^{\ex}\Sigma_n\}t^n
$$
holds in the ring of power series $K_0(\Var_k)[[t]]$.
\item[(ii)]
The $\Sigma_n$-variety $X^{\times n}$ is $\cS_0$-adequate for any $n\geqslant 0$ (see Definitions~\ref{def:sadeq}(i) and~\ref{def:S0}).
\end{itemize}
\end{prop}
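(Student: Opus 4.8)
The plan is to prove both parts by reducing everything to explicit combinatorics of the symmetric groups.

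\medskip

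\emph{Part (i).} I would start from the decomposition~\eqref{eq:extquotexpl} of the extended quotient,
$$
X^{\times n}/^{\ex}\Sigma_n=\coprod_{[\sigma]\in C(\Sigma_n)}(X^{\times n})^{\sigma}/Z(\sigma)\,,
$$
together with the classification of conjugacy classes of $\Sigma_n$ by cycle types, i.e.\ by partitions $\lambda=(1^{a_1}2^{a_2}\cdots)$ of $n$ (so $\sum_{i\geq 1}ia_i=n$). For $\sigma$ of type $\lambda$, the fixed locus $(X^{\times n})^{\sigma}$ is the product of the small diagonals along the cycles of $\sigma$, so $(X^{\times n})^{\sigma}\cong\prod_{i\geq 1}X^{\times a_i}$, while the centralizer is $Z(\sigma)\cong\prod_{i\geq 1}\bigl((\bbZ/i\bbZ)^{\times a_i}\rtimes\Sigma_{a_i}\bigr)$. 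The key point is that the cyclic factor $(\bbZ/i\bbZ)^{\times a_i}$ acts trivially on $X^{\times a_i}$ (rotating an $i$-cycle fixes its diagonal copy of $X$), whereas $\Sigma_{a_i}$ permutes the $a_i$ copies of $X$; hence $(X^{\times n})^{\sigma}/Z(\sigma)\cong\prod_{i\geq 1}S^{a_i}(X)$ as algebraic spaces, and therefore
$$
\{X^{\times n}/^{\ex}\Sigma_n\}=\sum_{\lambda\vdash n}\ \prod_{i\geq 1}\{S^{a_i(\lambda)}(X)\}
$$
in $K_0(\Var_k)$ (using Remark~\ref{rmk:Ggener}(ii) to make sense of classes of algebraic spaces if $X$ is not quasi-projective). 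On the other hand, expanding $\prod_{i\geq 1}Z_{mot}(X,t^i)=\prod_{i\geq 1}\sum_{a\geq 0}\{S^a(X)\}\,t^{ia}$ one sees that the coefficient of $t^n$ is exactly $\sum_{(a_i)\,:\,\sum ia_i=n}\prod_i\{S^{a_i}(X)\}$; matching the two expressions proves part~(i).

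\medskip

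\emph{Part (ii).} For a $\bar k$-point $x=(x_1,\dots,x_n)$ of $X^{\times n}$, the stabilizer $N_x\subset\Sigma_n$ depends only on the partition of $\{1,\dots,n\}$ into the level sets of $i\mapsto x_i$; if this partition has $b_m$ blocks of size $m$ (so $\sum_m m\,b_m=n$), then $N_x$ is the Young subgroup $\prod_{m\geq 1}\Sigma_m^{\times b_m}$. A standard computation of the normalizer of a Young subgroup then gives $G_x=N_{\Sigma_n}(N_x)=\prod_{m\geq 1}(\Sigma_m\wr\Sigma_{b_m})$ and $H_x=G_x/N_x=\prod_{m\geq 1}\Sigma_{b_m}$, and the exact sequence $1\to N_x\to G_x\to H_x\to 1$ is the direct product over $m\geq 1$ of the wreath-product extensions
$$
1\longrightarrow\Sigma_m^{\times b_m}\longrightarrow\Sigma_m\wr\Sigma_{b_m}\longrightarrow\Sigma_{b_m}\longrightarrow 1\,.
$$
By Example~\ref{examp:S0}(i) each of these belongs to $\cS_0$, because all irreducible representations of $\Sigma_m$ over $\bar k$ are defined over $k$ — indeed they are realized over $\bbQ$ by the Specht modules. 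Since $\cS_0$ is closed under taking direct products of exact sequences (Remark~\ref{rmk:S0}(v)), the extension attached to $x$ lies in $\cS_0$, which is exactly the assertion of $\cS_0$-adequacy (the cases $n=0,1$ being trivial).

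\medskip

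I expect the only genuinely delicate step to be inside part~(i): checking cleanly that the $Z(\sigma)$-quotient of $(X^{\times n})^{\sigma}$ is the product of symmetric powers $\prod_i S^{a_i}(X)$, which relies on the cyclic subgroups of $Z(\sigma)$ acting trivially on the relevant diagonals and on tracking the disjointness in~\eqref{eq:extquotexpl} so as to obtain an honest isomorphism of algebraic spaces rather than just an equality of classes. Part~(ii) is essentially bookkeeping once the normalizer of a Young subgroup is identified, the only external ingredient being the rationality of the representations of the symmetric groups.
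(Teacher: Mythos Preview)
Your proposal is correct and follows essentially the same route as the paper: both parts are reduced to the combinatorics of cycle types, identifying $(X^{\times n})^{\sigma}/Z(\sigma)\cong\prod_i S^{a_i}(X)$ for part~(i), and identifying stabilizers as Young subgroups with normalizers given by products of wreath products for part~(ii), then invoking Example~\ref{examp:S0}(i) and Remark~\ref{rmk:S0}(v). One small remark: your description of the centralizer as $\prod_i\bigl((\bbZ/i\bbZ)^{a_i}\rtimes\Sigma_{a_i}\bigr)$ is the standard correct one, whereas the paper's phrasing $Z(g_\varpi)\simeq\langle g_\varpi\rangle\times H_\varpi$ is slightly inaccurate (it undercounts, e.g.\ for cycle type $2^2$ in $\Sigma_4$); however this does not affect the argument, since in both treatments the cyclic part acts trivially on the fixed locus and only the image $\prod_i\Sigma_{a_i}$ matters for the quotient.
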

\begin{proof}
$(i)$
Fix an integer~$n\geqslant 0$. Let us describe explicitly the extended quotient $X^{\times n}/^{\ex}\Sigma_n$. Recall that by formula~\eqref{eq:extquotexpl}, we have a decomposition into a disjoint union
$$
X^{\times n}/^{\ex}\Sigma_n=\coprod_{[g]\in C(\Sigma_n)}(X^{\times n})^g/Z(g)\,.
$$
The set $C(\Sigma_n)$ of conjugacy classes in $\Sigma_n$ is canonically bijective with the set $P_n$ of partitions of $n$, that is, with the set of tuples $(n_1,\ldots,n_r)$ of non-negative integers $n_i$ such that ${n=n_1\cdot 1+n_2\cdot 2+\ldots+n_r\cdot r}$. Explicitly, an element $\varpi=(n_1,\ldots,n_r)$ in $P_n$ corresponds to the conjugacy class of the element
$$
g_{\varpi}:=(\underbrace{\sigma_1,\ldots,\sigma_1}_{n_1},\underbrace{\sigma_2,\ldots,\sigma_2}_{n_2},\ldots,
\underbrace{\sigma_r,\ldots,\sigma_r}_{n_r})\in\Sigma_1^{n_1}\times\ldots\times \Sigma_r^{n_r}\subset \Sigma_n\,,
$$
where we consider the obvious inclusion map from $\Sigma_1^{n_1}\times\ldots\times \Sigma_r^{n_r}$ to $\Sigma_n$ and
the element $\sigma_i\in\Sigma_i$ is a cyclic permutation of length $i$ (in particular, $\sigma_1$ is the unique element of the trivial group $\Sigma_1$).

In order to describe $(X^{\times n})^{g_{\varpi}}$, define a closed embedding
$$
\delta_{\varpi}:=(\underbrace{\delta_1,\ldots,\delta_1}_{n_1},\underbrace{\delta_2,\ldots,\delta_2}_{n_2}\ldots,\underbrace{\delta_r,\ldots,\delta_r}_{n_r})\;:\;X^{\times n_1}\times X^{\times n_2}\times\ldots\times X^{\times n_r}\longrightarrow X^{\times n}\,,
$$
where $\delta_i\colon X\to X^{\times i}$ is the diagonal embedding. Let $M_{\varpi}\subset X^{\times n}$ be the closed subvariety which is the image of $\delta_{\varpi}$. Explicitly, $\bar k$-points of $M_{\varpi}\subset X^{\times n}$ are of the type
$$
(x_{11},\ldots,x_{1n_1},x_{21},x_{21},\ldots,x_{2n_2},x_{2n_2},\ldots, \underbrace{x_{r1},\ldots,x_{r1}}_r,\ldots,\underbrace{x_{rn_r},\ldots,x_{rn_r}}_r)\,.
$$
Then there is an equality ${(X^{\times n})^{g_{\varpi}}=M_{\varpi}}$.

Furthermore, one checks that the centralizer $Z(g_{\varpi})\subset \Sigma_n$ coincides with the subgroup of~$\Sigma_n$ generated by $g_{\varpi}$ and the subgroup
$$
H_{\varpi}:=\Sigma_{n_1}\times\Sigma_{n_2}\times\ldots\times\Sigma_{n_r}\subset \Sigma_n\,,
$$
where the embedding is defined naturally by the partition ${n=n_1\cdot 1+n_2\cdot 2+\ldots+n_r\cdot r}$. Thus, the group~$\Sigma_{n_1}$ permutes first $n_1$ elements $1,2,\ldots,n_1$, the group $\Sigma_{n_2}$ permutes $n_2$ ordered pairs $\{n_1+1,n_1+2\},\ldots,\{n_1+2n_2-1,n_1+2n_2\}$, etc. Moreover, the centralizer~$Z(g_\varpi)$ is isomorphic to the direct product ${\langle g_{\varpi}\rangle\times H_{\varpi}}$.

Altogether, this implies the equalities
$$
X^{\times n}/^{\ex}\Sigma_n=\coprod_{\varpi\in P_n}M_{\varpi}/H_{\varpi}=\coprod_{\varpi\in P_n}\big(S^{n_1}(X)\times\ldots\times S^{n_r}(X)\big)\,.
$$
Hence the equality
$$
\{X^{\times n}/^{\ex}\Sigma_n\}=\sum_{\varpi\in P_n}\{S^{n_1}(X)\}\cdot\ldots\cdot\{S^{n_r}(X)\}
$$
holds in $K_0(\Var_k)$. The right hand side is precisely the coefficient of $t^n$ in the series
$$
\prod_{i\geqslant 1}\Big(\sum_{m\geqslant 0}\{S^m(X)\}t^{im}\Big)\,.
$$

$(ii)$ We keep notation introduced in the proof of part~$(i)$. Given an element ${\varpi=(n_1,\ldots,n_r)}$ in $P_n$, define the subgroup
$$
N_{\varpi}:=\Sigma_1^{\times n_1}\times\Sigma_2^{\times n_2}\times\ldots\times \Sigma_r^{\times n_r}\subset\Sigma_n\,.
$$
For any point of $X^{\times n}$, its stabilizer is conjugate to $N_{\varpi}$ for some $\varpi\in P_n$. Besides, the normalizer~$G_{\varpi}\subset\Sigma_n$ of~$N_{\varpi}$ is generated by the subgroups $N_{\varpi}$ and $H_{\varpi}$ and is isomorphic to the semidirect product ${N_{\varpi}\rtimes H_{\varpi}}$, where $H_{\varpi}$ acts naturally on $N_{\varpi}$ by permuting factors. Hence we are reduced to showing
that the exact sequence
$$
1\longrightarrow N_{\varpi}\longrightarrow G_{\varpi}\longrightarrow H_{\varpi}\longrightarrow 1
$$
belongs to $\cS_0$. This follows from Example~\ref{exam:wreath} and Remark~\ref{rmk:S0}(v).
\end{proof}

Now we are ready to prove Theorem~\ref{thm:GS}.

\begin{proof}[Proof of Theorem~\ref{thm:GS}]
By Proposition~\ref{prop:GS}$(i)$, it is enough to show that the equality ${\mu^{\Sigma_n}(X^{\times n})=\mu(X^{\times n}/^{\ex}\Sigma_n)}$ holds in $K_0(\DG_k^{\gm})$. By Proposition~\ref{prop:GS}$(ii)$, the \mbox{$\Sigma_n$-variety}~$X^{\times n}$ is \mbox{$\cS_0$-adequate}. Thus the theorem follows from Theorem~\ref{theorem:main}.
\end{proof}

\subsection{Extended quotient and quotient stack: different categorical measures}\label{subsec:diffcatmeas}

In this section, we construct examples of $G$-varieties $X$ such that the categorical measures~$\mu^G(X)$ and~$\mu(X/^{\ex}G)$ are not equal in $K_0(\DG^{\gm}_k)$, showing that the conclusion of Theorem~\ref{theorem:main} does not hold always. All varieties below are over~$\C$. Essentially, we adopt a method of Ekedahl in the proof of~\cite[Prop.\,3.9]{Eked}, replacing singular cohomology by topological $K$-groups.

\medskip

We shall use the following measure of torsion in abelian groups.

\begin{defi}
\hspace{0cm}
\begin{itemize}
\item[(i)]
Given a finitely generated abelian group $A$, let
$$
\tau(A):=\log|A_{\rm tors}|\in\R\,,
$$
where $A_{\rm tors}$ denotes the torsion subgroup of $A$ (the logarithm is taken with respect to any fixed real number greater than~$1$).
\item[(ii)]
Given a graded finitely generated abelian group $A\simeq\bigoplus\limits_{i\in\Z}A^i$, let
$$
\tau(A^{\rm ev}):=\sum\limits_{i\in\Z}\tau(A^{2i})\,,\quad \tau(A^{\rm odd}):=\sum\limits_{i\in\Z}\tau (A^{2i+1})\,.
$$
\end{itemize}
\end{defi}

Clearly, $\tau$ defines a homomorphism of abelian groups
$$
\tau\;:\;K_0^{\oplus}(\Z)\longrightarrow \R\,,\qquad \{A\}\longmapsto \tau(A)\,.
$$
Here it is important that we consider the group $K_0^{\oplus}(\Z)$ with relations given by direct sums of abelian groups, not the group $K_0(\Z)$ (cf. Lemma~\ref{lemma:tau0}$(iii)$ below).

\medskip

Fix a natural number~$n$. Let $B$ be a complex algebraic variety. We let $H^i(B,-)$ denote cohomology with respect to the classical complex topology. Given an unramified Galois cover $X\to B$ with the Galois group $H$, one has canonical maps
\begin{equation}\label{eq:lambda}
\lambda\;:\;H^i(H,\Z/n\Z)\longrightarrow H^i(B,\Z/n\Z)\,,\qquad i\geqslant 0\,.
\end{equation}
These maps are defined as the edge maps given by the spectral sequence with the \mbox{$E_2$-term} $H^i\big(H,H^j(X,\Z/n\Z)\big)$ that converges to $H^{i+j}(B,\Z/n\Z)$. Alternatively, these maps are defined as the pull-back maps with respect to the corresponding map~${B\to K(H,1)}$ to the classifying space of~$H$. 

Furthermore, the exact sequence
\begin{equation}\label{eq:bock}
0\longrightarrow \Z\stackrel{n}\longrightarrow \Z\longrightarrow \Z/n\Z\longrightarrow 0
\end{equation}
defines surjective coboundary maps
$$
\beta\;:\; H^i(B,\Z/n\Z)\longrightarrow H^{i+1}(B,\Z)_n\,,\qquad i\geqslant 0\,,
$$
called {\it Bockstein homomorphisms}. It is easy to see that these maps commute with multiplication by elements of~$H^j(B,\Z)$, $j\geqslant 0$.


\begin{theo}\label{theor:mainexamp}
Let
$$
0\longrightarrow \Z/n\Z\longrightarrow G\longrightarrow H \longrightarrow 1
$$
be a central extension of finite groups and let
$$
z\in H^2(H,\Z/n\Z)
$$
be its class. Let $H$ act freely on a complex smooth projective variety~$X$ and let~$G$ act on~$X$ through $H$. Put $B:=X/H$. Suppose that the following conditions are satisfied:
\begin{itemize}
\item[(i)]
the composition (see formulas~\eqref{eq:lambda} and~\eqref{eq:bock})
$$
H^2(H,\Z/n\Z)\stackrel{\lambda}\longrightarrow H^2(B,\Z/n\Z)\stackrel{\beta}\longrightarrow H^3(B,\Z)_n
$$
sends $z$ to a non-zero element $\beta(\lambda(z))$ in $H^3(B,\Z)_n$, where~$\lambda$ corresponds to the unramified Galois cover $X\to B$;
\item[(ii)]
there is an equality ${\tau\big(K_1^{\tp}(B)\big)=\tau\big(H^{\rm odd}(B,\Z)\big)}$.
\end{itemize}
Then there is an inequality of real numbers (see Definition~\ref{defi:kappa} for $\kappa^{\tp}_1$)
\begin{equation}\label{eq:ineqmain}
\tau\kappa^{\tp}_1\big(\mu^G(X)\big)<\tau\kappa^{\tp}_1\big(\mu(X/^{\ex}G)\big)\,.
\end{equation}
In particular, we have ${\mu^G(X)\ne \mu(X/^{\ex}G)}$ in $K_0(\DG^{\gm}_{\C})$.
\end{theo}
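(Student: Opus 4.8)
\emph{Sketch of the argument.}
The plan is to evaluate the group homomorphism $\tau\kappa_1^{\tp}\colon K_0(\DG^{\gm}_{\C})\to\R$ on the two classes and to show its value on $\mu^G(X)$ is strictly smaller. Since $N:=\Z/n\Z$ is central in $G$ and $G$ acts on $X$ through the free $H$-action, every $X^g$ with $g\notin N$ is empty, and Example~\ref{ex:extquottriv} gives $X/^{\ex}G=\coprod_{i=1}^{n}B$; hence $\mu(X/^{\ex}G)=n\{\cD(B)\}$ and, using hypothesis~(ii),
$$
\tau\kappa_1^{\tp}\big(\mu(X/^{\ex}G)\big)=n\,\tau\big(K_1^{\tp}(B)\big)=n\,\tau\big(H^{\odd}(B,\Z)\big)\,.
$$
So it suffices to prove $\tau\kappa_1^{\tp}\big(\mu^G(X)\big)<n\,\tau\big(H^{\odd}(B,\Z)\big)$.

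For the left-hand side I would use the description of $\coh^G(X)$ from Section~\ref{sec:noneff}. As $N$ is central, $\uIrr(N)$ is a disjoint union of $n$ copies of $\Spec(k)$ with trivial $H$-action, and the class $\theta\in\Br^H(Z)$ decomposes over the $n$ characters of $N$, its $j$-th component being $j\alpha$, where $\alpha\in\Br(B)$ is the pullback of the extension class along $B\to K(H,1)$, i.e.\ the class $\beta(\lambda(z))$; here freeness of the $H$-action is used to descend $H$-equivariant Azumaya algebras from $X$ to $B$. By hypothesis~(i) we have $\alpha\neq0$; let $m\mid n$, $m>1$, be its order. Theorem~\ref{cor:trivobstr} then gives $\coh^G(X)\simeq\bigoplus_{i=0}^{m-1}\coh(B,i\alpha)^{\oplus n/m}$, and, each $i\alpha$-twisted derived category being geometric by the theorem of Bernardara (Proposition~\ref{prop:Bern}), this means
$$
\mu^G(X)=\tfrac{n}{m}\sum_{i=0}^{m-1}\{\cD(B,i\alpha)\}
$$
in $K_0(\DG^{\gm}_{\C})$, where $\cD(B,i\alpha)$ is the dg-enhancement of the $i\alpha$-twisted derived category of $B$; if moreover the index of $\alpha$ equals its period, the sum is $\{\cD(E)\}$ for a Severi--Brauer scheme $E\to B$ attached to $\alpha$, which is a topologically locally trivial $\CP^{\,m-1}$-bundle. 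Applying $\tau\kappa_1^{\tp}$ and using that $\kappa_1^{\tp}$ of the $i\alpha$-twisted category is computed through the topological $K$-theory of an ambient Severi--Brauer scheme (equivalently, equals the $i\alpha$-twisted topological $K$-group), we are reduced to the purely topological estimate $\sum_{i=0}^{m-1}\tau\big(K_1^{\tp}(B,i\alpha)\big)<m\,\tau\big(H^{\odd}(B,\Z)\big)$.

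That estimate I would obtain following the method of Ekedahl \cite{Eked}, with singular cohomology replaced by topological $K$-theory. The Atiyah--Hirzebruch spectral sequence of a finite complex has torsion-valued differentials (it degenerates rationally via the Chern character), so passing to $E_\infty$ and then to the abutment never increases $\tau$ of the odd-degree part; this yields $\tau\big(K_1^{\tp}(E)\big)\le\tau\big(H^{\odd}(E,\Z)\big)$, and likewise $\tau\big(K_1^{\tp}(B,i\alpha)\big)\le\tau\big(H^{\odd}(B,\Z)\big)$ for every $i$ (twisting by a torsion class keeps the twisted differentials torsion-valued, and one invokes hypothesis~(ii)). For $E\to B$ the Leray spectral sequence has trivial coefficient system $H^{\bullet}(\CP^{\,m-1},\Z)$ ($\PGL_m(\C)$ being connected), $d_2=0$ because $H^{\odd}(\CP^{\,m-1},\Z)=0$, $E_2^{\bullet,\odd}=H^{\odd}(B,\Z)^{\oplus m}$, and it too degenerates rationally (Leray--Hirsch over $\Q$, since $\alpha$ is torsion), so its differentials are torsion-valued. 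The crucial point, exactly as in Ekedahl's computation, is that $d_3\colon E_3^{0,2}\to E_3^{3,0}$ is the map $\Z\to H^3(B,\Z)$ sending the hyperplane class of the fibre to the obstruction class $\alpha$; by hypothesis~(i) this is nonzero, and as $E_3^{3,0}$ sits in odd total degree it removes a cyclic subgroup of order $m$ from the odd part, forcing
$$
\tau\big(H^{\odd}(E,\Z)\big)\le\tau\big(E_\infty^{\bullet,\odd}\big)<m\,\tau\big(H^{\odd}(B,\Z)\big)
$$
(in the twisted phrasing: in the $i\alpha$-twisted Atiyah--Hirzebruch spectral sequence $d_3$ sends the unit to $i\alpha$, nonzero whenever $i\not\equiv0$, giving a strict drop for such $i$). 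Chaining the inequalities gives $\tau\kappa_1^{\tp}(\mu^G(X))<n\,\tau(H^{\odd}(B,\Z))=\tau\kappa_1^{\tp}(\mu(X/^{\ex}G))$, which is~\eqref{eq:ineqmain}; since $\tau\kappa_1^{\tp}$ is a homomorphism, $\mu^G(X)\neq\mu(X/^{\ex}G)$ in $K_0(\DG^{\gm}_{\C})$.

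The main obstacle I anticipate is the torsion bookkeeping: one must verify that the relevant differentials are torsion-valued and then combine this with the subadditivity of $\tau$ over extensions so that every page transition and every filtration step moves $\tau$ of the odd-degree part in the correct direction, and — above all — identify the $d_3$-differential of the Leray spectral sequence with multiplication by the nonzero class $\beta(\lambda(z))$, which is precisely what makes the final inequality strict. A secondary point is to get the multiplicity factor $n/m$ exactly right, for which the precise form of Proposition~\ref{prop:Bern} (or, equivalently, working with the twisted topological $K$-groups $K_1^{\tp}(B,i\alpha)$ for $0\le i<m$) is used.
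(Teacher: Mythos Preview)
Your proposal is correct and follows essentially the same route as the paper: compute $X/^{\ex}G=\coprod^n B$, decompose $\coh^G(X)$ into twisted pieces over $B$, invoke Bernardara to relate these to a Severi--Brauer scheme $E\to B$, and then use the Leray spectral sequence of $E\to B$ together with the Atiyah--Hirzebruch spectral sequence and the subadditivity properties of $\tau$ (Lemmas~\ref{lemma:tau0} and~\ref{lem:tau}) to get the strict inequality.

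Two small differences worth noting. First, the paper sidesteps the period--index issue you flag: rather than writing $\mu^G(X)=\tfrac{n}{m}\sum_{i=0}^{m-1}\{\cD(B,i\alpha)\}$ and then needing a Severi--Brauer scheme of relative dimension exactly $m-1$, the paper takes \emph{any} Severi--Brauer scheme $E\to B$ representing $\alpha$, of relative dimension $d-1$ say, and proves the identity $d\cdot\mu^G(X)=n\cdot\mu(E)$ in $K_0(\DG^{\gm}_{\C})$ (Proposition~\ref{prop:Bern}); this gives $\tau\kappa_1^{\tp}(\mu^G(X))=\tfrac{n}{d}\tau(K_1^{\tp}(E))$ with no hypothesis on index versus period. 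Second, the paper does not identify $d_3$ with cup-product by the obstruction class; it argues more cheaply that $E\times_B E$ is the projectivization of a vector bundle on $E$, so the pullback $H^3(B,\Z)\to H^3(E,\Z)$ kills $\Psi([E])=\beta(\lambda(z))$, and hence some differential into the third column must be nonzero (Proposition~\ref{prop:bound}). Both shortcuts avoid the twisted Atiyah--Hirzebruch machinery you invoke. Finally, be careful to distinguish $\alpha=\nu(\lambda(z))\in\Br(B)$ from its image $\Psi(\alpha)=\beta(\lambda(z))\in H^3(B,\Z)_n$; your sketch conflates them in places.
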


Proposition~\ref{prop:examp} below provides explicit examples of $B$ when the conditions of Theorem~\ref{theor:mainexamp} are satisfied.

\medskip

In what follows, we keep the notation and assumptions from Theorem~\ref{theor:mainexamp}. The proof of the theorem is based on several auxiliary statements. Proposition~\ref{prop:Bern} below allows to describe the left hand side of inequality~\eqref{eq:ineqmain} in terms of a Severi--Brauer scheme over $B$. (Actually, this statement is valid over any field $k$ that contains a primitive $n$-th root of unity.)

Define the character
$$
\chi\;:\;\Z/n\Z\longrightarrow\C^*\,,\qquad 1\longmapsto \exp(2\pi i/n)\,.
$$
We have the composition
\begin{equation}\label{eq:longcomp}
\nu\;:\;H^2(B,\Z/n\Z)\stackrel{\sim}\longrightarrow H^2_{\acute e t}(B,\Z/n\Z)\stackrel{\chi}\longrightarrow H^2_{\acute e t}(B,\cO_B^*)\stackrel{\sim}\longrightarrow \Br(B)\,.
\end{equation}
Here, for the first isomorphism see, e.g., Milne's book~\cite[Theor.\,III.3.12]{Milne} and for the last isomorphism see a theorem of Gabber and de Jong~\cite{dJ}.

\begin{prop}\label{prop:Bern}
Define an element in the Brauer group
$$
\alpha:=\nu(\lambda(z))\in \Br(B)\,.
$$
Let $E\to B$ be a Severi--Brauer scheme such that its class in $\Br(B)$ equals~$\alpha$. Then the equality
$$
d\cdot \mu^G(X)=n\cdot \mu(E)
$$
holds in~$K_0(\DG_{\C}^{\gm})$, where $E\to B$ is of relative dimension $d-1$.
\end{prop}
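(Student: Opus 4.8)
The plan is to identify the $G$-equivariant category $\coh^G(X)$ with the category of coherent modules over an Azumaya algebra on $B$, and then to invoke Bernardara's semiorthogonal decomposition for Severi--Brauer schemes to convert this into a statement about $\mu(E)$. First I would apply Theorem \ref{thm:equivcoh} (equivalently Theorem \ref{cor:trivobstr}) to the central extension $0\to\Z/n\Z\to G\to H\to 1$ and the $H$-variety $X$. Since $N=\Z/n\Z$ is abelian, $\uIrr(N)\simeq\mu_n$ consists of $n$ points permuted trivially by $H$ (because $N$ is central and all characters are defined over $\C$), so $\coh^G(X)$ decomposes as a direct sum, over the $n$ characters $\chi^j$ of $N$, of the categories $\coh^H\big(\theta_{\{\chi^j\}}\big)$ of $H$-equivariant modules over the pullback to $X$ of the Brauer class $\theta_{\{\chi^j\}}\in\Br^H(\C)$. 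Passing from $H$-equivariant sheaves on $X$ to sheaves on $B=X/H$ (using that $X\to B$ is an unramified $H$-Galois cover, so $\coh^H(X)\simeq\coh(B)$ and more generally twisted-equivariant sheaves descend), each summand becomes $\coh$ of the Brauer class on $B$ obtained as the image of $\theta_{\{\chi^j\}}$ under $\Br^H(\C)\simeq\Br(BH)\to\Br(B)$ along $B\to BH$. The key computation is that the class $\theta_{\{\chi\}}$ for the generating character, viewed in $\Br^H(\C)=\Br(BH)=H^2(H,\C^*)$, is exactly (up to sign) the image of $z\in H^2(H,\Z/n\Z)$ under the character $\chi$; hence its pullback to $\Br(B)$ is $\nu(\lambda(z))=\alpha$, and the class for $\chi^j$ is $j\alpha$.

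Next I would account for all $n$ summands together. The class $j\alpha$ for $j=0,\dots,n-1$ gives the categories $\coh(B)$ (for $j=0$) and $\coh(B,j\alpha)$ in general. Since $n\alpha=0$ in $\Br(B)$ and $\alpha$ has order dividing $n$, and since a Severi--Brauer scheme $E\to B$ of relative dimension $d-1$ representing $\alpha$ exists with $d\mid n$ (more precisely $d$ is the index of $\alpha$, so every $j\alpha$-twisted category is built out of $\alpha$-twisted ones), I would use Bernardara's theorem \cite{Bern}: there is a semiorthogonal decomposition
$$
D^b(E)=\big\langle\, D^b(B),\ D^b(B,\alpha),\ \dots,\ D^b(B,(d-1)\alpha)\,\big\rangle\,,
$$
induced by dg-functors, so that $\mu(E)=\sum_{j=0}^{d-1}\{\cD(B,j\alpha)\}$ in $K_0(\DG_\C^{\gm})$. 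On the other hand, because $\{j\alpha\}_{j=0}^{n-1}$ runs $d/\gcd$ times through a complete set of representatives that is in turn covered by the Bernardara pieces, a short bookkeeping argument (each of the $d$ twisted classes $0,\alpha,\dots,(d-1)\alpha$ occurs $n/d$ times among $0\cdot\alpha,\dots,(n-1)\alpha$, using that $j\alpha$ depends only on $j\bmod d$ once we know $\alpha$ has order $d$) shows
$$
\mu^G(X)=\sum_{j=0}^{n-1}\{\cD(B,j\alpha)\}=\frac{n}{d}\sum_{j=0}^{d-1}\{\cD(B,j\alpha)\}=\frac{n}{d}\,\mu(E)\,,
$$
which rearranges to $d\cdot\mu^G(X)=n\cdot\mu(E)$.

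I expect the main obstacle to be the careful identification, at the level of equivariant Brauer groups, of $\theta_{\{\chi^j\}}$ with $j$ times the image of the extension class $z$ under the chosen character $\chi$, together with the naturality/compatibility of the descent equivalence $\coh^H(-)\simeq\coh(B)$ with twisting by these classes — i.e.\ checking that the equivalence $\coh^G(X)\simeq\bigoplus_{j}\coh(B,j\alpha)$ is genuinely induced by dg-functors, as required for passing to $K_0(\DG_\C^{\gm})$. The group-cohomology bookkeeping and the fact that $\alpha$ has order exactly $d$ with $d\mid n$ are then routine; the invocation of \cite{Bern} is a black box. A secondary technical point is keeping track of the period-versus-index distinction: $d$ here should be taken as the relative dimension $+1$ of the chosen Severi--Brauer scheme, and the statement is uniform in that choice, which is why the clean identity is stated with the factor $d$ rather than with the order of $\alpha$.
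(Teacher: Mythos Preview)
Your approach is essentially the same as the paper's: decompose $\coh^G(X)$ into isotypic components for the characters of $N=\Z/n\Z$, identify each with a category of $j\alpha$-twisted sheaves on $B$, and compare with Bernardara's semiorthogonal decomposition of $D^b(E)$. The paper carries this out slightly more directly (via the elementary decomposition $\coh^G(X)\simeq\bigoplus_j\coh^{G,\chi^j}(B)$ and Elagin's identification $\coh^{G,\chi^j}(B)\simeq\coh^{j\alpha}(B)$, rather than via Theorem~\ref{cor:trivobstr}), but the substance is identical.

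One genuine imprecision: in your bookkeeping you assume $\alpha$ has order $d$, writing ``$j\alpha$ depends only on $j\bmod d$ once we know $\alpha$ has order $d$'' and asserting each class occurs $n/d$ times. But $d$ is the relative dimension plus one of the \emph{given} Severi--Brauer scheme, not the period of $\alpha$; these can differ. The clean fix (which is what the paper does) is to let $r$ be the order of $\alpha$ in $\Br(B)$. Then $r\mid n$ (since $\alpha$ comes from $\Z/n\Z$-coefficients) and $r\mid d$ (period divides index divides degree), so
\[
\mu^G(X)=\sum_{j=0}^{n-1}\{\cD(B,j\alpha)\}=\frac{n}{r}\sum_{j=0}^{r-1}\{\cD(B,j\alpha)\},\qquad
\mu(E)=\sum_{j=0}^{d-1}\{\cD(B,-j\alpha)\}=\frac{d}{r}\sum_{j=0}^{r-1}\{\cD(B,j\alpha)\},
\]
and the identity $d\cdot\mu^G(X)=n\cdot\mu(E)$ follows. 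Your closing remark shows you sensed this issue; just make the role of $r$ explicit and the argument is complete.
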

\begin{proof}
Let $\coh^{G,\chi}(B)$ denote the full subcategory in $\coh^G(B)$ that consists of all $G$-equivariant coherent sheaves $\cF$ on $X$ such that the subgroup $\Z/n\Z\subset G$ acts on $\cF$ by the character $\chi$. Clearly, there is an equivalences of categories
$$
\coh^G(X)\simeq \coh(B)\oplus \coh^{G,\chi}(B)\oplus\ldots\oplus\coh^{G,\chi^{n-1}}(B)\,.
$$
Let $\coh^{\alpha}(B)$ denote the category of $\alpha$-twisted sheaves. By definition of twisted sheaves, for each integer $i$, there is an equivalence of categories
$$
\coh^{G,\chi^i}(B)\simeq \coh^{i\alpha}(B)\,,
$$
see, e.g., Elagin's~\cite[Prop.\,1.9]{Ela0}. Whence there is an equivalence of categories
$$
\coh^G(X)\simeq \coh(B)\oplus \coh^{\alpha}(B)\oplus\ldots\oplus \coh^{(n-1)\alpha}(B)\,,
$$
and there is an equality
\begin{equation}\label{eq:onehand}
\mu^G(X)=\sum_{i=0}^{n-1}\big\{D^b\big(\coh^{i\alpha}(B)\big)\big\}
\end{equation}
in $K_0(\DG^{\gm}_{\C})$.

On the other hand, Bernardara~\cite[Theor.\,5.1]{Bern} proved that there is a semiorthogonal decomposition (an analogous result for algebraic $K$-groups was obtained previously by Quillen~\cite[Theor.\,4.1]{Qui})
$$
D^b(E)=\langle D^b(B),D^b\big(\coh^{-\alpha}(B)\big),\ldots,D^b\big(\coh^{-(d-1)\alpha}(B)\big)\rangle\,.
$$
Hence there is an equality
\begin{equation}\label{eq:otherhand}
\mu(E)=\sum_{i=0}^{d-1}\big\{D^b\big(\coh^{-i\alpha}(B)\big)\big\}
\end{equation}
in $K_0(\DG^{\gm}_{\C})$.

Let $r$ be the order of $\alpha$ in the Brauer group $\Br(B)$. Then $r$ divides both $d$ and $n$ and we finish the proof, comparing equalities~\eqref{eq:onehand} and~\eqref{eq:otherhand}.
\end{proof}

Explicitly, one can construct a Severi--Brauer scheme $E\to B$ that represents $\alpha$ as follows. Let $V$ be any representation of $G$ whose restriction to $\Z/n\Z$ is $\chi$-isotypic (for example, ${V=\ind_{\Z/n\Z}^G(\chi)}$). Then $H$ acts on~$\PP(V)$ and we put $E:=(\PP(V)\times X)/H$, where we consider the diagonal action of $H$. Note that a Severi--Brauer scheme over $B$ whose class equals $\alpha$ is not unique.

The number $d$ in Proposition~\ref{prop:Bern} is not necessarily equal to $n$: it may be that there is no a Severi--Brauer scheme $E\to B$ of relative dimension $n-1$ whose class in~$\Br(B)$ is~$\alpha$.

\medskip

We will also use the following simple properties of $\tau$.

\begin{lemma}\label{lemma:tau0}
Let $A$ be a finitely generated abelian group. The following holds true:
\begin{itemize}
\item[(i)]
for any subgroup $B\subset A$, we have $\tau(B)\leqslant \tau(A)$;
\item[(ii)]
let $f\colon A\to B$ be a surjective homomorphism such that $f_{\bbQ}\colon A_{\bbQ}\to B_{\bbQ}$ is an isomorphism of $\bbQ$-vector spaces; then $\tau(B)\leqslant \tau(A)$ and $f$ is an isomorphism if and only if ${\tau(B)=\tau(A)}$;
\item[(iii)]
given an exact sequence
$$
0\longrightarrow A'\longrightarrow A\longrightarrow A''\longrightarrow 0\,,
$$
we have $\tau(A)\leqslant \tau(A')+\tau(A'')$;
\item[(iv)]
given a finite decreasing filtration $F^iA$ on $A$, we have $\tau(A)\leqslant \sum_i \tau({\rm gr}^i_FA)$.
\end{itemize}
\end{lemma}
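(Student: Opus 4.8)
The plan is to reduce every part of Lemma~\ref{lemma:tau0} to two elementary facts: a finitely generated torsion abelian group is finite, and for a short exact sequence $0\to F'\to F\to F''\to 0$ of \emph{finite} abelian groups one has $|F|=|F'|\cdot|F''|$, hence $\tau$ is additive. Throughout I will use that for a subgroup $B\subseteq A$ one has $B_{\rm tors}=B\cap A_{\rm tors}$, and that a homomorphism carries torsion elements to torsion elements.

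For $(i)$, since $B_{\rm tors}=B\cap A_{\rm tors}$ is a subgroup of the finite group $A_{\rm tors}$, its order divides $|A_{\rm tors}|$, so $\tau(B)\leqslant\tau(A)$. For $(iii)$, given $0\to A'\to A\to A''\to 0$, the inclusion restricts to an identification $A'_{\rm tors}=A'\cap A_{\rm tors}$ and the quotient map sends $A_{\rm tors}$ into $A''_{\rm tors}$ with kernel exactly $A'_{\rm tors}$; hence $A_{\rm tors}/A'_{\rm tors}$ embeds into $A''_{\rm tors}$, so $|A_{\rm tors}|\leqslant |A'_{\rm tors}|\cdot|A''_{\rm tors}|$, and taking logarithms gives $\tau(A)\leqslant\tau(A')+\tau(A'')$. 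Part $(iv)$ then follows by induction on the length of the filtration, applying $(iii)$ at each step to the short exact sequence $0\to F^{i+1}A\to F^iA\to {\rm gr}^i_FA\to 0$.

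The only part needing a genuine (if short) argument is $(ii)$. Here $\ker f$ is finitely generated and satisfies $(\ker f)\otimes_{\bbQ}=\ker(f_{\bbQ})=0$, so $\ker f$ is finite and thus $\ker f\subseteq A_{\rm tors}$. The key point is that $f$ maps $A_{\rm tors}$ \emph{onto} $B_{\rm tors}$: if $b\in B_{\rm tors}$ with $mb=0$, pick any $a\in A$ with $f(a)=b$; then $ma\in\ker f$ is torsion, hence $a$ is torsion, i.e.\ $a\in A_{\rm tors}$. Consequently $0\to\ker f\to A_{\rm tors}\to B_{\rm tors}\to 0$ is exact, so $\tau(A)=\tau(\ker f)+\tau(B)\geqslant\tau(B)$, with equality if and only if $|\ker f|=1$, i.e.\ if and only if $f$ is injective; being already surjective, this is exactly the condition that $f$ be an isomorphism. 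I do not expect a real obstacle anywhere; the one place to be careful is precisely this surjectivity of $A_{\rm tors}\to B_{\rm tors}$, which upgrades the obvious left-exact torsion sequence to a short exact one and thereby yields the sharp statement in $(ii)$.
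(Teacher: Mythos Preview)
Your proof is correct and follows essentially the same route as the paper: the paper proves $(ii)$ via the same exact sequence $0\to\Ker(f)\to A_{\rm tors}\to B_{\rm tors}\to 0$, proves $(iii)$ via the left-exact sequence $0\to A'_{\rm tors}\to A_{\rm tors}\to A''_{\rm tors}$, and deduces $(iv)$ from $(iii)$. If anything you are more careful than the paper, since you spell out why $A_{\rm tors}\to B_{\rm tors}$ is surjective in $(ii)$, which the paper simply asserts.
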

\begin{proof}
$(i)$ This is obvious.

$(ii)$ Since $f_{\bbQ}$ is an isomorphism, $\Ker(f)$ is a torsion group and we have an exact sequence
$$
0\longrightarrow \Ker(f)\longrightarrow A_{\rm tors}\longrightarrow B_{\rm tors}\longrightarrow 0\,.
$$
Therefore $\tau(B)\leqslant \tau(A)$ and one has the equality if and only if $\Ker(f)=0$.

$(iii)$ This is implied by the exact sequence
$$
0\longrightarrow A'_{\rm tors}\longrightarrow A_{\rm tors}\longrightarrow A''_{\rm tors}\,.
$$

$(iv)$ This follows directly from~$(iii)$.
\end{proof}

\begin{lemma}\label{lem:tau}
Let $E_r^{ij}$ be a spectral sequence that converges to groups $A^p$, where $i,j,p\in\Z$ and~$r\geqslant 2$. Suppose that all $E^{ij}_2$ and $A^p$ are finitely generated abelian groups and that the spectral sequence degenerates in the \mbox{$E_2$-term} after tensoring with $\bbQ$. Then the following holds true:
\begin{itemize}
\item[(i)]
For any $p$, we have $\tau(A^p)\leqslant \sum\limits_{i+j=p}\tau(E_2^{ij})$ (possibly, this sum equals infinity).
\item[(ii)]
If there is a non-zero differential that comes to the $p$-th diagonal, then
${\tau(A^p)<\sum\limits_{i+j=p}\tau(E_2^{ij})}$.
\item[(iii)]
If the spectral sequence degenerates in the $E_2$-term and for any $p$, the resulting
filtration on $A^p$ splits, then for any $p$, there is an equality ${\tau(A^p)=\sum\limits_{i+j=p}\tau(E_2^{ij})}$.
\end{itemize}
\end{lemma}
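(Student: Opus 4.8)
The plan is to reduce all three parts to tracking the invariant $\tau$ along the pages of the spectral sequence by means of Lemma~\ref{lemma:tau0}. First I would spell out the consequence of rational degeneration: inductively $E_r^{ij}\otimes\bbQ\simeq E_2^{ij}\otimes\bbQ$ and $d_r\otimes\bbQ=0$ for all $r\geqslant 2$, so every differential $d_r$ has torsion image. Writing $Z_r^{ij}\subseteq E_r^{ij}$ for the kernel of the outgoing $d_r$ and $B_r^{ij}\subseteq Z_r^{ij}$ for the image of the incoming $d_r$, the cokernel of the inclusion $Z_r^{ij}\hookrightarrow E_r^{ij}$ is isomorphic to the image of $d_r$, hence torsion, while $B_r^{ij}$ is torsion and $E_{r+1}^{ij}=Z_r^{ij}/B_r^{ij}$.

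Next, for a fixed pair $(i,j)$ I would walk through the pages. Lemma~\ref{lemma:tau0}(i) applied to $Z_r^{ij}\subseteq E_r^{ij}$ gives $\tau(Z_r^{ij})\leqslant\tau(E_r^{ij})$, and Lemma~\ref{lemma:tau0}(ii) applied to the surjection $Z_r^{ij}\twoheadrightarrow E_{r+1}^{ij}=Z_r^{ij}/B_r^{ij}$ — which is a $\bbQ$-isomorphism since $B_r^{ij}$ is torsion — gives $\tau(E_{r+1}^{ij})\leqslant\tau(Z_r^{ij})$, with equality if and only if $B_r^{ij}=0$. Since the spectral sequence converges, $E_r^{ij}$ stabilises to $E_\infty^{ij}$ for $r\gg 0$, so $\tau(E_\infty^{ij})\leqslant\tau(E_2^{ij})$, and this inequality is \emph{strict} as soon as $B_r^{ij}\neq 0$ for some $r\geqslant 2$, i.e. as soon as some differential on some page lands non-trivially at position $(i,j)$.

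Then I would bring in the abutment. By convergence $A^p$ carries a finite decreasing filtration with graded pieces $E_\infty^{ij}$ for $i+j=p$, so Lemma~\ref{lemma:tau0}(iv) gives $\tau(A^p)\leqslant\sum_{i+j=p}\tau(E_\infty^{ij})$. Combined with the previous paragraph this immediately yields (i). For (ii), a non-zero differential hitting the $p$-th diagonal produces a pair $(i,j)$ with $i+j=p$ and $B_r^{ij}\neq 0$, hence $\tau(E_\infty^{ij})<\tau(E_2^{ij})$, while $\tau(E_\infty^{i'j'})\leqslant\tau(E_2^{i'j'})$ for the remaining pairs; summing gives $\tau(A^p)\leqslant\sum_{i+j=p}\tau(E_\infty^{ij})<\sum_{i+j=p}\tau(E_2^{ij})$. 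For (iii), degeneration at $E_2$ forces $E_\infty^{ij}=E_2^{ij}$, and the splitting hypothesis gives an isomorphism $A^p\simeq\bigoplus_{i+j=p}E_2^{ij}$, so $\tau(A^p)=\sum_{i+j=p}\tau(E_2^{ij})$ because $\tau$ is additive on direct sums of abelian groups.

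The one point that needs care is locating where strict inequality can be obtained: passing from $E_r^{ij}$ to the proper subgroup $Z_r^{ij}$ need not decrease $\tau$ even when the cokernel is non-trivial (a non-trivial index in a torsion-free group changes nothing), so the strictness in (ii) must be attributed entirely to the quotient step $E_{r+1}^{ij}=Z_r^{ij}/B_r^{ij}$, where Lemma~\ref{lemma:tau0}(ii) supplies the sharp criterion "equality iff $B_r^{ij}=0$". Everything else is bookkeeping with finitely generated abelian groups, using only that all the intermediate maps arising from the differentials are $\bbQ$-isomorphisms.
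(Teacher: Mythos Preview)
Your proof is correct and follows essentially the same route as the paper: reduce to the page-by-page inequality $\tau(E_{r+1}^{ij})\leqslant\tau(E_r^{ij})$ via Lemma~\ref{lemma:tau0}(i),(ii), then apply Lemma~\ref{lemma:tau0}(iv) to the abutment filtration; for~(ii) both arguments locate the strict drop at the quotient $Z_r^{ij}\twoheadrightarrow E_{r+1}^{ij}$ using the ``equality iff kernel zero'' clause of Lemma~\ref{lemma:tau0}(ii). Your discussion of why strictness must come from the quotient step rather than the inclusion $Z_r^{ij}\hookrightarrow E_r^{ij}$ is in fact more explicit than the paper's version, which simply asserts $\tau(E_{r+1}^{ij})<\tau(E_r^{ij})$ without isolating that point.
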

\begin{proof}
$(i)$ By Lemma~\ref{lemma:tau0}$(iv)$, we have $\tau(A^p)\leqslant \sum\limits_{i+j=p}\tau(E_{\infty}^{ij})$. For all $i,j$, the group $E_{\infty}^{ij}$ is a subquotient of the group $E_2^{ij}$ and by the assumption of the lemma, we have the equality ${(E_{\infty}^{ij})_{\bbQ}=(E_2^{ij})_{\bbQ}}$. Therefore, by Lemma~\ref{lemma:tau0}$(i),(ii)$, we have $\tau(E_{\infty}^{ij})\leqslant \tau(E_2^{ij})$.

$(ii)$ Suppose that there is a non-zero differential $d_r\colon E_r^{i-r,j+r-1}\to E_r^{ij}$ with $i+j=p$. Then the natural surjective homomorphism
$$
\Ker\big(d_r\colon E_{r}^{ij}\to E_r^{i+r,j-r+1}\big)\longrightarrow E^{ij}_{r+1}
$$
has a non-trivial kernel. Hence by Lemma~\ref{lemma:tau0}$(ii)$, we have a strict inequality ${\tau(E^{ij}_{r+1})<\tau(E^{ij}_{r})}$.

$(iii)$ This is obvious.
\end{proof}

Now recall that for a finite CW-complex $M$, there is an Atiyah--Hirzebruch spectral sequence, see~\cite[Sect.\,2]{AH0}, with $E_2^{ij}=H^i(M,\Z)$ for $j$ even and $E_2^{ij}=0$ for $j$ odd. The spectral sequence converges to~$K_{i+j}^{\tp}(M)$, is periodic with respect to the vertical shift by~$2$, and degenerates in the \mbox{$E_2$-term} after tensoring with~$\bbQ$. The resulting
filtration on topological $K$-groups is finite, exhaustive, and separated. In particular, Lemma~\ref{lem:tau}$(i)$ implies that
$$
\tau\big(K_0^{\tp}(M)\big)\leqslant \tau\big(H^{\rm ev}(M,\Z)\big)\,,\qquad
\tau\big(K_1^{\tp}(M)\big)\leqslant \tau\big(H^{\rm odd}(M,\Z)\big)\,.
$$

Recall that there is a canonical map
\begin{equation}\label{Psimap}
\Psi\;:\;\Br(B)\stackrel{\sim}\longrightarrow H^2_{\acute e t}(B,\cO_B^*)\longrightarrow H^2\big(B,(\cO_B^h)^*\big)\longrightarrow H^3(B,\Z)\,,
\end{equation}
where $\cO_B^{h}$ is the sheaf of holomorphic functions on $B$, $(\cO_B^{h})^*$ is the sheaf of invertible holomorphic functions, its cohomology is taken in the complex topology, and the last map is the boundary map associated with the exponential exact sequence
$$
0 \longrightarrow \Z \stackrel{2\pi i}\longrightarrow \cO_B^h \stackrel{\exp}\longrightarrow (\cO_B^h)^* \longrightarrow 1\,.
$$


\begin{prop}\label{prop:bound}
Let $f\colon E\to B$ be a Severi--Brauer scheme of relative dimension $d-1$. Suppose that the following conditions are satisfied:
\begin{itemize}
\item[(i)]
the element $\Psi\big([E]\big)\in H^3(B,\Z)$ is non-zero, where $[E]\in \Br(B)$ is the class of $E$;
\item[(ii)]
we have $\tau\big(K_1^{\tp}(B)\big)=\tau\big(H^{\rm odd}(B,\Z)\big)$.
\end{itemize}
Then $\tau\big(K_1^{\tp}(E)\big)<d\cdot \tau\big(K_1^{\tp}(B)\big)$.
\end{prop}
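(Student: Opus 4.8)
The plan is to reduce the statement about topological $K$-groups to one about integral singular cohomology and then run Ekedahl's cohomological argument from \cite{Eked} in the guise of the Leray spectral sequence of $f$. Since $E$ is a complex smooth projective variety, its space of complex points is a finite CW-complex, so by the Atiyah--Hirzebruch estimate recalled just above we have $\tau\big(K_1^{\tp}(E)\big)\le \tau\big(H^{\mathrm{odd}}(E,\Z)\big)$. Using hypothesis (ii), namely $\tau\big(K_1^{\tp}(B)\big)=\tau\big(H^{\mathrm{odd}}(B,\Z)\big)$, it therefore suffices to prove the strict inequality $\tau\big(H^{\mathrm{odd}}(E,\Z)\big)< d\cdot\tau\big(H^{\mathrm{odd}}(B,\Z)\big)$.

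For this I would work with the Leray spectral sequence $E_2^{p,q}=H^p(B,R^qf_*\Z)\Rightarrow H^{p+q}(E,\Z)$. Topologically $f$ is a locally trivial $\PP^{d-1}$-bundle whose structure group is the (connected) group of complex points of $\PGL_d$, so the local systems $R^qf_*\Z$ are the constant sheaves $\Z$ for $q\in\{0,2,\dots,2(d-1)\}$ and vanish for odd $q$; hence $E_2^{p,q}=H^p(B,\Z)$ for those $d$ values of $q$ and $E_2^{p,q}=0$ otherwise, and $d_2=0$ for parity reasons, so $E_3=E_2$. Since the Brauer obstruction is torsion, $f$ acquires a rational relative hyperplane class, so the spectral sequence degenerates at $E_2$ after tensoring with $\Q$ and all $E_r^{p,q}$, $r\ge 2$, have the same rationalization. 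The crucial input — and the step I expect to cost the most care to write out — is the identification of the first differential: $d_3\colon E_3^{0,2}=\Z\longrightarrow E_3^{3,0}=H^3(B,\Z)$ carries a generator (the fibrewise hyperplane class $\xi$) to $\pm\Psi([E])$, the image of the Brauer class of $E$ under the map \eqref{Psimap}; this is classical for the universal Severi--Brauer fibration over $B\PGL_d$ and follows here by pullback. By hypothesis (i), $\Psi([E])\ne 0$, so $d_3\colon E_3^{0,2}\to E_3^{3,0}$ is a nonzero differential reaching total degree $3$.

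Finally I would assemble the torsion bounds via Lemma~\ref{lem:tau}. For every odd $p\ne 3$, part (i) gives $\tau\big(H^p(E,\Z)\big)\le\sum_{q}\tau\big(H^{p-q}(B,\Z)\big)$, the sum over $q\in\{0,2,\dots,2(d-1)\}$; for $p=3$, since a nonzero differential reaches the third diagonal, part (ii) gives the strict inequality $\tau\big(H^3(E,\Z)\big)<\tau\big(H^3(B,\Z)\big)+\tau\big(H^1(B,\Z)\big)$, and all of these torsion groups are finite because $B$ is a compact complex manifold, so no infinite sums intervene. Summing over all odd $p$, and observing that for each of the $d$ admissible values of $q$ the shift $p\mapsto p-q$ runs over all odd integers exactly once, I obtain $\tau\big(H^{\mathrm{odd}}(E,\Z)\big)<d\cdot\tau\big(H^{\mathrm{odd}}(B,\Z)\big)$. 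Chaining this with the Atiyah--Hirzebruch estimate for $E$ and with hypothesis (ii) for $B$ yields $\tau\big(K_1^{\tp}(E)\big)\le\tau\big(H^{\mathrm{odd}}(E,\Z)\big)<d\cdot\tau\big(H^{\mathrm{odd}}(B,\Z)\big)=d\cdot\tau\big(K_1^{\tp}(B)\big)$, which is the assertion.
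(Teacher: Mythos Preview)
Your proof is correct and follows essentially the same route as the paper: reduce via Atiyah--Hirzebruch and hypothesis~(ii) to the cohomological inequality, then use the Leray spectral sequence of $f$ with its constant higher direct images, a nonzero $d_3$ hitting the third diagonal, rational degeneration, and Lemma~\ref{lem:tau}(ii). The only minor differences are in the justifications of sub-steps --- the paper trivializes $R^2f_*\Z$ via the relative canonical class rather than connectedness of $\PGL_d(\C)$, invokes Deligne for rational degeneration rather than a rational hyperplane class, and shows $d_3\ne 0$ by noting that $\Psi([E])$ dies on pullback to $E$ (since $E\times_B E$ is a genuine projective bundle) rather than by identifying $d_3(\xi)=\pm\Psi([E])$ exactly.
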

\begin{proof}
We have that $R^2f_*\Z$ is a locally constant sheaf of rank one on $B$. The first Chern class of the relative canonical sheaf of $f$ defines a non-zero section of $R^2f_*\Z$, whence this sheaf is constant. By multiplicativity, we obtain that $R^jf_*\Z$ is a constant sheaf of rank one for every~${j=2r}$, $0\leqslant r\leqslant d$. Clearly, ${R^jf_*\Z=0}$ otherwise. We see that the Leray spectral sequence of $f$ takes the form $E_2^{ij}=H^i(B,\Z)$ when~${j=2r}$, $0\leqslant r\leqslant d$, and with $E_2^{ij}=0$ otherwise.

Since $E\times _B E$ is the projectivization of a rank $d$ vector bundle on $E$, we see that the pull-back map $H^3(B,\Z)\to H^3(E,\Z)$ sends~$\Psi\big([E]\big)$ to zero. Therefore the differential ${d_3\colon H^0(B,\Z)\to H^3(B,\Z)}$ is non-zero. (Actually, one can show that $d_3$ sends the element $1\in H^0(B,\Z)$ to $\Psi\big([E]\big)\in H^3(B,\Z)$; see, e.g., the book of Gille and Szamuely~\cite[Prop.\,5.4.4]{GiSa} for the arithmetic counterpart of this).

Recall that by Deligne~\cite{Del0}, the Leray spectral sequence degenerates after tensoring with~$\bbQ$. Hence the explicit description of the spectral sequence given above, together with Lemma~\ref{lem:tau}$(ii)$, implies that we have a strict inequality
$$
\tau\big(H^{\rm odd}(E,\Z)\big)<d\cdot \tau\big(H^{\rm odd}(B,\Z)\big)\,.
$$
Now the proposition follows from condition $(ii)$ and the inequality
${\tau\big(K_1^{\tp}(E)\big)\leqslant \tau\big(H^{\rm odd}(E,\Z)
\big)}$
coming from the Atiyah--Hirzebruch spectral sequence as described above.
\end{proof}

\medskip

Now we are ready to prove Theorem~\ref{theor:mainexamp}.

\begin{proof}[Proof of Theorem~\ref{theor:mainexamp}]
First observe that, by Example~\ref{ex:extquottriv}, we have $X/^{\ex}G=\coprod\limits_{i=1}^n B$. Hence there is an equality
\begin{equation}\label{eq:extquot}
\tau\kappa_1^{\tp}\big(\mu(X/^{\ex}G)\big)=n\cdot \tau\big(K_1^{\tp}(B)\big)\,.
\end{equation}
Let an element $\alpha\in \Br(B)$ and a Severi--Brauer scheme $E\to B$ be as in Proposition~\ref{prop:Bern}. By this proposition, the equality $d\cdot \mu^G(X)=n\cdot \mu(E)$ holds in $K_0(\DG_{\C}^{\gm})$. Hence the equality
\begin{equation}\label{eq:equivmeas}
\tau\kappa_1^{\tp}\big(\mu^G(X)\big)=\frac{n}{d}\cdot\tau\big(K_1^{\tp}(E)\big)
\end{equation}
holds in $\R$.

Furthermore, there is a commutative diagram
$$
\begin{CD}
0 @>>> \Z @>n>> \Z  @>>> \Z/n\Z @>>> 0 \\
@. @VV=V   @VV\frac{2\pi i}{n}V @VV\chi V \\
0 @>>> \Z @> 2\pi i>> \cO_B^h  @>\exp>> (\cO_B^h)^* @>>> 1
\end{CD}
$$
of sheaves on $B$ in the complex topology. This yields that the Bockstein homomorphism ${\beta\colon H^2(B,\Z/n\Z)\to H^3(B,\Z)_n}$ is equal to the composition $\Psi\circ\nu$ (see formula~\eqref{eq:longcomp} for $\nu$). Therefore there are equalities
$$
\Psi\big([E]\big)=\Psi(\alpha)=\Psi\big(\nu(\lambda(z))\big)=\beta(\lambda(z))
$$
in $H^3(B,\Z)$ (see formula~\eqref{eq:lambda} for~$\lambda$). Hence, by conditions~$(i)$ and~$(ii)$ of the theorem and by Proposition~\ref{prop:bound}, we have a strict inequality
$$
\tau \big(K_1^{\tp}(E)\big)<d\cdot \tau\big(K_1^{\tp}(B)\big)
$$
of real numbers. Combining this with equations~\eqref{eq:extquot} and~\eqref{eq:equivmeas}, we finish the proof.
\end{proof}

\medskip

Let us construct examples for which the conditions of Theorem~\ref{theor:mainexamp} are satisfied. With this aim, we further analyze the Atiyah--Hirzebruch spectral sequence. As above, let $M$ be a finite \mbox{CW-complex}. The homomorphism ${K_0^{\tp}(M)\to H^0(M,\Z)}$ that arises from the Atiyah--Hirzebruch spectral sequence coincides with the rank of vector bundles and has a splitting defined by trivial vector bundles of a given rank. Denote the kernel of this homomorphism by~$\widetilde{K}^{\tp}_0(M)$. Thus we have a canonical isomorphism
$$
K^{\tp}_0(M)\simeq H^0(M,\Z)\oplus \widetilde{K}^{\tp}_0(M)\,.
$$
Recall that $K_1^{\tp}(M)$ is the group $[M,U(\infty)]$ of homotopy classes of continuous maps from~$M$ to the infinite-dimensional unitary group $U(\infty)$. The homomorphism ${K_1^{\tp}(M)\to H^1(M,\Z)}$ that arises from the Atiyah--Hirzebruch spectral sequence coincides with the homomorphism induced by the determinant $\det\colon U(\infty)\to U(1)$ and the natural isomorphism ${[M,U(1)]\simeq H^1(M,\Z)}$ (see, e.g.,~\cite[\S\,5]{Gor}). Thus the homomorphism ${K_1^{\tp}(M)\to H^1(M,\Z)}$ also has a splitting defined by any splitting $\iota\colon U(1)\to U(\infty)$ of the determinant (the splitting $H^1(M,\Z)\to K_1^{\tp}(M)$ does not depend on the choice of $\iota$). Denote the kernel is this homomorphism by $\widetilde{K}^{\tp}_1(M)$. Thus we have a canonical isomorphism
$$
K^{\tp}_1(M)\simeq H^1(M,\Z)\oplus \widetilde{K}^{\tp}_1(M)\,.
$$
In particular, we obtain the vanishing of all differentials in the Atiyah--Hirzebruch spectral sequence that come out of $H^0(M,\Z)$ and $H^1(M,\Z)$.

\begin{lemma}\label{lem:prodAH}
Let $M$ and $M'$ be two finite CW-complexes. Suppose that the Atiyah--Hirzebruch spectral sequence for $M$ degenerates in the $E_2$-term and the filtrations on~$K_0^{\tp}(M)$ and~$K_1^{\tp}(M)$ split. Suppose that all cohomology groups $H^i(M',\Z)$ are torsion-free. Then the Atiyah--Hirzebruch spectral sequence for $M\times M'$ degenerates in the $E_2$-term as well and the filtrations on $K_0^{\tp}(M\times M')$ and $K_1^{\tp}(M\times M')$ also split.
\end{lemma}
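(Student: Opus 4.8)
The plan is to reduce the statement to two standard inputs: the multiplicativity of the Atiyah--Hirzebruch spectral sequence (AHSS) under external products, and the Künneth theorems in singular cohomology and in topological $K$-theory. As a preliminary step, I would note that $M'$ itself satisfies the hypotheses imposed on $M$. Indeed, since every $H^i(M',\Z)$ is torsion-free, every term $E_2^{ij}$ of the AHSS for $M'$ is torsion-free, hence so is every $E_r^{ij}$, being a subquotient of $E_2^{ij}$; as the AHSS degenerates after tensoring with $\bbQ$ (via the Chern character), each $d_r$ has image in the torsion of its target and is therefore zero, so the AHSS for $M'$ degenerates at $E_2$. Moreover $K_\ast^{\tp}(M')$ is finitely generated and its associated graded consists of subquotients of the torsion-free groups $E_2^{ij}=H^i(M',\Z)$, hence is free; so $K_\ast^{\tp}(M')$ is free and its skeletal filtration splits automatically.

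For the degeneracy of the AHSS of $M\times M'$, I would use that this spectral sequence is multiplicative under external products: there is a pairing of spectral sequences $E_r^{\ast\ast}(M)\otimes E_r^{\ast\ast}(M')\to E_r^{\ast\ast}(M\times M')$ obeying the Leibniz rule, which on $E_2$-pages, under the identification $E_2^{p,\mathrm{even}}(-)=H^p(-,\Z)$, is the cohomology cross product
$$\bigoplus_{p_1+p_2=p}H^{p_1}(M,\Z)\otimes H^{p_2}(M',\Z)\longrightarrow H^p(M\times M',\Z).$$
Because $H^\ast(M',\Z)$ is torsion-free, this cross product is an isomorphism by the Künneth theorem, so the $E_2$-pairing is (bijective, in particular) surjective. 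Now every class in $E_2^{\ast\ast}(M\times M')$ is a sum of external products $a\times b$, and $d_2(a\times b)=d_2a\times b\pm a\times d_2b=0$ since $d_2$ vanishes on both $E_2^{\ast\ast}(M)$ (hypothesis) and $E_2^{\ast\ast}(M')$ (preliminary step); hence $d_2=0$ on $E_2^{\ast\ast}(M\times M')$ and $E_3^{\ast\ast}(M\times M')=E_2^{\ast\ast}(M\times M')$. Since all three spectral sequences now still equal their $E_2$-pages, the pairing remains the surjective $E_2$-pairing, and the same argument kills $d_3$, then $d_4$, and so on; thus the AHSS for $M\times M'$ degenerates at $E_2$.

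For the splitting of the filtrations on $K_0^{\tp}(M\times M')$ and $K_1^{\tp}(M\times M')$, I would invoke the Künneth theorem in topological $K$-theory: since $K_\ast^{\tp}(M')$ is free, the external product
$$K_\ast^{\tp}(M)\otimes K_\ast^{\tp}(M')\stackrel{\sim}{\longrightarrow}K_\ast^{\tp}(M\times M')$$
is an isomorphism, and it respects skeletal filtrations, the filtration on the source being the tensor product of those on the two factors. The skeletal filtration on $K_\ast^{\tp}(M)$ splits by hypothesis, and that on $K_\ast^{\tp}(M')$ splits because the group is free; hence their tensor product, a finite filtration of a finitely generated abelian group, is again split. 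On associated graded the displayed map is the cross product $E_\infty^{\ast\ast}(M)\otimes E_\infty^{\ast\ast}(M')\to E_\infty^{\ast\ast}(M\times M')$, which is an isomorphism by the degeneracy and the Künneth isomorphism of the previous paragraph; a filtration-preserving map of groups with finite filtrations that is an isomorphism on associated graded is an isomorphism of filtered groups. Transporting the splitting of the source along this isomorphism yields the desired splitting for $M\times M'$.

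The content above is formal; the only delicate point is the bookkeeping of the multiplicative structure of the AHSS. One must verify that external products induce a Leibniz pairing of spectral sequences whose $E_2$-term is the cohomology cross product (with the $2$-fold periodicity of the $K$-theory AHSS handled correctly), and that the external product $K_\ast^{\tp}(M)\otimes K_\ast^{\tp}(M')\to K_\ast^{\tp}(M\times M')$ carries the tensor product of the skeletal filtrations onto the skeletal filtration of $M\times M'$ equipped with the product cell structure. These are standard properties of the skeletal filtration; granting them, the steps above prove the lemma.
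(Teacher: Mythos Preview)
Your proof is correct and follows essentially the same approach as the paper: both use the K\"unneth isomorphism in cohomology together with the multiplicativity (Leibniz rule) of the Atiyah--Hirzebruch spectral sequence to obtain degeneration, and then compare the filtered external-product map on $K$-groups with its associated graded to deduce the splitting. The only cosmetic difference is that you invoke the $K$-theory K\"unneth theorem explicitly, whereas the paper simply writes down the external-product maps $\varphi_0,\varphi_1$ and deduces that they are isomorphisms directly from the associated-graded argument; your invocation is therefore harmless but redundant.
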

\begin{proof}
Since the groups $H^i(M',\Z)$ are torsion-free, the Atiyah--Hirzebruch spectral sequence for $M'$ degenerates in the $E_2$-term and the filtrations on $K_0^{\tp}(M')$ and $K_1^{\tp}(M')$ split. Also, the absence of torsion in the cohomology of $M'$ implies, by the K\"unneth formula, that we have isomorphisms
$$
H^p(M\times M')\simeq\bigoplus_{i+j=p}H^i(M,\Z)\otimes H^j(M',\Z)\,,\qquad p\geqslant 0\,.
$$
Recall that the Atiyah--Hirzebruch spectral sequence is multiplicative in the natural sense, in particular, the differentials satisfy the graded Leibniz rule, see Remark 4 after Theorem 15.27 in Switzer's book~\cite{Switz} and also~\cite[Theor.\,2.6]{AH0}. Therefore the Atiyah--Hirzubruch spectral sequence for $M\times M'$ degenerates in the $E_2$-term.

Consider the natural homomorphisms of filtered abelian groups
$$
\varphi_0\;:\;\big(K_0^{\tp}(M)\otimes K_0^{\tp}(M')\big)\oplus \big(K_1^{\tp}(M)\otimes K_1^{\tp}(M')\big)\longrightarrow K_0^{\tp}(M\times M')\,,
$$
$$
\varphi_1\;:\;\big(K_0^{\tp}(M)\otimes K_1^{\tp}(M')\big)\oplus \big(K_1^{\tp}(M)\otimes K_0^{\tp}(M')\big)\longrightarrow K_1^{\tp}(M\times M')\,.
$$
Since the filtrations on the topological $K$-groups of $M$ and $M'$ split, we see that the filtrations on the sources of $\varphi_0$ and $\varphi_1$ split as well and the adjoint quotients form the graded abelian groups
$$
\big(H^{\rm even}(M)\otimes H^{\rm even}(M')\big)\oplus \big(H^{\rm odd}(M)\otimes H^{\rm odd}(M')\big)\,,
$$
$$
\big(H^{\rm even}(M)\otimes H^{\rm odd}(M')\big)\oplus \big(H^{\rm odd}(M)\otimes H^{\rm even}(M')\big)\,,
$$
respectively. On the other hand, since the Atiyah--Hirzebruch spectral sequence for~${M\times M'}$ degenerates in the $E_2$-term, we obtain that the homomorphisms~$\varphi_0$ and~$\varphi_1$ induce isomorphisms on the adjoint quotients. This implies that~$\varphi_0$ and~$\varphi_1$ are isomorphisms of filtered abelian groups, because the filtrations are finite, exhaustive, and separated. We see that the filtrations on the topological $K$-groups of $M\times M'$ split.
\end{proof}

We shall apply Lemma~\ref{lem:prodAH} for a product of a curve and a surface. Obviously, for a complex smooth projective curve $C$, we have the isomorphisms
$$
K_0^{\tp}(C)\simeq H^0(C,\Z)\oplus H^2(C,\Z)\simeq \Z^{\oplus 2}\,,\qquad K_1^{\tp}(C)\simeq H^1(C,\Z)\simeq
\Z^{\oplus 2g}\,
$$
where $g$ is the genus of $C$.

The proof of the following lemma essentially copies the proof given by Galkin, Katzarkov, Mellit, and Shinder in~\cite[Lem.\,2.2]{GKMS}, where the authors compare torsion in the Picard group and in the algebraic $K_0$-group for arbitrary algebraic varieties.

\begin{lemma}\label{lem:surface}
For a complex smooth projective surface $S$, the Atiyah--Hirzebruch spectral sequence degenerates in the $E_2$-term, we have $\widetilde{K}^{\tp}_1(S)\simeq H^3(S,\Z)$, and the exact sequence
\begin{equation}\label{eq:Galkin}
0\longrightarrow H^4(S,\Z)\longrightarrow \widetilde{K}_0^{\tp}(S)\longrightarrow H^2(S,\Z)\longrightarrow 0
\end{equation}
splits.
\end{lemma}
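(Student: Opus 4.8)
The plan is to read off the first two assertions directly from the Atiyah--Hirzebruch spectral sequence for the finite CW-complex underlying $S$, and to deduce the splitting of \eqref{eq:Galkin} by the Picard-group comparison used by Galkin, Katzarkov, Mellit and Shinder in~\cite{GKMS}. First I would prove degeneration at $E_2$. The page is $E_2^{p,q}=H^p(S,\Z)$ for $q$ even and $0$ for $q$ odd; the differential $d_2$ vanishes for parity reasons, and for $r\geqslant 3$ a differential $d_r$ can be non-zero only for $r=3$ with source in column $p\in\{0,1\}$, since the target column $p+r$ must still lie in $\{0,\dots,4\}$. By the paragraph preceding the lemma, the edge homomorphisms $K_0^{\tp}(S)\to H^0(S,\Z)$ and $K_1^{\tp}(S)\to H^1(S,\Z)$ admit splittings, so all differentials leaving $H^0(S,\Z)$ and $H^1(S,\Z)$ vanish; hence $d_3=0$ and the spectral sequence degenerates at $E_2$.

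Next I would extract the two $K$-theoretic statements from the induced filtration $F^{\bullet}$. For $K_1^{\tp}(S)$ one gets $F^1/F^2=H^1(S,\Z)$, $F^2/F^3=0$, $F^3/F^4=H^3(S,\Z)$ and $F^4=0$, so $\widetilde{K}^{\tp}_1(S)=\ker\bigl(K_1^{\tp}(S)\to H^1(S,\Z)\bigr)=F^2 K_1^{\tp}(S)=F^3 K_1^{\tp}(S)\simeq H^3(S,\Z)$. For $K_0^{\tp}(S)$ one gets $F^0/F^1=H^0(S,\Z)$, $F^1/F^2=0$, $F^2/F^3=H^2(S,\Z)$, $F^3/F^4=0$, $F^4/F^5=H^4(S,\Z)$ and $F^5=0$, so $\widetilde{K}^{\tp}_0(S)=F^2 K_0^{\tp}(S)$ fits into the exact sequence $0\to F^4\to F^2\to F^2/F^4\to 0$, which is precisely \eqref{eq:Galkin}; here the surjection $\widetilde{K}^{\tp}_0(S)\to H^2(S,\Z)$ is $x\mapsto c_1(x)$, by compatibility of the Atiyah--Hirzebruch filtration with Chern classes.

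It remains to split \eqref{eq:Galkin}, and this is the step I expect to be the main obstacle. Since $F^4 K_0^{\tp}(S)\simeq H^4(S,\Z)\simeq\Z$ is torsion-free, the torsion subgroup of $\widetilde{K}^{\tp}_0(S)$ injects into $H^2(S,\Z)_{\rm tors}$; once this map is shown to be an isomorphism, the restriction of \eqref{eq:Galkin} to $H^2(S,\Z)_{\rm tors}$ splits, hence so does \eqref{eq:Galkin} itself, its class lying in $\Ext^1\bigl(H^2(S,\Z),\Z\bigr)=\Ext^1\bigl(H^2(S,\Z)_{\rm tors},\Z\bigr)$. For surjectivity I would argue as follows. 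By the exponential sequence, $H^2(S,\Z)_{\rm tors}$ lies in the image of $c_1\colon\Pic(S)\to H^2(S,\Z)$, because the cokernel of $c_1$ embeds into the torsion-free group $H^2(S,\cO_S)$; moreover $\ker(c_1)=\Pic^0(S)$ is divisible, so a torsion class $\gamma$ of order $n$ can be written $\gamma=c_1(L)$ for a line bundle $L$ with $L^{\otimes n}\simeq\cO_S$. Put $x:=[L]-[\cO_S]\in\widetilde{K}^{\tp}_0(S)$, which maps to $\gamma$. Being of rank zero, $x\in F^1 K_0^{\tp}(S)=F^2 K_0^{\tp}(S)$, so $x^2\in F^4 K_0^{\tp}(S)$, a torsion-free group; but $\mathrm{ch}(x^2)=c_1(L)^2=0$ in $H^4(S,\bbQ)$ since $c_1(L)$ is torsion, hence $x^2=0$ in $K_0^{\tp}(S)$. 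Therefore $1=[L^{\otimes n}]=[L]^n=(1+x)^n=1+nx$ in the ring $K_0^{\tp}(S)$, whence $nx=0$; thus $x$ is a torsion element of $\widetilde{K}^{\tp}_0(S)$ lifting $\gamma$, which proves surjectivity and completes the proof.
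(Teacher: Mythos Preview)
Your proposal is correct and follows the same overall strategy as the paper: reduce the splitting of \eqref{eq:Galkin} to showing that $\widetilde{K}_0^{\tp}(S)_{\rm tors}\to H^2(S,\Z)_{\rm tors}$ is an isomorphism, and verify surjectivity by lifting a torsion class to a line bundle $L$ with $L^{\otimes n}$ trivial and checking that $x=[L]-1$ is $n$-torsion.

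The tactical choices differ slightly. The paper takes $L$ to be the \emph{topological} line bundle corresponding to the given class in $H^2(S,\Z)$, so $L^{\otimes l}$ is automatically trivial; it then uses only the multiplicativity of the filtration ($y^3=0$) and the binomial identity $ly+\binom{l}{2}y^2=0$, multiplying by $y$ to force $y^2=0$ in the torsion-free group $H^4(S,\Z)$. You instead pass through the exponential sequence and divisibility of $\Pic^0(S)$ to produce an \emph{algebraic} $L$ with $L^{\otimes n}\simeq\cO_S$, and kill $x^2$ via the Chern character. Both routes are fine; the paper's avoids the detour through $\Pic^0$ and the Chern character, while yours makes the vanishing $x^2=0$ a one-line computation. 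You are also more explicit than the paper about why degeneration at $E_2$ holds and why the torsion isomorphism forces the $\Ext^1$ class to vanish.
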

\begin{proof}
The only non-trivial assertion is the splitting of the exact sequence~\eqref{eq:Galkin}.
Let ${x\in H^2(S,\Z)}$ be an element such that $lx=0$ for a natural number $l$. It is well-known that~${H^2(S,\Z)}$ is canonically bijective with the set of isomorphism classes of topological complex line bundles on $S$. Let $L\to S$ be the line bundle that corresponds to $x$ and take the element ${y:=\{L\}-1\in \widetilde{K}_0^{\tp}(S)}$. Clearly, $y$ is sent to $x$ under the homomorphism $\widetilde{K}_0^{\tp}(S)\to H^2(S,\Z)$, which is the first Chern class.

Since $lx=0$, the line bundle $L^{\otimes l}$ is trivial (actually, this implies that $L$ is algebraic), whence $(1+y)^l=1$. The filtration on topological $K$-groups is multiplicative, $y$ belongs to the first term $\widetilde{K}_0^{\tp}(S)\subset K_0^{\tp}(S)$ of this filtration, and $S$ is a surface. Therefore, we have $y^3=0$. Hence we obtain the equality
\begin{equation}\label{eq:Galkintrick}
ly+{l \choose 2}y^2=0\,.
\end{equation}
Multiplying the left hand side of~\eqref{eq:Galkintrick} by $y$, we get $ly^2=0$ in $H^4(S,\Z)\subset K_0^{\tp}(S)$. Since the group $H^4(S,\Z)\simeq\Z$ is torsion-free, we see that $y^2=0$. Thus equality~\eqref{eq:Galkintrick} implies that~$ly=0$.

We conclude that the homomorphism
$$
\widetilde{K}_0^{\tp}(S)_{\rm tors}\longrightarrow H^2(S,\Z)_{\rm tors}
$$
is surjective. Again using that $H^4(S,\Z)$ is torsion-free, we obtain that the latter homomorphism is an isomorphism. This implies that the exact sequence~\eqref{eq:Galkin} splits.
\end{proof}

Now we are ready to construct an example to Theorem~\ref{theor:mainexamp}. Let $G$ be the Heisenberg group over the ring $\bbZ/n\bbZ$ (cf. Lemma~\ref{lem:Heis}), let $N$ be the center of~$G$, and put~${H:=G/N}$. Choose generators $x,y$ of $H$, which fixes an isomorphism $H\simeq\Z/n\Z\times\Z/n\Z$, a generator~$[x,y]$ of $N$, and hence an isomorphism $N\simeq\Z/n\Z$. We have an exact sequence
$$
0\longrightarrow \Z/n\Z\longrightarrow G\longrightarrow H \longrightarrow 1
$$
and let $z\in H^2(H,\Z/n\Z)$ be the corresponding class.

Let $S$ be a complex smooth projective surface such that there is a non-zero torsion element $s\in H^2(S,\Z)_n$. For example, one can take for $S$ an Enriques surface with $n=2$ or a surface of general type with $h^{1,0}=h^{2,0}=0$ and with non-trivial torsion in the Picard group.

There exists an element $t\in H^1(S,\Z/n\Z)$ with $\beta(t)=s$. Let $T\to S$ be an $n$-cyclic cover of~$S$ that corresponds to $t$. Let $C$ be a complex smooth projective curve of positive genus, and let $D\to C$ be any non-trivial $n$-cyclic cover. We have a free action of $H$ on the variety $X:=T\times D$ given by the action of the first
factor on $T$ and of the second factor on $D$.
\begin{prop}\label{prop:examp}
The class $z\in H^2(H,\Z/n\Z)$ and the action of $H$ on $X$ as above satisfy the conditions of Theorem~\ref{theor:mainexamp}. Thus, we have $\mu^G(X)\ne \mu(X/^{\ex}G)$ in $K_0(\DG_k^{\gm})$.
\end{prop}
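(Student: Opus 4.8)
The plan is to check the two hypotheses of Theorem~\ref{theor:mainexamp} for the Heisenberg central extension $0\to\Z/n\Z\to G\to H\to 1$ and the free $H$-action on $X=T\times D$. I would first record the structural picture: writing $H=H_1\times H_2$ with $H_1,H_2\simeq\Z/n\Z$ acting on $T$ and on $D$ respectively, the $H$-action on $X$ is the product action, so that $B:=X/H=(T/H_1)\times(D/H_2)=S\times C$, and the classifying map of the unramified Galois cover $X\to B$ is
$$
B=S\times C\longrightarrow K(H,1)=K(H_1,1)\times K(H_2,1)\,,
$$
the product of the classifying maps of $T\to S$ and $D\to C$, which are represented by $t\in H^1(S,\Z/n\Z)$ and by the class $t'\in H^1(C,\Z/n\Z)$ of $D\to C$.

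For condition~(i) I would use that the Heisenberg extension class equals the cup product $z=u\cup v$ of the two tautological classes $u,v\in H^1(H,\Z/n\Z)$ dual to $x,y$, as one reads off from the cocycle $((a_1,a_2),(b_1,b_2))\mapsto a_1b_2$. Since $\lambda$ is the pull-back along $B\to K(H,1)$ it is multiplicative, so $\lambda(z)=\lambda(u)\cup\lambda(v)=t\times t'\in H^2(B,\Z/n\Z)$, the external product. Because $C$ is a curve, $H^2(C,\Z)$ is torsion free, so the Bockstein $H^1(C,\Z/n\Z)\to H^2(C,\Z)_n$ vanishes and $t'$ is the reduction of an integral class $\widehat{t'}\in H^1(C,\Z)$. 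Using compatibility of the integral Bockstein with cup product by a fixed integral class, I would get $\beta(\lambda(z))=\beta(t)\times\widehat{t'}=s\times\widehat{t'}$, an element of $H^3(B,\Z)_n$. To see it is nonzero I would invoke Künneth: since $H^\bullet(C,\Z)$ is torsion free, $H^2(S,\Z)\otimes H^1(C,\Z)$ is a direct summand of $H^3(B,\Z)$ and $s\times\widehat{t'}$ is $s\otimes\widehat{t'}$ there. Writing $\widehat{t'}=m\psi$ with $\psi$ a primitive element of the free group $H^1(C,\Z)$, the hypothesis that $D\to C$ is an honest $\Z/n\Z$-cover (such covers exist since $g(C)\geqslant 1$) forces $t'$ to be surjective, hence $\gcd(m,n)=1$; then $s\otimes\widehat{t'}=(ms)\otimes\psi$ is nonzero, since $\psi$ extends to a basis and $ms$ has the same order as $s$, which divides $n$ and is $>1$. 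As $ns=0$, this is a nonzero $n$-torsion class, giving condition~(i).

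For condition~(ii) I would combine Lemma~\ref{lem:surface}, which gives that the Atiyah--Hirzebruch spectral sequence of $S$ degenerates at $E_2$ with split filtrations on $K_0^{\tp}(S)$ and $K_1^{\tp}(S)$, with torsion-freeness of $H^\bullet(C,\Z)$, and feed this into Lemma~\ref{lem:prodAH} to conclude the same for $B=S\times C$. Then Lemma~\ref{lem:tau}$(iii)$ gives $\tau(K_1^{\tp}(B))=\sum_{i\ \mathrm{odd}}\tau(H^i(B,\Z))=\tau(H^{\mathrm{odd}}(B,\Z))$. With both hypotheses verified, Theorem~\ref{theor:mainexamp} yields $\tau\kappa_1^{\tp}(\mu^G(X))<\tau\kappa_1^{\tp}(\mu(X/^{\ex}G))$, hence $\mu^G(X)\neq\mu(X/^{\ex}G)$ in $K_0(\DG^{\gm}_{\C})$.

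I expect the only real subtlety to be in condition~(i): one must fix the coefficient pairings so that the identifications $z=u\cup v$ and $\lambda(z)=t\times t'$ are literally correct, establish the (elementary) compatibility of the integral Bockstein with external products by a cochain-level computation with $0\to\Z\xrightarrow{n}\Z\to\Z/n\Z\to 0$, and --- the point that genuinely uses the hypotheses on $D\to C$ --- ensure $s\times\widehat{t'}\neq 0$, which can fail for a careless choice of cover (e.g.\ $n=6$ with $s$ of order $2$ and $t'$ of order $3$). The rest (Künneth, Lemmas~\ref{lem:prodAH} and~\ref{lem:tau}) is routine bookkeeping.
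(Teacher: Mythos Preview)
Your proof is correct and follows essentially the same route as the paper's: identify $z$ as a cup product, compute $\lambda(z)=p_S^*(t)\cup p_C^*(t')$, lift $t'$ to an integral class and use the Bockstein--cup-product compatibility to get $\beta(\lambda(z))=s\times\widehat{t'}$, then invoke K\"unneth for condition~(i); for condition~(ii) combine Lemmas~\ref{lem:surface}, \ref{lem:prodAH}, and~\ref{lem:tau}(iii) exactly as the paper does. Your treatment of the nonvanishing of $s\otimes\widehat{t'}$ is in fact more careful than the paper's: the paper only records that $\tilde{c}$ is not $n$-divisible, which by itself does not rule out $ms=0$ when $s$ has order strictly dividing $n$; your observation that connectedness of the $\Z/n\Z$-cover $D\to C$ forces $\gcd(m,n)=1$ is precisely what closes this gap.
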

\begin{proof}
Clearly, the quotient $B=X/G$ is isomorphic to $S\times C$. The class $z\in H^2(H,\Z/n\Z)$ is equal to the cup-product of the classes in $H^1(H,\Z/n\Z)$ given by two coordinate projections from $H$ to $\Z/n\Z$. This implies the equality
$$
\lambda(z)=p_S^*(t)\cdot p_C^*(c)\in H^2(B,\Z/n\Z)\,,
$$
where the class $c\in H^1(C,\Z/n\Z)$ corresponds to the $n$-cyclic cover $D\to C$.

Let $\tilde{c}\in H^1(C,\Z)$ be any lift of $c$. We obtain the equalities in $H^3(B,\Z)$
$$
\beta(\lambda(z))=\beta\big(p_S^*(t)\cdot p_C^*(c)\big)=\beta\big(p_S^*(t)\cdot p_C^*(\tilde{c})\big)=
$$
$$
=\beta(p_S^*(t))\cdot p_C^*(\tilde{c})=p_S^*(\beta(t))\cdot p_C^*(\tilde{c})=p_S^*(s)\cdot p_C^*(\tilde{c})\,.
$$
By our assumptions, $s\ne 0$ in $H^2(S,\Z)_n$. Also, the cover $D\to C$ is non-trivial, whence $c\ne 0$ and $\tilde{c}$ is not $n$-divisible in $H^1(C,\Z)$. Hence, by the K\"unneth formula, the element $p_S^*(s)\cdot p_C^*(\tilde{c})$ is non-zero as it corresponds to the non-zero element
$$
s\otimes \tilde{c}\in H^2(S,\Z)\otimes H^1(C,\Z)\subset H^3(S\times C,\Z)\,.
$$
Thus we see that condition~$(i)$ of Theorem~\ref{theor:mainexamp} is satisfied.

Combining Lemma~\ref{lem:prodAH} and Lemma~\ref{lem:surface}, we see that the Atiyah--Hirzebruch spectral sequence for $B$ degenerates in the $E_2$-term and the filtrations on the topological $K$-groups of~$B$ split. Hence, by Lemma~\ref{lem:tau}$(iii)$, we have $\tau\big(K_1^{\tp}(B)\big)=\tau\big(H^{\rm odd}(B,\Z)\big)$ (this holds for $K_0^{\tp}$ and even cohomology as well), that is, condition~$(ii)$ of Theorem~\ref{theor:mainexamp} is satisfied as well.
\end{proof}

\begin{remark}\label{rmk:mainexamp}
Proposition~\ref{prop:examp} generalizes trivially to the case of a product of two complex smooth projective varieties $V$ and $V'$ such that the Atiyah--Hirzebruch spectral sequence for~$V$ degenerates in the $E_2$-term, the filtrations on $K_0^{\tp}(V)$ and $K_1^{\tp}(V)$ split, all cohomology groups~$H^i(V',\Z)$ are torsion-free, we have $H^2(V,\Z)_n\ne 0$, and there is a non-trivial $n$-cyclic cover of~$V'$.
\end{remark}

\subsection{On a conjecture by Polishchuk and Van den Bergh}\label{subsec:PVdB}

Polishchuk and Van den Bergh~\cite[Conj.\,A]{PV} made the following conjecture.

\begin{conj}\label{conj:pvdb}
Let $G$ be a finite group and $X$ be a smooth quasi-projective $G$-variety such that $G$ acts on $X$ effectively. Suppose that for every
$g\in G$, the variety $X^g/Z(g)$ is smooth. Then there exists a semiorthogonal decomposition of $D^G(X)$ whose components (up to order) form the collection of triangulated categories $D^b\big(X^g/Z(g)\big)$ parameterized by conjugacy classes $[g]\in C(G)$, where~$g\in G$ is a representative of a conjugacy class $[g]$.
\end{conj}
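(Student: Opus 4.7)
The plan is to construct, for each conjugacy class $[g]\in C(G)$ represented by an element $g\in G$, a fully faithful exact functor $\Phi_g\colon D^b\bigl(X^g/Z(g)\bigr)\to D^G(X)$, and then verify that the resulting collection assembles into a semiorthogonal decomposition. The most natural candidate is built from the fixed-point inclusion $i_g\colon X^g\hookrightarrow X$, which is $Z(g)$-equivariant, combined with descent along the quotient $\pi_g\colon X^g\to X^g/Z(g)$ and induction from $Z(g)$ up to $G$. Explicitly, I would define $\Phi_g$ as a Fourier--Mukai-type composition of the form $\Ind_{Z(g)}^G\circ\, i_{g,*}\circ\pi_g^{*,\chi}$, where $\pi_g^{*,\chi}$ denotes the pullback followed by projection onto an appropriate $\langle g\rangle$-eigencomponent singled out by the action of $g$ on the normal bundle of $X^g$ in $X$.

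The first main step is full faithfulness of each $\Phi_g$. The key technical input is smoothness of $X^g/Z(g)$ together with a compatibility between the Koszul resolution of $i_{g,*}\cO_{X^g}$ and the $\langle g\rangle$-equivariant splitting of the conormal bundle $N^{\vee}_{X^g/X}$ into character eigenspaces, on which $g$ acts without invariants. This is the mechanism behind several classical semiorthogonal decompositions (the root-stack decomposition of Proposition~\ref{prop:semiorth} is an instance), and it would reduce the computation to a standard Beilinson-type verification over the base $X^g/Z(g)$ after absorbing the $Z(g)$-equivariance via faithfully flat descent along $\pi_g$.

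The second step is semiorthogonality for $[g]\neq[h]$. A projection-formula argument would express $\Hom^{\bullet}\bigl(\Phi_g(F),\Phi_h(E)\bigr)$ as the cohomology of a complex on $X^g\cap X^h$ whose $\bigl(Z(g)\cap Z(h)\bigr)$-equivariant structure is twisted by the characters of $g$ and $h$ acting on the respective conormal contributions; the required vanishing would then follow from character-orthogonality across distinct conjugacy classes. Once full faithfulness and semiorthogonality are in place, generation could be upgraded from an additive computation in $K_0(\DG_k^{\gm})$: by construction the sum of the component classes equals $\mu(X/^{\ex}G)$, and under the hypotheses of Theorem~\ref{thmA} this equals $\mu^G(X)$, forcing the proposed decomposition to exhaust $D^G(X)$.

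The principal obstacle, and the reason the conjecture remains open in general, is that the kernels defining the $\Phi_g$ are only manifestly well defined étale-locally on $X/G$, where one can linearise the action and appeal to the known linear cases. Globally, one must glue the local kernels into genuine equivariant Fourier--Mukai kernels, and this encounters an obstruction living in an equivariant Brauer-type group of the coarse space -- precisely the same class $\theta\in\Br^H(Z)$ that appears in Theorem~\ref{thmF}. Theorem~\ref{thmD} shows that, without the effectiveness hypothesis, this obstruction can genuinely be non-trivial; thus effectiveness must enter the argument precisely to trivialise it. This is the step I expect to be the hardest, and it is the reason the conjecture is known only in special cases (linear actions, wreath products, curves) where either the global structure or the low dimension makes the gluing automatic.
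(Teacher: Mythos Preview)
The statement you are addressing is a \emph{conjecture} (Conjecture~\ref{conj:pvdb}, due to Polishchuk and Van den Bergh), not a theorem: the paper does not prove it and does not claim to. It is quoted as an open problem, and the paper's contribution in this direction is the converse-type result Theorem~\ref{thm:PolvdB}, which shows that the effectiveness hypothesis cannot be dropped. There is therefore no ``paper's own proof'' to compare against.

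Your text is not a proof but a strategic outline, and you yourself say as much in the final paragraph (``the reason the conjecture remains open in general''). That is an honest assessment, but several of the proposed steps would not go through as written. The generation step is the most clearly circular: you appeal to Theorem~\ref{thmA} to conclude $\mu^G(X)=\mu(X/^{\ex}G)$ and then upgrade this to generation, but the hypotheses of Theorem~\ref{thmA} (the $\cS_0$-adequacy conditions on stabilizers) are not assumed in the conjecture, and even when they hold, equality of classes in $K_0(\DG_k^{\gm})$ does not by itself force a given semiorthogonal collection to generate. The semiorthogonality step is also only a heuristic: ``character-orthogonality across distinct conjugacy classes'' is a statement about representations of $G$, not about derived Hom-complexes on $X^g\cap X^h$, and no mechanism is given to bridge the two. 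Finally, your identification of the gluing obstruction with the class $\theta\in\Br^H(Z)$ of Theorem~\ref{thmF} is suggestive but not established; that class is attached to a specific exact sequence of groups acting through a quotient, whereas the conjecture concerns an effective action with varying stabilizers, so the relevant obstruction (if any) would live elsewhere.
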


Recall that the disjoint union of varieties $X^g/Z(g)$, $[g]\in C(G)$, is isomorphic to the extended quotient~$X/^{\ex}G$ (see formula~\eqref{eq:extquotexpl}). In particular, if $X$ as in Conjecture~\ref{conj:pvdb} is also projective, then the conclusion of the conjecture implies the equality $\mu^G(X)=\mu(X/^{\ex}G)$ in $K_0(\DG_k^{\gm})$ (the conjecture predicts a much finer phenomena).

\medskip

Theorem~\ref{theor:mainexamp} and Proposition~\ref{prop:examp} show that a version of Conjecture~\ref{conj:pvdb} without effectiveness requirement is not true. The following result provides a simpler example when~$X$ is a curve, which also shows that the condition on the action to be effective is indeed necessary. Nevertheless, we do not know whether the inequality $\mu^G(X)\ne\mu(X/^{\ex}G)$ holds in this case.

\begin{theo}\label{thm:PolvdB}
Let $n$ be a natural number and let the groups $N\simeq(\Z/n\Z)^{\oplus 3}$, $G$, and~${H\simeq(\Z/n\Z)^{\oplus 2}}$ be as in Example~\ref{exam:S0}(ii). Let $E$ be an elliptic curve without complex multiplication over $k$, that is, such that $\Z\simeq\End_k(E)$. Suppose that all points in the $n$-torsion subgroup $E_n\subset E$ are $k$-rational
(and therefore that $k$ contains a primitive $n$-th root of unity).
Choose an isomorphism $H\simeq E_n$ and let $G$ act on $E$ by translation through~$H$. Then there is no semiorthogonal decomposition of $D^G(E)$ whose components (up to order) form the collection of triangulated categories $D^b\big(E^g/Z(g)\big)$, ${[g]\in C(G)}$.
\end{theo}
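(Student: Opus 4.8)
The plan is to compute $D^G(E)$ explicitly via the results of Section~\ref{sec:noneff}, to describe the collection of categories $D^b\big(E^g/Z(g)\big)$, and to contradict the existence of the claimed semiorthogonal decomposition using the rigidity of derived categories of non-CM elliptic curves together with the fact that the Serre functor of an abelian variety is a shift. First, since $G$ acts on $E$ through $H=G/N$, with $N\simeq(\Z/n\Z)^{\oplus 3}$ abelian and $G=N\rtimes H$ split, Example~\ref{ex:commut} gives $\theta=0$ in $\Br^H(Z)$ for $Z=Z(k[N])=k[N]$; and since $k$ contains a primitive $n$-th root of unity, $\uIrr(N)=\Spec k[N]$ is $H$-equivariantly the finite set $\Irr(N)=\Hom(N,k^*)$ with its conjugation action of $H$. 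By Theorem~\ref{cor:trivobstr}$(ii)$, $\coh^G(E)\simeq\coh^H\big(\uIrr(N)\times E\big)$ with diagonal $H$-action; decomposing $\uIrr(N)$ into $H$-orbits $H/H_\chi$ and using induction, this becomes $\coh^G(E)\simeq\prod_{[\chi]}\coh^{H_\chi}(E)\simeq\prod_{[\chi]}\coh(E/H_\chi)$, since $H_\chi\subset H\simeq E_n$ acts freely on $E$ by translation. Hence $D^G(E)\simeq\bigoplus_{[\chi]}D^b(E/H_\chi)$. By Example~\ref{exam:S0}(ii), every stabilizer $H_\chi$ of the $H$-action on $\Irr(N)$ is a subgroup of $H\simeq(\Z/n\Z)^{\oplus 2}$ of the form $(\Z/d\Z)^{\oplus 2}$ with $d\mid n$; being annihilated by $d$, it lies in the $d$-torsion subgroup of $E_n$, which has order $d^2$, so it equals that subgroup, i.e. $H_\chi=E[d]$. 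Therefore $E/H_\chi=E/E[d]\cong E$ via multiplication by $d$, and $D^G(E)\simeq D^b(E)^{\oplus r}$ with $r=|\Irr(N)/H|$.

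Next I would identify the proposed pieces. An element $g=(m,h)\in G$ acts on $E$ by translation by the image $\bar g\in H\simeq E_n$ of $g$, so $E^g=E$ if $g\in N$ and $E^g=\varnothing$ otherwise. The nonzero members of $\big\{D^b(E^g/Z(g))\big\}_{[g]\in C(G)}$ thus come only from $g\in N$; their conjugacy classes are the $H$-orbits $[m]$ in $N$, and for $g=(m,0)$ one checks $Z(g)=N\rtimes H_m$ with $H_m=\mathrm{Stab}_H(m)$, whence $E^g/Z(g)=E/H_m$. So a semiorthogonal decomposition realizing the collection would give, after deleting the zero pieces, a semiorthogonal decomposition of $D^G(E)$ with pieces $\big\{D^b(E/H_m)\big\}_{[m]\in N/H}$.

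Finally, since $G$ acts on the abelian variety $E$ by translations, $\omega_E\cong\mathcal O_E$ is $G$-equivariantly trivial and $D^G(E)$ has Serre functor $[1]$; for a category with this property semiorthogonality forces orthogonality, because for $i>j$ one has $\mathrm{Hom}(\mathcal A_j,\mathcal A_i)\cong\mathrm{Hom}(\mathcal A_i,\mathcal A_j[1])^*=0$, $\mathcal A_j$ being closed under $[1]$. Hence the hypothetical decomposition would be an orthogonal direct sum $D^G(E)\simeq\bigoplus_{[m]}D^b(E/H_m)$ into nonzero indecomposable triangulated subcategories ($E/H_m$ is connected). But orthogonal direct-sum decompositions of the idempotent-complete category $D^b(E)^{\oplus r}$ correspond to idempotents of $\mathrm{End}(\mathrm{id})\cong k^{\oplus r}$, so its only decomposition into nonzero indecomposables is $D^b(E)^{\oplus r}$ itself; by uniqueness, $r=|N/H|$ and $D^b(E/H_m)\simeq D^b(E)$ for every $[m]$. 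Taking $m=(0,1,0)$, so that $H_m\subset H\simeq E_n$ is the cyclic subgroup of order $n$ (not of the form $(\Z/d\Z)^{\oplus 2}$ when $n\geqslant 2$), this reads $D^b(E')\simeq D^b(E)$ for $E'=E/H_m$, an elliptic curve that is $n$-isogenous to $E$ by a cyclic isogeny. By Orlov's classification of derived equivalences of abelian varieties~\cite{Orlov1} and the hypothesis $\mathrm{End}_k(E)=\Z$, the equivalence forces $E'\cong E$ over $k$; composing with the quotient isogeny produces a $k$-endomorphism of $E$ of degree $n$ with cyclic kernel. Since $\mathrm{End}_k(E)=\Z$, such an endomorphism is $[a]$ with kernel $E[|a|]\simeq(\Z/|a|\Z)^{\oplus 2}$, which is cyclic only for $|a|=1$; this contradicts $n\geqslant 2$, completing the proof (for $n=1$ the statement is empty).

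The step I expect to be the main obstacle is the last paragraph: establishing that a semiorthogonal decomposition of $D^G(E)$ with the prescribed pieces is necessarily an orthogonal direct sum with uniquely determined indecomposable summands, and, above all, invoking Orlov~\cite{Orlov1} correctly so as to upgrade the derived equivalence $D^b(E')\simeq D^b(E)$ to an isomorphism of elliptic curves over $k$ — this is exactly where the absence of complex multiplication is used.
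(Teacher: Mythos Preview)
Your proof is correct and follows essentially the same route as the paper: compute $D^G(E)\simeq D^b(E)^{\oplus r}$ via Theorem~\ref{cor:trivobstr}$(ii)$ and the description of stabilizers on $\Irr(N)$ from Example~\ref{exam:S0}(ii), identify the nonempty pieces $E^g/Z(g)$ as quotients $E/H_m$, and derive a contradiction from Orlov's result~\cite{Orlov1} together with $\End_k(E)=\Z$. The only tactical difference is in how you pass from the hypothetical semiorthogonal decomposition to an orthogonal one: you use that the Serre functor on $D^G(E)$ is $[1]$ (a Calabi--Yau argument, valid here since $\omega_E$ is $G$-equivariantly trivial), whereas the paper proves a general categorical lemma (Lemma~\ref{lem:admdec}) matching a semiorthogonal decomposition into indecomposables against an orthogonal one of the same length. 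Your argument is slightly more direct in this specific situation; the paper's lemma is more portable. Either way, the conclusion and the endgame with the cyclic-kernel contradiction are identical.
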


In the proof of Theorem~\ref{thm:PolvdB} we use the following lemmas.

\begin{lemma}\label{lem:explfree}
Given an exact sequence of finite groups
$$
1\longrightarrow N\longrightarrow G\longrightarrow H\longrightarrow 1\,,
$$
let $G$ act on a variety $X$ through a free action of $H$ on $X$. Suppose that the exact sequence above satisfies conditions~(i) and (ii) of Definition~\ref{def:S0}. Then there is an equivalence of categories
$$
\coh^G(X)\simeq \bigoplus_{[r]\in \Irr(N)/H}\coh(X/H_r)\,,
$$
where $r\in\Irr(N)$ is a representative of an $H$-orbit $[r]\in \Irr(N)/H$ and $H_r$ is the stabilizer of $r$ in~$H$.
\end{lemma}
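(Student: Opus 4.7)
The strategy is to apply Theorem~\ref{cor:trivobstr}$(ii)$ and then decompose the resulting category of equivariant sheaves on a disjoint union into pieces indexed by $H$-orbits on $\Irr(N)$.

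First, I would invoke Proposition~\ref{cor:split}: conditions (i) and (ii) of Definition~\ref{def:S0} are precisely what is needed to conclude that the class $\theta \in \Br^H(Z)$ associated with the exact sequence vanishes. Therefore, by Theorem~\ref{cor:trivobstr}$(ii)$, we obtain an equivalence
$$
\coh^G(X) \simeq \coh^H\bigl(\uIrr(N) \times X\bigr),
$$
where $H$ acts diagonally on $\uIrr(N) \times X$, with the action on $\uIrr(N)$ induced by conjugation on $Z = Z(k[N])$.

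Next, condition (i) of Definition~\ref{def:S0} (all irreducible $N$-representations are defined over $k$) implies that $Z \simeq \prod_{r \in \Irr(N)} k$, so $\uIrr(N)$ is $H$-equivariantly isomorphic to the disjoint union $\coprod_{r \in \Irr(N)} \Spec(k)$, with $H$ permuting the points via the natural action on $\Irr(N)$. Consequently,
$$
\uIrr(N) \times X \;\simeq\; \coprod_{r \in \Irr(N)} X,
$$
with $H$ acting by $h \cdot (r,x) = (h(r), h(x))$. Grouping the components according to the $H$-orbits on $\Irr(N)$, for each orbit $[r] \in \Irr(N)/H$ the corresponding union of components is $H$-equivariantly isomorphic to $\ind_{H_r}^H(X)$, where $H_r \subset H$ is the stabilizer of a chosen representative $r$. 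Thus we obtain
$$
\coh^H\bigl(\uIrr(N) \times X\bigr) \;\simeq\; \bigoplus_{[r] \in \Irr(N)/H} \coh^H\bigl(\ind_{H_r}^H(X)\bigr).
$$

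Finally, the standard equivalence $\coh^H(\ind_{H_r}^H(X)) \simeq \coh^{H_r}(X)$ (a direct consequence of the fact that, for any subgroup $H_r \subset H$ and any $H_r$-variety $Y$, equivariant sheaves on the induced variety correspond to $H_r$-equivariant sheaves on $Y$) reduces us to studying $\coh^{H_r}(X)$. Since by hypothesis $H$ acts freely on $X$, so does its subgroup $H_r$, and therefore $\coh^{H_r}(X) \simeq \coh(X/H_r)$ by descent along the étale cover $X \to X/H_r$. Assembling these identifications yields the desired equivalence
$$
\coh^G(X) \simeq \bigoplus_{[r] \in \Irr(N)/H} \coh(X/H_r).
$$
The only nontrivial input is the vanishing of $\theta$ furnished by Proposition~\ref{cor:split}; everything else is bookkeeping about equivariant sheaves on disjoint unions and on free quotients, so I do not anticipate serious obstacles beyond being careful with the identification of $H$-actions in each step.
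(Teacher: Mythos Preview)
Your proof is correct and follows essentially the same route as the paper's: both invoke the vanishing of $\theta$ via Proposition~\ref{cor:split} to apply Theorem~\ref{cor:trivobstr}$(ii)$, then use condition~(i) to identify $\uIrr(N)$ with the discrete $H$-set $\Irr(N)$, and finally exploit freeness of the $H$-action on $X$ to descend. The only cosmetic difference is the order of the last two steps: the paper first descends from $\coh^H(\Irr(N)\times X)$ to $\coh\bigl((\Irr(N)\times X)/H\bigr)$ (the diagonal action is free since it is free on the $X$-factor) and then decomposes the quotient as $\coprod_{[r]} X/H_r$, whereas you first decompose into induced pieces $\ind_{H_r}^H(X)$ and then descend on each; these are equivalent bookkeeping.
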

\begin{proof}
By Remark~\ref{rmk:S0}(iii) and Theorem~\ref{cor:trivobstr}$(ii)$, there is an equivalence of categories
$$
\coh^G(X)\simeq\coh^H\big(\uIrr(N)\times X\big)\,
$$
By condition~(i) of Definition~\ref{def:S0}, the scheme $\uIrr(N)$ is the finite set of points~$\Spec(k)$ parameterized by $\Irr(N)$. Since $H$ acts on~$X$ freely, there is an equivalence of categories
$$
\coh^H\big(\Irr(N)\times X\big)\simeq \coh\big((\Irr(N)\times X)/H\big)\,.
$$
Finally, one has an isomorphism
$$
(\Irr(N)\times X)/H\simeq \coprod_{[r]\in \Irr(N)/H} X/H_r\,,
$$
which finishes the proof.
\end{proof}

\begin{lemma}\label{lem:admdec}
Let $T$ be an idempotent complete triangulated category admitting a semiorthogonal decomposition
$$
T=\langle T_1, \ldots,  T_m\rangle
$$
into triangulated categories, which are indecomposable into
direct sums. Suppose that there is also a decomposition
$$
T\simeq C_1 \oplus \ldots \oplus C_m
$$
into a direct sum of non-zero triangulated categories. Then there exists a permutation $\sigma \in \Sigma_m$ such that $T_i \simeq C_{\sigma(i)}$ for any $i$, $1 \leqslant i \leqslant m$.
\end{lemma}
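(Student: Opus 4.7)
The plan is to exploit the rigidity of a direct sum decomposition---the projections $Q_j \colon T \to C_j$ are exact functors (both left and right adjoint to inclusion), they vanish on $C_k$ for $k \neq j$, and every object satisfies $X \simeq \bigoplus_j Q_j(X)$ naturally---to show that each indecomposable $T_i$ sits inside precisely one $C_j$, and then to upgrade this inclusion to an equivalence using the canonical semiorthogonal filtration.

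First I would verify that each admissible subcategory $T_i \subset T$ is closed under direct summands taken in $T$; this is a standard consequence of admissibility together with idempotent completeness of $T$, via splitting idempotents (any idempotent $e$ in $T_i$ splits in $T$, and applying the left adjoint $L_i$ shows the splitting descends to $T_i$). It follows that for $X \in T_i$, each piece $Q_j(X)$, being a summand of $X$ in $T$, lies in $T_i$. Setting $T_i^{(j)} := T_i \cap C_j$, we therefore obtain an internal direct sum decomposition
$$
T_i \simeq T_i^{(1)} \oplus \cdots \oplus T_i^{(m)},
$$
the mutual orthogonality of the pieces being inherited from that of the $C_j$'s. Since $T_i$ is indecomposable as a direct sum, exactly one $T_i^{(j)}$ is non-zero; define $\sigma(i)$ to be that index, so that $T_i \subset C_{\sigma(i)}$.

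Next I would check that $\sigma$ is a permutation of $\{1,\dots,m\}$. If some $C_j$ contained none of the $T_i$'s, then the exact functor $Q_j$ would vanish on every generator of $T = \langle T_1,\dots,T_m\rangle$, hence on all of $T$, contradicting $C_j \neq 0$. So $\sigma$ is surjective, and by cardinality bijective.

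Finally, to promote the inclusion $T_i \subset C_{\sigma(i)}$ to an equivalence, fix $X \in C_{\sigma(i)}$ and consider its canonical semiorthogonal filtration $0 = X_0 \to X_1 \to \cdots \to X_m = X$ with graded pieces in the respective $T_{i'}$. Applying the exact functor $Q_{\sigma(i)}$ fixes $X$, while killing every piece in $T_{i'}$ with $i' \neq i$ (since $T_{i'} \subset C_{\sigma(i')}$ and $\sigma(i') \neq \sigma(i)$). The filtration therefore collapses, forcing $X$ to be isomorphic to its $T_i$-graded piece, so $X \in T_i$ and hence $T_i = C_{\sigma(i)}$. The main obstacle is the technical first step: ensuring that admissibility plus idempotent completeness of $T$ really does force closure of $T_i$ under direct summands, so that $Q_j$ restricts to an endofunctor of $T_i$. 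Once this is in hand, indecomposability and the semiorthogonal filtration carry the rest formally.
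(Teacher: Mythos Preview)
Your proof is correct and follows essentially the same route as the paper: use idempotent completeness to induce a direct-sum decomposition on each $T_i$, invoke indecomposability to place $T_i$ inside a unique $C_{\sigma(i)}$, show $\sigma$ is a bijection, and conclude $T_i = C_{\sigma(i)}$. The paper is terser---it simply notes that once each $T_i$ lands in some $C_{\sigma(i)}$ the semiorthogonal decomposition becomes orthogonal and finishes from there---while you spell out the last step via the semiorthogonal filtration and the projector $Q_{\sigma(i)}$, but the content is the same.
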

\begin{proof}
Since $T$ is idempotent complete, each $T_i$ is idempotent complete as well.
Hence the direct sum decomposition on $T$ induces a direct sum decomposition on each $T_i$.
Since the categories~$T_i$ are assumed to be indecomposable,
we have $T_i \subset C_{\sigma(i)}$ for some map $\sigma$ from the index set to itself.
It follows that we actually have an orthogonal decomposition $T = T_1 \oplus \ldots \oplus T_m$.
Also, since each $C_i$ is assumed to be non-trivial, the map $\sigma$ must be surjective and therefore a permutation.
From this, it is easy to see that $T_i \simeq C_{\sigma(i)}$ for each~$i$.
\end{proof}

The following statement is well-known and is obtained by Orlov~\cite[Ex.\,4.16]{Orlov1} in
the more general case of principally polarized abelian varieties over an algebraically closed field of zero characteristic. For the convenience of the reader, we provide a separate proof in our simple case, still using one of the main results from~\cite{Orlov1}.

\begin{lemma}\label{lem:Orl}
Let $E$ be an elliptic curve without complex multiplication over $k$. Suppose that for an elliptic curve $E'$, there is an equivalence of triangulated categories ${D^b(E)\simeq D^b(E')}$. Then there is an isomorphism $E\simeq E'$.
\end{lemma}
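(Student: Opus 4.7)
The plan is to invoke Orlov's classification of derived equivalences between abelian varieties and reduce the problem to a question about homomorphisms between the elliptic curves, which becomes elementary under the no-CM hypothesis.

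First I would apply the main theorem of Orlov's paper \cite{Orlov1}: the equivalence $D^b(E)\simeq D^b(E')$ produces an isomorphism of abelian varieties $f\colon E\times\widehat{E}\stackrel{\sim}\longrightarrow E'\times\widehat{E'}$ satisfying a symplectic condition (we will not actually need the symplectic refinement, only the existence of an isomorphism). Using the canonical principal polarizations to identify $\widehat{E}\simeq E$ and $\widehat{E'}\simeq E'$, we obtain an isomorphism
$$
f\colon E\times E\stackrel{\sim}\longrightarrow E'\times E'\,.
$$

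Next I would analyze $f$ using the CM hypothesis. Since $\End_k(E)=\bbZ$ by assumption, $E$ is simple. If $\Hom_k(E,E')=0$, then every morphism $E\times E\to E'\times E'$ is zero, contradicting that $f$ is an isomorphism. Hence there is a nonzero morphism, necessarily an isogeny $\phi_0\colon E\to E'$. Over an algebraically closed field of characteristic zero, isogenous elliptic curves have isomorphic endomorphism $\bbQ$-algebras, so $\End_k(E')\otimes\bbQ=\bbQ$ and therefore $\End_k(E')=\bbZ$ as well. Consequently $\Hom_k(E,E')$ is a torsion-free rank-one $\bbZ$-module, and we may choose $\phi_0$ to be a generator, so that every morphism $E\to E'$ is of the form $n\phi_0$ for a unique $n\in\bbZ$.

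Finally I would conclude by a degree computation. Writing $f$ as a $2\times 2$ matrix of morphisms $E\to E'$, each entry is an integer multiple of $\phi_0$, so
$$
f=(\phi_0\times\phi_0)\circ M
$$
for some matrix $M\in\Mat_2(\bbZ)$ acting as an endomorphism of $E\times E$. Taking degrees gives $\deg(f)=\deg(\phi_0)^2\cdot|\det M|$, and since $f$ is an isomorphism, $\deg(f)=1$. This forces $\deg(\phi_0)=1$, so $\phi_0$ is itself an isomorphism and $E\simeq E'$.

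The only real point requiring care is the correct application of Orlov's result: one must verify that it produces an actual isomorphism (not merely an isogeny) of $E\times\widehat E$ with $E'\times\widehat{E'}$ and that the identifications $\widehat E\simeq E$, $\widehat{E'}\simeq E'$ via the principal polarizations are compatible with the statement; both are standard in the elliptic-curve setting, so no genuine obstacle arises.
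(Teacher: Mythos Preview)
Your proof is correct and follows essentially the same route as the paper: invoke Orlov's theorem to obtain an isomorphism $E\times E\simeq E'\times E'$, deduce that $\Hom_k(E,E')$ is infinite cyclic with generator $\phi_0$, and then argue that $\phi_0$ has degree one. The only difference is in this last step: the paper observes that any morphism $E\to E'\times E'$ has kernel containing $\Ker(\phi_0)$ and then feeds in the embedding $E\hookrightarrow E\times E\simeq E'\times E'$, whereas you factor $f=(\phi_0\times\phi_0)\circ M$ and compute degrees. One small slip: the degree of $M\in\Mat_2(\bbZ)$ acting on $E\times E$ is $(\det M)^2$, not $|\det M|$, so the identity should read $1=\deg(f)=(\deg\phi_0)^2\cdot(\det M)^2$; this does not affect the conclusion. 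Also, the claim that $\End_k(E')=\bbZ$ follows directly from $\End_k(E)\otimes\bbQ\simeq\End_k(E')\otimes\bbQ$ for isogenous curves over any field, so the aside about algebraically closed fields is unnecessary.
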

\begin{proof}
By~\cite[Theor.\,2.19]{Orlov1}, the equivalence $D^b(E)\simeq D^b(E')$ implies that there is an isomorphism of abelian surfaces
$$
E\times E\simeq E'\times E'\,.
$$
It follows that the curves $E$ and $E'$ are isogenous over $k$.

Let $\phi\colon E'\to E$ be any isogeny. We have a homomorphism of abelian groups
$$
\Hom_k(E,E')\longrightarrow \End_k(E)\,,\qquad \psi\longmapsto \phi\circ\psi\,.
$$
This homomorphism is injective, because the composition of two isogenies is a (non-zero) isogeny. Since $\End_k(E)\simeq\Z$, we see that $\Hom_k(E,E')$ is infinite cyclic.

Let an isogeny $\xi\colon E\to E'$ be a generator of $\Hom_k(E,E')$. Then the kernel of any morphism ${E\to E'}$ contains the finite group scheme $\Ker(\xi)\subset E$. This implies that the kernel of any morphism $E\to E'\times E'$ contains $\Ker(\xi)$ as well. On the other hand, the isomorphism ${E\times E\simeq E'\times E'}$ gives an injective morphism $E\to E'\times E'$. Therefore $\Ker(\xi)$ is trivial, whence the isogeny~$\xi$ is an isomorphism.
\end{proof}

\medskip

Now we are ready to prove Theorem~\ref{thm:PolvdB}.

\begin{proof}
By Example~\ref{exam:S0}(ii), the set of stabilizers of the action of~$H$ on~$C(N)$ consists of~$n$ copies of~$H$ and of~${n^3-n}$ subgroups $K\subset H$ such that each $H/K$ is a non-trivial cyclic group. Hence by Example~\ref{ex:extquottriv}, the collection of varieties $E^g/Z(g)$, $[g]\in C(G)$, coincides with
$$
\underbrace{E,\ldots,E}_n,\,E_1,\ldots,E_M,
$$
where $M\geqslant 1$ is a natural number depending on $n$ and for each $i$, $1\leqslant i\leqslant M$,
there is a cyclic isogeny $E\to E_i$ of positive degree.

Now assume that there is a semiorthogonal decomposition
$$
D^G(X)=\langle T_1,\ldots, T_{n+M}\rangle
$$
such that the collection $T_1,\ldots,T_{n+M}$ coincides up to permutation with the collection
$$
\underbrace{D^b(E),\ldots,D^b(E)}_n,\,D^b(E_1)\ldots,D^b(E_M)\,.
$$
Again by Example~\ref{exam:S0}(ii), the set of stabilizers of the action of $H$ on $\Irr(N)$ consists of subgroups of type ${\Z/d\Z\times \Z/d\Z}$, where $d$ is a positive divisor of $n$. For each subgroup $H'\subset H$ of this type, there is an isomorphism $E/H'\simeq E$. Hence, by Lemma~\ref{lem:explfree}, we have an equivalence of triangulated categories
$$
D^G(X)\simeq D(E)^{\oplus L},
$$
where $L$ is a natural number depending on $n$ (in fact, one can show $L=n+M$).
The categories $D^b(E)$ and $D^b(E_i)$, $1\leqslant i\leqslant M$, are indecomposable into direct sums as elliptic curves are connected. Thus, by Lemma~\ref{lem:admdec} with $m=n+M$, we obtain that for each $i$, $1\leqslant i\leqslant M$, there is an equivalence $D^b(E_i)\simeq D^b(E)$.

Now, since $E$ has no complex multiplication, by Lemma~\ref{lem:Orl}, there is an isomorphism~${E\simeq E_i}$.
However, this is impossible since an elliptic curve without complex multiplication over $k$ has no $k$-endomorphism with non-trivial cyclic kernel.
\end{proof}

\end{document}